\documentclass[a4paper]{article}

\usepackage[T1]{fontenc}
\usepackage{amsmath, amssymb, amsthm}
\usepackage{mathtools}
\usepackage{tikz-cd}

\usepackage{dsfont}

\usepackage[backref=true,backrefstyle=none]{biblatex}
\DefineBibliographyStrings{english}{%
  backrefpage  = {\ensuremath{\uparrow}},
  backrefpages = {\ensuremath{\uparrow}},
}
\usepackage{xcolor}
\usepackage[colorlinks, linkcolor=blue!50!black, citecolor=green!40!black, urlcolor=blue!50!black]{hyperref}
\usepackage[capitalise, noabbrev]{cleveref}

\usepackage{tikz}
\usepackage{tikz-3dplot}
\usetikzlibrary{angles}
\usepackage{adjustbox}

\usepackage{subcaption}

\usepackage{graphicx}

\usepackage{booktabs}
\usepackage{array}

\DeclareMathOperator{\Hom}{Hom}
\DeclareMathOperator{\End}{End}

\DeclareMathOperator{\id}{id}
\DeclareMathOperator{\Id}{Id}
\newcommand{\op}{\mathrm{op}}
\DeclareMathOperator{\pr}{pr}
\DeclareMathOperator{\inn}{in}
\DeclareMathOperator{\ev}{ev}
\DeclareMathOperator{\coev}{coev}

\newcommand{\N}{\mathds{N}}
\newcommand{\Z}{\mathds{Z}}
\newcommand{\Q}{\mathds{Q}}
\newcommand{\R}{\mathds{R}}
\newcommand{\C}{\mathds{C}}

\newcommand{\cat}[1]{\mathbf{#1}}
\newcommand{\Mat}{\cat{Mat}}
\newcommand{\Krn}{\cat{Krn}}
\newcommand{\Rel}{\cat{Rel}}
\newcommand{\Hilb}{\cat{Hilb}}
\newcommand{\fdVect}{\cat{fdVect}}
\newcommand{\fgpMod}{\cat{fgpMod}}
\newcommand{\Herm}{\cat{Herm}}
\newcommand{\DiffOp}{\cat{DiffOp}}
\newcommand{\VB}{\cat{VB}}

\newcommand{\llangle}{\langle\!\langle}
\newcommand{\rrangle}{\rangle\!\rangle}
\newcommand{\conj}[1]{\overline{#1}}

\DeclareMathOperator{\Lie}{Lie}
\DeclareMathOperator{\GL}{GL}
\DeclareMathOperator{\ad}{ad}
\DeclareMathOperator{\Ad}{Ad}
\let\origO\O
\let\origo\o
\let\origu\u
\renewcommand{\O}{\TextOrMath{\origO}{\operatorname{O}}}

\DeclareMathOperator{\U}{U}
\DeclareMathOperator{\SU}{SU}
\DeclareMathOperator{\Sp}{Sp}
\DeclareMathOperator{\gl}{\mathfrak{gl}}
\renewcommand{\o}{\TextOrMath{\origo}{\operatorname{\mathfrak{o}}}}

\renewcommand{\u}{\TextOrMath{\origu}{\operatorname{\mathfrak{u}}}}
\DeclareMathOperator{\Der}{Der}

\DeclareMathOperator{\Ker}{Ker}
\DeclareMathOperator{\tr}{tr}
\DeclareMathOperator{\Pic}{Pic}
\DeclareMathOperator{\grad}{grad}
\DeclareMathOperator{\dvg}{div}

\DeclareMathOperator{\Diff}{Diff}
\DeclareMathOperator{\Sym}{Sym}
\DeclareMathOperator{\gr}{gr}

\DeclareMathOperator*{\colim}{colim}

\theoremstyle{plain}
\newtheorem{theorem}{Theorem}[section]
\newtheorem{lemma}[theorem]{Lemma}
\newtheorem{corollary}[theorem]{Corollary}
\newtheorem{proposition}[theorem]{Proposition}

\theoremstyle{definition}
\newtheorem{definition}[theorem]{Definition}
\newtheorem{example}[theorem]{Example}

\theoremstyle{remark}
\newtheorem{remark}[theorem]{Remark}

\AddToHook{env/theorem/begin}{\crefalias{theorem}{theorem}}
\AddToHook{env/lemma/begin}{\crefalias{theorem}{lemma}}
\AddToHook{env/corollary/begin}{\crefalias{theorem}{corollary}}
\AddToHook{env/proposition/begin}{\crefalias{theorem}{proposition}}
\AddToHook{env/definition/begin}{\crefalias{theorem}{definition}}
\AddToHook{env/example/begin}{\crefalias{theorem}{example}}
\AddToHook{env/remark/begin}{\crefalias{theorem}{remark}}

\crefname{theorem}{Theorem}{Theorems}
\Crefname{theorem}{Theorem}{Theorems}
\crefname{lemma}{Lemma}{Lemmas}
\Crefname{lemma}{Lemma}{Lemmas}
\crefname{corollary}{Corollary}{Corollaries}
\Crefname{corollary}{Corollary}{Corollaries}
\crefname{proposition}{Proposition}{Propositions}
\Crefname{proposition}{Proposition}{Propositions}
\crefname{definition}{Definition}{Definitions}
\Crefname{definition}{Definition}{Definitions}
\crefname{example}{Example}{Examples}
\Crefname{example}{Example}{Examples}
\crefname{remark}{Remark}{Remarks}
\Crefname{remark}{Remark}{Remarks}
\crefname{part}{Part}{Parts}
\Crefname{part}{Part}{Parts}

\title{Dagger Categories in Riemannian Geometry}
\author{%
  Jón Hákon Garðarsson\textsuperscript{\dag}
  \qquad
  Paolo Perrone\textsuperscript{\dag}
  \\[1ex]
  \normalsize \textsuperscript{\dag}University of Oxford%
}
\date{}

\begin{document}

\maketitle

\begin{abstract}
A dagger on a category assigns to every morphism \(f \colon X \to Y\) a morphism \(f^\dagger \colon Y \to X\), contravariantly and involutively, in the way the transpose reverses a real matrix.
We show that on the categories of linear algebra and differential geometry, daggers encode metric and Riemannian structures.

As is known, on a single vector space, an anti-involution of the endomorphism ring is the adjoint of a form that is determined up to a scalar.
But more generally, on the category of all finite-dimensional vector spaces, a dagger is the adjoint for an involution of the scalars together with a unique unimodular Hermitian form on every space.
The same holds for finitely generated projective modules over a commutative ring, and hence, through the Serre--Swan correspondence, for vector bundles, where a dagger is a choice of bundle metrics.

The dagger structure expresses notions of orthogonality categorically: orthogonal complements and normal bundles acquire universal properties, and Hermitian modules form a dagger compact category.

Moreover, daggers on Lie groups, Lie groupoids and their infinitesimal counterparts capture unitary representations and metric connections: a connection is metric precisely when its parallel transport is a dagger functor, or, infinitesimally, when its splitting of the Atiyah sequence commutes with the daggers.

Finally, we classify the daggers on the locally graded category of differential operators of a Lie--Rinehart algebra.
They correspond to metric-divergence structures, consisting of a real structure, a divergence, and a unimodular Hermitian form on every finitely generated projective module, and they act on operators as formal adjoints; Green's formula expresses the difference between an operator and its adjoint as a divergence.
\end{abstract}

\tableofcontents

\section{Introduction}
\label{sec:intro}

Klein's Erlangen programme characterises a geometry by the group of transformations that preserve its structure \cite{klein1893}.
Eilenberg and Mac Lane presented category theory as a continuation of this programme, with groups of symmetries replaced by categories of structure-preserving maps \cite[p.~237]{eilenberg-maclane1945}.
The categorical point of view has been most successful on the algebraic and topological sides of geometry.
Metric notions such as inner products, orthogonality and unitarity fit less easily: restricting to the maps that preserve them, such as the isometries, discards most maps of interest, while the category of all maps does not see the metrics at all.

A \emph{dagger} on a category assigns to every morphism \(f \colon X \to Y\) a morphism \(f^\dagger \colon Y \to X\), contravariantly and involutively, like the transpose of a matrix \cite{selinger2007, karvonen2018}.
It keeps all the maps and encodes the metric data as an operation on them: adjoints with respect to scalar products form a dagger on the category of finite-dimensional real vector spaces, and we will see that every dagger on this category arises in this way.
In this paper we develop this observation into the statement that anti-involutions on algebraic structures are metric structures, and we follow it through a sequence of such structures, from a single vector space to the category of differential operators on a manifold.

The argument has three stages.
First, we recover forms from daggers: on a single vector space an anti-involution of its endomorphism ring determines a form up to scale \cite{knus-et-al1998}, while a dagger on the category of all finite-dimensional vector spaces determines a unique form on every space.
Through the Serre--Swan correspondence, the latter result becomes a classification of daggers on vector bundles by bundle metrics.
The dagger then expresses orthogonality, including orthogonal complements and normal bundles.

Second, we study Lie groups and groupoids and their infinitesimal counterparts.
Differentiating a dagger identifies skew-adjoint infinitesimal symmetries, and for a metric bundle the groupoid and algebroid daggers characterise metric parallel transport and metric connections.

Finally, we pass from module maps to differential operators of every order; over a manifold these are operators between bundles.
On the operators from functions to functions, a dagger supplies a real structure and a divergence.
On the operators between modules, it also supplies a unimodular Hermitian form on every module, and it acts as the formal adjoint.

\paragraph{Related work.}
The categorical treatment of adjoints and orthogonality includes dagger compact categories \cite{selinger2007}, dagger kernels \cite{heunen-jacobs2010}, and the reconstruction of dagger categories from anti-involutions and Hermitian fixed points \cite{stehouwer-steinebrunner2024}.
These works provide a general setting for daggers, in which the adjoint for inner products on Hilbert spaces is the motivating example; our classification goes in the converse direction and identifies the geometric data carried by a dagger on the categories studied here.
It works over any field or commutative ring, the involution of the scalars is part of the data rather than fixed in advance, additivity of the dagger is derived rather than assumed (\cref{prop:dagger-additive}), and no positivity is imposed, so that indefinite forms, like the Minkowski one, appear on the same footing as inner products.
For connections, the characterisation of the functors on the thin path groupoid that arise as parallel transport is due to Schreiber and Waldorf \cite{schreiber-waldorf2009}, while the modular class of Lie algebroids and Lie--Rinehart algebras provides a related setting for the divergence terms \cite{evens-lu-weinstein1999,huebschmann1999}.
The classical instance of our daggers on enveloping algebras is the transposition of the Weyl algebra, with \(x^\dagger = x\) and \(\partial^\dagger = -\partial\), which underlies the passage between left and right modules in the theory of \(D\)-modules; \cref{thm:anti-involutions-D} classifies the filtration-preserving anti-involutions of the enveloping algebra of any Lie--Rinehart algebra, by a real structure and a divergence.
The enveloping algebra of a Lie--Rinehart algebra also has a duality theory of its own \cite{lambre-le-meur2018}.
The jet comonad and its coalgebras appeared in Marvan's description of PDEs \cite{marvan1987}; synthetic and linear-logic treatments were developed later \cite{khavkine-schreiber2017,wallbridge2020}.
Flood, Mantegazza and Winther construct noncommutative holonomic, semiholonomic and nonholonomic jet functors, and show that linear differential operators of finite order form a category \cite[Proposition~10.3 and Corollary~10.6]{flood-mantegazza-winther2025}.
Our use of a locally graded jet comonad retains the order of each differential operator, and the dagger classification then relates this order structure to formal adjoints.

\paragraph{Contributions.}
\label{par:contributions}
The main results are the following.
\begin{itemize}
    \item Every dagger on the category of finite-dimensional vector spaces over a field \(K\) is the adjoint for a unique involution of \(K\) and a unique family of unimodular Hermitian forms, one on every space and standard on \(K\) (\cref{thm:daggers-are-forms}).
    Additivity of the dagger follows from the biproducts (\cref{prop:dagger-additive}), and over \(\R\) every dagger is linear and comes from scalar products (\cref{cor:real-daggers}).
    The same holds for finitely generated projective modules over a commutative ring, and hence, through the Serre--Swan correspondence, for vector bundles, where a dagger is a choice of bundle metrics (\cref{thm:daggers-are-metrics,cor:daggers-on-bundles}).
    \item We characterise when the standard functor from matrices is a dagger equivalence, and when two daggers on \(\fdVect_K\) are related by a linear dagger equivalence (\cref{prop:mat-dagger-equivalence,prop:dagger-equivalent-structures}).
    \item Orthogonal sums, orthogonal splittings, orthogonal squares and orthogonal complements have dagger-categorical universal properties (\cref{sec:orthogonality}).
    An orthogonal complement is an isometric kernel (\cref{prop:complement-is-kernel}), complements exist for Hermitian modules over any commutative ring with involution (\cref{prop:complements-exist-modules}), and the normal bundle of a submanifold is the orthogonal complement of its tangent bundle (\cref{ex:normal-bundle}).
    \item Hermitian modules form a dagger compact category, with the conjugate module as dual (\cref{thm:herm-dagger-compact}).
    \item A dagger on a Lie group differentiates to a dagger on its Lie algebra, and the unitary subgroup differentiates to the skew-adjoint elements (\cref{thm:differentiate-dagger}).
    For a metric bundle, the fibrewise adjoint on \(\GL(E)\) differentiates to the dagger on \(\gl(E)\) (\cref{thm:differentiate-groupoid-dagger}).
    A connection, flat or curved, is metric if and only if its parallel transport is a dagger functor (\cref{thm:metric-iff-dagger-transport}), and a splitting of the Atiyah sequence commutes with the daggers if and only if the connection is metric (\cref{cor:atiyah}).
    \item Filtration-preserving daggers on the enveloping algebra \(D\) of a Lie--Rinehart algebra correspond to real structures with a real divergence (\cref{thm:anti-involutions-D}); for the tangent algebroid of a manifold with its standard real structure, every real divergence differs from the divergence of a positive density by a real closed \(1\)-form (\cref{prop:divergences-differ}).
    \item Daggers on the locally graded category \(\DiffOp\) of differential operators correspond to metric-divergence structures, and act on operators as the formal adjoint (\cref{thm:formal-adjoints}).
    Green's formula expresses the difference between an operator and its adjoint as a divergence, so an invariant integral turns the dagger into an adjoint pairing on sections (\cref{prop:green-formula,cor:invariant-integral-adjoint}).
    The metric and the divergence are independent data (\cref{prop:reweighting,ex:circle-divergence}).
\end{itemize}

\paragraph{Reading guide.}
\Cref{sec:dagger-categories} recalls dagger categories.
\Cref{part:point} (\cref{sec:matrix-algebras,sec:lie-groups,sec:vector-spaces,sec:orthogonality}) needs only linear algebra and basic Lie theory.
\Cref{part:bundles} (\cref{sec:vector-bundles,sec:orthogonal-bundles,sec:lie-groupoids,sec:lie-algebroids}) uses manifolds, vector bundles and connections \cite{lee2013, lee2018}.
\Cref{part:diff} (\cref{sec:enveloping-algebra,sec:jets,sec:formal-adjoints}) is algebraic, with a manifold with a density as the running example, and the polynomial line \(K[x]\) as a second example in which everything can be computed by hand.
\Cref{sec:outlook} concludes the paper and discusses further directions.
Routine verifications are collected in \cref{sec:appendix}.

\paragraph{Acknowledgements.}
We would like to thank Matt Di Meglio, Tobias Fritz, Chris Heunen and Franco Rota for fruitful discussions and pointers to the literature.

\paragraph{Origin.}
Part of this work was carried out by the first author for his Master's dissertation at the University of Oxford.

\section{Dagger categories}
\label{sec:dagger-categories}

This section recalls the definition of a dagger category and the names of the morphisms that are compatible with the dagger \cite{selinger2007, karvonen2018, heunen-vicary2019}.
Dagger categories with one object, such as a matrix ring with the transpose, are first used in \cref{sec:matrix-algebras} and are introduced there.
For a category \(\cat C\), we write \(\cat C^\op\) for its opposite category, with the same objects and the morphisms reversed, and \(\Id\) for the identity functor.

\begin{definition}
    \label{def:dagger-category}
    A \emph{dagger} on a category \(\cat C\) assigns to each morphism \(f \colon X \to Y\) a morphism \(f^\dagger \colon Y \to X\) such that
    \[
        (g \circ f)^\dagger = f^\dagger \circ g^\dagger, \qquad
        \id_X^\dagger = \id_X, \qquad
        f^{\dagger\dagger} = f.
    \]
    Equivalently, it is an identity-on-objects functor \(\dagger \colon \cat C^\op \to \cat C\) with \(\dagger \circ \dagger^\op = \Id\).
    A \emph{dagger category} is a category with a dagger.
\end{definition}

\begin{example}
    \label{ex:dagger-categories}
    The category \(\Mat_\R\) has the natural numbers as objects and real \(m \times n\) matrices as morphisms \(n \to m\).
    The transpose \(M^\dagger = M^\top\) is a dagger on it, since \((M N)^\top = N^\top M^\top\).
    On the category \(\Mat_\C\) of complex matrices, the conjugate transpose \(M^\dagger = \conj M^\top\) is a dagger, where \(\conj M\) is the matrix of complex conjugate entries.
\end{example}

\begin{example}
    \label{ex:hilb-dagger}
    The category \(\Hilb\) of Hilbert spaces and bounded linear maps has the adjoint as its dagger.
    For \(\varphi \colon V \to W\), the adjoint \(\varphi^\dagger \colon W \to V\) is the unique bounded linear map with \(\langle \varphi(v), w \rangle = \langle v, \varphi^\dagger(w) \rangle\) for all \(v \in V\) and \(w \in W\), where \(\langle -, - \rangle\) denotes the inner products.
\end{example}

\begin{example}
    \label{ex:rel-dagger}
    The category \(\Rel\) has sets as objects and relations \(R \subseteq X \times Y\) as morphisms \(X \to Y\), composed in the usual way.
    Its dagger is the opposite relation \(R^\dagger = \{(y, x) \mid (x, y) \in R\}\).
\end{example}

\begin{example}
    \label{ex:krn-dagger}
    The category \(\Krn\) has standard Borel probability spaces as objects and measure-preserving Markov kernels, up to almost sure equality, as morphisms \cite{dahlqvist-et-al2018, ensarguet-perrone2024, perrone-vanbelle2024}.
    Its dagger is Bayesian inversion: for a kernel \(f \colon X \to Y\), the kernel \(f^\dagger \colon Y \to X\) is characterised by the fact that \(f\) and \(f^\dagger\) give the same joint distribution on \(X \times Y\).
\end{example}

\begin{definition}
    \label{def:dagger-vocabulary}
    In a dagger category,
    \begin{itemize}
        \item a morphism \(u \colon X \to Y\) is \emph{unitary} if it is invertible with \(u^{-1} = u^\dagger\);
        \item a morphism \(i \colon X \to Y\) is an \emph{isometry} if \(i^\dagger \circ i = \id_X\);
        \item a morphism \(p \colon X \to Y\) is a \emph{coisometry} if \(p \circ p^\dagger = \id_Y\);
        \item a morphism \(s \colon X \to X\) is \emph{self-adjoint} if \(s^\dagger = s\);
        \item a morphism \(e \colon X \to X\) is a \emph{projector} if \(e \circ e = e = e^\dagger\).
    \end{itemize}
    If the hom-sets are abelian groups, \(s \colon X \to X\) is \emph{skew-adjoint} if \(s^\dagger = -s\).
\end{definition}

A morphism is unitary if and only if it is both an isometry and a coisometry, and \(f\) is an isometry if and only if \(f^\dagger\) is a coisometry.
For every isometry \(i\), the composite \(i \circ i^\dagger\) is a projector.
In matrices, each of these notions picks out, among all morphisms of a given kind, the ones compatible with the Euclidean geometry of \(\R^n\).

\begin{example}
    \label{ex:mat-vocabulary}
    In \(\Mat_\R\) with the transpose, the unitaries are the orthogonal matrices, and the isometries are the matrices whose columns are orthonormal for the dot product.
    The coisometries are the matrices with orthonormal rows, the self-adjoint morphisms are the symmetric matrices, and the skew-adjoint ones are the antisymmetric matrices.
    The projectors are the matrices of orthogonal projections onto subspaces of \(\R^n\).
    The dagger singles out the orthogonal projections among the idempotent matrices.
    For example, the idempotent
    \(
        \bigl(\begin{smallmatrix} 1 & 1 \\ 0 & 0 \end{smallmatrix}\bigr)
    \)
    projects onto the first coordinate axis along the line spanned by \((1, -1)\), and it is not a projector, because this line is not orthogonal to the axis.
\end{example}

\begin{example}
    \label{ex:unitary-dagger}
    Every groupoid has the \emph{unitary dagger} \(g^\dagger = g^{-1}\).
    For it, every morphism is unitary.
\end{example}

\begin{definition}
    \label{def:dagger-functor}
    A functor \(F\) between dagger categories is a \emph{dagger functor} if \(F(f^\dagger) = F(f)^\dagger\).
    A \emph{dagger equivalence} is a pair of dagger functors \(F \colon \cat C \to \cat D\) and \(G \colon \cat D \to \cat C\) with unitary natural isomorphisms \(\Id \cong G F\) and \(F G \cong \Id\).
\end{definition}

A functor is \emph{unitarily essentially surjective} if every object of the codomain is unitarily isomorphic to an object in the image.

\begin{lemma}
    \label{lem:dagger-equivalence-criterion}
    A dagger functor is part of a dagger equivalence if and only if it is fully faithful and unitarily essentially surjective.
\end{lemma}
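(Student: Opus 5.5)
We prove the two directions separately; the forward one is routine and the converse needs a construction.

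\emph{Forward direction.} Suppose \(F \colon \cat C \to \cat D\) is part of a dagger equivalence \((F, G)\). Then \(F\) is in particular an equivalence of categories, hence fully faithful, as in ordinary category theory. For an object \(Y\) of \(\cat D\), the component \(FG(Y) \cong Y\) of the given unitary natural isomorphism is a unitary from an object in the image of \(F\) to \(Y\). So \(F\) is unitarily essentially surjective.

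\emph{Converse: construction of \(G\).} Assume \(F\) is a fully faithful, unitarily essentially surjective dagger functor. We build \(G\) as in the usual proof that fully faithful essentially surjective functors are equivalences, but choosing unitaries throughout. For each \(Y\) in \(\cat D\), choose (by the axiom of choice) an object \(GY\) of \(\cat C\) and a unitary \(\varepsilon_Y \colon FGY \to Y\). For \(g \colon Y \to Y'\), let \(G g\) be the unique morphism with
\[
F(Gg) = \varepsilon_{Y'}^{-1} \circ g \circ \varepsilon_Y ;
\]
it exists and is unique by full faithfulness. Functoriality of \(G\) follows from uniqueness, and naturality of \(\varepsilon\) holds by construction. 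The step that needs the dagger is checking that \(G\) is a dagger functor. Using \(\varepsilon^{-1} = \varepsilon^\dagger\) and that \(F\) is a dagger functor,
\[
F\bigl((Gg)^\dagger\bigr) = F(Gg)^\dagger = (\varepsilon_{Y'}^\dagger g\, \varepsilon_Y)^\dagger = \varepsilon_Y^\dagger\, g^\dagger\, \varepsilon_{Y'} = F(G(g^\dagger)).
\]
Faithfulness then gives \((Gg)^\dagger = G(g^\dagger)\).

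\emph{Converse: the unit.} For \(X\) in \(\cat C\), let \(\eta_X \colon X \to GFX\) be the unique morphism with \(F(\eta_X) = \varepsilon_{FX}^{-1}\). It is invertible because \(F\) is fully faithful, and it is natural by faithfulness together with naturality of \(\varepsilon\). It remains to show \(\eta_X\) is unitary, and this is the main point of the argument. Since \(F\) is a dagger functor, \(F(\eta_X^\dagger \eta_X) = \varepsilon_{FX}\, \varepsilon_{FX}^{-1} = \id\), and similarly \(F(\eta_X \eta_X^\dagger) = \id\). Faithfulness gives \(\eta_X^\dagger = \eta_X^{-1}\). This yields the required unitary natural isomorphisms \(\Id \cong GF\) and \(FG \cong \Id\), so \((F, G)\) is a dagger equivalence.
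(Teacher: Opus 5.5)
Your proof is correct and follows essentially the same route as the paper: choose unitaries \(FGY \to Y\), define \(G\) on morphisms by full faithfulness, show \(G\) preserves the dagger using faithfulness, and get unitarity of the unit because \(F\) sends it to a unitary and, as a faithful dagger functor, reflects unitarity. The only difference is that you spell out the unitarity of \(\eta_X\) explicitly, whereas the paper states it in one line.
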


\begin{proof}
    A dagger equivalence is an equivalence, so its functors are fully faithful, and the unitary isomorphisms \(F(G(Y)) \cong Y\) show that \(F\) is unitarily essentially surjective.
    Conversely, let \(F \colon \cat C \to \cat D\) be a fully faithful and unitarily essentially surjective dagger functor.
    For each object \(Y\) of \(\cat D\), choose an object \(G(Y)\) of \(\cat C\) and a unitary \(u_Y \colon F(G(Y)) \to Y\).
    For \(g \colon Y \to Y'\), let \(G(g)\) be the unique morphism with \(F(G(g)) = u_{Y'}^\dagger \circ g \circ u_Y\), which exists because \(F\) is full and is unique because \(F\) is faithful.
    As in the usual construction of an inverse equivalence, this makes \(G\) a functor and \(u\) a natural isomorphism \(F G \cong \Id\).
    Since \(F\) is faithful and preserves the dagger, \(F(G(g)^\dagger) = (u_{Y'}^\dagger \circ g \circ u_Y)^\dagger = F(G(g^\dagger))\) gives \(G(g^\dagger) = G(g)^\dagger\), so \(G\) is a dagger functor.
    The counit \(u\) is unitary by construction, and the unit \(X \to G(F(X))\) is the unique morphism whose image under \(F\) is the unitary \(u_{F(X)}^\dagger\), so it is unitary as well, because \(F\) is a faithful dagger functor.
\end{proof}

In \cref{sec:vector-spaces} these notions become the familiar ones for Hermitian forms: unitaries preserve a form, isometries embed one Hermitian space in another, and projectors are orthogonal projections.

\part{Linear algebra and Lie theory}
\label{part:point}

\section{Matrix rings and forms}
\label{sec:matrix-algebras}

We begin with a single finite-dimensional vector space \(V\) over a field \(K\).
The algebraic structure is the ring \(\End_K(V)\), and the involution is an anti-involution of this ring: an additive map that reverses products and squares to the identity.
We show that such a map induces an involution \(a \mapsto \conj a\) of the scalars (\cref{lem:involution-on-scalars}), and that it is the adjoint of a unimodular form on \(V\) that is determined up to a non-zero scalar factor (\cref{thm:matrix-involutions}).
The unitary elements of the group \(\GL(V)\) of invertible linear maps are then the isometries of this form, which gives the orthogonal, Lorentz, symplectic and unitary groups.
The conjugate transpose of complex matrices shows that the involution of the scalars can be complex conjugation; for this reason the forms defined here are sesquilinear, and they are used unchanged in \cref{sec:vector-spaces,sec:vector-bundles,sec:formal-adjoints}.

\subsection{Dagger rings and dagger groups}
\label{sec:dagger-rings}

\begin{definition}
    \label{def:dagger-ring}
    A \emph{dagger ring} is a ring \(R\), not necessarily commutative, with an \emph{anti-involution}: an additive map \(r \mapsto r^\dagger\) such that
    \[
        (r s)^\dagger = s^\dagger r^\dagger, \qquad 1^\dagger = 1, \qquad r^{\dagger\dagger} = r.
    \]
\end{definition}

\begin{definition}
    \label{def:dagger-group}
    A \emph{dagger group} is a group \(G\) with a map \(g \mapsto g^\dagger\) such that \((g h)^\dagger = h^\dagger g^\dagger\), \(1^\dagger = 1\) and \(g^{\dagger\dagger} = g\); equivalently, a dagger category with one object in which every morphism is invertible.
    Its \emph{unitary subgroup} is
    \[
        \U(G) = \{u \in G \mid u^{-1} = u^\dagger\}.
    \]
\end{definition}

The units of a dagger ring form a dagger group.
The set \(\U(G)\) is a subgroup, because \((u v)^\dagger = v^\dagger u^\dagger = v^{-1} u^{-1}\) for unitary \(u\) and \(v\).

\begin{example}
    \label{ex:dagger-rings}
    The ring \(M(n, \R)\) of real \(n \times n\) matrices with the transpose is a dagger ring.
    Its group of units \(\GL(n, \R)\) is a dagger group whose unitary subgroup is the orthogonal group \(\O(n)\).
    Likewise, the ring \(M(n, \C)\) of complex matrices with the conjugate transpose is a dagger ring, and the unitary subgroup of \(\GL(n, \C)\) is the unitary group \(\U(n)\).
\end{example}

\begin{example}
    \label{ex:commutative-dagger-rings}
    In a commutative ring, \((a b)^\dagger = b^\dagger a^\dagger = a^\dagger b^\dagger\), so a commutative dagger ring is a commutative ring with an involutive automorphism, which we write \(a \mapsto \conj a\).
    Examples are \(\C\) with complex conjugation, and any commutative ring with the identity map.
\end{example}

For the unitary dagger \(g^\dagger = g^{-1}\) of \cref{ex:unitary-dagger}, every element is unitary and \(\U(G) = G\).

\subsection{Sesquilinear forms}
\label{sec:anti-involutions-matrix}

Let \(K\) be a field and \(V\) a non-zero finite-dimensional vector space over \(K\).

\begin{lemma}[{\cite[\S2]{knus-et-al1998}}]
    \label{lem:involution-on-scalars}
    Every anti-involution of the ring \(\End_K(V)\) preserves the centre \(K \cdot \id_V\), and so restricts to an involutive automorphism \(a \mapsto \conj a\) of \(K\).
    It is semilinear for this automorphism: \((a \varphi)^\dagger = \conj a\, \varphi^\dagger\).
\end{lemma}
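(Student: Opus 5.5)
The plan is to use only that the anti-involution \(\varphi \mapsto \varphi^\dagger\) is an additive anti-automorphism of the ring \(R = \End_K(V)\). First we show that it maps the centre \(Z(R)\) to itself. If \(z\) is central, then for every \(\psi \in R\) we have \(\psi^\dagger z^\dagger = (z\psi)^\dagger = (\psi z)^\dagger = z^\dagger \psi^\dagger\). Since \(\dagger\) is a bijection (it is its own inverse), \(\psi^\dagger\) ranges over all of \(R\), so \(z^\dagger\) commutes with everything and is central. We then invoke the standard fact that the centre of \(\End_K(V)\), for \(V\) non-zero and finite-dimensional, is \(K \cdot \id_V\). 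To see this, a central element commutes with all matrix units \(E_{ij}\) in a chosen basis, which forces it to be diagonal with equal entries. The map \(a \mapsto a\,\id_V\) is injective because \(V \neq 0\).

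Next we define \(\conj a\) by \((a\,\id_V)^\dagger = \conj a\,\id_V\), which is well defined by injectivity. Additivity of \(\dagger\) gives \(\conj{a+b} = \conj a + \conj b\). The identity \(1^\dagger = 1\) gives \(\conj 1 = 1\). Reversal of products, together with commutativity of \(K \cdot \id_V\), gives
\[
(ab\,\id_V)^\dagger = (b\,\id_V)^\dagger (a\,\id_V)^\dagger = \conj b\,\conj a\,\id_V = \conj a\,\conj b\,\id_V,
\]
so \(\conj{ab} = \conj a\,\conj b\). Finally \(\dagger\dagger = \id\) gives \(\conj{\conj a} = a\). Hence \(a \mapsto \conj a\) is a ring endomorphism of \(K\) that is its own inverse, that is, an involutive automorphism.

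For semilinearity we write \(a\varphi = (a\,\id_V)\circ\varphi\) and apply the anti-multiplicativity:
\[
(a\varphi)^\dagger = \varphi^\dagger (a\,\id_V)^\dagger = \varphi^\dagger\,\conj a\,\id_V = \conj a\,\varphi^\dagger,
\]
where the last step uses that scalars are central.

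No step is a real obstacle. The only input beyond the definitions is the description of the centre of a matrix ring, which is elementary. The one point to state explicitly is that surjectivity of \(\dagger\) is needed to conclude that \(z^\dagger\) is central, and this holds because \(\dagger\) is an involution.
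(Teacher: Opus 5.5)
Your proposal is correct and follows essentially the same route as the paper's proof. You show that the dagger maps central elements to central elements using that it is its own inverse, identify the centre with \(K \cdot \id_V\), read off the involutive automorphism of \(K\), and obtain semilinearity from \(a \varphi = (a \id_V) \circ \varphi\). The only additions are the details you make explicit: the matrix-unit argument for the centre and the injectivity of \(a \mapsto a \id_V\) when \(V \neq 0\).
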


\begin{proof}
    Let \(z\) be central.
    For every \(\varphi\), we have \(z^\dagger \circ \varphi^\dagger = (\varphi \circ z)^\dagger = (z \circ \varphi)^\dagger = \varphi^\dagger \circ z^\dagger\), and since every endomorphism is of the form \(\varphi^\dagger\), the element \(z^\dagger\) is central as well.
    The centre of \(\End_K(V)\) consists of the multiples \(a \id_V\) of the identity, and we identify it with \(K\).
    On this commutative ring the dagger is additive, reverses and hence preserves products, fixes \(1\) and squares to the identity, so it is an involutive automorphism of \(K\).
    Finally, \(a \varphi = (a \id_V) \circ \varphi\), so \((a \varphi)^\dagger = \varphi^\dagger \circ (\conj a \id_V) = \conj a\, \varphi^\dagger\).
\end{proof}

The hypothesis \(V \neq 0\) makes \(K \cdot \id_V\) a copy of \(K\); the ring \(\End_K(0)\) is zero.
The lemma is observed in \cite{knus-et-al1998}, where an involution likewise means an arbitrary additive anti-automorphism of order two.
It says that a dagger on \(\End_K(V)\) makes \(K\) a commutative dagger ring, and the next definitions are relative to that involution.

For the rest of this section, fix an involutive automorphism \(a \mapsto \conj a\) of \(K\), possibly the identity.
A map \(\varphi\) between vector spaces is \emph{semilinear} if it is additive and \(\varphi(a v) = \conj a\, \varphi(v)\).
We write \(V^\vee\) for the dual space of linear forms on \(V\), and \(\varphi^\vee \colon W^\vee \to V^\vee\), \(\alpha \mapsto \alpha \circ \varphi\), for the dual of a linear map \(\varphi \colon V \to W\).
In a basis \(e_1, \dots, e_n\) of \(V\), the coordinates of a vector \(v\) are written \(v^i\), so that \(v = \sum_i v^i e_i\).

\begin{definition}
    \label{def:sesquilinear}
    A \emph{sesquilinear form} on \(V\) is a biadditive map \(\langle -, - \rangle \colon V \times V \to K\) that is linear in the first variable and semilinear in the second:
    \[
        \langle a v, w \rangle = a \langle v, w \rangle, \qquad
        \langle v, a w \rangle = \conj a \langle v, w \rangle.
    \]
    It is \emph{Hermitian} if \(\langle w, v \rangle = \conj{\langle v, w \rangle}\) and \emph{skew-Hermitian} if \(\langle w, v \rangle = -\conj{\langle v, w \rangle}\).
    When the involution of \(K\) is the identity, these are called \emph{symmetric} and \emph{antisymmetric}.
    The form is \emph{unimodular} if the semilinear map
    \[
        \flat \colon V \to V^\vee, \qquad w \mapsto w^\flat = \langle -, w \rangle
    \]
    is bijective; its inverse is written \(\sharp\).
\end{definition}

The map \(\flat\) is semilinear because \((a w)^\flat = \conj a\, w^\flat\).
Since \(V\) and \(V^\vee\) have the same finite dimension, the form is unimodular as soon as \(\flat\) is injective, that is, as soon as \(\langle v, w \rangle = 0\) for all \(v\) implies \(w = 0\).\footnote{%
    Over a field, unimodular therefore means the same as non-degenerate.
    Over the rings of functions in \cref{sec:vector-bundles} the two notions differ, and bijectivity of \(\flat\) is the one we need.
    For example, the form \(\langle a, b \rangle = 2 a b\) on \(\Z\) is non-degenerate, but \(\flat\) is not surjective.%
}
A \emph{scalar product} is a unimodular symmetric form.
An \emph{inner product} is a positive definite scalar product over \(\R\), or a positive definite Hermitian form over \(\C\) with complex conjugation.

\begin{definition}
    \label{def:hermitian-space}
    A \emph{Hermitian space} is a finite-dimensional vector space with a unimodular Hermitian form.
    The space \(K\) carries the \emph{standard form} \(\langle a, b \rangle = a \conj b\).
\end{definition}

\begin{definition}
    \label{def:adjoint}
    Let \(V\) carry a unimodular sesquilinear form.
    The \emph{adjoint} of \(\varphi \in \End_K(V)\) is the unique \(\varphi^\dagger \in \End_K(V)\) with
    \[
        \langle \varphi(v), w \rangle = \langle v, \varphi^\dagger(w) \rangle \qquad \text{for all } v, w \in V.
    \]
\end{definition}

For each \(w\), the linear form \(\langle \varphi(-), w \rangle\) equals \(u^\flat\) for a unique \(u \in V\), and \(\varphi^\dagger(w) = u\) defines the adjoint; in terms of \(\flat\) and \(\sharp\), \(\varphi^\dagger = \sharp \circ \varphi^\vee \circ \flat\), which is linear as a composite of two semilinear maps and a linear one.
For \(\R^n\) with the dot product the adjoint of a matrix is its transpose, and for \(\C^n\) with \(\langle v, w \rangle = \sum_i v^i \conj{w^i}\) it is the conjugate transpose.

\subsection{Involutions and forms}
\label{sec:involutions-are-forms}

\begin{theorem}[{\cite[Theorem~4.2]{knus-et-al1998}}]
    \label{thm:matrix-involutions}
    Let \(\dagger\) be an anti-involution of the ring \(\End_K(V)\), inducing the involution \(a \mapsto \conj a\) of \(K\) (\cref{lem:involution-on-scalars}).
    Then \(\dagger\) is the adjoint of a unimodular sesquilinear form on \(V\), and the forms with adjoint \(\dagger\) are exactly the non-zero scalar multiples of any one of them.
    \begin{enumerate}
        \item If the involution of \(K\) is the identity, these forms are all symmetric or all antisymmetric.
        \item Otherwise, some of them are Hermitian, and the Hermitian ones are unique up to a non-zero factor \(c\) with \(\conj c = c\).
    \end{enumerate}
    Conversely, the adjoint of every unimodular Hermitian or skew-Hermitian form is an anti-involution of \(\End_K(V)\) inducing the given involution of \(K\).
\end{theorem}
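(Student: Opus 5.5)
We proceed in the standard way, comparing \(\dagger\) with the adjoint of one fixed reference form and using the Skolem--Noether theorem to identify the difference. The steps are: construct one form, prove uniqueness up to scale, refine to the symmetric or Hermitian cases, and finally check the converse.

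\textbf{Existence.} Choose a basis \(e_1, \dots, e_n\) of \(V\) and let \(\beta\) be the sesquilinear form \(\beta(v, w) = \sum_i v^i \conj{w^i}\). It is unimodular and Hermitian. Write \(\varphi^*\) for its adjoint, the conjugate transpose in this basis. By the converse part (checked last, without circularity), \(\varphi \mapsto \varphi^*\) is an anti-involution inducing the same involution of \(K\) as \(\dagger\).

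Then \(\varphi \mapsto (\varphi^{*})^{\dagger}\) is additive. It preserves products, since \((\varphi\psi)^{*\dagger} = (\psi^*\varphi^*)^\dagger = \varphi^{*\dagger}\psi^{*\dagger}\). It is \(K\)-linear, because both operations are semilinear for the same involution by \cref{lem:involution-on-scalars}, and it is bijective. So it is a \(K\)-algebra automorphism of \(\End_K(V)\). By Skolem--Noether, or directly because every automorphism of a matrix algebra over a field is inner, there is \(g \in \GL(V)\) with \(\varphi^{*\dagger} = g\varphi g^{-1}\). Substituting \(\varphi^*\) for \(\varphi\) gives \(\varphi^\dagger = g\varphi^* g^{-1}\).

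Now set \(\langle v, w\rangle = \beta(v, h w)\) for a suitable invertible \(h\). This form is unimodular, and its adjoint is \(h^{-1}\varphi^* h\). Taking \(h = g^{-1}\) therefore gives adjoint \(\dagger\).

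\textbf{Uniqueness up to scale.} Suppose two unimodular forms \(\langle-,-\rangle_1\) and \(\langle-,-\rangle_2\) both have adjoint \(\dagger\). Write \(\langle v, w\rangle_2 = \langle v, h w\rangle_1\) with \(h = \sharp_1 \circ \flat_2\), which is linear and invertible. Equality of the two adjoints gives \(h^{-1}\varphi^\dagger h = \varphi^\dagger\) for all \(\varphi\), so \(h\) is central, \(h = c\,\id_V\) with \(c \neq 0\). The form \(\langle v, c w\rangle_1\) equals \(\conj c\langle v,w\rangle_1\), so the forms are exactly the non-zero multiples. Conversely, every non-zero multiple clearly has the same adjoint.

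\textbf{Symmetry type.} Given a form with adjoint \(\dagger\), define \(\langle v, w\rangle' = \conj{\langle w, v\rangle}\). It is again sesquilinear and unimodular. Using \(\varphi^{\dagger\dagger} = \varphi\), its adjoint is also \(\dagger\):
\[
\langle \varphi v, w\rangle' = \conj{\langle w, \varphi v\rangle} = \conj{\langle \varphi^\dagger w, v\rangle} = \langle v, \varphi^\dagger w\rangle'.
\]
Hence \(\langle-,-\rangle' = \lambda\langle-,-\rangle\) for some \(\lambda\). Applying the construction twice returns the original form, so \(\lambda\conj\lambda = 1\).

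If the involution of \(K\) is the identity, then \(\lambda^2 = 1\), so \(\lambda = \pm 1\). Since \(\lambda\) scales consistently over all multiples, every form is symmetric or every form is antisymmetric.

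Otherwise, Hilbert 90 for the quadratic extension \(K/K_0\), with \(K_0\) the fixed field, gives \(\lambda = \conj\mu/\mu\) for some \(\mu\). Concretely, take \(\mu = 1 + \conj\lambda\) when this is non-zero, and otherwise \(\mu = t - \conj t\) for any \(t\) with \(\conj t \neq t\). Rescaling the form by a suitable \(\mu\) then makes it Hermitian. For the second clause, \(\conj c\) times a Hermitian form is Hermitian exactly when \(c\) is fixed by the involution.

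\textbf{Converse.} The map \(\varphi \mapsto \sharp\circ\varphi^\vee\circ\flat\) is additive and reverses products. Sesquilinearity gives \((a\,\id)^\dagger = \conj a\,\id\), so it induces the given involution of \(K\). Involutivity follows from the same swap computation as above: the (skew-)Hermitian identity gives \(\langle \varphi^{\dagger\dagger} v, w\rangle = \langle \varphi v, w\rangle\).

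The step most likely to cause trouble is existence. It needs the automorphism \(\varphi \mapsto \varphi^{*\dagger}\) to be \(K\)-linear, so that Skolem--Noether applies; this works precisely because \cref{lem:involution-on-scalars} makes both daggers semilinear for the same involution. Had we instead compared \(\dagger\) with the transpose for the trivial involution, the composite would only be semilinear and Skolem--Noether would not apply directly.
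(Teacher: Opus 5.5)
Your proposal is correct and is essentially the paper's proof: you compare with the adjoint of the standard form \(\sum_i v^i \conj{w^i}\) via Skolem--Noether, get uniqueness because \(\sharp_1 \circ \flat_2\) is central, use the swapped form \(\conj{\langle w, v \rangle}\) to obtain \(\lambda \conj\lambda = 1\), and use the same explicit rescalings as the paper's \(1 + \varepsilon\) and \(b - \conj b\). One small correction: since \(a \langle -, - \rangle\) is Hermitian exactly when \(a = \conj a\, \lambda\), the factor to rescale by is \(\conj\mu\) (for instance \(1 + \lambda\)), not \(\mu = 1 + \conj\lambda\) itself.
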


\begin{proof}[Sketch of proof]
    We follow \cite[Chapter~I]{knus-et-al1998}, where the theorem is stated for central simple algebras.
    For existence, choose a basis of \(V\) and let \(\langle -, - \rangle_0\) be the form \(\langle v, w \rangle_0 = \sum_i v^i \conj{w^i}\).
    It is unimodular and Hermitian, and its adjoint \(\varphi \mapsto \varphi^{\dagger_0}\) is the conjugate transpose in this basis.
    The map \(\varphi \mapsto (\varphi^{\dagger_0})^\dagger\) is a ring automorphism of \(\End_K(V)\), and it is \(K\)-linear because both anti-involutions are semilinear for the same involution of \(K\).
    By the Skolem--Noether theorem it is inner, so there is \(\psi \in \GL(V)\) with \((\varphi^{\dagger_0})^\dagger = \psi \circ \varphi \circ \psi^{-1}\) for all \(\varphi\), and hence \(\varphi^\dagger = \psi \circ \varphi^{\dagger_0} \circ \psi^{-1}\), since \(\dagger_0\) is involutive (see the last paragraph of the proof).
    The form \(\langle v, w \rangle = \langle v, \psi^{-1}(w) \rangle_0\) is sesquilinear and unimodular, and
    \[
        \langle \varphi(v), w \rangle
        = \langle v, \varphi^{\dagger_0}(\psi^{-1}(w)) \rangle_0
        = \langle v, \psi^{-1}(\varphi^\dagger(w)) \rangle_0
        = \langle v, \varphi^\dagger(w) \rangle,
    \]
    so its adjoint is \(\dagger\).

    Every non-zero multiple of a form has the same adjoint.
    Conversely, let \(\llangle -, - \rrangle\) be another unimodular sesquilinear form with adjoint \(\dagger\).
    Write \(\flat'\) for the flat map of \(\llangle -, - \rrangle\).
    The map \(\chi = \sharp \circ \flat'\) is linear and satisfies \(\llangle v, w \rrangle = \langle v, \chi(w) \rangle\).
    Comparing \(\llangle \varphi(v), w \rrangle = \langle v, \varphi^\dagger(\chi(w)) \rangle\) with \(\llangle v, \varphi^\dagger(w) \rrangle = \langle v, \chi(\varphi^\dagger(w)) \rangle\) shows that \(\chi\) commutes with every \(\varphi^\dagger\), hence with every endomorphism, so \(\chi = c \id_V\) and \(\llangle -, - \rrangle = \conj c \langle -, - \rangle\).

    For the symmetry type, the form \(\conj{\langle w, v \rangle}\) is unimodular and sesquilinear in \((v, w)\), and its adjoint is again \(\dagger\) because \(\varphi^{\dagger\dagger} = \varphi\).
    Therefore \(\conj{\langle w, v \rangle} = \varepsilon \langle v, w \rangle\) for some \(\varepsilon \neq 0\), and applying this twice gives \(\varepsilon \conj \varepsilon = 1\).
    If the involution of \(K\) is the identity, then \(\varepsilon = \pm 1\), and every multiple \(a \langle -, - \rangle\) satisfies the same identity with the same \(\varepsilon\).
    Otherwise, \(a \langle -, - \rangle\) is Hermitian if and only if \(a = \conj a \varepsilon\).
    This holds for \(a = 1 + \varepsilon\) when \(\varepsilon \neq -1\), and for \(a = b - \conj b\), with \(b\) any scalar such that \(\conj b \neq b\), when \(\varepsilon = -1\).
    If \(\langle -, - \rangle\) is Hermitian, then \(\varepsilon = 1\), and \(a \langle -, - \rangle\) is Hermitian if and only if \(\conj a = a\).

    For the converse, let the form be Hermitian or skew-Hermitian, with \(\langle w, v \rangle = \pm \conj{\langle v, w \rangle}\).
    The adjoint is additive, reverses composition and fixes the identity, by the uniqueness in \cref{def:adjoint}.
    Using the symmetry twice, \(\langle \varphi^\dagger(v), w \rangle = \pm \conj{\langle w, \varphi^\dagger(v) \rangle} = \pm \conj{\langle \varphi(w), v \rangle} = \langle v, \varphi(w) \rangle\), so \(\varphi^{\dagger\dagger} = \varphi\).
    Finally, \(\langle a v, w \rangle = \langle v, \conj a w \rangle\) shows that \((a \id_V)^\dagger = \conj a \id_V\).
\end{proof}

\begin{remark}
    \label{rem:hermitian-vs-skew}
    When the involution of \(K\) is non-trivial, the scalar factor in \cref{thm:matrix-involutions} can change a Hermitian form into a skew-Hermitian one.
    If a non-zero scalar \(b\) satisfies \(\conj b = -b\), for example \(b = a - \conj a\) with \(\conj a \neq a\), then multiplying a Hermitian form by \(b\) gives a skew-Hermitian form with the same adjoint.
    Over \(\C\) with complex conjugation, \(b = i\) turns the standard Hermitian form into a skew-Hermitian one.
    When the involution is the identity, the dichotomy of \cref{thm:matrix-involutions} is an invariant of the anti-involution, except in characteristic \(2\), where symmetric and antisymmetric forms coincide; there the finer invariant is whether \(\langle v, v \rangle = 0\) for all \(v\) \cite[Theorem~4.2]{knus-et-al1998}.
\end{remark}

Both ambiguities, the scalar factor and (outside characteristic \(2\)) the antisymmetric forms, disappear in \cref{sec:vector-spaces}, where the dagger is defined on all spaces at once (\cref{rem:scale-fixed}).

A map that preserves a form also preserves its non-zero multiples, so the maps preserving the form depend only on the anti-involution.
For a unimodular Hermitian or skew-Hermitian form on \(V\), an invertible \(\varphi\) is unitary for the adjoint if and only if
\[
    \langle \varphi(v), \varphi(w) \rangle = \langle v, w \rangle \qquad \text{for all } v, w \in V,
\]
because \(\langle \varphi(v), \varphi(w) \rangle = \langle v, \varphi^\dagger(\varphi(w)) \rangle\) and the form is unimodular.
The unitary subgroup of \(\GL(V)\) is therefore the group of automorphisms that preserve the form, which we call its \emph{isometry group}; these automorphisms are the invertible isometries in the sense of \cref{def:dagger-vocabulary} (see \cref{sec:hermitian-spaces}).

\begin{example}
    \label{ex:dot-product-form}
    The dot product on \(\R^n\) is a scalar product, its adjoint is the transpose, and its unitary group is \(\O(n)\).
\end{example}

\begin{example}
    \label{ex:minkowski-form}
    The Minkowski space \(\R^{1,3}\) (\cref{rem:signature}) carries the scalar product
    \[
        \langle v, w \rangle = -v^0 w^0 + v^1 w^1 + v^2 w^2 + v^3 w^3,
    \]
    which is indefinite.
    The adjoint of a matrix is its transpose with the signs of the off-diagonal entries in the first row and the first column changed, and the unitary group is the Lorentz group \(\O(1,3)\).
\end{example}

\begin{example}
    \label{ex:symplectic-form}
    On \(\R^{2n}\), the form
    \[
        \langle v, w \rangle = \sum_{i = 1}^n \bigl(v^i w^{n+i} - v^{n+i} w^i\bigr)
    \]
    is antisymmetric and unimodular, and its unitary group is the symplectic group \(\Sp(2n, \R)\).
\end{example}

\begin{example}
    \label{ex:standard-hermitian-form}
    With complex conjugation, the standard Hermitian form on \(\C^n\),
    \[
        \langle v, w \rangle = \sum_i v^i \conj{w^i},
    \]
    has the conjugate transpose as its adjoint and \(\U(n)\) as its unitary group.
\end{example}

\begin{remark}
    \label{rem:signature}
    Let \(\langle -, - \rangle\) be a scalar product on a real vector space \(V\).
    In a basis \(e_1, \dots, e_n\), the form is determined by its components, since \(\langle v, w \rangle = \sum_{i, j} v^i w^j \langle e_i, e_j \rangle\).
    The components depend on the basis: if \(e'_j = \sum_k a^k_j e_k\), then \(\langle e'_i, e'_j \rangle = \sum_{k, l} a^k_i a^l_j \langle e_k, e_l \rangle\).
    The form itself is a geometric object, independent of any basis.
    There is always a basis with \(\langle e_i, e_j \rangle = \pm \delta_{ij}\), where \(\delta_{ij}\) is \(1\) for \(i = j\) and \(0\) otherwise, and by Sylvester's law of inertia the number \(p\) of minus signs and the number \(q\) of plus signs are the same for every such basis.
    The pair \((p, q)\) is the \emph{signature} of the form.
    We write \(\R^{p,q}\) for \(\R^{p+q}\) with the scalar product
    \[
        \langle v, w \rangle = -\sum_{i = 1}^{p} v^i w^i + \sum_{i = p+1}^{p+q} v^i w^i,
    \]
    which has signature \((p, q)\), and \(\O(p, q)\) for its isometry group.
    The Minkowski space \(\R^{1,3}\) has signature \((1, 3)\); there the coordinates are numbered \(v^0, \dots, v^3\).
    Two scalar products on \(V\) are \emph{isometric} if a linear automorphism of \(V\) carries one to the other, and this happens if and only if they have the same signature.
    The same holds for Hermitian forms on complex spaces with complex conjugation, since \(\langle v, v \rangle\) is then real.
\end{remark}

\section{Lie groups and Lie algebras}
\label{sec:lie-groups}

Lie groups add smooth structure to the picture.
In this section \(K\) is \(\R\) or \(\C\), and Hermitian spaces are taken over \(K\).
In \cref{sec:matrix-algebras} the unitary group of an anti-involution of \(\End_K(V)\) appeared as a subgroup of \(\GL(V)\); here the group itself is the structure, a Lie group \(G\) with a smooth anti-involution, and we differentiate it.
The derivative is an anti-involution of the Lie algebra \(\mathfrak g\) (\cref{thm:differentiate-dagger}), and the condition \(u^\dagger = u^{-1}\) on group elements becomes the condition \(X^\dagger = -X\) on Lie algebra elements.
For \(\GL(V)\) with the adjoint of a form, this says that the Lie algebra of the isometry group consists of the skew-adjoint maps.
Representations behave in the same way: a representation of a connected Lie group preserves a form if and only if its derivative acts by skew-adjoint maps (\cref{prop:unitary-rep-differentiates}).
In \cref{sec:lie-groupoids,sec:lie-algebroids} the same statement for groupoids and algebroids says that parallel transport preserves a metric if and only if the connection is metric.
Throughout, Lie algebras are real, and \(\gl(n, \C)\) is regarded as a real Lie algebra.

\begin{definition}
    \label{def:dagger-lie-group}
    A \emph{dagger Lie group} is a Lie group \(G\) with a smooth map \(g \mapsto g^\dagger\) making it a dagger group.
\end{definition}

The unitary subgroup \(\U(G)\) is the fixed-point set of the involutive automorphism \(g \mapsto (g^\dagger)^{-1}\), hence a closed Lie subgroup.

\begin{example}
    \label{ex:dagger-lie-groups}
    The group \(\GL(n, \R)\) with the transpose is a dagger Lie group with unitary subgroup \(\O(n)\), and \(\GL(n, \C)\) with the conjugate transpose is one with unitary subgroup \(\U(n)\).
    More generally, let \(V\) be a Hermitian space (\cref{def:hermitian-space}).
    The adjoint \(\varphi \mapsto \varphi^\dagger\) is a real-linear map on \(\End_K(V)\), so its restriction to the open subset \(\GL(V)\) is smooth, and \(\GL(V)\) is a dagger Lie group whose unitary subgroup is the isometry group of the form.
    The Minkowski space gives the Lorentz group \(\O(1,3)\) in this way.
\end{example}

The unitary dagger \(g^\dagger = g^{-1}\) of \cref{ex:unitary-dagger} makes every Lie group a dagger Lie group, since inversion is smooth.

\begin{definition}
    \label{def:dagger-lie-algebra}
    A \emph{dagger Lie algebra} is a real Lie algebra \(\mathfrak g\) with a linear map \(X \mapsto X^\dagger\) such that
    \[
        [X, Y]^\dagger = [Y^\dagger, X^\dagger] \qquad \text{and} \qquad X^{\dagger\dagger} = X.
    \]
    Its \emph{unitary subalgebra} is \(\u(\mathfrak g) = \{X \in \mathfrak g \mid X^\dagger = -X\}\), the skew-adjoint elements.
    The \emph{unitary dagger} on \(\mathfrak g\) is \(X^\dagger = -X\).
\end{definition}

The first condition says that \(\dagger\) is a Lie algebra homomorphism from \(\mathfrak g\) with the opposite bracket \([X, Y]^\op = [Y, X]\) to \(\mathfrak g\).
The unitary subalgebra is a Lie subalgebra, since for skew-adjoint \(X\) and \(Y\) we have \([X, Y]^\dagger = [-Y, -X] = -[X, Y]\).

\begin{example}
    \label{ex:gl-transpose}
    The algebra \(\gl(n, \R)\) of real \(n \times n\) matrices, with the commutator bracket \([X, Y] = X Y - Y X\), is a dagger Lie algebra with the transpose, because \([X, Y]^\top = [Y^\top, X^\top]\).
    Its unitary subalgebra is the algebra \(\o(n)\) of antisymmetric matrices.
\end{example}

\begin{remark}
    \label{rem:cartan-involution}
    Putting \(\theta(X) = -X^\dagger\) identifies daggers on \(\mathfrak g\) with involutive Lie algebra automorphisms \(\theta\).
    For \(\gl(n, \R)\) with the transpose, \(\theta(X) = -X^\top\) is the Cartan involution.
    Its fixed points form the Lie algebra \(\o(n)\) of the maximal compact subgroup \(\O(n)\) of \(\GL(n, \R)\).
    The map \(\theta\) preserves brackets, since \(\theta([X, Y]) = -[Y^\dagger, X^\dagger] = [\theta(X), \theta(Y)]\), and conversely \(X^\dagger = -\theta(X)\) is a dagger for every involutive automorphism \(\theta\).
    The unitary subalgebra \(\u(\mathfrak g)\) is the fixed-point set of \(\theta\).
    For Cartan involutions and the symmetric spaces they define, see also \cite{borel1998}.
    For \(\gl(n, \C)\) with the conjugate transpose, \(\theta(X) = -\conj X^\top\) is again the Cartan involution; see \cite[\S VII.2, Example~2]{knapp2002}, where \(\theta(X) = -\conj X^\top\) is the Cartan involution of any connected closed group of matrices that is closed under conjugate transpose, such as the identity components of \(\GL(n, \R)\) and \(\GL(n, \C)\).
    This \(\theta\) reappears as the real structure in \cref{sec:enveloping-algebra}.
\end{remark}

For a Lie group \(G\) we write \(\Lie(G)\) for its Lie algebra, identified with the tangent space at the identity, and \(\exp\) for its exponential map.
To differentiate a dagger, we compare \(\Lie(G)\) with the Lie algebra of the opposite group \(G^\op\), which is the same manifold with the product \(g \cdot h = h g\).

\begin{lemma}
    \label{lem:opposite-lie-algebra}
    Identify the Lie algebras of \(G\) and \(G^\op\) with the tangent space at the identity.
    Then the bracket of \(\Lie(G^\op)\) is the opposite \([X, Y]^\op = [Y, X]\) of the bracket of \(\Lie(G)\).
\end{lemma}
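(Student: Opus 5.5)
We prove the lemma by describing the bracket of \(\Lie(G)\) through the group commutator and then comparing the commutators of \(G\) and \(G^\op\).

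Recall that for \(X, Y \in T_1 G\) the bracket is the mixed second derivative at \(s = t = 0\) of \(\exp(sX)\exp(tY)\exp(-sX)\exp(-tY)\). Equivalently, \([X, Y] = \frac{d}{dt}\big|_{t=0} \Ad_{\exp(tX)} Y\), where \(\Ad_g\) is the differential at \(1\) of conjugation \(h \mapsto g h g^{-1}\). We take the \(\Ad\) description as the one to work with.

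The first step is to identify the exponential map of \(G^\op\). A one-parameter subgroup \(t \mapsto \gamma(t)\) of \(G\) is also a one-parameter subgroup of \(G^\op\), because \(\gamma(s)\gamma(t) = \gamma(s+t) = \gamma(t)\gamma(s)\). The two groups have the same tangent space at \(1\), since the identity element is the same, so \(\exp_{G^\op} = \exp_G\).

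Next we compare conjugations. In \(G^\op\), conjugation by \(g\) sends \(h\) to \(g \cdot h \cdot g^{-1} = g^{-1} h g\). This is conjugation by \(g^{-1}\) in \(G\), so \(\Ad^{\op}_g = \Ad_{g^{-1}}\). Differentiating at \(g = \exp(tX)\) gives
\[
[X, Y]_{\Lie(G^\op)} = \frac{d}{dt}\Big|_{t=0} \Ad_{\exp(-tX)} Y = -[X, Y] = [Y, X].
\]

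An alternative route is through invariant vector fields. The map \(L^\op_g(h) = g \cdot h = h g\) is right multiplication \(R_g\) in \(G\), so the left-invariant fields of \(G^\op\) are the right-invariant fields of \(G\). We would then use the standard fact that the bracket of right-invariant fields is minus that of the left-invariant ones; the cleanest way to see this is via inversion \(\iota\), which is a diffeomorphism \(G \to G^\op\) with \(d\iota_1 = -\id\).

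The only real care needed is in the sign conventions. Under the identification of both Lie algebras with \(T_1 G\), inversion \(\iota\) is a group isomorphism \(G \to G^\op\), so it induces a Lie algebra isomorphism \(\Lie(G) \to \Lie(G^\op)\). Since \(d\iota_1 = -\id\), we get \([X, Y]_{\op} = -[-X, -Y] \cdot (-1)\), which simplifies to \(-[X, Y]\). This cross-check confirms the result, and either argument finishes the proof.
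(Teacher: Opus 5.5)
Your main argument is correct, but it is not the paper's route. The paper's proof is essentially your final cross-check. It observes that inversion \(\iota(g) = g^{-1}\) is a Lie group isomorphism \(G \to G^\op\) whose derivative at the identity is \(-\id\). By functoriality of \(\Lie\) this derivative is a Lie algebra isomorphism, so \([-X,-Y]' = -[X,Y]\), that is, \([X,Y]' = [Y,X]\).

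Your route instead combines three facts: the characterisation \([X,Y] = \frac{d}{dt}\Big|_{t=0}\Ad_{\exp(tX)}Y\), which holds in every Lie group and so in \(G^\op\) with its own \(\Ad\) and \(\exp\); the equality \(\exp_{G^\op} = \exp_G\); and the observation \(\Ad^{\op}_g = \Ad_{g^{-1}}\). This is a legitimate proof. It needs slightly more input than the paper's: the \(\Ad\)/\(\exp\) description of the bracket, and the identification of the exponential maps, which the paper establishes separately in the proof of \cref{thm:differentiate-dagger}. The paper needs only functoriality and \(d\iota_1 = -\id\). You can also drop \(\exp\) from your version: since \(\Ad^{\op} = \Ad \circ \iota\), differentiating at \(1\) gives \(\ad^{\op} = -\ad\) directly.

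One correction: the displayed line in your cross-check is garbled. If the inner bracket is that of \(G\), then \(-[-X,-Y]\cdot(-1)\) equals \([X,Y]\), not \(-[X,Y]\). The correct bookkeeping is \([X,Y]_{\op} = [-X,-Y]_{\op} = d\iota_1([X,Y]) = -[X,Y]\), which is exactly the paper's computation.
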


\begin{proof}
    Inversion \(g \mapsto g^{-1}\) is a smooth map \(G \to G^\op\), and it is a group isomorphism, because \((g h)^{-1} = h^{-1} g^{-1}\) is the product of \(g^{-1}\) and \(h^{-1}\) in \(G^\op\).
    Its derivative at the identity is \(X \mapsto -X\), and the derivative of a Lie group isomorphism is a Lie algebra isomorphism.
    Writing \([-, -]'\) for the bracket of \(\Lie(G^\op)\), this gives \([-X, -Y]' = -[X, Y]\), that is, \([X, Y]' = [Y, X]\).
\end{proof}

\begin{theorem}
    \label{thm:differentiate-dagger}
    Let \(G\) be a dagger Lie group with Lie algebra \(\mathfrak g\).
    The derivative of \(\dagger\) at the identity is a dagger on \(\mathfrak g\), and the Lie algebra of \(\U(G)\) is \(\u(\mathfrak g)\).
\end{theorem}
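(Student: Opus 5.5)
The plan is to view the dagger as a Lie group isomorphism \(\dagger \colon G^\op \to G\) and differentiate it. By \cref{def:dagger-group}, \((g h)^\dagger = h^\dagger g^\dagger\) says exactly that \(\dagger\) is a group homomorphism from \(G^\op\) to \(G\), and it is smooth by \cref{def:dagger-lie-group}. Since \(G^\op\) and \(G\) share the same identity and the same tangent space there, its derivative \(X \mapsto X^\dagger\) is a real-linear map \(\mathfrak g \to \mathfrak g\). The derivative of a Lie group homomorphism is a Lie algebra homomorphism, so \(\dagger\) maps \(\Lie(G^\op)\) to \(\Lie(G)\) preserving brackets. By \cref{lem:opposite-lie-algebra} the bracket on \(\Lie(G^\op)\) is \([X,Y]^\op = [Y,X]\). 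Hence \([Y,X]^\dagger = [Y^\dagger, X^\dagger]\), which after renaming the variables is the first axiom of \cref{def:dagger-lie-algebra}. For involutivity, differentiate \(g^{\dagger\dagger} = g\) with the chain rule: the derivative of \(\dagger \circ \dagger = \id_G\) at \(1\) is the square of the derivative, so \(X^{\dagger\dagger} = X\).

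For the unitary part, I would follow the remark before \cref{ex:dagger-lie-groups}. Put \(\theta(g) = (g^\dagger)^{-1}\). This is a smooth involutive automorphism of \(G\), being the composite of the anti-automorphism \(\dagger\) with inversion. Its fixed-point set is \(\U(G)\), which is therefore a closed Lie subgroup. Since inversion has derivative \(-\id\), the derivative of \(\theta\) at \(1\) is \(d\theta(X) = -X^\dagger\).

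The remaining claim is that the Lie algebra of the fixed-point subgroup of \(\theta\) is the fixed-point set of \(d\theta\), which is \(\{X \mid X^\dagger = -X\} = \u(\mathfrak g)\). I would use the characterisation of the Lie algebra of a closed subgroup \(H\) as \(\{X \mid \exp(tX) \in H \text{ for all } t\}\), together with naturality of \(\exp\): \(\theta(\exp tX) = \exp(t\, d\theta(X))\). If \(d\theta(X) = X\), then every \(\exp(tX)\) is fixed by \(\theta\), so \(X \in \Lie(\U(G))\). Conversely, if every \(\exp(tX)\) is fixed, then differentiating \(\theta(\exp tX) = \exp tX\) at \(t = 0\) gives \(d\theta(X) = X\).

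The step needing most care is the identification of the derivative of \(\dagger\) with a map \(\Lie(G^\op) \to \Lie(G)\). It relies on \cref{lem:opposite-lie-algebra}, and one must check that the bracket convention there matches the direction of the homomorphism. If a sign error appears, the alternative is to compose with inversion. Then \(\theta\) is an honest automorphism of \(G\), so \(d\theta\) preserves brackets, and the dagger axiom follows directly as in \cref{rem:cartan-involution}, since \(X^\dagger = -d\theta(X)\). This route in fact gives both parts of the theorem at once, and I would present it as a cross-check.
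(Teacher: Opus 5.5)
Your argument is the paper's own: you differentiate the homomorphism \(\dagger \colon G^\op \to G\) using \cref{lem:opposite-lie-algebra}, and you identify \(\Lie(\U(G))\) through the automorphism \(\theta(g) = (g^\dagger)^{-1}\) and the one-parameter-subgroup description of the Lie algebra of a closed subgroup. Applying naturality of \(\exp\) to \(\theta\) directly even saves the paper's remark that \(G\) and \(G^\op\) have the same exponential map.

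One slip needs fixing. The homomorphism property reads \(d\dagger([X,Y]^\op) = [X^\dagger, Y^\dagger]\), that is, \([Y,X]^\dagger = [X^\dagger, Y^\dagger]\). You wrote \([Y,X]^\dagger = [Y^\dagger, X^\dagger]\), and renaming that gives the wrong sign. With this correction, or with your \(\theta\) cross-check, the proof is complete.
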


\begin{proof}
    The equation \((g h)^\dagger = h^\dagger g^\dagger\) says that \(\dagger\) is a Lie group homomorphism \(G^\op \to G\), and it fixes the identity.
    Its derivative \(X \mapsto X^\dagger\) at the identity is therefore a linear map, and a Lie algebra homomorphism from \(\Lie(G^\op)\) to \(\mathfrak g\).
    By \cref{lem:opposite-lie-algebra}, this means \([Y, X]^\dagger = [X^\dagger, Y^\dagger]\), which is the first condition of \cref{def:dagger-lie-algebra}.
    Since \(1^\dagger = 1\), applying the chain rule to \(g^{\dagger\dagger} = g\) at the identity gives \(X^{\dagger\dagger} = X\).

    For the second claim, note that \(G\) and \(G^\op\) have the same one-parameter subgroups \(t \mapsto \exp(t X)\), since these subgroups are commutative, so the two groups have the same exponential map.
    Lie group homomorphisms commute with the exponential map \cite[(1.82)]{knapp2002}, and applied to \(\dagger \colon G^\op \to G\) this says that the square
    \[
        \begin{tikzcd}
            \mathfrak g \rar["\dagger"] \dar["\exp"'] & \mathfrak g \dar["\exp"] \\
            G \rar["\dagger"] & G
        \end{tikzcd}
    \]
    commutes: \(\exp(X)^\dagger = \exp(X^\dagger)\).
    Consider the automorphism \(\Theta(g) = (g^\dagger)^{-1}\) of \(G\), whose fixed points form \(\U(G)\).
    The square gives
    \[
        \Theta(\exp(t X)) = \exp(t X^\dagger)^{-1} = \exp(-t X^\dagger) \qquad \text{for all } t \in \R.
    \]
    The Lie algebra of the closed subgroup \(\U(G)\) consists of the \(X\) with \(\exp(t X) \in \U(G)\) for all \(t\); see \cite[\S I.10]{knapp2002}.
    If \(X^\dagger = -X\), the displayed equation shows that \(\exp(t X)\) is fixed by \(\Theta\) for all \(t\).
    Conversely, if \(\exp(t X) = \Theta(\exp(t X)) = \exp(-t X^\dagger)\) for all \(t\), then differentiating at \(t = 0\) gives \(X = -X^\dagger\).
\end{proof}

Differentiating the last equation at \(t = 0\) shows that the involution \(\theta\) of \cref{rem:cartan-involution} is the derivative of the automorphism \(\Theta\).

\begin{example}
    \label{ex:gl-adjoint}
    Let \(V\) be a Hermitian space.
    The dagger of \(\GL(V)\) differentiates to the adjoint on \(\gl(V)\), and \(\u(\gl(V))\) consists of the skew-adjoint maps.
    The first claim holds because the dagger of \(\GL(V)\) is the restriction of the real-linear map \(\varphi \mapsto \varphi^\dagger\) on \(\End_K(V) = \gl(V)\), which is its own derivative.
    A map \(X\) is skew-adjoint if and only if \(\langle X(v), w \rangle = -\langle v, X(w) \rangle\) for all \(v, w\), and by \cref{thm:differentiate-dagger} these are the \(X\) for which every \(\exp(t X)\) preserves the form.
    For \(\R^n\) with the dot product, this gives the antisymmetric matrices \(\o(n)\), and for \(\C^n\) with the standard Hermitian form, the skew-Hermitian matrices \(\u(n)\).
    The same holds in every signature: for the space \(\R^{p,q}\) of \cref{rem:signature}, the Lie algebra \(\o(p,q)\) of the isometry group \(\O(p,q)\) consists of the maps that are skew-adjoint for the indefinite form.
\end{example}

\begin{definition}
    \label{def:unitary-representation}
    Let \(V\) be a Hermitian space, and give \(\GL(V)\) and \(\gl(V)\) the adjoint.
    A representation \(\pi \colon G \to \GL(V)\) of a group is \emph{unitary} if it is a dagger functor (\cref{def:dagger-functor}) from \(G\) with the unitary dagger to \(\GL(V)\), both regarded as one-object categories, that is, \(\pi(g^{-1}) = \pi(g)^\dagger\).
    A representation \(\pi \colon \mathfrak g \to \gl(V)\) of a Lie algebra is \emph{unitary} if \(\pi(X^\dagger) = \pi(X)^\dagger\) for the unitary dagger on \(\mathfrak g\), that is, \(\pi(X)^\dagger = -\pi(X)\).
\end{definition}

A group representation is unitary if and only if every \(\pi(g)\) is unitary, that is, preserves the form of \(V\).
A Lie algebra representation is unitary if and only if it takes values in the skew-adjoint maps \(\u(\gl(V))\).
A representation of a Lie group is a Lie group homomorphism \(\pi \colon G \to \GL(V)\), in particular smooth.

\begin{proposition}
    \label{prop:unitary-rep-differentiates}
    If \(\pi \colon G \to \GL(V)\) is a unitary representation of a Lie group, then its derivative \(\mathfrak g \to \gl(V)\) is unitary.
    If \(G\) is connected, the converse holds.
\end{proposition}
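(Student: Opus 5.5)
The plan is to reduce both directions to \cref{thm:differentiate-dagger} by working inside the dagger Lie group \(\GL(V)\) with the adjoint (\cref{ex:dagger-lie-groups}). Its unitary subgroup \(\U(\GL(V))\) is the isometry group of the form, and by \cref{ex:gl-adjoint} its Lie algebra is \(\u(\gl(V))\), the skew-adjoint maps. As noted after \cref{def:unitary-representation}, \(\pi\) is unitary exactly when \(\pi(G) \subseteq \U(\GL(V))\), and \(d\pi\) is unitary exactly when \(d\pi(\mathfrak g) \subseteq \u(\gl(V))\). So the proposition says that \(\pi\) lands in the closed subgroup \(\U(\GL(V))\) if and only if \(d\pi\) lands in its Lie algebra.

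For the forward direction, I would use that \(\pi\) commutes with exponentials: \(\pi(\exp(tX)) = \exp(t\, d\pi(X))\) for all \(t \in \R\). If \(\pi\) is unitary, the left side lies in \(\U(\GL(V))\) for every \(t\). By the characterisation of the Lie algebra of a closed subgroup used in the proof of \cref{thm:differentiate-dagger}, this gives \(d\pi(X) \in \u(\gl(V))\), so \(d\pi(X)^\dagger = -d\pi(X)\).

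An alternative for this direction is to differentiate the identity \(\pi(g^{-1}) = \pi(g)^\dagger\) at the identity. The right side is the composite of \(\pi\) with the real-linear map \(\dagger\), which is its own derivative. Inversion differentiates to \(X \mapsto -X\). Together these give \(-d\pi(X) = d\pi(X)^\dagger\) directly.

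For the converse, assume \(G\) is connected and \(d\pi(X)\) is skew-adjoint for all \(X\). Then \(\exp(t\, d\pi(X)) \in \U(\GL(V))\) by \cref{thm:differentiate-dagger}, so \(\pi(\exp X) \in \U(\GL(V))\) for every \(X \in \mathfrak g\). The key step is that a connected Lie group is generated by \(\exp(\mathfrak g)\), since this set contains a neighbourhood of the identity, and a connected group is generated by any such neighbourhood. Because \(\U(\GL(V))\) is a subgroup and \(\pi\) is a homomorphism, \(\pi(G)\) is generated by elements of \(\U(\GL(V))\) and so lies inside it. Hence every \(\pi(g)\) is unitary, that is, \(\pi(g^{-1}) = \pi(g)^{-1} = \pi(g)^\dagger\).

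I expect the only real subtlety to be this generation step, which is where connectedness is needed. The reflection \(\operatorname{diag}(-1,1)\) acting on \(\R^2\) with a non-invariant form shows the converse fails for disconnected groups: its Lie algebra is zero, so \(d\pi\) is trivially unitary, while \(\pi\) need not be. The generation fact itself is standard and I would cite it rather than reprove it.
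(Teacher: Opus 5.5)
Your proposal is correct and follows the paper's proof. The forward direction uses that \(\pi\) lands in the closed subgroup \(\U(\GL(V))\), whose Lie algebra is \(\u(\gl(V))\) by \cref{thm:differentiate-dagger} and \cref{ex:gl-adjoint}. The converse uses \(\pi(\exp X) = \exp(d\pi(X))\) together with the fact that a connected Lie group is generated by \(\exp(\mathfrak g)\). Your alternative forward argument, differentiating \(\pi(g^{-1}) = \pi(g)^\dagger\) at the identity, is also valid and slightly more direct.
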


\begin{proof}
    Let \(d\pi\) be the derivative of \(\pi\), so that \(\pi(\exp(X)) = \exp(d\pi(X))\) for \(X \in \mathfrak g\).
    If \(\pi\) is unitary, it takes values in the closed subgroup \(\U(\GL(V))\), so its derivative takes values in the Lie algebra of \(\U(\GL(V))\), which is \(\u(\gl(V))\) by \cref{thm:differentiate-dagger} and \cref{ex:gl-adjoint}.

    Conversely, suppose that \(d\pi(X)\) is skew-adjoint for every \(X\).
    By \cref{thm:differentiate-dagger}, \(d\pi(X)\) lies in the Lie algebra of \(\U(\GL(V))\), so \(\pi(\exp(X)) = \exp(d\pi(X))\) is unitary.
    The unitaries form a subgroup, and when \(G\) is connected it is generated by the image of the exponential map, so \(\pi(g)\) is unitary for every \(g \in G\).
\end{proof}

\begin{example}
    \label{ex:disconnected-counterexample}
    Connectedness is needed for the converse of \cref{prop:unitary-rep-differentiates}: the discrete group \(\Z\) is a Lie group with zero Lie algebra, so the derivative of its representation on \(\R\) in which \(n\) acts by multiplication by \(2^n\) is unitary, but the representation is not unitary for the form \(\langle a, b \rangle = a b\).
\end{example}

\begin{example}
    \label{ex:unitary-representations}
    The cyclic group of order \(n\) acts on \(\R^2\) by rotations through multiples of \(2\pi / n\), and the dihedral group adds the reflections; these are the symmetries of a regular \(n\)-gon centred at the origin.
    Rotations and reflections preserve the dot product, so both representations are unitary.
\end{example}

\begin{example}
    \label{ex:permutation-representation}
    A group \(G\) acting on a finite set \(S\) acts on the space \(K^S\) with basis \((e_s)_{s \in S}\) by \(\pi(g)(e_s) = e_{g s}\).
    Permutations of the basis preserve the form with \(\langle e_s, e_t \rangle = \delta_{st}\), so this permutation representation is unitary.
\end{example}

\begin{example}
    \label{ex:tensor-representations}
    Unitary representations are closed under the usual constructions.
    If \(\pi\) and \(\pi'\) are unitary on \(V\) and \(V'\), then \(\pi(g) \otimes \pi'(g)\) is unitary for the form \(\langle v \otimes v', w \otimes w' \rangle = \langle v, w \rangle \langle v', w' \rangle\) on \(V \otimes V'\), and the induced representation on the exterior power \(\Lambda^k V\) is unitary for the form with \(\langle v_1 \wedge \dots \wedge v_k, w_1 \wedge \dots \wedge w_k \rangle = \det(\langle v_i, w_j \rangle)\).
\end{example}

\begin{example}
    \label{ex:dual-representation}
    If \(\pi\) is unitary on \(V\), the dual representation \(g \mapsto \pi(g^{-1})^\vee\) on \(V^\vee\) is unitary for the form \(\langle v^\flat, w^\flat \rangle = \langle w, v \rangle\), because \(\pi(g^{-1})^\vee(v^\flat) = \pi(g)(v)^\flat\).
\end{example}

\begin{example}
    \label{ex:lorentz-representation}
    The Lorentz group \(\O(1,3)\) acts unitarily on the Minkowski space, whose form is indefinite, and its Lie algebra acts by maps that are skew-adjoint for the Minkowski form.
\end{example}

\begin{example}
    \label{ex:quantum-symmetries}
    In quantum mechanics, the states of a system form a Hilbert space (\cref{ex:hilb-dagger}), and its symmetries act by unitary representations.
    For spin the space is finite-dimensional: the group \(\SU(2)\) acts on \(\C^2\) with the standard Hermitian form, describing the spin of an electron.
    By \cref{prop:unitary-rep-differentiates}, the Lie algebra acts by skew-adjoint maps.
    Physicists write these as \(i\) times self-adjoint maps, the observables, which is possible because a map \(X\) is skew-adjoint for a Hermitian form if and only if \(-i X\) is self-adjoint.
\end{example}

\section{Vector spaces and Hermitian forms}
\label{sec:vector-spaces}

In \cref{sec:matrix-algebras} we studied anti-involutions of the endomorphism ring of a single vector space, and each one turned out to be the adjoint of a form that is determined up to a scalar factor.
In this section we consider a dagger on the category of all finite-dimensional vector spaces, which assigns adjoints to linear maps between different spaces as well.
We show in \cref{thm:daggers-are-forms} that such a dagger is the adjoint for an involution of the scalars together with a unimodular Hermitian form on every space, and that these data are unique once the form on the one-dimensional space \(K\) is taken to be the standard one.
The proofs in this section are the model for those for vector bundles in \cref{sec:vector-bundles} and for differential operators in \cref{sec:formal-adjoints}.

Throughout, \(K\) is a field and \(\fdVect_K\) is the category of finite-dimensional vector spaces over \(K\) and linear maps.

\subsection{Hermitian spaces}
\label{sec:hermitian-spaces}

\begin{definition}
    \label{def:herm-K}
    Fix an involutive automorphism \(a \mapsto \conj a\) of \(K\).
    The category \(\Herm_K\) has Hermitian spaces (\cref{def:hermitian-space}) as objects and all linear maps as morphisms.
\end{definition}

The morphisms of \(\Herm_K\) are all linear maps; the forms are used to define their adjoints.

\begin{proposition}
    \label{prop:herm-dagger}
    Every linear map \(\varphi \colon V \to W\) between Hermitian spaces has a unique \emph{adjoint} \(\varphi^\dagger \colon W \to V\) with
    \[
        \langle \varphi(v), w \rangle = \langle v, \varphi^\dagger(w) \rangle \qquad \text{for all } v \in V,\ w \in W.
    \]
    Adjoints make \(\Herm_K\) a dagger category, and \((a \varphi)^\dagger = \conj a\, \varphi^\dagger\).
    In particular, \(\varphi \mapsto \varphi^\dagger\) is linear if and only if the involution of \(K\) is the identity.
\end{proposition}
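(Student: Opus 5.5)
We construct the adjoint through the musical isomorphisms, as after \cref{def:adjoint}.
For \(\varphi \colon V \to W\), set \(\varphi^\dagger = \sharp_V \circ \varphi^\vee \circ \flat_W\).
Here \(\flat_W \colon W \to W^\vee\) and \(\sharp_V \colon V^\vee \to V\) are semilinear bijections by unimodularity, and \(\varphi^\vee\) is linear.
The composite is therefore linear, since two semilinear maps compose to a linear one.
Unwinding the definitions, \(\langle v, \varphi^\dagger(w) \rangle = \flat_V(\varphi^\dagger(w))(v) = (\flat_W(w) \circ \varphi)(v) = \langle \varphi(v), w \rangle\), which is the required identity.
For uniqueness, suppose \(\psi\) also satisfies it.
Then \(\langle v, \psi(w) - \varphi^\dagger(w) \rangle = 0\) for all \(v\), so injectivity of \(\flat_V\) gives \(\psi = \varphi^\dagger\).

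The dagger axioms follow from uniqueness.
For composition, \(\langle \chi\varphi(v), u \rangle = \langle \varphi(v), \chi^\dagger(u) \rangle = \langle v, \varphi^\dagger\chi^\dagger(u) \rangle\), so \((\chi \circ \varphi)^\dagger = \varphi^\dagger \circ \chi^\dagger\).
Trivially \(\id^\dagger = \id\).
Involutivity is where Hermitian symmetry enters, exactly as in the converse part of \cref{thm:matrix-involutions}:
\[
    \langle \varphi^\dagger(w), v \rangle = \conj{\langle v, \varphi^\dagger(w) \rangle} = \conj{\langle \varphi(v), w \rangle} = \langle w, \varphi(v) \rangle .
\]
This exhibits \(\varphi\) as the adjoint of \(\varphi^\dagger\), so \(\varphi^{\dagger\dagger} = \varphi\).
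Semilinearity follows from \(\langle a\varphi(v), w \rangle = \langle \varphi(v), \conj a w \rangle = \langle v, \conj a\, \varphi^\dagger(w) \rangle\), giving \((a\varphi)^\dagger = \conj a\, \varphi^\dagger\).
Additivity of \(\dagger\) is similarly immediate from uniqueness.

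For the final claim, if the involution is the identity, then \(\dagger\) is additive and commutes with scalars, hence linear.
Conversely, suppose \(\dagger\) is linear.
We need a nonzero morphism, so take the standard Hermitian space \(K\) and \(\varphi = \id_K\).
Linearity gives \(a\,\id = (a\id)^\dagger = \conj a\,\id\), so \(\conj a = a\) for all \(a\).
A nonzero space always exists in \(\Herm_K\), namely \(K\) with its standard form, so the converse is not vacuous.

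The only mildly delicate step is keeping track of semilinearity in \(\sharp\circ\varphi^\vee\circ\flat\) to confirm that the adjoint is genuinely linear; everything else reduces to uniqueness from unimodularity.
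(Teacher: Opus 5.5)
Your proof is correct and follows essentially the same route as the paper. Both build the adjoint from unimodularity as \(\sharp \circ \varphi^\vee \circ \flat\). Both derive the dagger axioms from uniqueness of adjoints, with involutivity coming from Hermitian symmetry, and both compute \((a\varphi)^\dagger = \conj a\, \varphi^\dagger\). Your explicit treatment of additivity and of the converse in the final claim, via \((a \id_K)^\dagger = \conj a \id_K\) on the standard space \(K\), fills in steps the paper leaves implicit.
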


\begin{proof}
    For each \(w \in W\), the map \(\langle \varphi(-), w \rangle\) is a linear form on \(V\), and since the form of \(V\) is unimodular, it equals \(\langle -, v \rangle\) for exactly one \(v \in V\).
    Taking \(\varphi^\dagger(w)\) to be this \(v\) gives the unique map satisfying the equation above.
    In terms of the musical maps, \(\varphi^\dagger = \sharp \circ \varphi^\vee \circ \flat\), and this map is linear because \(\flat\) and \(\sharp\) are both semilinear.

    The dagger axioms follow from the uniqueness of adjoints.
    For a second map \(\psi \colon W \to U\), the equations
    \[
        \langle \psi(\varphi(v)), u \rangle = \langle \varphi(v), \psi^\dagger(u) \rangle = \langle v, \varphi^\dagger(\psi^\dagger(u)) \rangle
    \]
    show that \(\varphi^\dagger \circ \psi^\dagger\) is the adjoint of \(\psi \circ \varphi\), and the identity is clearly its own adjoint.
    To see that \(\varphi\) is the adjoint of \(\varphi^\dagger\), we use Hermitian symmetry twice:
    \[
        \langle \varphi^\dagger(w), v \rangle = \conj{\langle v, \varphi^\dagger(w) \rangle} = \conj{\langle \varphi(v), w \rangle} = \langle w, \varphi(v) \rangle.
    \]
    Finally, \(\langle a \varphi(v), w \rangle = a \langle v, \varphi^\dagger(w) \rangle = \langle v, \conj a\, \varphi^\dagger(w) \rangle\), so the adjoint of \(a \varphi\) is \(\conj a\, \varphi^\dagger\).
\end{proof}

On a single space \(V\), the adjoint restricts to the anti-involution of \(\End_K(V)\) that \cref{thm:matrix-involutions} associates with the form of \(V\).
A linear map \(i \colon U \to V\) is an isometry in \(\Herm_K\) if and only if \(\langle i(u), i(u') \rangle = \langle u, u' \rangle\) for all \(u, u' \in U\), because \(\langle i(u), i(u') \rangle = \langle u, i^\dagger(i(u')) \rangle\).
The unitaries are the invertible isometries, and the unitary subgroup of \(\GL(V)\) (\cref{def:dagger-group}) is the group of isometries of the form of \(V\).

\begin{example}
    \label{ex:hermitian-spaces}
    With the identity involution, \(\R^n\) with the dot product is a Hermitian space with unitary group \(\O(n)\), and so is the Minkowski space \(\R^{1,3}\) of \cref{ex:minkowski-form}, whose unitary group is the Lorentz group \(\O(1,3)\).
    With complex conjugation, \(\C^n\) with \(\langle v, w \rangle = \sum_i v^i \conj{w^i}\) is a Hermitian space with unitary group \(\U(n)\).
    For any involution, \(K\) with the standard form \(\langle a, b \rangle = a \conj b\) is a Hermitian space.
\end{example}

The form of a Hermitian space can be recovered from the dagger of \(\Herm_K\), once vectors are regarded as linear maps.

\begin{definition}
    \label{def:elements-as-maps}
    For a vector \(v\) in \(V\), write \(\hat v \colon K \to V\) for the linear map \(a \mapsto a v\).
\end{definition}

Every linear map \(K \to V\) is \(\hat v\) for a unique \(v\), namely its value at \(1\), and \(\varphi \circ \hat v = \widehat{\varphi(v)}\) for every linear map \(\varphi\).
Endomorphisms of \(K\) are multiplications by scalars, and we identify \(\End_K(K)\) with \(K\), so that \(\hat a = a\) for \(a \in K\).

\begin{lemma}
    \label{lem:flat-is-dagger}
    In \(\Herm_K\), the adjoint of \(\hat v\) is the linear form \(v^\flat = \langle -, v \rangle\).
    Consequently \(\langle u, v \rangle = \hat v^\dagger \circ \hat u\), and for every linear map \(\varphi \colon V \to W\) the square
    \[
        \begin{tikzcd}
            W \rar["\varphi^\dagger"] \dar["\flat"'] & V \dar["\flat"] \\
            W^\vee \rar["\varphi^\vee"] & V^\vee
        \end{tikzcd}
    \]
    commutes.
\end{lemma}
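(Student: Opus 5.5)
The plan is to unwind the definition of the adjoint in \(\Herm_K\) from \cref{prop:herm-dagger}, applied to the map \(\hat v \colon K \to V\), where \(K\) carries the standard form \(\langle a, b \rangle = a \conj b\). The adjoint \(\hat v^\dagger \colon V \to K\) is characterised by
\[
    \langle \hat v(a), u \rangle_V = \langle a, \hat v^\dagger(u) \rangle_K = a\, \conj{\hat v^\dagger(u)}
\]
for all \(a \in K\) and \(u \in V\). The left side is \(a \langle v, u \rangle\). Taking \(a = 1\) gives \(\hat v^\dagger(u) = \conj{\langle v, u \rangle}\), and Hermitian symmetry turns this into \(\langle u, v \rangle = v^\flat(u)\). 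By the uniqueness of adjoints it suffices to check that \(v^\flat\) satisfies the defining equation for every \(a\), which follows from linearity of the form in its first variable. The only point needing care is the placement of the conjugation in the standard form on \(K\), which is exactly why the standard form is \(a \conj b\) and not \(\conj a b\).

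For the second claim, I would compose: \(\hat v^\dagger \circ \hat u\) is the endomorphism of \(K\) sending \(1\) to \(v^\flat(u) = \langle u, v \rangle\). Under the identification \(\End_K(K) = K\), this is the scalar \(\langle u, v \rangle\).

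For the square, I would evaluate both composites on \(w \in W\) and compare them as linear forms on \(V\). Going down and then across gives \(\varphi^\vee(w^\flat) = \langle \varphi(-), w \rangle\). Going across and then down gives \(\varphi^\dagger(w)^\flat = \langle -, \varphi^\dagger(w) \rangle\). These agree by the defining equation of \(\varphi^\dagger\). Alternatively, this is the identity \(\varphi^\dagger = \sharp \circ \varphi^\vee \circ \flat\) already noted in the proof of \cref{prop:herm-dagger}. In dagger-categorical language, it also follows from the first part, since \((\varphi^\dagger \circ \hat w)^\dagger = \hat w^\dagger \circ \varphi\).

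There is no real obstacle. The only thing to watch is consistency of conventions: which variable is semilinear, and where the conjugate sits in the standard form on \(K\).
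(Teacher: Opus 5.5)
Your proposal is correct and follows essentially the same route as the paper. The paper also reads off \(\hat v^\dagger(w) = \langle w, v \rangle\) from the defining equation of the adjoint against the standard form on \(K\), then composes with \(\hat u\), and checks the square on each \(w \in W\) using the defining property of \(\varphi^\dagger\).
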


\begin{proof}
    For \(a \in K\) and \(w \in V\), the standard form of \(K\) gives
    \(
        \langle \hat v(a), w \rangle = a \langle v, w \rangle = \langle a, \langle w, v \rangle \rangle
    \),
    so \(\hat v^\dagger(w) = \langle w, v \rangle = v^\flat(w)\).
    Composing with \(\hat u\) gives \(\hat v^\dagger \circ \hat u = \langle u, v \rangle\).
    For the square, \(\varphi^\vee(w^\flat) = \langle \varphi(-), w \rangle = \langle -, \varphi^\dagger(w) \rangle = \varphi^\dagger(w)^\flat\).
\end{proof}

\begin{remark}
    \label{rem:dagger-vs-dual}
    A finite-dimensional vector space is isomorphic to its dual, but a natural isomorphism would have to compare the covariant identity functor with the contravariant duality functor \((-)^\vee\).
    The dagger of \(\Herm_K\) is contravariant as well, and the square of \cref{lem:flat-is-dagger} compares it with the duality functor.
    When the involution is the identity, \(\flat\) is linear, and the square says that \(\flat\) is a natural isomorphism from \(\dagger\) to \((-)^\vee\).
    For a non-trivial involution, \(\flat\) is semilinear.
    The same square explains why we let forms be semilinear in the second variable: \(v^\flat = \hat v^\dagger\) is a linear form in its argument, while \((\widehat{a v})^\dagger = (\hat v \circ a)^\dagger = \conj a\, \hat v^\dagger\) depends semilinearly on \(v\).
\end{remark}

\subsection{Classification of daggers}
\label{sec:daggers-are-forms}

We now start from an arbitrary dagger on \(\fdVect_K\) and recover an involution of \(K\) and forms on all spaces.
We first show that the dagger is additive.
For daggers compatible with the biproducts this is well known \cite{heunen-vicary2019}; the following proof assumes no compatibility between the dagger and the biproducts.

\begin{proposition}
    \label{prop:dagger-additive}
    Every dagger on \(\fdVect_K\) is additive: \((\varphi + \psi)^\dagger = \varphi^\dagger + \psi^\dagger\).
\end{proposition}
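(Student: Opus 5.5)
The approach is to express the sum \(\varphi + \psi\) through biproduct structure maps, \(\varphi + \psi = \nabla \circ (\varphi \oplus \psi) \circ \Delta\), and to show that the dagger carries the biproduct structure on \(V \oplus V\) to another biproduct structure on the same object. Sums can be computed from any biproduct structure, since addition in an additive category is determined by the category alone. Since \(\dagger\) is a contravariant, identity-on-objects isomorphism \(\fdVect_K^\op \to \fdVect_K\), it sends coproducts to products and products to coproducts. Concretely, for \(V \oplus W\) with injections \(i_V, i_W\) and projections \(p_V, p_W\), the family \((V \oplus W, i_V^\dagger, i_W^\dagger)\) is again a product and \((V \oplus W, p_V^\dagger, p_W^\dagger)\) is again a coproduct. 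This is purely formal, because \(\dagger\) is an isomorphism of categories between \(\fdVect_K^\op\) and \(\fdVect_K\).

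The key steps, in order, are as follows.
\begin{enumerate}
    \item Check that \(p_V^\dagger, p_W^\dagger, i_V^\dagger, i_W^\dagger\) satisfy the biproduct equations. From \(p_V i_V = \id\), \(p_V i_W = 0\) and \(i_V p_V + i_W p_W = \id\), the first two give \(i_V^\dagger p_V^\dagger = \id\) and \(i_W^\dagger p_V^\dagger = 0^\dagger\).
    \item Show that \(0^\dagger = 0\). A zero map factors through the zero object \(0\); its dagger factors through \(0\) as well, and is therefore zero.
    \item Identify the new structure as a biproduct. The third equation involves a sum, so instead I use that in \(\fdVect_K\) a product cone together with a coproduct cocone on the same object satisfying \(\pi_j \iota_k = \delta_{jk}\) is automatically a biproduct. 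Indeed, the map \(\iota_V \pi_V + \iota_W \pi_W\) agrees with \(\id\) after composing with each \(\pi_j\), hence equals \(\id\) by the universal property of the product.
    \item Apply this with \(\iota = p^\dagger\) and \(\pi = i^\dagger\): the dagger of a biproduct is a biproduct on the same object.
\end{enumerate}

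For the conclusion, take \(\varphi, \psi \colon V \to W\) and write \(\varphi + \psi = \nabla_W \circ (\varphi \oplus \psi) \circ \Delta_V\), where \(\Delta_V\) is determined by \(p_j \Delta_V = \id\) and \(\nabla_W\) by \(\nabla_W i_j = \id\). Applying \(\dagger\) gives
\[
    (\varphi + \psi)^\dagger = \Delta_V^\dagger \circ (\varphi \oplus \psi)^\dagger \circ \nabla_W^\dagger .
\]
Relative to the dual biproduct structure, \(\Delta_V^\dagger\) is the codiagonal and \(\nabla_W^\dagger\) is the diagonal. The middle map satisfies \(i_j^\dagger\,(\varphi\oplus\psi)^\dagger\,p_k^\dagger = (p_k (\varphi\oplus\psi) i_j)^\dagger\), which equals \(\delta_{jk}\) times \(\varphi^\dagger\) or \(\psi^\dagger\), using \(0^\dagger = 0\). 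So the middle map is \(\varphi^\dagger \oplus \psi^\dagger\) with respect to the dual biproduct structure. The composite is then the sum \(\varphi^\dagger + \psi^\dagger\) computed in that biproduct, and this coincides with the ordinary sum because addition of morphisms in a category with biproducts does not depend on the chosen biproduct.

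I expect the main obstacle to be step 3 together with this independence: making sure that no additivity of \(\dagger\) is smuggled in. The formula \(\iota_V\pi_V + \iota_W\pi_W = \id\) uses only the ambient addition, not the dagger, so the argument stays clean. I would present this as a short lemma stating that a product and a coproduct on one object with \(\pi_j\iota_k = \delta_{jk}\) form a biproduct, and that the induced addition is the usual one.
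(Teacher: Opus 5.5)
Your proposal is correct and is essentially the paper's argument. Both proofs show that zero maps are preserved and that the daggered structure maps again form a biproduct on the same object; your step 3 is the paper's identity \(\pr_1^\dagger \circ \inn_1^\dagger + \pr_2^\dagger \circ \inn_2^\dagger = \id\). You get the product property from \(\dagger\) being an isomorphism \(\fdVect_K^\op \cong \fdVect_K\), where the paper checks it by hand, and both then take the dagger of a sum factored through biproducts. The paper factors \(\varphi + \psi\) only through \(W \oplus W\), as \((\pr_1 + \pr_2) \circ (\inn_1 \circ \varphi + \inn_2 \circ \psi)\), and inserts that identity directly, so it needs neither your matrix-entry check nor the independence of addition from the chosen biproduct.
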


\begin{proof}
    We use only that composition is additive in each variable and that \(\fdVect_K\) has a zero object and biproducts.

    A zero map \(V \to W\) factors through the zero space as \(V \to 0 \to W\), so its dagger factors as \(W \to 0 \to V\) and is again zero.

    Let \(W \oplus W\) be the biproduct, with injections \(\inn_1, \inn_2\) and projections \(\pr_1, \pr_2\).
    Then \(\pr_j \circ \inn_i\) is the identity for \(i = j\) and zero otherwise, and
    \[
        \inn_1 \circ \pr_1 + \inn_2 \circ \pr_2 = \id_{W \oplus W}.
    \]
    Applying the dagger to the first equations shows that \(\inn_j^\dagger \circ \pr_i^\dagger\) is the identity for \(i = j\) and zero otherwise.
    We claim that
    \begin{equation}
        \label{eq:dagger-biproduct}
        \pr_1^\dagger \circ \inn_1^\dagger + \pr_2^\dagger \circ \inn_2^\dagger = \id_{W \oplus W}.
    \end{equation}
    Composing either side with \(\inn_j^\dagger\) gives \(\inn_j^\dagger\), so it suffices to show that two maps \(\chi, \chi'\) into \(W \oplus W\) are equal as soon as \(\inn_j^\dagger \circ \chi = \inn_j^\dagger \circ \chi'\) for \(j = 1, 2\).
    Taking daggers, such maps satisfy \(\chi^\dagger \circ \inn_j = \chi'^\dagger \circ \inn_j\), and composing with \(\pr_j\) and adding gives \(\chi^\dagger = \chi'^\dagger\), hence \(\chi = \chi'\).

    Now let \(\varphi, \psi \colon V \to W\).
    Their sum factors through \(W \oplus W\), as the map \(\inn_1 \circ \varphi + \inn_2 \circ \psi\) with components \(\varphi\) and \(\psi\), followed by the codiagonal \(\pr_1 + \pr_2\), which adds the two components:
    \[
        \begin{tikzcd}[column sep=large]
            V \rar["\inn_1 \circ \varphi + \inn_2 \circ \psi"] \drar["\varphi + \psi"'] & W \oplus W \dar["\pr_1 + \pr_2"] \\
            & W
        \end{tikzcd}
    \]
    Taking daggers and inserting \eqref{eq:dagger-biproduct} in the middle,
    \[
        (\varphi + \psi)^\dagger
        = \sum_{j = 1, 2} \bigl(\pr_j \circ (\inn_1 \circ \varphi + \inn_2 \circ \psi)\bigr)^\dagger \circ \bigl((\pr_1 + \pr_2) \circ \inn_j\bigr)^\dagger
        = \varphi^\dagger + \psi^\dagger. \qedhere
    \]
\end{proof}

\begin{theorem}
    \label{thm:daggers-are-forms}
    Let \(\dagger\) be a dagger on \(\fdVect_K\).
    There are a unique involutive automorphism of \(K\) and a unique family of unimodular Hermitian forms, one on every space, whose adjoints are \(\dagger\) and whose form on \(K\) is the standard one.
\end{theorem}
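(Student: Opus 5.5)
The plan is to read the involution and the forms off the dagger itself, following \cref{lem:flat-is-dagger}. Recovering them is forced: if forms with adjoint \(\dagger\) exist and \(K\) carries the standard form, then \cref{lem:flat-is-dagger} gives \(\langle u, v \rangle = \hat v^\dagger \circ \hat u\). Uniqueness is therefore immediate once existence is shown. The involution is likewise forced, since \((a \id_K)^\dagger = \conj a \id_K\) by \cref{prop:herm-dagger}. So I will take these formulas as definitions.

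First the involution. Set \(\conj a = a^\dagger\) on \(\End_K(K) = K\). This is the restriction of \(\dagger\) to a commutative ring, and it is additive by \cref{prop:dagger-additive}, reverses (hence preserves) products, fixes \(1\), and squares to the identity, as in \cref{ex:commutative-dagger-rings}. Next I show that \(\dagger\) is semilinear on every hom-set, i.e.\ \((a \varphi)^\dagger = \conj a\, \varphi^\dagger\). Write \(a \varphi = \varphi \circ (a \id_V)\) and ask whether \((a \id_V)^\dagger = \conj a \id_V\). For \(V \neq 0\), \cref{lem:involution-on-scalars} shows that \(\dagger\) restricted to \(\End_K(V)\) preserves the centre and induces some involution \(\sigma_V\) of \(K\). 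To show \(\sigma_V = \conj{\,\cdot\,}\), I use naturality of scalars: for any \(\hat v \colon K \to V\) we have \(a \id_V \circ \hat v = \hat v \circ a\). Taking daggers gives \(\hat v^\dagger \circ \sigma_V(a)\id_V = \conj a \circ \hat v^\dagger\), so \((\sigma_V(a) - \conj a)\hat v^\dagger = 0\). Choosing \(v \neq 0\) makes \(\hat v\) a non-zero map, so \(\hat v^\dagger \neq 0\) because \(\dagger\) is involutive. Hence \(\sigma_V(a) = \conj a\).

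Now define \(\langle u, v \rangle = \hat v^\dagger(\hat u(1)) \in K\), i.e.\ \(\hat v^\dagger \circ \hat u\) as an element of \(\End_K(K)\).

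\textbf{Sesquilinearity.} Linearity in \(u\) is clear. Additivity in \(v\) follows from \(\widehat{v + v'} = \hat v + \hat v'\) and \cref{prop:dagger-additive}. Semilinearity in \(v\) follows from \(\widehat{a v} = a \hat v\) and the semilinearity step above.

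\textbf{Hermitian.} We have \(\conj{\langle u, v \rangle} = (\hat v^\dagger \hat u)^\dagger = \hat u^\dagger \hat v = \langle v, u \rangle\).

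\textbf{Adjoint.} For \(\varphi \colon V \to W\),
\[
  \langle \varphi(v), w \rangle = \hat w^\dagger \circ \varphi \circ \hat v = (\varphi^\dagger \circ \hat w)^\dagger \circ \hat v = \langle v, \varphi^\dagger(w) \rangle,
\]
using \(\varphi \circ \hat v = \widehat{\varphi(v)}\).

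\textbf{Standard on \(K\).} For \(a, b \in K\) the form is \(\langle a, b \rangle = b^\dagger a = a \conj b\).

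\textbf{Unimodularity.} This is the step needing care. Over a field it suffices to show injectivity of \(\flat\). Suppose \(\langle u, v \rangle = 0\) for all \(u\). Then \(\hat v^\dagger \circ \hat u = 0\) for all \(\hat u\), and since the \(\hat u\) jointly detect maps out of \(V\), we get \(\hat v^\dagger = 0\). Hence \(\hat v = 0\) and \(v = 0\).

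\textbf{Uniqueness.} Any family of unimodular Hermitian forms with adjoint \(\dagger\), standard on \(K\), satisfies \(\langle u, v \rangle = \hat v^\dagger \circ \hat u\) by \cref{lem:flat-is-dagger}. Any such involution equals \(a \mapsto a^\dagger\) on \(K\) by \cref{prop:herm-dagger}. So both coincide with the constructed data.

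I expect the main obstacle to be the semilinearity step, namely that every space induces the same involution. The naturality argument with \(\hat v\) handles it; the case \(V = 0\) is trivial.
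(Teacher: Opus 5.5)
Your proposal is correct and follows the paper's proof essentially step for step: the involution read off from \(\End_K(K)\), semilinearity of the dagger, the forms \(\langle u, v \rangle = \hat v^\dagger \circ \hat u\), the adjoint computation, and uniqueness via \cref{lem:flat-is-dagger}. The only variations are local. You obtain \((a \id_V)^\dagger = \conj a \id_V\) from \cref{lem:involution-on-scalars} plus cancellation against a single non-zero \(\hat v^\dagger\), and unimodularity from injectivity plus a dimension count, whereas the paper uses that the \(\hat v^\dagger\) exhaust \(V^\vee\) and that \(\flat\) is a composite of bijections; unlike yours, these versions do not rely on \(K\) being a field and carry over (with dual bases in place of point separation) to modules in \cref{thm:daggers-are-metrics}.
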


\begin{proof}
    We begin with the scalars.
    The dagger restricts to \(\End_K(K) = K\), where it is additive by \cref{prop:dagger-additive}.
    Since \(K\) is commutative, \((a b)^\dagger = b^\dagger a^\dagger = a^\dagger b^\dagger\), and together with \(1^\dagger = 1\) and \(a^{\dagger\dagger} = a\) this makes \(a \mapsto a^\dagger\) an involutive automorphism of \(K\), which we write \(a \mapsto \conj a\).

    Next we show that the dagger is semilinear for this involution.
    For \(v \in V\) and \(a \in K\) we have \((a \id_V) \circ \hat v = \hat v \circ a\), and applying the dagger gives \(\hat v^\dagger \circ (a \id_V)^\dagger = \conj a\, \hat v^\dagger\).
    The dagger is a bijection from \(\Hom(K, V)\) to \(\Hom(V, K) = V^\vee\), so every linear form on \(V\) is \(\hat v^\dagger\) for some \(v\), and since linear forms separate points, \((a \id_V)^\dagger = \conj a \id_V\).
    Hence \((a \varphi)^\dagger = (\varphi \circ a \id_V)^\dagger = \conj a\, \varphi^\dagger\) for every linear map \(\varphi\).

    \Cref{lem:flat-is-dagger} tells us which forms to take:
    \[
        \langle u, v \rangle = \hat v^\dagger \circ \hat u \quad \in \End_K(K) = K.
    \]
    This is biadditive by \cref{prop:dagger-additive}, linear in \(u\) because \(\widehat{a u} = a\, \hat u\), and semilinear in \(v\) because \((a \hat v)^\dagger = \conj a\, \hat v^\dagger\).
    It is Hermitian, since \(\langle v, u \rangle = \hat u^\dagger \circ \hat v = (\hat v^\dagger \circ \hat u)^\dagger = \conj{\langle u, v \rangle}\).
    Since \(\langle u, v \rangle = \hat v^\dagger(u)\), the map \(\flat\) is the composite of the bijections \(v \mapsto \hat v\) and \(\hat v \mapsto \hat v^\dagger\), so the form is unimodular.
    On \(K\) itself \(\hat b = b\), so \(\langle a, b \rangle = \conj b\, a\) is the standard form.

    The dagger is the adjoint for these forms: for \(\varphi \colon V \to W\), \(v \in V\) and \(w \in W\),
    \[
        \langle \varphi(v), w \rangle
        = \hat w^\dagger \circ \varphi \circ \hat v
        = (\varphi^\dagger \circ \hat w)^\dagger \circ \hat v
        = \langle v, \varphi^\dagger(w) \rangle.
    \]

    For uniqueness, suppose that \(\dagger\) is the adjoint for some involution and some forms, standard on \(K\).
    Then \cref{lem:flat-is-dagger} gives \(\langle u, v \rangle = \hat v^\dagger \circ \hat u\), so the forms are the ones constructed above.
    The involution is determined as well, because for the standard form the adjoint of multiplication by \(a\) is multiplication by \(\conj a\).
\end{proof}

Together with \cref{prop:herm-dagger}, the theorem says that a dagger on \(\fdVect_K\) is the same thing as an involution of \(K\) together with a choice of unimodular Hermitian form on every space, standard on \(K\).
Each such choice identifies \(\fdVect_K\) with a full subcategory of \(\Herm_K\) that contains exactly one Hermitian space over each vector space.

\begin{corollary}
    \label{cor:real-daggers}
    Every dagger on \(\fdVect_\R\) is linear, and it is the adjoint for a unique family of scalar products, one on every space and standard on \(\R\).
\end{corollary}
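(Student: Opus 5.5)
The plan is to apply \cref{thm:daggers-are-forms} with \(K = \R\) and then observe that the real numbers have only one involutive automorphism. That theorem gives an involutive automorphism \(a \mapsto \conj a\) of \(\R\) and a unique family of unimodular Hermitian forms, standard on \(\R\), whose adjoints are \(\dagger\). Once the involution is known to be the identity, Hermitian forms become symmetric forms. Unimodular symmetric forms are by definition scalar products. The standard form \(\langle a, b \rangle = a \conj b = ab\) is then the usual product on \(\R\). Linearity follows from the semilinearity \((a\varphi)^\dagger = \conj a\, \varphi^\dagger\) established in the proof of \cref{thm:daggers-are-forms}, together with additivity from \cref{prop:dagger-additive}; equivalently, one can use the last clause of \cref{prop:herm-dagger}.

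The one substantive step is that every ring automorphism \(\sigma\) of \(\R\) is the identity. I would argue as follows.
\begin{itemize}
    \item \(\sigma\) fixes \(1\), so by additivity it fixes \(\Z\), and by multiplicativity it fixes \(\Q\).
    \item \(\sigma\) preserves squares, and the non-negative reals are exactly the squares. Hence \(\sigma\) maps non-negatives to non-negatives, so \(x \le y\) implies \(\sigma(x) \le \sigma(y)\).
    \item An order-preserving map that fixes \(\Q\) must fix every real number, since each real is squeezed between rationals converging to it from both sides.
\end{itemize}
In particular, the involution supplied by \cref{thm:daggers-are-forms} is the identity.

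Uniqueness of the family of scalar products is inherited directly from the uniqueness clause of \cref{thm:daggers-are-forms}. Any family of scalar products standard on \(\R\) is in particular a family of unimodular Hermitian forms for the identity involution, so it must coincide with the family constructed there.

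I expect no real obstacle. The only point needing care is the automorphism argument, specifically deducing order preservation from the fact that squares are exactly the non-negative reals.
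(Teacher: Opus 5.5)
Your proposal is correct and follows the paper's own proof. Both apply \cref{thm:daggers-are-forms}, observe that the involution of \(\R\) must be trivial so that the Hermitian forms are symmetric (hence scalar products), and deduce linearity from \cref{prop:herm-dagger}. The paper only asserts that \(\R\) has no nontrivial automorphisms, and your argument via squares and order preservation correctly supplies that standard fact.
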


\begin{proof}
    The only automorphism of the field \(\R\) is the identity, so the involution of \cref{thm:daggers-are-forms} is trivial, the forms are symmetric, and the dagger is linear by \cref{prop:herm-dagger}.
\end{proof}

\begin{remark}
    \label{rem:complex-daggers}
    Over \(\C\), the continuous involutive automorphisms are the identity and complex conjugation.
    The first gives daggers coming from symmetric bilinear forms on complex spaces, and the second gives daggers coming from Hermitian forms in the usual sense.
    Discontinuous involutive automorphisms of \(\C\) also exist, although constructing one requires the axiom of choice, and each of them gives its own family of daggers.
\end{remark}

\begin{remark}
    \label{rem:scale-fixed}
    On a single space, an anti-involution of \(\End_K(V)\) determines a form only up to a non-zero scalar, and for the identity involution this form may be antisymmetric (\cref{thm:matrix-involutions}).
    In \cref{thm:daggers-are-forms} neither ambiguity remains.
    The space \(K\) is one of the objects, the involution of \(K\) is the dagger itself, and the form of every other space is determined by \(\langle u, v \rangle = \hat v^\dagger \circ \hat u\).
    This form is Hermitian because the dagger is involutive; in characteristic other than \(2\) this excludes the antisymmetric forms of \cref{thm:matrix-involutions}.
\end{remark}

\subsection{Metrics and bases}
\label{sec:metric-vs-basis}

In coordinates, the adjoint of a real matrix with respect to the dot product is its transpose; this subsection makes that statement precise, and asks when two daggers on \(\fdVect_K\) agree up to a change of coordinates.

\begin{definition}
    \label{def:orthonormal-basis}
    A basis \(e_1, \dots, e_n\) of a Hermitian space is \emph{orthonormal} if \(\langle e_i, e_j \rangle = \pm \delta_{ij}\).
\end{definition}

The category \(\Mat_K\) of matrices over \(K\) is defined as for \(\R\) and \(\C\) in \cref{ex:dagger-categories}.
The standard functor \(\Mat_K \to \fdVect_K\) sends \(n\) to \(K^n\) and a matrix to the linear map it defines in the standard bases.

\begin{proposition}
    \label{prop:mat-dagger-equivalence}
    Let \(\dagger\) be a dagger on \(\fdVect_K\), with involution \(a \mapsto \conj a\) and forms \(\langle -, - \rangle\) as in \cref{thm:daggers-are-forms}, and give \(\Mat_K\) the conjugate transpose \(M^\dagger = \conj M^\top\).
    The standard functor \(\Mat_K \to \fdVect_K\) is a dagger functor if and only if the form of every \(K^n\) is the standard form \(\langle v, w \rangle = \sum_i v^i \conj{w^i}\).
    It is then a dagger equivalence if and only if every space has a basis with \(\langle e_i, e_j \rangle = \delta_{ij}\).
\end{proposition}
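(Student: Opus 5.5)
The plan has two parts, matching the two claims.

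\emph{First claim.} Write \(S\) for the standard functor, so \(S(M)\) is the linear map \(v \mapsto M v\) on column vectors. By \cref{thm:daggers-are-forms} the dagger on \(\fdVect_K\) is the adjoint for the forms \(\langle -, - \rangle\). Hence \(S\) is a dagger functor exactly when, for every \(m \times n\) matrix \(M\),
\[
    \langle M v, w \rangle_{K^m} = \langle v, \conj M^\top w \rangle_{K^n}
\]
for all \(v\) and \(w\).

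For the backward direction, if every \(K^n\) carries the standard form, the identity above is the usual computation \(\sum_{i,j} M_{ij} v^j \conj{w^i} = \sum_j v^j \conj{(\conj M^\top w)^j}\).

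For the forward direction, test the identity on the column matrices \(M = \hat e_i \colon K \to K^n\), where \(e_i\) is the standard basis vector. Here \(\conj M^\top\) is the row with a \(1\) in position \(i\), that is, the coordinate form \(v \mapsto v^i\). By \cref{lem:flat-is-dagger}, \(\hat e_i^\dagger = e_i^\flat = \langle -, e_i \rangle\). If \(S\) preserves daggers, then \(\langle v, e_i \rangle = v^i\), so \(\langle e_j, e_i \rangle = \delta_{ij}\). Sesquilinearity then gives the standard form on all of \(K^n\). This uses that the form on \(K = K^1\) is already standard, which is part of \cref{thm:daggers-are-forms}.

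\emph{Second claim.} Assume every \(K^n\) has the standard form. By the first claim \(S\) is a dagger functor, and it is fully faithful, as for any field. By \cref{lem:dagger-equivalence-criterion}, \(S\) is a dagger equivalence if and only if it is unitarily essentially surjective. So it remains to show that a space \(V\) is unitarily isomorphic to some \(K^n\) if and only if \(V\) has a basis with \(\langle e_i, e_j \rangle = \delta_{ij}\).

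A linear iso \(u \colon K^n \to V\) is unitary exactly when it is an isometry. By the remark after \cref{prop:herm-dagger}, this means \(\langle u(x), u(y) \rangle = \langle x, y \rangle_{\mathrm{std}}\) for all \(x, y\). Applied to standard basis vectors, this says \(\langle u(e_i), u(e_j) \rangle = \delta_{ij}\), so the images \(u(e_i)\) form such a basis of \(V\). Conversely, given such a basis \(f_1, \dots, f_n\) of \(V\), the map \(e_i \mapsto f_i\) preserves the form on basis vectors, hence everywhere by sesquilinearity, so it is a unitary iso.

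\emph{Main subtlety.} The delicate step is the forward direction of the first claim. It relies on the identification \(\hat v^\dagger = v^\flat\) and on the normalisation of the form on \(K\) from \cref{thm:daggers-are-forms}. Without that normalisation, the forms would only be pinned down up to scalars. Everything else is direct unwinding of definitions.
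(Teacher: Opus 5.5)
Your proposal is correct and follows essentially the same route as the paper. You use the standard computation for the backward direction, and for the forward direction you test on the columns \(\hat e_i\) using \(\hat e_i^\dagger = e_i^\flat\) together with the standard form on \(K\). For the second claim you use \cref{lem:dagger-equivalence-criterion} and the correspondence between unitaries \(K^n \to V\) and bases of \(V\) with \(\langle e_i, e_j \rangle = \delta_{ij}\).
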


\begin{proof}
    If every \(K^n\) has the standard form, then for an \(m \times n\) matrix \(M\) with entries \(M^i_j\),
    \[
        \langle M v, w \rangle = \sum_{i, j} M^i_j v^j \conj{w^i} = \sum_j v^j \conj{\sum_i \conj{M^i_j} w^i} = \langle v, M^\dagger w \rangle,
    \]
    so the functor preserves the dagger.
    Conversely, suppose it is a dagger functor.
    The columns \(e_j \colon 1 \to n\) of the identity matrix are sent to the maps \(\hat e_j\).
    Since the form of \(K = K^1\) is standard (\cref{thm:daggers-are-forms}), \cref{lem:flat-is-dagger} shows that the number \(\langle e_i, e_j \rangle = \hat e_j^\dagger \circ \hat e_i\) is the image of the \(1 \times 1\) matrix \(e_j^\dagger e_i = \delta_{ij}\), and the form is standard.

    The functor is fully faithful, so by \cref{lem:dagger-equivalence-criterion} it is a dagger equivalence if and only if every space \(V\) is unitarily isomorphic to some \(K^n\).
    A unitary \(K^n \to V\) is determined by the images of the standard basis vectors, which form a basis of \(V\) with \(\langle e_i, e_j \rangle = \delta_{ij}\), and every such basis arises in this way.
\end{proof}

Over \(\R\) with the identity, and over \(\C\) with complex conjugation, the Gram--Schmidt process shows that a Hermitian space has a basis with \(\langle e_i, e_j \rangle = \delta_{ij}\) if and only if its form is positive definite.
For positive definite forms, then, choosing a metric amounts to choosing which bases count as orthonormal.
The Minkowski space has orthonormal bases in the sense of \cref{def:orthonormal-basis}, but none with \(\langle e_i, e_j \rangle = \delta_{ij}\).
Over other fields positivity is not enough: the form \(\langle a, b \rangle = 2 a b\) on \(\Q\) is positive definite, but \(2 c^2 = 1\) has no rational solution.

\begin{proposition}
    \label{prop:dagger-equivalent-structures}
    Let \(\dagger\) and \(\ddagger\) be daggers on \(\fdVect_K\), with forms \(\langle -, - \rangle\) and \(\llangle -, - \rrangle\).
    There is an identity-on-objects linear functor \(\fdVect_K \to \fdVect_K\) that is an equivalence and sends \(\dagger\) to \(\ddagger\) if and only if the two daggers have the same involution of \(K\) and there are linear automorphisms \(\varphi_V\) of every space \(V\) with
    \[
        \langle u, v \rangle = \llangle \varphi_V(u), \varphi_V(v) \rrangle .
    \]
\end{proposition}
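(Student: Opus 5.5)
We prove both directions by conjugating with the automorphisms \(\varphi_V\).

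\emph{Sufficiency.} Suppose the two daggers have the same involution and that automorphisms \(\varphi_V\) with \(\langle u, v \rangle = \llangle \varphi_V(u), \varphi_V(v) \rrangle\) are given. Replacing \(\varphi_K\) by the identity does no harm. Indeed, \(\varphi_K\) is multiplication by some \(c\) with \(c \conj c = 1\), because both forms on \(K\) are standard. Since \(c\,\id\) is central, conjugation by it is trivial.

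Define \(F(\psi) = \varphi_W \circ \psi \circ \varphi_V^{-1}\) for \(\psi \colon V \to W\). This is an identity-on-objects linear functor with inverse given by conjugation the other way, so it is an isomorphism of categories and in particular an equivalence.

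To see that it sends \(\dagger\) to \(\ddagger\), compute for \(v \in V\) and \(w \in W\):
\[
\llangle F(\psi)(\varphi_V v), \varphi_W w \rrangle = \llangle \varphi_W \psi v, \varphi_W w \rrangle = \langle \psi v, w \rangle = \langle v, \psi^\dagger w \rangle = \llangle \varphi_V v, \varphi_V \psi^\dagger w \rrangle .
\]
Since \(\varphi_V\) and \(\varphi_W\) are surjective, this says \(F(\psi)^\ddagger = \varphi_V \psi^\dagger \varphi_W^{-1} = F(\psi^\dagger)\).

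\emph{Necessity.} Let \(F\) be such a functor. Since \(F\) is linear and \(F(\id_K) = \id_K\), it fixes every scalar \(a\,\id_K\). Then \(\conj a^{\ddagger} = F(a)^\ddagger = F(a^\dagger) = \conj a^{\dagger}\), where \(\conj a^{\dagger}\) and \(\conj a^{\ddagger}\) denote the involutions of \(K\) induced by \(\dagger\) and \(\ddagger\). So the involutions agree.

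The main step is to show that \(F\) is conjugation by automorphisms. Since \(F\) is linear, it restricts on each \(\End_K(V)\) to a \(K\)-algebra automorphism. By Skolem--Noether, this restriction is \(\psi \mapsto \varphi_V \psi \varphi_V^{-1}\) for some \(\varphi_V \in \GL(V)\), unique up to a scalar. We normalise \(\varphi_K = \id\).

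Next, \(F\) restricts to linear bijections \(\Hom(K, V) \to \Hom(K, V)\). I would define \(\varphi_V\) instead by \(F(\hat v) = \widehat{\varphi_V(v)}\); this is linear and bijective because \(F\) is. Functoriality gives \(F(\psi \circ \hat v) = F(\psi) \circ F(\hat v)\), which reads \(\varphi_W(\psi v) = F(\psi)(\varphi_V v)\). Hence \(F(\psi) = \varphi_W \psi \varphi_V^{-1}\) for every \(\psi \colon V \to W\), with no appeal to Skolem--Noether. The only care needed is that this definition is consistent on \(K\), where \(F\) is the identity, so \(\varphi_K = \id\).

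Finally, using \cref{lem:flat-is-dagger},
\[
\llangle \varphi_V u, \varphi_V v \rrangle = F(\hat v)^\ddagger \circ F(\hat u) = F(\hat v^\dagger \circ \hat u) = F(\langle u, v \rangle) = \langle u, v \rangle,
\]
the last step because \(F\) fixes scalars. This gives the required automorphisms \(\varphi_V\).

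The step I expect to be the main obstacle is the one just treated: showing that \(F\) is given by conjugation. The paper's hypothesis of linearity is exactly what makes it work, since it lets \(F\) fix scalars and makes each \(\varphi_V\) linear.
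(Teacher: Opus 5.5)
Your proof is correct and follows the paper's argument. You define \(\varphi_V\) by \(F(\hat v) = \widehat{\varphi_V(v)}\), deduce \(F(\psi) = \varphi_W \circ \psi \circ \varphi_V^{-1}\) from functoriality, and read off the forms via \(\langle u, v \rangle = \hat v^\dagger \circ \hat u\); conversely, you check the adjoint equation for the conjugated functor. The Skolem--Noether detour in the necessity direction and the normalisation of \(\varphi_K\) in the sufficiency direction are unnecessary and can be dropped.
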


\begin{proof}
    Let \(F\) be such a functor.
    Since \(F\) is linear, it is the identity on \(\End_K(K)\), which is \(K\).
    Define \(\varphi_V(v) = F(\hat v)(1)\), so that \(F(\hat v) = \widehat{\varphi_V(v)}\).
    The map \(\varphi_V\) is linear, and it is injective because \(F\) is faithful, so it is an automorphism.
    For \(\psi \colon V \to W\), applying \(F\) to \(\psi \circ \hat v = \widehat{\psi(v)}\) gives \(F(\psi)(\varphi_V(v)) = \varphi_W(\psi(v))\), hence \(F(\psi) = \varphi_W \circ \psi \circ \varphi_V^{-1}\).
    Because \(F\) is the identity on \(K\) and sends \(\dagger\) to \(\ddagger\), the two daggers agree on \(K\), so they have the same involution, and
    \[
        \langle u, v \rangle = F(\hat v^\dagger \circ \hat u) = F(\hat v)^\ddagger \circ F(\hat u) = \llangle \varphi_V(u), \varphi_V(v) \rrangle.
    \]
    Conversely, given such automorphisms, \(F(\psi) = \varphi_W \circ \psi \circ \varphi_V^{-1}\) defines an identity-on-objects linear isomorphism of categories, and
    \[
        \llangle F(\psi)(\varphi_V(v)), \varphi_W(w) \rrangle
        = \langle \psi(v), w \rangle
        = \langle v, \psi^\dagger(w) \rangle
        = \llangle \varphi_V(v), F(\psi^\dagger)(\varphi_W(w)) \rrangle,
    \]
    which shows \(F(\psi)^\ddagger = F(\psi^\dagger)\).
\end{proof}

The condition on the involutions follows from the condition on the forms for \(V = K\), where both forms are standard.
Write \(b \mapsto b^\ddagger\) for the involution of \(\ddagger\) and let \(\varphi_K\) be multiplication by \(c\).
Then \(a \conj b = \llangle c a, c b \rrangle = c\, c^\ddagger a\, b^\ddagger\) for all \(a, b\); taking \(a = b = 1\) gives \(c\, c^\ddagger = 1\), and then \(\conj b = b^\ddagger\).

Two daggers are therefore related by such a functor, that is, \((\fdVect_K, \dagger)\) and \((\fdVect_K, \ddagger)\) are isomorphic by an identity-on-objects linear dagger functor, when, space by space, their forms are isometric.
Over \(\R\), Sylvester's law of inertia (\cref{rem:signature}) turns this into the condition that the forms have the same signature on every space.
For example, the form with matrix \(\mathrm{diag}(1, 1)\) on \(\R^2\) is carried to the one with matrix \(\mathrm{diag}(1, 2)\) by \(\varphi(e_1) = e_1\), \(\varphi(e_2) = e_2 / \sqrt 2\), while \(\mathrm{diag}(1, 1)\) and \(\mathrm{diag}(1, -1)\) are not.
A dagger that puts a Minkowski form on \(\R^4\) is not equivalent to one whose forms are all positive definite, and by \cref{prop:mat-dagger-equivalence} it is not equivalent to \(\Mat_\R\) with the transpose.

\section{Orthogonality in dagger categories}
\label{sec:orthogonality}

With the forms of \cref{sec:vector-spaces} in place, the familiar constructions of linear algebra acquire an orthogonal version: direct sums become orthogonal sums, images of idempotents become orthogonal projections, and kernels become orthogonal complements.
In a dagger category each of these is a universal property with an extra equation involving the dagger, and in \(\Herm_K\) the extra equation says that two subspaces are orthogonal.
We state the definitions in an arbitrary dagger category with a zero object, and prove what they mean in \(\Herm_K\), where forms may be indefinite.
The same definitions apply to vector bundles in \cref{sec:orthogonal-bundles}, where the normal bundle of a submanifold is an orthogonal complement.
Readers interested chiefly in connections and differential operators only need orthogonal morphisms (\cref{sec:orthogonal-biproducts}) and orthogonal complements (\cref{sec:orthogonal-complements}) and can then continue with \cref{sec:vector-bundles}; orthogonal squares (\cref{sec:orthogonal-squares}) are used only in the bundle examples of \cref{sec:orthogonal-bundles}.

Throughout, \(\cat C\) is a dagger category with a zero object \(0\).
For objects \(X\) and \(Y\), the composite \(X \to 0 \to Y\) is the \emph{zero morphism} \(0 \colon X \to Y\).
Its dagger is the composite \(Y \to 0 \to X\), which is again zero, and every composite with a zero morphism is zero.

\subsection{Orthogonal morphisms and biproducts}
\label{sec:orthogonal-biproducts}

\begin{definition}
    \label{def:orthogonal}
    Morphisms \(f \colon X \to Z\) and \(g \colon Y \to Z\) are \emph{orthogonal}, written \(f \perp g\), if \(f^\dagger \circ g = 0\).
\end{definition}

Orthogonality is symmetric, since \(g^\dagger \circ f = (f^\dagger \circ g)^\dagger\) and the dagger of a zero morphism is zero.

\begin{example}
    \label{ex:orthogonal-vectors}
    In \(\Herm_K\), \cref{lem:flat-is-dagger} gives \(\hat v^\dagger \circ \hat w = \langle w, v \rangle\), so \(\hat v \perp \hat w\) means \(\langle w, v \rangle = 0\).
    More generally, linear maps \(\varphi \colon U \to W\) and \(\psi \colon V \to W\) are orthogonal if and only if \(\langle \varphi(u), \psi(v) \rangle = 0\) for all \(u \in U\) and \(v \in V\), because \(\langle u, \varphi^\dagger(\psi(v)) \rangle = \langle \varphi(u), \psi(v) \rangle\) and the form of \(U\) is unimodular.
    For isometries, this says that their images are orthogonal subspaces of \(W\).
\end{example}

\begin{example}
    \label{ex:krn-independence}
    In the category of finite probability spaces and stochastic matrices, the one-point space is a zero object, and random variables on a probability space are coisometries.
    Two random variables \(f\) and \(g\) on the same space are independent if and only if \(f^\dagger \perp g^\dagger\) \cite[Example~3.6]{di-meglio-et-al2025}.
    This category is the finite part of \(\Krn\) (\cref{ex:krn-dagger}).
\end{example}

Recall that a \emph{biproduct} of \(X_1\) and \(X_2\) is an object \(X_1 \oplus X_2\) with injections \(\inn_j \colon X_j \to X_1 \oplus X_2\) and projections \(\pr_j \colon X_1 \oplus X_2 \to X_j\), such that \(\pr_j \circ \inn_j = \id\), \(\pr_j \circ \inn_k = 0\) for \(j \neq k\), the projections make it a product, and the injections make it a coproduct.

\begin{definition}
    \label{def:orthogonal-biproduct}
    An \emph{orthogonal biproduct} of \(X_1\) and \(X_2\) is a biproduct \(X_1 \oplus X_2\) whose injections and projections satisfy \(\pr_j = \inn_j^\dagger\).
\end{definition}

The injections of an orthogonal biproduct are orthogonal isometries, since
\[
    \inn_j^\dagger \circ \inn_j = \pr_j \circ \inn_j = \id
    \qquad \text{and} \qquad
    \inn_1^\dagger \circ \inn_2 = \pr_1 \circ \inn_2 = 0.
\]
Orthogonal biproducts are unique up to a unitary.
If \(X_1 \oplus' X_2\) is another one, the comparison isomorphism \(\varphi\), determined by \(\pr'_j \circ \varphi = \pr_j\), satisfies \(\varphi^\dagger \circ \inn'_j = (\pr'_j \circ \varphi)^\dagger = \inn_j\), so \(\varphi^\dagger\) is the inverse comparison isomorphism.
Here \(\varphi \circ \inn_k = \inn'_k\), because \(\pr'_j \circ \varphi \circ \inn_k = \pr_j \circ \inn_k\) for all \(j\), so \(\varphi^\dagger \circ \varphi \circ \inn_k = \inn_k\) and \(\varphi^\dagger \circ \varphi = \id\); the other composite is the identity by the symmetric argument.

\begin{proposition}
    \label{prop:orthogonal-biproduct-herm}
    In \(\Herm_K\), the orthogonal biproduct of \(V\) and \(W\) is \(V \oplus W\) with the form \(\langle v + w, v' + w' \rangle = \langle v, v' \rangle + \langle w, w' \rangle\), and its injections are orthogonal isometries.
\end{proposition}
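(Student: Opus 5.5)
The plan is to equip the vector space direct sum \(V \oplus W\) with the stated form and verify three things. First, it is an object of \(\Herm_K\). Second, with the usual injections \(\inn_1(v) = v\), \(\inn_2(w) = w\) and projections \(\pr_1(v + w) = v\), \(\pr_2(v + w) = w\), it is a biproduct. Third, the adjoints satisfy \(\inn_j^\dagger = \pr_j\). The biproduct property is purely linear-algebraic and independent of the forms, since \(V \oplus W\) is the biproduct in \(\fdVect_K\) and \(\Herm_K\) has all linear maps as morphisms. The only work is therefore the form and the adjoints. The final clause, that the injections are orthogonal isometries, then follows from the general remark after \cref{def:orthogonal-biproduct}.

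For the form, sesquilinearity and Hermitian symmetry are inherited componentwise from those of \(V\) and \(W\). For unimodularity, I would identify \((V \oplus W)^\vee\) with \(V^\vee \oplus W^\vee\) by restriction to the summands. The flat map of the sum form sends \(v' + w'\) to the functional \(v + w \mapsto \langle v, v' \rangle + \langle w, w' \rangle\), whose restrictions are \(v'^\flat\) and \(w'^\flat\). Hence \(\flat_{V \oplus W} = \flat_V \oplus \flat_W\) under this identification, and it is bijective because both factors are. By the remark after \cref{def:sesquilinear}, it would even suffice to check injectivity.

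For the adjoints, by the uniqueness in \cref{prop:herm-dagger} it is enough to check \(\langle \inn_1(v), v' + w' \rangle = \langle v, \pr_1(v' + w') \rangle\). The left side is \(\langle v, v' \rangle + \langle 0, w' \rangle = \langle v, v' \rangle\), which equals the right side. The same computation works for \(j = 2\). Thus \(\pr_j = \inn_j^\dagger\), and the biproduct is orthogonal.

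There is no real obstacle here. The step needing the most care is the unimodularity check, because the forms may be indefinite, so one cannot argue via positivity. The identification of the dual of a direct sum handles this uniformly.

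If "the" orthogonal biproduct is read as a uniqueness claim, it is covered by the earlier observation that orthogonal biproducts are unique up to a unitary. Alternatively, one can note that for any orthogonal biproduct the form is forced: writing \(x = \inn_1 \pr_1 x + \inn_2 \pr_2 x\) and using that the \(\inn_j\) are orthogonal isometries recovers the sum formula.
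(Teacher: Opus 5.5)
Your proposal is correct and follows essentially the same route as the paper. It gets unimodularity from the flat map of the sum form being the direct sum of the flat maps, and \(\inn_j^\dagger = \pr_j\) from \(\langle v, v' + w' \rangle = \langle v, v' \rangle\). It shows any form with \(\inn_j^\dagger = \pr_j\) is the sum form, as the paper does via \(\langle \inn_i(v), \inn_j(v') \rangle = \langle v, \pr_i(\inn_j(v')) \rangle\), and it uses uniqueness of orthogonal biproducts up to a unitary.
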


\begin{proof}
    The given form on \(V \oplus W\) is Hermitian, and it is unimodular because its flat map is the direct sum of the flat maps of \(V\) and \(W\).
    Since \(\langle v, v' + w' \rangle = \langle v, v' \rangle\), the adjoint of the injection \(\inn_1\) is the projection \(\pr_1\), and likewise \(\inn_2^\dagger = \pr_2\), so the biproduct is orthogonal.
    Conversely, if some unimodular Hermitian form on the vector space \(V \oplus W\) has \(\inn_j^\dagger = \pr_j\), then
    \[
        \langle \inn_i(v), \inn_j(v') \rangle = \langle v, \pr_i(\inn_j(v')) \rangle
    \]
    vanishes for \(i \neq j\) and equals the form of \(V\) or \(W\) for \(i = j\), so the form is the one given.
    Any other orthogonal biproduct is unitarily isomorphic to this one.
\end{proof}

\begin{definition}
    \label{def:orthogonal-splitting}
    An \emph{orthogonal splitting} of an idempotent \(e \colon X \to X\) is an isometry \(i \colon Y \to X\) with \(i \circ i^\dagger = e\).
\end{definition}

An orthogonal splitting is a splitting of \(e\) in the usual sense, with the retraction \(i^\dagger\).
Like orthogonal biproducts, orthogonal splittings are unique up to a unitary when they exist \cite[Lemma~3.10]{selinger2008}.

\begin{proposition}
    \label{prop:orthogonal-splitting}
    An idempotent with an orthogonal splitting is a projector.
    In \(\Herm_K\), every projector has an orthogonal splitting.
\end{proposition}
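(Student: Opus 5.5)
The first claim is a one-line computation in an arbitrary dagger category. Let \(i \colon Y \to X\) be an orthogonal splitting of the idempotent \(e\), so that \(i^\dagger \circ i = \id_Y\) and \(e = i \circ i^\dagger\). Then \(e^\dagger = (i \circ i^\dagger)^\dagger = i^{\dagger\dagger} \circ i^\dagger = i \circ i^\dagger = e\). Since \(e\) is idempotent by hypothesis (or directly, since \(e \circ e = i \circ (i^\dagger \circ i) \circ i^\dagger = e\)), it is a projector. This is the observation already recorded after \cref{def:dagger-vocabulary}, that \(i \circ i^\dagger\) is a projector for every isometry \(i\).

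For the second claim, let \(e \colon V \to V\) be a projector in \(\Herm_K\). I would take \(W = e(V)\) with the restricted form, and let \(i \colon W \to V\) be the inclusion. The crucial point, and the only real obstacle, is that \(W\) must be an object of \(\Herm_K\). Since forms may be indefinite, the restriction of a unimodular form to a subspace need not be unimodular, so this needs an argument and cannot be taken for granted.

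I would argue via the decomposition \(V = e(V) \oplus \Ker e\). First, this decomposition is orthogonal: for \(u = e(u)\) and \(k\) with \(e(k) = 0\), self-adjointness gives \(\langle u, k \rangle = \langle e(u), k \rangle = \langle u, e(k) \rangle = 0\). Next, suppose \(w \in W\) satisfies \(\langle w', w \rangle = 0\) for all \(w' \in W\). Every \(v \in V\) is \(e(v) + (v - e(v))\) with the second summand in \(\Ker e\), so \(\langle v, w \rangle = \langle e(v), w \rangle = 0\) for all \(v \in V\). Unimodularity of \(V\) then forces \(w = 0\). Hence the restricted flat map \(W \to W^\vee\) is injective, and since \(W\) is finite-dimensional it is bijective. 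The restricted form is clearly still Hermitian, so \(W\) is a Hermitian space.

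It remains to check the splitting equations. The inclusion \(i\) preserves the form by construction, so it is an isometry by the characterisation after \cref{prop:herm-dagger}. To compute \(i^\dagger\), I would guess \(i^\dagger(v) = e(v)\) and verify it. For \(w \in W\) and \(v \in V\), \(\langle i(w), v \rangle = \langle e(w), v \rangle = \langle w, e(v) \rangle\), using \(e(w) = w\) and \(e^\dagger = e\). Uniqueness of adjoints then gives \(i^\dagger = e\) viewed as a map \(V \to W\). Therefore \(i \circ i^\dagger = e\), as required.
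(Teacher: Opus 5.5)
Your proof is correct and follows the paper's argument essentially step for step: the one-line computation \(e^\dagger = i^{\dagger\dagger} \circ i^\dagger = e\); then, for a projector \(e\) on a Hermitian space, orthogonality of \(e(V)\) and \(\Ker e = (\id - e)(V)\) from self-adjointness, non-degeneracy of the restricted form on \(e(V)\) plus a dimension count to get unimodularity, and identification of the adjoint of the inclusion with \(e\) corestricted to its image. The only cosmetic difference is that the paper deduces \(i^\dagger \circ i = \id\) from \(e\) fixing its image, while you cite the form-preservation characterisation of isometries; these are equivalent.
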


\begin{proof}
    If \(e = i \circ i^\dagger\), then \(e^\dagger = i^{\dagger\dagger} \circ i^\dagger = e\).

    Let \(e\) be a projector on a Hermitian space \(V\), and let \(Y = e(V)\), with the restricted form.
    The subspaces \(e(V)\) and \((\id - e)(V)\) are orthogonal, because \(\langle e(v), w - e(w) \rangle = \langle v, e(w) - e(e(w)) \rangle = 0\), and together they span \(V\).
    If \(y \in Y\) is orthogonal to all of \(Y\), then \(\langle v, y \rangle = \langle e(v), y \rangle + \langle v - e(v), y \rangle = 0\) for every \(v \in V\), so \(y = 0\).
    The restricted form is therefore unimodular, and \(Y\) is a Hermitian space.
    Let \(i \colon Y \to V\) be the inclusion.
    For \(u \in Y\) and \(v \in V\) we have \(\langle i(u), v \rangle = \langle e(u), v \rangle = \langle u, e(v) \rangle\), so \(i^\dagger\) is \(e\) with its codomain restricted to \(Y\).
    Hence \(i \circ i^\dagger = e\), and \(i^\dagger \circ i = \id_Y\) because \(e\) is the identity on its image.
\end{proof}

The restriction of a unimodular form to a subspace need not be unimodular, for example on a line spanned by a vector \(v\) of the Minkowski space with \(\langle v, v \rangle = 0\).
The proof shows that this does not happen for the image of a projector.

\subsection{Orthogonal squares}
\label{sec:orthogonal-squares}

\begin{definition}
    \label{def:orthogonal-square}
    An \emph{orthogonal square} is a square of isometries
    \[
        \begin{tikzcd}
            W \rar["k"] \dar["h"'] & Y \dar["j"] \\
            X \rar["i"] & Z
        \end{tikzcd}
    \]
    with \(i \circ h = j \circ k\) and \(i^\dagger \circ j = h \circ k^\dagger\).
    With \(W = 0\), this says that \(i \perp j\).
\end{definition}

The first equation says that the square commutes.
The second says that the square obtained by reversing \(k\) and \(i\) with the dagger,
\[
    \begin{tikzcd}
        W \dar["h"'] & Y \lar["k^\dagger"'] \dar["j"] \\
        X & Z \lar["i^\dagger"]
    \end{tikzcd}
\]
also commutes.
Applying the dagger to every arrow turns an orthogonal square into a square of coisometries, and these are the \emph{independent squares} of \cite{stein2025, di-meglio-et-al2025}, where they describe conditional independence in probability; orthogonal squares themselves appear as the \emph{co-independent squares} of \cite{di-meglio-et-al2025}.

\begin{example}
    \label{ex:relatively-orthogonal-planes}
    Let \(V\) and \(W\) be planes in \(\R^3\) that meet in a line \(U\), each with the restriction of the dot product, and consider the square of inclusions
    \[
        \begin{tikzcd}
            U \rar["k"] \dar["h"'] & W \dar["j"] \\
            V \rar["i"] & \R^3
        \end{tikzcd}
    \]
    in \(\Herm_\R\).
    The square commutes.
    Choose unit vectors \(u \in U\), \(v \in V\) orthogonal to \(u\), and \(w \in W\) orthogonal to \(u\).
    Then \(u, v\) and \(u, w\) are orthonormal bases of \(V\) and \(W\), and the adjoints of the inclusions are the orthogonal projections, for example \(i^\dagger(v') = \langle v', u \rangle u + \langle v', v \rangle v\) for \(v' \in \R^3\).
    On the basis of \(W\), the two sides of the second equation are
    \[
        i^\dagger(j(u)) = u = h(k^\dagger(u)), \qquad
        i^\dagger(j(w)) = \langle w, v \rangle v, \qquad
        h(k^\dagger(w)) = 0.
    \]
    The square is orthogonal if and only if \(\langle v, w \rangle = 0\), that is, if and only if the planes are perpendicular (\cref{fig:relatively-orthogonal}).
    For planes meeting in \(U\), being perpendicular is a relative form of orthogonality: the planes are not orthogonal as subspaces, but their parts orthogonal to \(U\) are.
\end{example}

\begin{figure}[tbp]
    \centering
    \begin{subfigure}{0.4\textwidth}
        \centering
        \tdplotsetmaincoords{70}{110}
        \begin{tikzpicture}[tdplot_main_coords, fill opacity=0.2, scale=0.6]
            \draw[fill=red] (-3,0,-3) -- (-3,0,0) -- (3,0,0) -- (3,0,-3) -- cycle;
            \draw[fill=blue] (-3,-3,0) -- (-3,3,0) -- (3,3,0) -- (3,-3,0) -- cycle;
            \draw[fill=red] (-3,0,3) -- (-3,0,0) -- (3,0,0) -- (3,0,3) -- cycle;
            \draw[thick] (-3,0,0) -- (3,0,0);
            \coordinate (O) at (-3, 0, 0);
            \coordinate (A) at (-3, 0, 3);
            \coordinate (B) at (-3, 3, 0);
            \pic[draw] {right angle=A--O--B};
        \end{tikzpicture}
        \caption{}
    \end{subfigure}
    \quad
    \begin{subfigure}{0.4\textwidth}
        \centering
        \tdplotsetmaincoords{70}{110}
        \begin{tikzpicture}[tdplot_main_coords, fill opacity=0.2, scale=0.6]
            \draw[fill=green] (-3,2,-3) -- (-3,0,0) -- (3,0,0) -- (3,2,-3) -- cycle;
            \draw[fill=blue] (-3,-3,0) -- (-3,3,0) -- (3,3,0) -- (3,-3,0) -- cycle;
            \draw[fill=green] (-3,-2,3) -- (-3,0,0) -- (3,0,0) -- (3,-2,3) -- cycle;
            \draw[thick] (-3,0,0) -- (3,0,0);
            \coordinate (O) at (-3, 0, 0);
            \coordinate (A) at (-3, -2, 3);
            \coordinate (B) at (-3, 3, 0);
            \pic[draw] {angle=B--O--A};
        \end{tikzpicture}
        \caption{}
    \end{subfigure}
    \caption{Two planes meeting in a line: perpendicular (a) and not (b).}
    \label{fig:relatively-orthogonal}
\end{figure}

\begin{definition}
    \label{def:orthogonal-pushout}
    An orthogonal square is an \emph{orthogonal pushout} if every orthogonal square with the same upper-left corner \(h, k\) factors through it by a unique morphism.
\end{definition}

Explicitly, for the square of \cref{def:orthogonal-square} to be an orthogonal pushout means that for every orthogonal square formed by \(h\), \(k\) and isometries \(i' \colon X \to Z'\) and \(j' \colon Y \to Z'\), there is a unique morphism \(f \colon Z \to Z'\) making the diagram
\[
    \begin{tikzcd}
        W \rar["k"] \dar["h"'] & Y \dar["j"] \ar[ddr, "j'", bend left] & \\
        X \rar["i"'] \ar[drr, "i'"', bend right] & Z \drar["f", dashed] & \\
        & & Z'
    \end{tikzcd}
\]
commute, that is, \(f \circ i = i'\) and \(f \circ j = j'\).
A \emph{pushout} has the same property for all pairs of morphisms \(i'\), \(j'\) with \(i' \circ h = j' \circ k\).

In \(\Herm_K\) orthogonal squares can be described by subspaces.
For an isometry \(i\), the projector \(i \circ i^\dagger\) has the same image as \(i\), since \(i \circ i^\dagger \circ i = i\).

\begin{lemma}
    \label{lem:orthogonal-square-intersection}
    In \(\Herm_K\), the images of the two paths through an orthogonal square satisfy \(i(h(W)) = i(X) \cap j(Y)\).
\end{lemma}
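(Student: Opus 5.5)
The plan is to prove the two inclusions separately, using only the two equations of \cref{def:orthogonal-square} and the fact that all four maps are isometries. The inclusion \(i(h(W)) \subseteq i(X) \cap j(Y)\) is immediate from commutativity: \(i(h(w)) = j(k(w))\) lies in both images.

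For the reverse inclusion, take \(z \in i(X) \cap j(Y)\) and write \(z = i(x) = j(y)\). The aim is to show \(x \in h(W)\), for then \(z \in i(h(W))\). Since \(i\) is an isometry, \(x = i^\dagger(i(x)) = i^\dagger(z) = i^\dagger(j(y))\). By the second equation of the orthogonal square, \(i^\dagger \circ j = h \circ k^\dagger\), so \(x = h(k^\dagger(y))\), which lies in \(h(W)\). Hence \(z = i(h(k^\dagger(y))) \in i(h(W))\).

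No step is a real obstacle. The only point to watch is to use \(i^\dagger \circ i = \id\) (isometry) and not \(i \circ i^\dagger = \id\), so that \(x\) is recovered from \(z\). The remark before the lemma, that \(i \circ i^\dagger\) is a projector with image \(i(X)\), gives an equivalent way to say this: \(z = i(i^\dagger(z))\) for every \(z \in i(X)\). Indefiniteness of the forms plays no role, since the argument uses only the dagger identities.
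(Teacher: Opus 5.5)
Your proof is correct and is essentially the paper's argument done elementwise. The paper packages the same step as the projector identity \(i \circ i^\dagger \circ j \circ j^\dagger = (i \circ h) \circ (i \circ h)^\dagger\) and reads off images, while you apply \(i^\dagger\) directly to \(z = j(y)\); for the reverse inclusion this needs only \(i^\dagger \circ i = \id_X\) and \(i^\dagger \circ j = h \circ k^\dagger\), not that \(j\) or \(i \circ h\) is an isometry.
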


\begin{proof}
    The inclusion \(i(h(W)) = j(k(W)) \subseteq i(X) \cap j(Y)\) holds because the square commutes.
    Using the second equation of \cref{def:orthogonal-square} and then the first,
    \[
        i \circ i^\dagger \circ j \circ j^\dagger
        = i \circ h \circ k^\dagger \circ j^\dagger
        = (i \circ h) \circ (j \circ k)^\dagger
        = (i \circ h) \circ (i \circ h)^\dagger.
    \]
    The left side is the identity on \(i(X) \cap j(Y)\), since both projectors fix it, and the right side has image \(i(h(W))\) because \(i \circ h\) is an isometry.
\end{proof}

\begin{proposition}
    \label{prop:orthogonal-pushout-herm}
    In \(\Herm_K\), the square
    \[
        \begin{tikzcd}
            0 \rar \dar & W \dar["\inn_2"] \\
            V \rar["\inn_1"'] & V \oplus W
        \end{tikzcd}
    \]
    formed by the injections of the orthogonal biproduct is an orthogonal pushout.
    An orthogonal square in \(\Herm_K\) is an orthogonal pushout if and only if it is a pushout.
\end{proposition}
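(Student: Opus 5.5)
The first claim is a direct check, and the second reduces to a description of pushouts of injective linear maps by subspaces. For the biproduct square, I first check that it is an orthogonal square. The injections are isometries by \cref{prop:orthogonal-biproduct-herm}, and with \(W = 0\) in the corner the second equation of \cref{def:orthogonal-square} says exactly \(\inn_1 \perp \inn_2\), which holds because \(\inn_1^\dagger \circ \inn_2 = \pr_1 \circ \inn_2 = 0\). Now take another orthogonal square with corner \(0\), given by isometries \(i' \colon V \to Z'\) and \(j' \colon W \to Z'\). The coproduct property of \(V \oplus W\) gives a unique linear map \(f\) with \(f \circ \inn_1 = i'\) and \(f \circ \inn_2 = j'\). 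Since the morphisms of \(\Herm_K\) are all linear maps, no further condition on \(f\) is needed. Existence and uniqueness therefore both come from the coproduct.

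For the equivalence, one direction is immediate. Suppose an orthogonal square is a pushout. Every orthogonal square on the same \(h, k\) commutes, so it factors uniquely through the pushout, and the given square is an orthogonal pushout.

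For the converse, let the orthogonal square with \(i, j\) into \(Z\) be an orthogonal pushout. The first step is to show that \(Z = i(X) + j(Y)\). Compare the square with itself. Both \(\id_Z\) and \(\id_Z + g\) are factorisations, for any linear \(g \colon Z \to Z\) that vanishes on \(Z_0 = i(X) + j(Y)\). If \(Z_0 \neq Z\), a nonzero such \(g\) exists, since arbitrary linear maps are allowed, and this contradicts uniqueness. Hence \(Z_0 = Z\).

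Next, \cref{lem:orthogonal-square-intersection} gives \(i(X) \cap j(Y) = i(h(W))\). All the maps are isometries, hence injective.

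I then compare \(Z\) with the usual pushout \(P = (X \oplus Y)/\{(h(w), -k(w))\}\) of vector spaces. The canonical map \(P \to Z\) induced by \(i\) and \(j\) is surjective by the first step. Its domain has dimension \(\dim X + \dim Y - \dim W\). By the intersection formula this equals \(\dim i(X) + \dim j(Y) - \dim(i(X) \cap j(Y)) = \dim Z\). So the map is an isomorphism, and the square is a pushout.

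The main obstacle is the converse direction. The universal property there only tests against orthogonal squares, so it cannot be used directly to produce factorisations of arbitrary commuting squares. The plan avoids this by extracting the two subspace conditions, spanning and intersection, separately: spanning comes from the uniqueness clause applied to the square itself, and intersection comes from the earlier lemma. These two conditions characterise pushouts of injections in vector spaces, which finishes the argument.
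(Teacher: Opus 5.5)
Your proof is correct and follows the paper's route. Both arguments get \(Z = i(X) + j(Y)\) from the uniqueness clause applied to \(\id_Z\) and \(\id_Z + g\), and \(i(X) \cap j(Y) = i(h(W))\) from \cref{lem:orthogonal-square-intersection}. The only difference is the last step: the paper defines the factorisation \(f(i(v) + j(v')) = i'(v) + j'(v')\) directly and uses the intersection to show it is well defined, while you compare dimensions with the standard pushout; your version is equally valid here because the spaces are finite-dimensional and \(h\), \(i\), \(j\) are injective.
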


\begin{proof}
    We prove the second statement first, for an orthogonal square as in \cref{def:orthogonal-square}.
    A pushout is in particular an orthogonal pushout, since the factorisation property is then required for all pairs \(i'\), \(j'\) with \(i' \circ h = j' \circ k\).

    Conversely, suppose that the square is an orthogonal pushout.
    We first show that \(i(X) + j(Y) = Z\).
    Otherwise there is a non-zero linear map \(\chi \colon Z \to Z\) that vanishes on \(i(X) + j(Y)\), and both \(\id_Z\) and \(\id_Z + \chi\) factor the square through itself, contradicting uniqueness.
    Now let \(i' \colon X \to Z'\) and \(j' \colon Y \to Z'\) be linear maps with \(i' \circ h = j' \circ k\).
    Every element of \(Z\) has the form \(i(v) + j(v')\), and we put \(f(i(v) + j(v')) = i'(v) + j'(v')\).
    This is well defined: if \(i(v) = -j(v')\), then this vector lies in \(i(X) \cap j(Y) = i(h(W))\) by \cref{lem:orthogonal-square-intersection}, so \(v = h(w)\) and \(-v' = k(w)\) for some \(w \in W\), since \(i\) and \(j\) are injective, and \(i'(v) + j'(v') = i'(h(w)) - j'(k(w)) = 0\).
    The map \(f\) is linear, satisfies \(f \circ i = i'\) and \(f \circ j = j'\), and is unique with these properties, so the square is a pushout.

    For the first statement, the injections of the orthogonal biproduct are orthogonal isometries by \cref{prop:orthogonal-biproduct-herm}, so the square is orthogonal, and it is a pushout because \(V \oplus W\) is a coproduct.
\end{proof}

In the factorisation through the biproduct square, the morphism is \(f = i' \circ \pr_1 + j' \circ \pr_2\), and it is an isometry, since \(f^\dagger \circ f = \inn_1 \circ \pr_1 + \inn_2 \circ \pr_2 = \id\) when \(i'\) and \(j'\) are orthogonal isometries.
The biproduct is thus the orthogonal sum of \(V\) and \(W\) in the geometric sense: the smallest space containing orthogonal copies of both.

\begin{proposition}
    \label{prop:pushout-sequence}
    Let \(i \colon U \to W\) and \(j \colon V \to W\) be orthogonal isometries in \(\Herm_K\).
    The square
    \[
        \begin{tikzcd}
            0 \rar \dar & V \dar["j"] \\
            U \rar["i"'] & W
        \end{tikzcd}
    \]
    is an orthogonal pushout if and only if
    \[
        \begin{tikzcd}
            0 \rar & U \rar["i"] & W \rar["j^\dagger"] & V \rar & 0
        \end{tikzcd}
    \]
    is short exact.
\end{proposition}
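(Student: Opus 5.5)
By \cref{prop:orthogonal-pushout-herm}, the orthogonal square with corner \(0\) is an orthogonal pushout if and only if it is a pushout. A pushout of \(0 \to U\) and \(0 \to V\) is just a coproduct. So the plan is to show that both conditions are equivalent to the single statement \(W = i(U) \oplus j(V)\), meaning \(i(U) + j(V) = W\) as an internal direct sum.

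First, the pushout side. The square is a pushout exactly when the map \(U \oplus V \to W\) with components \(i\) and \(j\) is an isomorphism. The intersection \(i(U) \cap j(V)\) is zero by \cref{lem:orthogonal-square-intersection} with \(W = 0\) in the corner. Alternatively, it is zero directly: if \(i(u) = j(v)\), then \(u = i^\dagger i(u) = i^\dagger j(v) = 0\). Hence the comparison map is always injective, and the square is a pushout if and only if \(i(U) + j(V) = W\).

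Second, the exact-sequence side. The map \(i\) is injective because it is an isometry. The map \(j^\dagger\) is surjective because \(j^\dagger \circ j = \id_V\). The composite \(j^\dagger \circ i\) vanishes because \(i \perp j\). So exactness reduces to \(\Ker j^\dagger \subseteq i(U)\).

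It remains to show that \(i(U) + j(V) = W\) if and only if \(\Ker j^\dagger \subseteq i(U)\); this is the only real step. The projector \(e = j \circ j^\dagger\) gives \(W = j(V) \oplus \Ker(j^\dagger)\), since \(w = e(w) + (w - e(w))\) and \(j^\dagger(w - e(w)) = 0\). Also \(i(U) \subseteq \Ker j^\dagger\).
\begin{itemize}
    \item If \(\Ker j^\dagger \subseteq i(U)\), then \(\Ker j^\dagger = i(U)\), and \(W = j(V) + i(U)\).
    \item Conversely, suppose \(W = i(U) + j(V)\) and \(w \in \Ker j^\dagger\). Write \(w = i(u) + j(v)\). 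Applying \(j^\dagger\) gives \(0 = 0 + v\), so \(w = i(u) \in i(U)\).
\end{itemize}
This needs no positivity, since only the isometry equations are used. Combining the two sides proves the equivalence, and I expect no real obstacle beyond invoking \cref{prop:orthogonal-pushout-herm} correctly.
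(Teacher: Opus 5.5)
Your proposal is correct and follows essentially the same route as the paper. You reduce the orthogonal-pushout condition to \(i(U) + j(V) = W\), reduce exactness to \(\Ker j^\dagger \subseteq i(U)\), and prove these equivalent by decomposing \(w = j(j^\dagger(w)) + (w - j(j^\dagger(w)))\) in one direction and applying \(j^\dagger\) to \(i(u) + j(v)\) in the other. The only difference is minor: you obtain the first reduction from the statement of \cref{prop:orthogonal-pushout-herm} plus the observation that the comparison map \(U \oplus V \to W\) is always injective, whereas the paper takes it from the proof of that proposition.
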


\begin{proof}
    The square is orthogonal because \(i \perp j\).
    By \cref{prop:orthogonal-pushout-herm} and its proof, it is an orthogonal pushout if and only if \(i(U) + j(V) = W\).
    The sequence is exact at \(U\) and at \(V\), since \(i\) is injective and \(j^\dagger\) is surjective, both because \(i^\dagger \circ i = \id\) and \(j^\dagger \circ j = \id\).
    Moreover \(j^\dagger \circ i = (i^\dagger \circ j)^\dagger = 0\), so the image of \(i\) lies in the kernel of \(j^\dagger\).

    If \(i(U) + j(V) = W\) and \(j^\dagger(i(u) + j(v)) = 0\), then \(v = 0\), so the kernel of \(j^\dagger\) is the image of \(i\).
    Conversely, if the sequence is exact, then for every \(w \in W\) the vector \(w - j(j^\dagger(w))\) lies in the kernel of \(j^\dagger\), hence equals \(i(u)\) for some \(u\), and \(w = i(u) + j(j^\dagger(w))\).
\end{proof}

\subsection{Orthogonal complements}
\label{sec:orthogonal-complements}

\begin{definition}
    \label{def:orthogonal-complement}
    An \emph{orthogonal complement} of an isometry \(i \colon X \to Z\) is an isometry \(i^\perp \colon X^\perp \to Z\) with \(i \perp i^\perp\), such that every morphism \(j \colon Y \to Z\) with \(i \perp j\) factors as \(j = i^\perp \circ f\) for a unique \(f\).
\end{definition}

In a diagram, the factorisation reads
\[
    \begin{tikzcd}
        & & Y \ar[dl, "f"', dashed] \ar[ddl, "j", bend left] \\
        0 \rar \dar \ar[urr, bend left] & X^\perp \dar["i^\perp"] & \\
        X \rar["i"'] & Z &
    \end{tikzcd}
\]
where the square is orthogonal and \(i \perp j\).
When \(j\) is an isometry, so is the factoring morphism \(f\), because \(f^\dagger \circ f = f^\dagger \circ (i^\perp)^\dagger \circ i^\perp \circ f = j^\dagger \circ j = \id_Y\).
In particular, every isometry orthogonal to \(i\) factors through \(i^\perp\) by an isometry, so \(i^\perp\) is the largest isometry orthogonal to \(i\), which is the classical description of the orthogonal complement of a subspace.
Orthogonal complements are unique up to a unitary: if \(i^\perp\) and \(j\) are both orthogonal complements of \(i\), then \(j = i^\perp \circ f\) and \(i^\perp = j \circ f'\), and since isometries are monomorphisms, \(f \circ f'\) and \(f' \circ f\) are identities.

A \emph{kernel} of a morphism \(f \colon Z \to X\) is an equaliser of \(f\) and the zero morphism: a morphism \(k \colon Y \to Z\) with \(f \circ k = 0\) through which every \(g\) with \(f \circ g = 0\) factors uniquely.

\begin{definition}
    \label{def:isometric-kernel}
    An \emph{isometric kernel} of a morphism \(f\) is a kernel of \(f\) that is an isometry.
\end{definition}

\begin{proposition}
    \label{prop:complement-is-kernel}
    An orthogonal complement of an isometry \(i\) is the same as an isometric kernel of \(i^\dagger\).
\end{proposition}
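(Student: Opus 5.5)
The plan is to unfold both definitions and observe that they impose the same conditions on an isometry \(k \colon Y \to Z\); no construction is needed. The key identity is that, for any morphism \(j \colon Y \to Z\), the condition \(i \perp j\) means by \cref{def:orthogonal} that \(i^\dagger \circ j = 0\), which is exactly the condition that \(j\) is killed by \(i^\dagger\). With this translation, the two universal properties quantify over the same class of morphisms.

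First, suppose \(i^\perp \colon X^\perp \to Z\) is an orthogonal complement of \(i\). It is an isometry by definition. The condition \(i \perp i^\perp\) reads \(i^\dagger \circ i^\perp = 0\). For any \(j\) with \(i^\dagger \circ j = 0\), we have \(i \perp j\), so \(j = i^\perp \circ f\) for a unique \(f\). This is precisely the universal property of a kernel of \(i^\dagger\), that is, of an equaliser of \(i^\dagger\) and \(0\). Hence \(i^\perp\) is an isometric kernel of \(i^\dagger\).

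Conversely, let \(k\) be an isometric kernel of \(i^\dagger\). Then \(i^\dagger \circ k = 0\) gives \(i \perp k\). Every \(j\) with \(i \perp j\) satisfies \(i^\dagger \circ j = 0\), so it factors uniquely through \(k\). Together with \(k\) being an isometry, this gives all the conditions of \cref{def:orthogonal-complement}. I should check that the kernel's uniqueness clause matches the complement's: both demand a unique \(f\) with \(j = k \circ f\), so they agree. Isometries are monomorphisms in any case.

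I expect no real obstacle here. The only point requiring care is the direction convention in \cref{def:orthogonal}. There \(f \perp g\) is defined as \(f^\dagger \circ g = 0\), so \(i \perp j\) is \(i^\dagger \circ j = 0\) as needed, and symmetry of \(\perp\) (noted after that definition) removes any ambiguity. The statement "the same as" should then be read as an equality of the two classes of morphisms into \(Z\), so in particular each exists if and only if the other does.
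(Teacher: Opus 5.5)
Your proof is correct and is essentially the paper's argument: both observe that \(i \perp j\) means exactly \(i^\dagger \circ j = 0\), so the universal property of an orthogonal complement is that of a kernel of \(i^\dagger\), and both notions require an isometry. Checking the two directions separately only makes explicit what the paper states in one line.
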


\begin{proof}
    A morphism \(j\) is orthogonal to \(i\) if and only if \(i^\dagger \circ j = 0\), that is, if and only if \(j\) equalises \(i^\dagger\) and \(0\) in the diagram
    \[
        \begin{tikzcd}
            Y \rar["j"] & Z \rar[shift left, "i^\dagger"] \rar[shift right, "0"'] & X.
        \end{tikzcd}
    \]
    Hence the universal property of an orthogonal complement in \cref{def:orthogonal-complement} is the universal property of a kernel of \(i^\dagger\), and both notions ask for an isometry.
\end{proof}

\begin{proposition}
    \label{prop:complements-exist-spaces}
    In \(\Herm_K\) every isometry \(i\) has an orthogonal complement, namely an orthogonal splitting of the projector \(\id - i \circ i^\dagger\).
\end{proposition}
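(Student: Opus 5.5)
The plan is to check that \(e = \id - i \circ i^\dagger\) is a projector, split it orthogonally using \cref{prop:orthogonal-splitting}, and then verify the universal property of \cref{def:orthogonal-complement} directly.

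First, \(e\) is a projector. Since \(i^\dagger \circ i = \id_X\), the map \(p = i \circ i^\dagger\) satisfies \(p \circ p = i \circ (i^\dagger \circ i) \circ i^\dagger = p\) and \(p^\dagger = p\). Here the dagger of \(\Herm_K\) is additive, being the adjoint for the forms (or by \cref{prop:dagger-additive} via \cref{prop:herm-dagger}). Hence \(e^\dagger = \id - p = e\) and \(e \circ e = \id - 2p + p \circ p = e\). By \cref{prop:orthogonal-splitting}, \(e\) then has an orthogonal splitting: an isometry \(i^\perp \colon X^\perp \to Z\) with \(i^\perp \circ (i^\perp)^\dagger = e\). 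Concretely, \(X^\perp\) is the image \(e(Z)\) with the restricted form, and \(i^\perp\) is the inclusion.

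Next, orthogonality. We have \(i^\dagger \circ e = i^\dagger - i^\dagger \circ i \circ i^\dagger = 0\). Therefore
\[
    i^\dagger \circ i^\perp = i^\dagger \circ i^\perp \circ (i^\perp)^\dagger \circ i^\perp = i^\dagger \circ e \circ i^\perp = 0,
\]
so \(i \perp i^\perp\).

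For the universal property, let \(j \colon Y \to Z\) satisfy \(i^\dagger \circ j = 0\). Then \(e \circ j = j - i \circ i^\dagger \circ j = j\), so \(j = i^\perp \circ \bigl((i^\perp)^\dagger \circ j\bigr)\). This gives the factorisation with \(f = (i^\perp)^\dagger \circ j\). Uniqueness holds because \(i^\perp\) is an isometry and hence a monomorphism: if \(i^\perp \circ f = i^\perp \circ f'\), applying \((i^\perp)^\dagger\) gives \(f = f'\).

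No step is a real obstacle. The only point needing care is that the splitting must be an isometry for a possibly indefinite form, where the restriction of a unimodular form to a subspace can fail to be unimodular. \Cref{prop:orthogonal-splitting} already handles exactly this for images of projectors, so it suffices to invoke it rather than restrict the form to \(\ker i^\dagger\) by hand.
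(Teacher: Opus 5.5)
Your proof is correct and follows the paper's argument essentially step for step: the paper also checks that \(e = \id - i \circ i^\dagger\) is a projector, splits it by \cref{prop:orthogonal-splitting}, and obtains orthogonality and the factorisation \(j = i^\perp \circ ((i^\perp)^\dagger \circ j)\) from \(i^\dagger \circ e = 0\) and \(e \circ j = j\), with uniqueness because an isometry is a monomorphism. The only cosmetic difference is that the paper phrases the universal property as \(i^\perp\) being an isometric kernel of \(i^\dagger\) and then cites \cref{prop:complement-is-kernel}, whereas you verify \cref{def:orthogonal-complement} directly, which is the same check.
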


\begin{proof}
    The map \(e = \id - i \circ i^\dagger\) is self-adjoint, and it is idempotent because
    \[
        e \circ e = \id - 2\, i \circ i^\dagger + i \circ i^\dagger \circ i \circ i^\dagger = e.
    \]
    By \cref{prop:orthogonal-splitting}, it has an orthogonal splitting \(k\), with \(k \circ k^\dagger = e\) and \(k^\dagger \circ k = \id\).
    We show that \(k\) is an isometric kernel of \(i^\dagger\), which is an orthogonal complement of \(i\) by \cref{prop:complement-is-kernel}.
    First, \(i^\dagger \circ e = i^\dagger - i^\dagger \circ i \circ i^\dagger = 0\), so \(i^\dagger \circ k = i^\dagger \circ e \circ k = 0\), since \(e \circ k = k \circ k^\dagger \circ k = k\).
    If \(i^\dagger \circ g = 0\) for a linear map \(g\), then \(e \circ g = g\), so \(g = k \circ (k^\dagger \circ g)\), and this factorisation is unique because \(k\) is an isometry.
\end{proof}

\begin{corollary}
    \label{cor:complement-sequence}
    In \(\Herm_K\), an isometry \(j \colon V \to W\) is an orthogonal complement of an isometry \(i \colon U \to W\) if and only if \(i \perp j\) and
    \[
        \begin{tikzcd}
            0 \rar & U \rar["i"] & W \rar["j^\dagger"] & V \rar & 0
        \end{tikzcd}
    \]
    is short exact.
\end{corollary}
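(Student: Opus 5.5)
The plan is to reduce the corollary to \cref{prop:pushout-sequence}, which already characterises short exactness of this sequence for orthogonal isometries \(i \perp j\). Both directions then come down to showing that, for such \(i\) and \(j\), \(j\) is an orthogonal complement of \(i\) exactly when \(i(U) + j(V) = W\). By the proof of \cref{prop:pushout-sequence}, that spanning condition is equivalent to exactness.

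For the forward direction, suppose \(j\) is an orthogonal complement of \(i\). Then \(i \perp j\) holds by definition. By \cref{prop:complements-exist-spaces}, an orthogonal splitting \(k\) of \(e = \id - i \circ i^\dagger\) is also an orthogonal complement. Uniqueness up to a unitary, noted after \cref{def:orthogonal-complement}, gives \(j = k \circ f\) with \(f\) unitary, so \(j(V) = k(V') = e(W)\). Every \(w\) decomposes as \(w = i(i^\dagger(w)) + e(w)\), so \(i(U) + j(V) = W\), and \cref{prop:pushout-sequence} gives exactness.

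For the converse, assume \(i \perp j\) and the sequence is exact. Again by \cref{prop:pushout-sequence}, \(W = i(U) + j(V)\). Let \(g \colon Y \to W\) be a linear map with \(i^\dagger \circ g = 0\). The candidate factorisation is \(g = j \circ (j^\dagger \circ g)\), so I need \(j \circ j^\dagger\) to fix the image of \(g\).

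Take \(w = g(y)\) and write \(w = i(u) + j(v)\). Applying \(i^\dagger\), and using \(i^\dagger \circ i = \id\) and \(i^\dagger \circ j = 0\), gives \(0 = u\). So \(w = j(v)\), hence \(j(j^\dagger(w)) = j(v) = w\). The factorisation is unique because \(j\) is an isometry and hence a monomorphism. Therefore \(j\) is an isometric kernel of \(i^\dagger\), which is an orthogonal complement by \cref{prop:complement-is-kernel}.

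No step looks hard. The one point needing care is in the forward direction: checking that the image of an arbitrary orthogonal complement equals \(e(W)\). This follows from uniqueness up to unitary, or directly: \(k\) factors through \(j\) and \(j\) factors through \(k\), so the two images coincide.
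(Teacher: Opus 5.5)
Your proposal is correct and follows essentially the paper's route: both reduce to the spanning condition \(i(U) + j(V) = W\), pass to exactness through the proof of \cref{prop:pushout-sequence}, and prove the converse by the same computation, namely that \(i^\dagger(i(u) + j(v)) = 0\) forces \(u = 0\). The one small difference is in the forward direction. You identify \(j(V) = e(W)\) using uniqueness of complements up to a unitary together with the explicit complement of \cref{prop:complements-exist-spaces}. The paper instead shows directly that any isometric kernel of \(i^\dagger\) has image \(\Ker i^\dagger\), using \(\Ker i^\dagger\), equipped with some unimodular Hermitian form, as a test object.
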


\begin{proof}
    By \cref{prop:complements-exist-spaces,prop:complement-is-kernel}, the orthogonal complements of \(i\) are the isometric kernels of \(i^\dagger\).
    Since the morphisms of \(\Herm_K\) are all linear maps, and the kernel of \(i^\dagger\) carries some unimodular Hermitian form (for example \(\sum_i v^i \conj{w^i}\) in a basis) and so can serve as a test object, an isometry \(j\) is a kernel of \(i^\dagger\) if and only if its image is the kernel of \(i^\dagger\).
    Let \(j\) be orthogonal to \(i\), so that \(j(V)\) is contained in the kernel of \(i^\dagger\).
    We claim that \(j(V)\) is the kernel of \(i^\dagger\) if and only if \(i(U) + j(V) = W\).
    If it is, then every \(w \in W\) decomposes as \(w = i(i^\dagger(w)) + (w - i(i^\dagger(w)))\), with the second term in the kernel of \(i^\dagger\) and hence in \(j(V)\).
    Conversely, if \(i(U) + j(V) = W\) and \(i^\dagger(i(u) + j(v)) = 0\), then \(u = 0\), so the kernel of \(i^\dagger\) is contained in \(j(V)\).
    By the proof of \cref{prop:pushout-sequence}, this holds if and only if the sequence is short exact.
\end{proof}

Together with \cref{prop:pushout-sequence}, the corollary says that \(j\) is an orthogonal complement of \(i\) if and only if the square formed by \(i\), \(j\) and the zero space is an orthogonal pushout.

\part{Bundles and connections}
\label{part:bundles}

\section{Vector bundles and bundle metrics}
\label{sec:vector-bundles}

In \cref{sec:vector-spaces} a dagger on all finite-dimensional vector spaces turned out to be a Hermitian form on every space.
In this section the single point is replaced by a manifold \(M\), and vector spaces by vector bundles over \(M\).
By the Serre--Swan theorem, bundles are the same as finitely generated projective modules over the ring of functions, so the structure we study is the category of such modules over a commutative ring \(A\).
We show in \cref{thm:daggers-are-metrics} that a dagger on it is an involution of \(A\) together with a unimodular Hermitian form on every module, which for \(A = C^\infty(M)\) is a bundle metric on every bundle, and that its unitaries are the isometric bundle isomorphisms.
The proofs of \cref{sec:vector-spaces} carry over, with dual bases in place of bases.
The last subsection shows that the forms also make the category dagger compact, with the conjugate module as the dual.

Throughout, \(A\) is a commutative ring and \(\fgpMod_A\) is the category of finitely generated projective \(A\)-modules and \(A\)-linear maps.
The dual of a module \(E\) is \(E^\vee = \Hom_A(E, A)\), the module of linear forms on \(E\).

For a vector bundle \(E\) over a manifold \(M\), the smooth sections form a module \(\Gamma(E)\) over the ring \(C^\infty(M)\) of smooth functions, and a bundle map \(E \to F\) gives a \(C^\infty(M)\)-linear map \(\Gamma(E) \to \Gamma(F)\).

\begin{theorem}[Serre--Swan, {\cite[Theorem~A.4]{fritz2024}}, {\cite[Theorems~12.29 and 12.32]{nestruev2020}}]
    \label{thm:serre-swan}
    For a manifold \(M\) with finitely many connected components, taking sections gives an equivalence between the category \(\VB_M\) of real (or complex) vector bundles over \(M\) and \(\fgpMod_A\) for \(A = C^\infty(M)\) (or \(C^\infty(M, \C)\)).
\end{theorem}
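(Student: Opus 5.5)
The plan is to show that the sections functor \(\Gamma \colon \VB_M \to \fgpMod_A\), \(E \mapsto \Gamma(E)\), is well defined, fully faithful and essentially surjective. Fibrewise operations are already linear, so the work is in producing finitely many sections and in realising projective modules as bundles. First, \(\Gamma(E)\) is finitely generated projective. Since \(M\) has finitely many components, and each component has a fixed rank and is a manifold of some dimension \(d\), there is a finite open cover of \(M\) by trivialising sets. This comes from the dimension-theoretic trick of covering by \(d+1\) families of disjoint open sets, refining a trivialising cover. Choose a partition of unity subordinate to this cover and multiply local frames by square roots of the bump functions. This gives finitely many sections spanning every fibre, hence a surjective bundle map \(M \times \R^N \to E\). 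Its kernel is a subbundle, and a complement is obtained from any fibre metric (also built from the partition of unity). So \(E \oplus E' \cong M \times \R^N\), and \(\Gamma(E)\) is a direct summand of \(A^N\).

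Second, full faithfulness. A bundle map \(E \to F\) is determined by its action on sections, because every vector in a fibre extends to a global section by a bump function. Conversely, let \(T \colon \Gamma(E) \to \Gamma(F)\) be \(A\)-linear. We show that \((Ts)(x)\) depends only on \(s(x)\). If \(s(x) = 0\), write \(s = \sum_i f_i s_i\) with \(f_i(x) = 0\), using the finite spanning family and a local expansion extended by a bump function. Then \((Ts)(x) = \sum_i f_i(x)(Ts_i)(x) = 0\). This defines a fibrewise linear map, and it is smooth because it is smooth in local frames.

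Third, essential surjectivity. Let \(P\) be finitely generated projective, so \(P \cong e A^N\) for an idempotent matrix \(e \in M(N, A)\). Then \(e\) is a smooth family of idempotent matrices \(e(x)\), and its rank is locally constant, since \(\operatorname{rank} e(x) = \tr e(x)\) is continuous and integer-valued (working over \(\R\); the complex case is the same). Hence the images \(E_x = e(x)\R^N\) form a subbundle of \(M \times \R^N\): near \(x_0\), applying \(e(x)\) to a basis of \(E_{x_0}\) gives a local frame. By the second step, \(\Gamma(E)\) is the image of \(e\) acting on \(\Gamma(M \times \R^N) = A^N\), which is \(P\).

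The main obstacle is the finiteness of the cover in the first step. Without it one only obtains countably generated modules, and this is exactly where the hypothesis on components is used (non-constant rank over infinitely many components would break finite generation). I would cite the standard covering-dimension argument, or alternatively embed \(M\) in some \(\R^K\) by Whitney and pull back a finite-rank construction. The rest is routine given \cite{nestruev2020}.
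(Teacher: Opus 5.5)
Your proposal is correct and follows the standard argument of the sources the paper cites; the paper gives no proof of its own. The steps match: a finite trivialising cover makes \(E\) a direct summand of a trivial bundle, locality of \(A\)-linear maps gives full faithfulness, and image bundles of idempotent matrices give essential surjectivity, with the finiteness of components used exactly where you say and varying ranks handled componentwise as the paper's remark requires. One small repair: square roots of an arbitrary partition of unity need not be smooth, but you do not need them, since the sections \(\varphi_i e_{i,a}\) already span every fibre.
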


In particular, bundle maps \(E \to F\) are the same as \(A\)-linear maps \(\Gamma(E) \to \Gamma(F)\), and the dual bundle \(E^\vee\) has the dual module \(\Gamma(E)^\vee\) as its sections.
The correspondence is the smooth analogue of Swan's theorem for topological vector bundles over compact Hausdorff spaces \cite{swan1962}.
Here a vector bundle may have different ranks over different components of \(M\), matching the fact that a projective module over \(C^\infty(M)\) can have different ranks there; \cite{nestruev2020} treats connected \(M\), where the rank is constant.
The finiteness hypothesis ensures that modules of sections are finitely generated: over infinitely many components, a bundle whose rank grows from one component to the next has a module of sections that is not finitely generated.
Both references treat real bundles; their proofs apply verbatim to complex bundles and \(C^\infty(M, \C)\).
From now on, manifolds have finitely many connected components, and we identify a bundle with its module of sections.

\begin{definition}
    \label{def:dual-basis}
    A \emph{dual basis} of an \(A\)-module \(E\) is a finite family \(e_i \in E\), \(\xi^i \in E^\vee\) with \(u = \sum_i \xi^i(u)\, e_i\) for all \(u \in E\).
\end{definition}

A module has a dual basis if and only if it is finitely generated projective \cite[Theorem~A.2]{fritz2024}.
For a bundle, a dual basis is a finite family of sections \(e_i\) together with linear forms \(\xi^i\) that recover every section from its components \(\xi^i(u)\).
The values \(e_i(x)\) span each fibre but are in general linearly dependent, and a dual basis is a global replacement for a local frame.

\begin{example}
    \label{ex:sphere-dual-basis}
    Let \(\langle -, - \rangle\) be the dot product on \(\R^3\), and write a point of the sphere \(S^2 \subseteq \R^3\) as \(x = (x^1, x^2, x^3)\).
    Sections of the tangent bundle \(TS^2\) are the smooth maps \(u \colon S^2 \to \R^3\) with \(\langle u(x), x \rangle = 0\) for all \(x\).
    Write \(\partial_1, \partial_2, \partial_3\) for the standard basis of \(\R^3\), and let \(e_i(x) = \partial_i - x^i x\) be its orthogonal projection onto the tangent plane at \(x\) and \(\xi^i(u) = u^i\) the \(i\)-th component of \(u\).
    Then
    \[
        \sum_i \xi^i(u)\, e_i(x) = \sum_i u^i(x) \bigl(\partial_i - x^i x\bigr) = u(x) - \langle u(x), x \rangle\, x = u(x),
    \]
    so \((e_i, \xi^i)\) is a dual basis of \(\Gamma(TS^2)\) with three elements.
    By the hairy ball theorem, \(TS^2\) has no nowhere vanishing section, so it has no global frame.
\end{example}

\subsection{Hermitian modules}
\label{sec:hermitian-modules}

\begin{definition}
    \label{def:herm-A}
    Fix an involutive automorphism \(f \mapsto \conj f\) of \(A\).
    Sesquilinear, Hermitian and unimodular forms on an \(A\)-module are defined as in \cref{def:sesquilinear}, with values in \(A\).
    A \emph{Hermitian module} is a finitely generated projective module with a unimodular Hermitian form, and \(\Herm_A\) is the category of Hermitian modules and all \(A\)-linear maps.
    The module \(A\) carries the standard form \(\langle f, g \rangle = f \conj g\).
\end{definition}

\begin{example}
    \label{ex:bundle-metrics}
    Let \(A = C^\infty(M)\) with the identity involution.
    An \(A\)-bilinear map \(\Gamma(E) \times \Gamma(E) \to A\) is an \(A\)-linear map \(\Gamma(E \otimes E) \to A\), since \(\Gamma(E \otimes E) \cong \Gamma(E) \otimes_A \Gamma(E)\) \cite[Theorem~A.4]{fritz2024}, so by \cref{thm:serre-swan} it is a bundle map from \(E \otimes E\) to the trivial line bundle \(M \times \R\): a family of bilinear forms \(\langle -, - \rangle_x\) on the fibres, with \(\langle u, v \rangle(x) = \langle u(x), v(x) \rangle_x\) smooth for all sections \(u\) and \(v\).
    For a symmetric form, \(\flat\) is a bundle map \(E \to E^\vee\), and by \cref{thm:serre-swan} it is bijective on sections if and only if it is an isomorphism of bundles, that is, if and only if every \(\langle -, - \rangle_x\) is unimodular.
    Hence a unimodular symmetric form on \(\Gamma(E)\) is a \emph{pseudo-Riemannian bundle metric} on \(E\), of any signature, and a Riemannian bundle metric when it is positive definite.
    We call a real bundle with a pseudo-Riemannian bundle metric a \emph{metric bundle}.
\end{example}

\begin{example}
    \label{ex:hermitian-bundle-metrics}
    For a complex bundle \(E\) and \(A = C^\infty(M, \C)\) with complex conjugation, the argument of \cref{ex:bundle-metrics} shows that a unimodular Hermitian form on \(\Gamma(E)\) is a smooth family of unimodular Hermitian forms on the fibres, that is, a Hermitian bundle metric of any signature.
\end{example}

\begin{proposition}
    \label{prop:herm-module-dagger}
    Adjoints make \(\Herm_A\) a dagger category, with \((f \varphi)^\dagger = \conj f\, \varphi^\dagger\).
\end{proposition}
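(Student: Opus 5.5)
The plan is to transcribe the proof of \cref{prop:herm-dagger} to modules, replacing the dimension count for vector spaces by the unimodularity hypothesis, which over a ring asks that \(\flat\) be bijective.

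\emph{Existence and uniqueness of adjoints.} Let \(\varphi \colon E \to F\) be an \(A\)-linear map between Hermitian modules and let \(w \in F\). The map \(\langle \varphi(-), w \rangle\) is \(A\)-linear, so it is an element of \(E^\vee\). Since \(\flat \colon E \to E^\vee\) is bijective, there is exactly one \(u \in E\) with \(u^\flat = \langle \varphi(-), w \rangle\), and we put \(\varphi^\dagger(w) = u\). In terms of the musical maps,
\[
    \varphi^\dagger = \sharp \circ \varphi^\vee \circ \flat .
\]
Here \(\varphi^\vee\) is \(A\)-linear, \(\flat\) is semilinear because \((f w)^\flat = \conj f\, w^\flat\), and \(\sharp\) is semilinear as the inverse of a semilinear bijection. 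The composite of two semilinear maps and a linear one is \(A\)-linear, so \(\varphi^\dagger\) is a morphism of \(\Herm_A\). Uniqueness is immediate: if \(\langle v, u \rangle = \langle v, u' \rangle\) for all \(v\), then \(u^\flat = u'^\flat\), and injectivity of \(\flat\) gives \(u = u'\).

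\emph{Dagger axioms.} These follow from uniqueness of adjoints, exactly as in \cref{prop:herm-dagger}.
\begin{itemize}
    \item For \(\psi \circ \varphi\), the chain \(\langle \psi\varphi(v), u \rangle = \langle \varphi(v), \psi^\dagger(u) \rangle = \langle v, \varphi^\dagger\psi^\dagger(u) \rangle\) shows that \((\psi \circ \varphi)^\dagger = \varphi^\dagger \circ \psi^\dagger\).
    \item The identity is its own adjoint.
    \item For involutivity, using Hermitian symmetry twice, \(\langle \varphi^\dagger(w), v \rangle = \conj{\langle v, \varphi^\dagger(w) \rangle} = \conj{\langle \varphi(v), w \rangle} = \langle w, \varphi(v) \rangle\). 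This gives \(\varphi^{\dagger\dagger} = \varphi\).
\end{itemize}
Finally, \(\langle f\varphi(v), w \rangle = f \langle v, \varphi^\dagger(w) \rangle = \langle v, \conj f\, \varphi^\dagger(w) \rangle\), using sesquilinearity in the second variable, so \((f\varphi)^\dagger = \conj f\, \varphi^\dagger\).

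\emph{Main obstacle.} The only step that differs from the field case is existence of the adjoint. Over a field, injectivity of \(\flat\) sufficed by counting dimensions. Over a ring this fails, as the form \(\langle a, b \rangle = 2ab\) on \(\Z\) shows, since its \(\flat\) is injective but not surjective. The argument therefore relies on surjectivity of \(\flat\), which is part of the definition of unimodular, and on \(\sharp\) being semilinear. The latter is a short check: applying the bijection \(\flat\) to both sides of \(\sharp(f\alpha) = \conj f\, \sharp(\alpha)\) reduces it to \(f\alpha = f\alpha\), using \(\conj{\conj f} = f\). Finite generation and projectivity of the modules are not needed for this proposition.
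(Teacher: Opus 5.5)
Your proposal is correct and follows the paper's proof, which states that the argument of \cref{prop:herm-dagger} applies word for word because it uses only the bijectivity of \(\flat\) and Hermitian symmetry. Your explicit checks supply exactly the details that this transfer depends on: that \(\sharp\) is semilinear, and that existence of adjoints comes from surjectivity of \(\flat\) rather than from a dimension count.
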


\begin{proof}
    The proof of \cref{prop:herm-dagger} applies word for word, with \(A\) in place of \(K\).
    It uses only that \(\flat\) is bijective, so that each linear form \(\langle \varphi(-), w \rangle\) equals \(\langle -, v \rangle\) for a unique \(v\), and that the forms are Hermitian.
\end{proof}

As in \cref{def:elements-as-maps}, an element \(u\) of a module \(E\) gives the linear map \(\hat u \colon A \to E\), \(f \mapsto f u\), and every linear map \(A \to E\) is of this form.
The computation in the proof of \cref{lem:flat-is-dagger} shows that in \(\Herm_A\) the adjoint of \(\hat v\) is \(v^\flat\), so that \(\langle u, v \rangle = \hat v^\dagger \circ \hat u\) as an element of \(\End_A(A) = A\).
A linear map \(i \colon E \to F\) of Hermitian modules is an isometry if and only if \(\langle i(u), i(v) \rangle = \langle u, v \rangle\) for all \(u, v \in E\).
For metric bundles this says that \(i\) restricts to a linear isometric embedding \(E_x \to F_x\) on every fibre, and the unitaries are the bundle isomorphisms that preserve the metrics.

\begin{theorem}
    \label{thm:daggers-are-metrics}
    Let \(\dagger\) be a dagger on \(\fgpMod_A\).
    There are a unique involutive automorphism of \(A\) and a unique family of unimodular Hermitian forms, one on every finitely generated projective module, whose adjoints are \(\dagger\) and whose form on \(A\) is the standard one.
\end{theorem}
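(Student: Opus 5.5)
The plan is to transpose the proof of \cref{thm:daggers-are-forms} from \(\fdVect_K\) to \(\fgpMod_A\), replacing the uses of bases and of finite dimension by dual bases (\cref{def:dual-basis}).

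First, additivity. The proof of \cref{prop:dagger-additive} uses only that composition is biadditive and that the category has a zero object and biproducts. \(\fgpMod_A\) has both: the zero module, and \(E \oplus E\), which is finitely generated projective. So every dagger on \(\fgpMod_A\) is additive.

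Next, the scalars. Restricted to \(\End_A(A) = A\), the dagger is additive, fixes \(1\), is involutive, and reverses products. Since \(A\) is commutative, it preserves products, so it is an involutive automorphism \(f \mapsto \conj f\).

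Semilinearity is the step I expect to need the most care. Over a field we used that linear forms separate points; here I would argue as follows. For \(u \in E\) and \(f \in A\), the identity \((f \id_E) \circ \hat u = \hat u \circ f\) gives \(\hat u^\dagger \circ (f\id_E)^\dagger = \conj f\, \hat u^\dagger\). The dagger is a bijection \(\Hom_A(A, E) \to \Hom_A(E, A) = E^\vee\), so every linear form \(\xi\) satisfies \(\xi \circ (f \id_E)^\dagger = \conj f\, \xi\). Now take a dual basis \((e_i, \xi^i)\) of \(E\). For \(w \in E\),
\[
    (f \id_E)^\dagger(w) = \sum_i \xi^i\bigl((f\id_E)^\dagger(w)\bigr) e_i = \sum_i \conj f\, \xi^i(w)\, e_i = \conj f\, w .
\]
So linear forms separate points of a projective module, which is exactly what the dual basis provides. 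It follows that \((f\varphi)^\dagger = \conj f\, \varphi^\dagger\).

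Then define the forms by \(\langle u, v \rangle = \hat v^\dagger \circ \hat u \in A\). The computations of \cref{thm:daggers-are-forms} carry over verbatim:
\begin{itemize}
    \item the form is biadditive by additivity of the dagger;
    \item it is linear in \(u\) and semilinear in \(v\) by the previous step;
    \item it is Hermitian because the dagger is involutive and contravariant;
    \item it is standard on \(A\), since \(\hat g = g\) there.
\end{itemize}
Unimodularity is the point where the ring case genuinely needs bijectivity rather than injectivity. Here it is automatic: \(\flat\) is the composite of the bijections \(v \mapsto \hat v\) (from \(E\) to \(\Hom_A(A,E)\)) and \(\hat v \mapsto \hat v^\dagger\) (from \(\Hom_A(A,E)\) to \(E^\vee\)), because \(v^\flat = \langle -, v\rangle = \hat v^\dagger\).

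The adjoint identity \(\langle \varphi(v), w \rangle = \langle v, \varphi^\dagger(w)\rangle\) follows from the same one-line computation as before.

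For uniqueness, suppose \(\dagger\) is the adjoint for some involution and some family of forms, standard on \(A\). The remark after \cref{prop:herm-module-dagger} (the analogue of \cref{lem:flat-is-dagger} in \(\Herm_A\)) gives \(\langle u, v\rangle = \hat v^\dagger \circ \hat u\), so the forms are the ones constructed above. The involution is forced too, since for the standard form the adjoint of multiplication by \(f\) is multiplication by \(\conj f\).
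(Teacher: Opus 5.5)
Your proposal is correct and follows essentially the same route as the paper's proof. Additivity comes from the biproduct argument and the involution from \(\End_A(A) = A\). A dual basis replaces ``linear forms separate points'' to give \((f \id_E)^\dagger = \conj f \id_E\). The forms \(\langle u, v \rangle = \hat v^\dagger \circ \hat u\) are unimodular because \(v^\flat = \hat v^\dagger\), and uniqueness follows from the analogue of \cref{lem:flat-is-dagger} in \(\Herm_A\).
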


\begin{proof}
    The proof is that of \cref{thm:daggers-are-forms}, with \(A\) in place of \(K\), and we only give the steps that change.
    The category \(\fgpMod_A\) has a zero object and biproducts, and the proof of \cref{prop:dagger-additive} uses nothing else, so the dagger is additive.
    It restricts to \(\End_A(A) = A\), where, as before, it is an involutive automorphism \(f \mapsto \conj f\) of the commutative ring \(A\).

    Next we show that \((f \id_E)^\dagger = \conj f \id_E\) for every module \(E\) and every \(f \in A\).
    Applying the dagger to \((f \id_E) \circ \hat v = \hat v \circ f\) gives \(\hat v^\dagger \circ (f \id_E)^\dagger = \conj f\, \hat v^\dagger\).
    The dagger is a bijection from \(\Hom_A(A, E)\) to \(\Hom_A(E, A) = E^\vee\), so every linear form \(\alpha\) on \(E\) is \(\hat v^\dagger\) for some \(v\), and \(\alpha \circ (f \id_E)^\dagger = \conj f\, \alpha\).
    In \cref{thm:daggers-are-forms} we concluded because linear forms separate points.
    For a module, we take a dual basis \((e_i, \xi^i)\) of \(E\) and apply this to the forms \(\xi^i\):
    \[
        (f \id_E)^\dagger(u) = \sum_i \xi^i\bigl((f \id_E)^\dagger(u)\bigr)\, e_i = \sum_i \conj f\, \xi^i(u)\, e_i = \conj f u .
    \]
    Hence \((f \varphi)^\dagger = \conj f\, \varphi^\dagger\) for every linear map \(\varphi\).

    The forms are \(\langle u, v \rangle = \hat v^\dagger \circ \hat u\) in \(\End_A(A) = A\), and the arguments of \cref{thm:daggers-are-forms} apply verbatim and show that they are sesquilinear and Hermitian, that they are standard on \(A\), and that \(\dagger\) is their adjoint.
    For unimodularity, note that \(\hat v^\dagger \circ \hat u\) is multiplication by \(\hat v^\dagger(u)\), so \(v^\flat = \hat v^\dagger\).
    Thus \(\flat\) is the composite of the bijections \(E \to \Hom_A(A, E)\), \(v \mapsto \hat v\), and \(\Hom_A(A, E) \to E^\vee\), \(\hat v \mapsto \hat v^\dagger\).
    Uniqueness follows from \(\langle u, v \rangle = \hat v^\dagger \circ \hat u\) in \(\Herm_A\), as for vector spaces.
\end{proof}

As in \cref{lem:flat-is-dagger}, the dagger is then compared with the duality functor by the square \(\varphi^\vee \circ \flat = \flat \circ \varphi^\dagger\), which holds in \(\Herm_A\) with the same proof.

Together with \cref{prop:herm-module-dagger}, the theorem says that a dagger on \(\fgpMod_A\) is the same thing as a way of making it into \(\Herm_A\), for some involution of \(A\), with the standard form on \(A\).
The dagger is \(A\)-linear if and only if the involution is the identity, since \((f \id_A)^\dagger = \conj f \id_A\).

\begin{corollary}
    \label{cor:daggers-on-bundles}
    For real bundles, the \(C^\infty(M)\)-linear daggers on \(\VB_M\) correspond bijectively to families of pseudo-Riemannian bundle metrics, one on every bundle and standard on the trivial line bundle.
\end{corollary}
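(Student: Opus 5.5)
The plan is to transport \cref{thm:daggers-are-metrics} along the Serre--Swan equivalence of \cref{thm:serre-swan}, with \(A = C^\infty(M)\).

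\textbf{Transport.} The equivalence \(\Gamma \colon \VB_M \to \fgpMod_A\) is fully faithful. Bundle maps \(E \to F\) correspond bijectively to \(A\)-linear maps \(\Gamma(E) \to \Gamma(F)\), compatibly with composition and identities. A dagger on \(\VB_M\) is an operation on hom-sets subject to equations. So transporting it through these bijections gives a dagger on the full subcategory of \(\fgpMod_A\) spanned by the modules \(\Gamma(E)\).

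\textbf{Essential image.} To apply \cref{thm:daggers-are-metrics} I need a dagger on all of \(\fgpMod_A\), not only on this subcategory. I see two ways to get it. The first is to identify \(\VB_M\) with its image and note that the proof of \cref{thm:daggers-are-metrics} only uses three things: the zero object, biproducts, and the object \(A = \Gamma(M \times \R)\). All three lie in the image, so the proof runs verbatim on the image. The second is to extend the dagger along chosen isomorphisms \(P \cong \Gamma(E)\). That extension depends on the choices, so I would take the first route to avoid this issue.

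\textbf{Reading off the forms.} With the identification in place, \cref{thm:daggers-are-metrics} gives a unique involutive automorphism of \(A\) and unique unimodular Hermitian forms, standard on \(A\), whose adjoints are the dagger. Because the dagger is \(C^\infty(M)\)-linear, the restriction to \(\End_A(A) = A\) is the identity. Hence the involution is trivial and the forms are symmetric. By \cref{ex:bundle-metrics}, a unimodular symmetric form on \(\Gamma(E)\) is exactly a pseudo-Riemannian bundle metric on \(E\). The standard form on \(A\) is the standard metric \(\langle f, g \rangle = fg\) on \(M \times \R\).

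\textbf{Converse and bijectivity.} Start from a family of metrics that is standard on the trivial line bundle. \cref{prop:herm-module-dagger} turns it into a dagger, and this dagger is \(A\)-linear because the involution is the identity. The two assignments are mutually inverse. The uniqueness clause of \cref{thm:daggers-are-metrics} shows that the forms are recovered from the dagger. In the other direction, the dagger is recovered from the forms because each adjoint is determined by its defining identity.

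\textbf{Main obstacle.} The delicate step is checking that the proof of \cref{thm:daggers-are-metrics} really only needs objects in the image of \(\Gamma\). This covers the additivity argument of \cref{prop:dagger-additive}, which uses \(W \oplus W\), the dual-basis step, and the maps \(\hat v \colon A \to E\). In each case the objects involved (\(A\), direct sums, \(E\) itself) are modules of sections of bundles, so I expect no difficulty.
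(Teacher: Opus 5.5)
Your proposal is correct and follows essentially the same route as the paper. The paper also runs the proof of \cref{thm:daggers-are-metrics} word for word in \(\VB_M\) through the fully faithful section functor, noting that this functor preserves the zero object, biproducts and the unit \(A\); it then uses \(C^\infty(M)\)-linearity to get the trivial involution and \cref{ex:bundle-metrics} to identify the resulting symmetric forms with pseudo-Riemannian bundle metrics.
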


\begin{proof}
    By \cref{thm:serre-swan}, taking sections is fully faithful, and it sends the trivial line bundle to \(A = C^\infty(M)\), direct sums of bundles to biproducts and the zero bundle to the zero module.
    Hence the proof of \cref{thm:daggers-are-metrics} applies to \(\VB_M\) word for word, with \(\Gamma(E)\) in place of \(E\).
    A \(C^\infty(M)\)-linear dagger has the identity as its involution, and by \cref{ex:bundle-metrics} a unimodular symmetric form on \(\Gamma(E)\) is a pseudo-Riemannian bundle metric on \(E\).
\end{proof}

\begin{remark}
    \label{rem:nonlinear-bundle-daggers}
    A dagger on the category \(\VB_M\) of real vector bundles that is not \(C^\infty(M)\)-linear has a non-trivial involution of \(C^\infty(M)\).
    If this involution is \(\R\)-linear, as it is when the dagger is \(\R\)-linear, then it is \(f \mapsto f \circ \tau\) for a unique diffeomorphism \(\tau\) of \(M\) \cite[Theorem~1]{mrcun2005}, and \(\tau\) is an involution.
    The form then satisfies \(\langle u, f v \rangle = (f \circ \tau) \langle u, v \rangle\), and its value at \(x\) depends only on \(u(x)\) and \(v(\tau(x))\): it pairs the fibre \(E_x\) with the fibre \(E_{\tau(x)}\).
    For the antipodal map of the sphere, such a form pairs the fibres over antipodal points.
    For complex bundles, a \(\C\)-antilinear involutive automorphism of \(C^\infty(M, \C)\) is \(f \mapsto \conj{f \circ \tau}\), by the same theorem applied to its composite with complex conjugation, and \(\tau = \id\) gives the Hermitian bundle metrics of \cref{ex:hermitian-bundle-metrics}.
    We do not classify the daggers with \(\tau \neq \id\); which bundles admit unimodular pairings between the fibres over \(x\) and \(\tau(x)\), and hence when such daggers exist, we leave open.
\end{remark}

\begin{remark}
    \label{rem:existence-of-metrics}
    Every real vector bundle has a Riemannian bundle metric \cite[Lemma~12.25]{nestruev2020}, and the same partition-of-unity argument gives every complex bundle a positive definite Hermitian metric.
    By \cref{prop:herm-module-dagger}, \(\VB_M\) therefore has daggers, both linear ones and, for complex bundles, ones with complex conjugation as involution.
    Over a general ring, daggers with a given involution need not exist.
    For the identity involution, a unimodular symmetric form on a projective module \(E\) of rank one is an isomorphism \(\flat \colon E \cong E^\vee\), and \(E^\vee\) is the inverse of \(E\) in the Picard group \(\Pic(A)\), the group of isomorphism classes of projective modules of rank one under the tensor product, so the class of \(E\) has order at most two.
    For the coordinate ring \(A\) of a complex elliptic curve with one point removed, \(\Pic(A)\) is isomorphic to the group of points of the curve, which has elements of infinite order.
    The corresponding modules have no unimodular symmetric form, so by \cref{thm:daggers-are-metrics} there is no dagger on \(\fgpMod_A\) whose involution is the identity.
\end{remark}

\subsection{Dagger compact structure}
\label{sec:dagger-compact}

The tensor product of bundles is compatible with the metrics, and so is duality.
In \(\fgpMod_A\) every module \(E\) already has a dual \(E^\vee\), with evaluation \(\ev \colon E^\vee \otimes E \to A\), \(\alpha \otimes u \mapsto \alpha(u)\), and coevaluation \(\coev \colon A \to E \otimes E^\vee\), \(\coev(1) = \sum_i e_i \otimes \xi^i\) for any dual basis; the finitely generated projective modules are exactly the dualisable objects of the monoidal category of \(A\)-modules \cite[Example~1.4]{dold-puppe1984}, \cite[Theorem~A.2]{fritz2024}.
With forms, the dual of a Hermitian module can be taken to be its conjugate.

\begin{definition}
    \label{def:conjugate-module}
    The \emph{conjugate} of an \(A\)-module \(E\) is the module \(\conj E\) with the same underlying group, with elements written \(\conj u\), and with the action \(f \cdot \conj u = \conj{\conj f u}\).
    If \(E\) is Hermitian, so is \(\conj E\), with \(\langle \conj u, \conj v \rangle = \langle v, u \rangle\).
\end{definition}

For the trivial involution, \(\conj E = E\), so every Hermitian module becomes self-dual; for the coherence questions raised by such self-dualities, see \cite{selinger2010}.
If \((e_i, \xi^i)\) is a dual basis of \(E\), then \(\conj u = \sum_i \conj{\xi^i(u)} \cdot \conj{e_i}\), so \(\conj E\) is again finitely generated projective.
The form on \(\conj E\) is sesquilinear and Hermitian because the form on \(E\) is, and it is unimodular: a linear form \(\beta\) on \(\conj E\) gives the linear form \(u \mapsto \conj{\beta(\conj u)}\) on \(E\), which is \(\langle -, v \rangle\) for a unique \(v\), and then \(\beta = \langle -, \conj v \rangle\).
The map \(\conj v \mapsto v^\flat\) is an \(A\)-linear isomorphism \(\conj E \cong E^\vee\).

\begin{definition}
    \label{def:reciprocal-basis}
    Let \(E\) be Hermitian with dual basis \((e_i, \xi^i)\).
    The \emph{reciprocal basis} is \(e^i = (\xi^i)^\sharp\), so that \(\langle u, e^i \rangle = \xi^i(u)\).
\end{definition}

For an orthonormal basis (\cref{def:orthonormal-basis}) of a Hermitian space, with its dual basis, the reciprocal basis is \(e^i = \langle e_i, e_i \rangle\, e_i\).
In \cref{ex:sphere-dual-basis}, with the metric on \(TS^2\) induced by the dot product, \(\xi^i(u) = \langle u, \partial_i \rangle = \langle u, e_i \rangle\) because \(u(x)\) is orthogonal to \(x\), so the dual basis of \(\Gamma(TS^2)\) is its own reciprocal basis, \(e^i = e_i\).

\begin{proposition}
    \label{prop:reciprocal-expansion}
    For every \(u\) in a Hermitian module,
    \[
        u = \sum_i \langle u, e^i \rangle\, e_i = \sum_i \langle u, e_i \rangle\, e^i .
    \]
\end{proposition}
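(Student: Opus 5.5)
The first equality is immediate from the definitions. By \cref{def:reciprocal-basis}, \(\langle u, e^i \rangle = \xi^i(u)\), so \(\sum_i \langle u, e^i \rangle\, e_i = \sum_i \xi^i(u)\, e_i = u\) by the defining property of a dual basis (\cref{def:dual-basis}). No further work is needed here.

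For the second equality, the plan is to test both sides against the form and then use unimodularity. Since \(\flat\) is injective, it suffices to show
\[
    \langle w, u \rangle = \Bigl\langle w, \sum_i \langle u, e_i \rangle\, e^i \Bigr\rangle \qquad \text{for all } w.
\]
\begin{itemize}
    \item By semilinearity in the second variable, the right side is \(\sum_i \conj{\langle u, e_i \rangle}\, \langle w, e^i \rangle\).
    \item By Hermitian symmetry, \(\conj{\langle u, e_i \rangle} = \langle e_i, u \rangle\).
    \item By the defining property of the reciprocal basis, \(\langle w, e^i \rangle = \xi^i(w)\).
\end{itemize}
Hence the right side equals \(\sum_i \xi^i(w)\, \langle e_i, u \rangle = \bigl\langle \sum_i \xi^i(w)\, e_i, u \bigr\rangle\), using linearity in the first variable. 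This is \(\langle w, u \rangle\) by the dual basis expansion of \(w\).

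Equivalently, one can apply the first expansion to \(w\) and pair with \(u\). The only point needing care is the order of the arguments and the placement of the conjugations, because the form is semilinear in the second slot. The injectivity of \(\flat\) then converts equality of pairings into equality of elements. I expect no real obstacle beyond this bookkeeping.
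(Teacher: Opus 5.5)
Your proposal is correct and follows essentially the same route as the paper. The paper also reads off the first expansion from \(\langle u, e^i \rangle = \xi^i(u)\), and proves the second by pairing against an arbitrary \(w\) using semilinearity, Hermitian symmetry and the first expansion of \(w\), then concluding by injectivity of \(\flat\).
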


\begin{proof}
    The first expansion is the definition of a dual basis, since \(\langle u, e^i \rangle = \xi^i(u)\).
    For the second, we compare both sides after applying \(\flat\).
    For every \(w\), using that the form is semilinear in the second variable and Hermitian, and then the first expansion of \(w\),
    \[
        \Bigl\langle w, \sum_i \langle u, e_i \rangle\, e^i \Bigr\rangle
        = \sum_i \langle e_i, u \rangle \langle w, e^i \rangle
        = \Bigl\langle \sum_i \langle w, e^i \rangle\, e_i, u \Bigr\rangle
        = \langle w, u \rangle .
    \]
    Since \(\flat\) is injective, \(\sum_i \langle u, e_i \rangle\, e^i = u\).
\end{proof}

The second expansion says that \((e^i, e_i^\flat)\) is again a dual basis, whose reciprocal basis is \((e_i)\).
For the sphere, both expansions read \(u = \sum_i \langle u, e_i \rangle\, e_i\).

\begin{definition}[\cite{abramsky-coecke2004, selinger2007}]
    \label{def:dagger-compact}
    A symmetric monoidal category with a dagger, with unit object \(I\), is \emph{dagger symmetric monoidal} if \((\varphi \otimes \psi)^\dagger = \varphi^\dagger \otimes \psi^\dagger\) and the associators, unitors and symmetries \(\sigma_{X, Y} \colon X \otimes Y \to Y \otimes X\) are unitary.
    It is \emph{dagger compact} if moreover every object \(X\) has a dual object \(Y\), with morphisms \(\eta \colon I \to Y \otimes X\) and \(\varepsilon \colon X \otimes Y \to I\) such that the triangles
    \[
        \begin{tikzcd}
            X \rar["\id \otimes \eta"] \drar[equals] & X \otimes Y \otimes X \dar["\varepsilon \otimes \id"] \\
            & X
        \end{tikzcd}
        \qquad
        \begin{tikzcd}
            Y \rar["\eta \otimes \id"] \drar[equals] & Y \otimes X \otimes Y \dar["\id \otimes \varepsilon"] \\
            & Y
        \end{tikzcd}
    \]
    commute, up to associators and unitors, and such that \(\varepsilon = \eta^\dagger \circ \sigma_{X, Y}\).
\end{definition}

\begin{theorem}
    \label{thm:herm-dagger-compact}
    With the tensor product over \(A\) and the form \(\langle u \otimes u', v \otimes v' \rangle = \langle u, v \rangle \langle u', v' \rangle\), the category \(\Herm_A\) is dagger compact.
    The dual of \(E\) is \(\conj E\), with
    \[
        \varepsilon \colon E \otimes \conj E \to A, \quad u \otimes \conj v \mapsto \langle u, v \rangle,
        \qquad
        \eta \colon A \to \conj E \otimes E, \quad 1 \mapsto \sum_i \conj{e^i} \otimes e_i .
    \]
\end{theorem}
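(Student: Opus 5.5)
We verify the structure in three stages: the tensor product of Hermitian modules is Hermitian, the symmetric monoidal structure is compatible with the dagger, and \(\conj E\) is a dual satisfying the compactness condition.

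\emph{Monoidal part.} For Hermitian modules \(E\) and \(F\), the module \(E \otimes_A F\) is finitely generated projective. If \((e_i, \xi^i)\) and \((f_k, \zeta^k)\) are dual bases, then \((e_i \otimes f_k, \xi^i \otimes \zeta^k)\) is a dual basis. We define the form on pure tensors by \(\langle u \otimes u', v \otimes v' \rangle = \langle u, v \rangle \langle u', v' \rangle\). It is well defined because, for fixed \(v \otimes v'\), the expression is \(A\)-bilinear in \((u, u')\) and hence descends to \(E \otimes F\). Symmetrically, for fixed first argument it is semilinear and balanced in \((v, v')\), using \(\conj{fg} = \conj f \conj g\). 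Sesquilinearity and Hermitian symmetry are then checked on pure tensors. For unimodularity, we identify \(\flat_{E \otimes F}\) with the composite
\[
    E \otimes F \xrightarrow{\flat \otimes \flat} E^\vee \otimes F^\vee \cong (E \otimes F)^\vee ,
\]
where the last map is the canonical isomorphism for finitely generated projective modules. The only care needed is that \(\flat \otimes \flat\) is a tensor product of semilinear bijections, which we handle by writing it as \(\conj E \otimes \conj F \to E^\vee \otimes F^\vee\), that is, as a linear isomorphism via \(\conj v \mapsto v^\flat\) together with \(\conj{E \otimes F} \cong \conj E \otimes \conj F\). Next, \((\varphi \otimes \psi)^\dagger = \varphi^\dagger \otimes \psi^\dagger\) holds because both sides satisfy the defining equation of the adjoint on pure tensors, and pure tensors generate, so uniqueness from \cref{prop:herm-module-dagger} applies. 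Finally, the associator, the unitors (with \(A\) carrying its standard form) and the symmetry preserve the forms on pure tensors. They are therefore invertible isometries, hence unitary.

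\emph{Duality.} The form on \(\conj E\) is unimodular Hermitian by the discussion after \cref{def:conjugate-module}. The map \(\varepsilon\) is \(A\)-linear since \(\langle u, v \rangle\) is linear in \(u\) and \(\langle u, \conj f \cdot v \rangle\)-type rescaling matches the conjugate action: \(f \cdot \conj v = \conj{\conj f v}\) gives \(\langle u, \conj f v \rangle = f \langle u, v \rangle\). For the first triangle, \(u \mapsto \sum_i u \otimes \conj{e^i} \otimes e_i \mapsto \sum_i \langle u, e^i \rangle e_i = u\) by \cref{prop:reciprocal-expansion}. For the second, \(\conj v \mapsto \sum_i \conj{e^i} \cdot \langle e_i, v \rangle\). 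In \(\conj E\) this is \(\conj{\sum_i \conj{\langle e_i, v \rangle} e^i} = \conj{\sum_i \langle v, e_i \rangle e^i} = \conj v\), again by the second expansion of \cref{prop:reciprocal-expansion}.

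\emph{Compactness condition.} It remains to show \(\varepsilon = \eta^\dagger \circ \sigma\). It suffices to check \(\langle \varepsilon(u \otimes \conj v), 1 \rangle = \langle \sigma(u \otimes \conj v), \eta(1) \rangle\), since both sides equal \(\langle \eta^\dagger \sigma(u \otimes \conj v), 1 \rangle\) and the standard form on \(A\) detects elements. The right side is \(\sum_i \langle \conj v, \conj{e^i} \rangle \langle u, e_i \rangle = \sum_i \langle e^i, v \rangle \langle u, e_i \rangle\). Rewriting as \(\bigl\langle \sum_i \langle u, e_i \rangle e^i, v \bigr\rangle = \langle u, v \rangle\) uses the second expansion once more. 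I expect the main obstacle to be the well-definedness and unimodularity of the tensor form over a general ring with involution, in particular keeping track of the semilinearity through \(\otimes_A\). The remaining steps are direct calculations with \cref{prop:reciprocal-expansion}.
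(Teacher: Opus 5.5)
Your proposal is correct and follows essentially the same route as the paper's proof in the appendix. The tensor form is made unimodular via \(\conj{E \otimes F} \cong \conj E \otimes \conj F \cong E^\vee \otimes F^\vee \cong (E \otimes F)^\vee\), the structure maps are unitary because they preserve forms on pure tensors, the triangle identities are the two expansions of \cref{prop:reciprocal-expansion}, and \(\varepsilon = \eta^\dagger \circ \sigma\) follows from the second expansion; the only differences are cosmetic, namely that the paper defines the form directly as the one whose \(\flat\) is that composite isomorphism (getting well-definedness and unimodularity at once) and writes \(\eta^\dagger = \eta(1)^\flat\) rather than pairing against \(1 \in A\).
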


The proof is in \cref{sec:appendix-compact}; the triangle identities are the two expansions of \cref{prop:reciprocal-expansion}.
Under the isomorphism \(\conj E \cong E^\vee\), \(\conj v \mapsto v^\flat\), the cup \(\eta\) becomes \(1 \mapsto \sum_i \xi^i \otimes e_i\), which is \(\coev\) followed by the symmetry, and the cap \(\varepsilon\) becomes \(\ev\) preceded by the symmetry.
The compact structure is therefore the one of \(\fgpMod_A\) given by \(\ev\) and \(\coev\), and the form contributes the identification \(\conj E \cong E^\vee\) and the equation \(\varepsilon = \eta^\dagger \circ \sigma_{E, \conj E}\), which relates the cap to the cup.

\section{Orthogonal complements and normal bundles}
\label{sec:orthogonal-bundles}

\Cref{sec:orthogonality} expressed orthogonality, orthogonal squares and orthogonal complements by equations between morphisms and their daggers, in any dagger category with a zero object.
These definitions apply verbatim in \(\Herm_A\), and in particular to metric bundles.
We show that they have their expected fibrewise meaning, and that orthogonal complements exist over every commutative ring with involution, as images of projectors.
The main example is the normal bundle of a submanifold, which is the orthogonal complement of its tangent bundle.

Throughout, \(A = C^\infty(M)\) with the trivial involution, unless stated otherwise, and bundles carry bundle metrics.

An isometry \(i \colon E \to H\) of metric bundles satisfies \(i^\dagger \circ i = \id_E\), so it is injective on every fibre, and its image is a subbundle of \(H\) \cite[Proposition~12.20]{nestruev2020}.
The adjoint of a bundle map \(\varphi\) is computed fibrewise, because \(\langle \varphi(u), w \rangle\) takes the value \(\langle \varphi(u(x)), w(x) \rangle_x\) at \(x\), and every vector in a fibre is the value of a section \cite[Lemma~12.8]{nestruev2020}.

\begin{proposition}
    \label{prop:orthogonal-subbundles}
    Isometries \(i \colon E \to H\) and \(j \colon F \to H\) of metric bundles have fibrewise orthogonal images if and only if \(i \perp j\), that is, \(i^\dagger \circ j = 0\).
\end{proposition}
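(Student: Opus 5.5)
The plan is to reduce the statement to the module-level description of orthogonality in \cref{ex:orthogonal-vectors}, and then pass between sections and fibres using the two facts recalled just before the statement. The adjoint of a bundle map is computed fibrewise, and every vector of a fibre is the value of a section \cite[Lemma~12.8]{nestruev2020}.

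First, the argument of \cref{ex:orthogonal-vectors} applies verbatim in \(\Herm_A\). For sections \(u \in \Gamma(E)\) and \(v \in \Gamma(F)\) we have
\[
    \langle u, i^\dagger(j(v)) \rangle = \langle i(u), j(v) \rangle .
\]
Since the form on \(\Gamma(E)\) is unimodular, so that \(\flat\) is injective, \(i^\dagger \circ j = 0\) holds if and only if \(\langle i(u), j(v) \rangle = 0\) in \(A = C^\infty(M)\) for all sections \(u\) and \(v\).

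Second, I translate this into a fibrewise statement. By \cref{ex:bundle-metrics}, the function \(\langle i(u), j(v) \rangle\) takes the value \(\langle i(u(x)), j(v(x)) \rangle_x\) at \(x\), where \(i\) and \(j\) also denote the induced fibre maps \(E_x \to H_x\) and \(F_x \to H_x\).

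Suppose the images are fibrewise orthogonal, meaning \(i(E_x) \perp j(F_x)\) in \(H_x\) for every \(x\). Then these values all vanish, so \(i \perp j\).

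Conversely, suppose \(i \perp j\), and fix \(x\) together with vectors \(a \in E_x\) and \(b \in F_x\). Choose sections \(u\) and \(v\) with \(u(x) = a\) and \(v(x) = b\). Then \(\langle i(a), j(b) \rangle_x\) is the value at \(x\) of the zero function, hence zero.

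The images of \(i\) and \(j\) are subbundles whose fibres are exactly \(i(E_x)\) and \(j(F_x)\) \cite[Proposition~12.20]{nestruev2020}, so this is precisely fibrewise orthogonality of the images. The isometry hypothesis enters only here, to make the images subbundles; the equivalence itself holds for arbitrary bundle maps.

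I expect no real obstacle. The only point needing care is the identification of the module-level value \(\langle i(u), j(v) \rangle\) with the pointwise values of the fibre metric. This is the Serre--Swan dictionary of \cref{ex:bundle-metrics}, together with the compatibility of a bundle map with evaluation at points, \(i(u)(x) = i(u(x))\).
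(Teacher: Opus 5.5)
Your proposal is correct and is essentially the paper's proof. Both rest on the identity \(\langle i(u(x)), j(v(x)) \rangle_x = \langle u, (i^\dagger \circ j)(v) \rangle(x)\), using unimodularity of the form on \(E\) to pass from sections to the vanishing of \(i^\dagger \circ j\), and the fact that every vector of a fibre is the value of a section to pass to fibres; your observation that the equivalence does not need \(i\) and \(j\) to be isometries is also accurate.
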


\begin{proof}
    For sections \(u\) of \(E\) and \(v\) of \(F\), and every point \(x\),
    \[
        \langle i(u(x)), j(v(x)) \rangle_x = \langle u, (i^\dagger \circ j)(v) \rangle(x).
    \]
    If \(i^\dagger \circ j = 0\), the right-hand side vanishes, and since every vector in \(E_x\) and \(F_x\) is the value of a section, the images of \(E_x\) and \(F_x\) are orthogonal in \(H_x\).
    Conversely, if the images are orthogonal, then \(\langle u, (i^\dagger \circ j)(v) \rangle = 0\) for all \(u\), and since the form of \(E\) is unimodular, \((i^\dagger \circ j)(v) = 0\) for all \(v\).
\end{proof}

\begin{proposition}
    \label{prop:complements-exist-modules}
    In \(\Herm_A\), for any commutative ring \(A\) with involution, every isometry \(i \colon E \to H\) has an orthogonal complement, an orthogonal splitting of the projector \(\id_H - i \circ i^\dagger\).
    For metric bundles (\(A = C^\infty(M)\)), it is the subbundle with fibres
    \[
        E^\perp_x = \{ w \in H_x \mid \langle i(v), w \rangle_x = 0 \text{ for all } v \in E_x \}.
    \]
\end{proposition}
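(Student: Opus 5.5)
The plan is to repeat the argument of \cref{prop:complements-exist-spaces}, with \cref{prop:orthogonal-splitting} replaced by a module version. The crucial new input is that the image of a projector on a Hermitian module is again a Hermitian module.

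Let \(e = \id_H - i \circ i^\dagger\). The computation in \cref{prop:complements-exist-spaces} shows that \(e\) is self-adjoint and idempotent. It uses only the dagger axioms and additivity, both available in \(\Herm_A\) by \cref{prop:herm-module-dagger}. Put \(Y = e(H)\). This is a direct summand of \(H\), since \(H = e(H) \oplus (\id - e)(H)\), so it is finitely generated projective. Equip \(Y\) with the restricted form, which is sesquilinear and Hermitian.

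The main obstacle is unimodularity. Over a field injectivity of \(\flat\) sufficed, but over a ring we need \(\flat\) to be bijective. I would prove surjectivity directly. Let \(\alpha\) be a linear form on \(Y\). Then \(\alpha \circ e\) is a linear form on \(H\), so it equals \(\langle -, w \rangle\) for a unique \(w \in H\). Set \(y = e(w) \in Y\). For \(u \in Y\) we get \(\langle u, y \rangle = \langle e(u), w \rangle = \alpha(e(u)) = \alpha(u)\), using \(e = e^\dagger\) and \(e(u) = u\).

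For injectivity, suppose \(\langle u, y \rangle = 0\) for all \(u \in Y\). Then for every \(h \in H\) we have \(\langle h, y \rangle = \langle e(h), y \rangle + \langle h - e(h), y \rangle\). The first term vanishes by assumption. The second equals \(\langle h, (\id - e)(y) \rangle = 0\), because \(e(y) = y\). Unimodularity of \(H\) then forces \(y = 0\).

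With \(Y\) Hermitian, let \(k \colon Y \to H\) be the inclusion. The computation in \cref{prop:orthogonal-splitting} gives \(k^\dagger = e\) with codomain restricted to \(Y\). Hence \(k \circ k^\dagger = e\) and \(k^\dagger \circ k = \id_Y\). The kernel argument of \cref{prop:complements-exist-spaces} then goes through verbatim: it shows that \(k\) is an isometric kernel of \(i^\dagger\), and hence an orthogonal complement of \(i\) by \cref{prop:complement-is-kernel}.

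For metric bundles, note that adjoints are computed fibrewise. So \(e\) is the bundle map whose value at \(x\) is \(\id - i_x i_x^\dagger\), the orthogonal projection of \(H_x\) along the image of \(E_x\). By \cref{thm:serre-swan}, \(Y\) is the module of sections of the subbundle \(e(H)\).

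It remains to identify the fibres. On each fibre, \(w \in H_x\) is fixed by \(e_x\) exactly when \(i_x i_x^\dagger w = 0\). Since \(i_x\) is injective, this happens exactly when \(i_x^\dagger w = 0\), that is, when \(\langle v, i_x^\dagger w \rangle_x = \langle i(v), w \rangle_x = 0\) for all \(v \in E_x\), by unimodularity on \(E_x\). So the fibre of \(e(H)\) at \(x\) is \(E^\perp_x\).

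Finally, the fibrewise values of sections of \(e(H)\) exhaust \(e_x(H_x)\), because every vector of \(H_x\) is the value of a section. This matches the fibres of \(Y\) with \(E^\perp_x\) as stated.
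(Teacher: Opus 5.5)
Your proposal is correct and follows the paper's proof essentially step for step. It uses the same projector and the same direct-summand argument for projectivity, proves surjectivity of \(\flat\) the same way via \(\alpha \circ e = \langle -, w \rangle\) and \(e(w)\), and identifies the inclusion as an isometric kernel of \(i^\dagger\); the paper checks \(e(H) = \Ker i^\dagger\) directly rather than reusing the factorisation \(g = k \circ (k^\dagger \circ g)\), but these amount to the same thing.

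The one step you compress is that the fibrewise image \(\bigcup_x e_x(H_x)\) is a smooth subbundle, which Serre--Swan does not say by itself. The paper gets it from the locally constant rank \(\tr(e_x)\) of the idempotent \(e_x\). You could instead note that \(k\) corresponds to a bundle map that is injective on fibres, because \(k^\dagger \circ k = \id\). Its image then has fibres \(e_x(H_x)\), because \(e_x = k_x \circ k_x^\dagger\) with \(k_x^\dagger\) surjective.
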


\begin{proof}
    Over a field, \cref{prop:orthogonal-splitting} splits the projector by its image with the restricted form.
    We do the same here; the new point is that \(\flat\) on the image is surjective, which over a field followed from injectivity by counting dimensions.

    Write \(p = \id_H - i \circ i^\dagger\), the projector called \(e\) in \cref{prop:complements-exist-spaces}.
    It is self-adjoint, and \(p \circ p = p\) because \(i^\dagger \circ i = \id_E\), so it is a projector.
    Its image \(E^\perp\) is the kernel of \(i^\dagger\): on the one hand \(i^\dagger \circ p = i^\dagger - i^\dagger \circ i \circ i^\dagger = 0\), and on the other hand \(p(w) = w\) whenever \(i^\dagger(w) = 0\).
    Since \(H = E^\perp \oplus \Ker p\), the module \(E^\perp\) is a direct summand of \(H\), hence finitely generated projective.
    We give it the restriction of the form of \(H\), which is Hermitian.

    To see that it is unimodular, let \(\alpha\) be a linear form on \(E^\perp\).
    Then \(\alpha \circ p\) is a linear form on \(H\), so it equals \(\langle -, w \rangle\) for a unique \(w \in H\), and for \(u \in E^\perp\),
    \[
        \alpha(u) = \langle p(u), w \rangle = \langle u, p(w) \rangle,
    \]
    with \(p(w) \in E^\perp\).
    Hence \(\flat\) is surjective on \(E^\perp\).
    It is injective, because if \(v \in E^\perp\) satisfies \(\langle u, v \rangle = 0\) for all \(u \in E^\perp\), then \(\langle w, v \rangle = \langle w, p(v) \rangle = \langle p(w), v \rangle = 0\) for all \(w \in H\), so \(v = 0\).

    Let \(j \colon E^\perp \to H\) be the inclusion.
    It is an isometry because the form of \(E^\perp\) is the restricted one.
    For \(u \in E^\perp\) and \(w \in H\), we have \(\langle u, w \rangle = \langle p(u), w \rangle = \langle u, p(w) \rangle\), so \(j^\dagger(w) = p(w)\), and \(j \circ j^\dagger = p\).
    Thus \(j\) is an orthogonal splitting of \(p\).
    It is a kernel of \(i^\dagger\), since \(E^\perp = \Ker i^\dagger\) and every linear map into \(H\) that is killed by \(i^\dagger\) lands in \(E^\perp\).
    By \cref{prop:complement-is-kernel}, \(j\) is an orthogonal complement of \(i\).

    For bundles, a section \(w\) of \(H\) lies in \(\Ker i^\dagger\) if and only if \(\langle i(v), w \rangle = 0\) for all sections \(v\) of \(E\), that is, if and only if \(w(x)\) is orthogonal to the image of \(E_x\) at every point.
    Since \(p\) is a bundle map with \(p_x \circ p_x = p_x\), its fibres \(E^\perp_x = p_x(H_x)\) have locally constant dimension \(\tr(p_x)\), so \(E^\perp\) is a subbundle \cite[Proposition~12.20]{nestruev2020}, with the stated fibres.
\end{proof}

As for vector spaces in \cref{cor:complement-sequence}, the orthogonal complement \(j \colon E^\perp \to H\) gives a short exact sequence
\[
    \begin{tikzcd}
        0 \rar & E \rar["i"] & H \rar["j^\dagger"] & E^\perp \rar & 0 .
    \end{tikzcd}
\]
Here \(j^\dagger\) is surjective because \(j^\dagger \circ j = \id\), and its kernel is the kernel of \(p = j \circ j^\dagger\), which is the image of \(i \circ i^\dagger\), that is, of \(i\).

\begin{example}
    \label{ex:orthogonal-squares-bundles}
    Consider an orthogonal square of metric bundles
    \[
        \begin{tikzcd}
            W \rar["k"] \dar["h"'] & F \dar["j"] \\
            E \rar["i"] & H
        \end{tikzcd}
    \]
    as in \cref{def:orthogonal-square}, and regard \(W\), \(E\) and \(F\) as subbundles of \(H\), with \(W\) contained in \(E\) and \(F\).
    Let \(k^\perp\) be the orthogonal complement of \(k\) in \(F\), from \cref{prop:complements-exist-modules}, so that \(k \circ k^\dagger + k^\perp \circ (k^\perp)^\dagger = \id_F\).
    Using \(i \circ h = j \circ k\) and \(i^\dagger \circ i = \id_E\),
    \[
        i^\dagger \circ j
        = i^\dagger \circ j \circ k \circ k^\dagger + i^\dagger \circ j \circ k^\perp \circ (k^\perp)^\dagger
        = h \circ k^\dagger + i^\dagger \circ j \circ k^\perp \circ (k^\perp)^\dagger .
    \]
    Since \((k^\perp)^\dagger\) is surjective, the second diagonal \(i^\dagger \circ j = h \circ k^\dagger\) holds if and only if \(i^\dagger \circ (j \circ k^\perp) = 0\).
    By \cref{prop:orthogonal-subbundles}, this says that the orthogonal complement of \(W_x\) in \(F_x\) is orthogonal to \(E_x\), or equivalently to the orthogonal complement of \(W_x\) in \(E_x\), at every point.
    This is the relative orthogonality of \cref{ex:relatively-orthogonal-planes}, fibre by fibre.
\end{example}

\begin{example}
    \label{ex:orthogonal-square-circle}
    Over the circle \(S^1 \subseteq \R^2 \subseteq \R^3\), let \(H = S^1 \times \R^3\) with the dot product, and let \(W\) be spanned by the constant section \(\partial_3\), the third standard basis vector.
    For \(x \in S^1\), write \(x'\) for \(x\) turned by a right angle in the plane, and let \(c\) be a smooth function on \(S^1\).
    Let \(E_x\) be spanned by \(\partial_3\) and \(x\), and \(F_x\) by \(\partial_3\) and \(\cos(c(x))\, x + \sin(c(x))\, x'\), so that the plane \(F_x\) turns around the vertical axis as \(x\) moves.
    The orthogonal complement of \(W_x\) in \(F_x\) is spanned by \(\cos(c(x))\, x + \sin(c(x))\, x'\), whose inner product with \(x\) is \(\cos(c(x))\).
    By \cref{ex:orthogonal-squares-bundles}, the planes \(E_x\) and \(F_x\) are orthogonal relative to \(W_x\) if and only if \(\cos(c(x)) = 0\), and the inclusions form an orthogonal square if and only if this holds everywhere.
\end{example}

\begin{example}
    \label{ex:normal-bundle}
    Let \(\iota \colon M \to N\) be an isometric immersion of (pseudo-)\hspace{0pt}Riemannian manifolds, so that its differential is an isometry \(TM \to \iota^* TN\) of metric bundles over \(M\).
    The \emph{normal bundle} of \(M\) in \(N\) is the orthogonal complement of this isometry.
    Here \(\iota^* TN\) is the pullback of \(TN\) along \(\iota\), with fibre \(T_{\iota(x)} N\) over \(x\), and we write \(d\iota \colon TM \to \iota^* TN\) for the differential and \(TM^\perp\) for the normal bundle.
    By \cref{prop:complements-exist-modules} the normal bundle exists, and its fibre at \(x\) is the orthogonal complement of \(T_x M\) in \(T_{\iota(x)} N\).
    It sits in the short exact sequence
    \[
        \begin{tikzcd}
            0 \rar & TM \rar["d\iota"] & \iota^* TN \rar["j^\dagger"] & TM^\perp \rar & 0 ,
        \end{tikzcd}
    \]
    so it is a cokernel of \(d\iota\), and in addition it comes with the isometry \(j\) into \(\iota^* TN\), whose image is orthogonal to \(TM\).
    Its universal property involves only isometries and the dagger, so it is preserved by unitary bundle isomorphisms; for instance, a unitary automorphism of \(\iota^* TN\) that maps \(TM\) onto itself maps \(TM^\perp\) onto itself.
\end{example}

\begin{example}
    \label{ex:normal-bundle-sphere}
    For an isometric immersion \(\iota\) into \(N = \R^n\), the bundle \(\iota^* T\R^n\) is the trivial bundle \(M \times \R^n\) with the dot product.
    For the sphere \(S^2 \subseteq \R^3\), the projector \(\id - d\iota \circ d\iota^\dagger\) is \(w \mapsto \langle w, x \rangle\, x\) on the fibre over \(x\), and the normal bundle is the trivial line bundle \(S^2 \times \R\), with the isometry \(j(x, c) = c\, x\) into \(S^2 \times \R^3\) and \(j^\dagger(w) = \langle w, x \rangle\).
    The complementary projector \(d\iota \circ d\iota^\dagger\) is the orthogonal projection onto the tangent plane, and the dual basis of \cref{ex:sphere-dual-basis} is \(e_i = d\iota^\dagger(\partial_i)\), \(\xi^i = \langle d\iota(-), \partial_i \rangle\).
    In the same way, every isometry \(i\) of a metric bundle into a trivial bundle \(M \times \R^n\) with the dot product gives a dual basis \(e_i = i^\dagger(\partial_i)\), \(\xi^i = \langle i(-), \partial_i \rangle\), which is its own reciprocal basis.
\end{example}

\section{Lie groupoids and parallel transport}
\label{sec:lie-groupoids}

In \cref{sec:lie-groups}, a form on \(V\) made \(\GL(V)\) a dagger Lie group, and a representation was unitary when it preserved that form.
A Lie groupoid is a Lie group with many objects: its morphisms \(g \colon x \to y\) go between points of a manifold \(M\), and a Lie group is a Lie groupoid over a point.
This section repeats the constructions of \cref{sec:lie-groups} for Lie groupoids, each one introduced by comparison with its counterpart for groups.
The general linear group \(\GL(V)\) is replaced by the general linear groupoid \(\GL(E)\) of a vector bundle, whose morphisms are the linear isomorphisms between fibres, and a metric on \(E\) gives it a dagger, the fibrewise adjoint.
The new phenomenon is parallel transport: a connection on \(E\) gives a representation of the groupoid of paths in \(M\), and \cref{thm:metric-iff-dagger-transport} says that this representation is unitary if and only if the connection is metric.

Throughout, bundles are real vector bundles over a manifold \(M\) with bundle metrics, as in \cref{cor:daggers-on-bundles}; the complex Hermitian case is identical.

\begin{definition}
    \label{def:lie-groupoid}
    A \emph{Lie groupoid} \(G \rightrightarrows M\) is a groupoid whose morphisms form a manifold \(G\) and whose objects form the manifold \(M\), with smooth structure maps and with source and target \(s, t \colon G \to M\) surjective submersions.
    We write \(g \colon x \to y\) when \(s(g) = x\) and \(t(g) = y\).
\end{definition}

The structure maps are the composition \((h, g) \mapsto h \circ g\), defined when \(s(h) = t(g)\), the units \(x \mapsto \id_x\) and the inversion \(g \mapsto g^{-1}\).
A morphism of Lie groupoids over \(M\) is a smooth functor that is the identity on objects.
A standard reference is \cite{crainic-fernandes2011}.

\begin{example}
    \label{ex:lie-group-groupoid}
    A Lie group is a Lie groupoid over a point, and everything in this section reduces to \cref{sec:lie-groups} in this case.
\end{example}

\begin{example}
    \label{ex:trivial-groupoid}
    The \emph{trivial groupoid} \(M \rightrightarrows M\) has only identity morphisms.
\end{example}

\begin{example}
    \label{ex:pair-groupoid}
    The \emph{pair groupoid} \(M \times M \rightrightarrows M\) has one morphism \((x, y) \colon x \to y\) between any two points, with \((y, z) \circ (x, y) = (x, z)\).
\end{example}

\begin{example}
    \label{ex:fundamental-groupoid}
    The \emph{fundamental groupoid} \(\Pi_1(M) \rightrightarrows M\) has homotopy classes of paths \(\gamma\) from \(x\) to \(y\), relative to the endpoints, as morphisms \(x \to y\), composed by concatenation.
    Its loops at \(x\) form the fundamental group, and for simply connected \(M\) it is the pair groupoid.
\end{example}

\begin{example}
    \label{ex:action-groupoid}
    A smooth action of a Lie group \(H\) on \(M\), written \((g, x) \mapsto g \cdot x\), gives the \emph{action groupoid} \(H \ltimes M \rightrightarrows M\), with morphisms \((g, x) \colon x \to g \cdot x\) and composition \((h, g \cdot x) \circ (g, x) = (h g, x)\).
\end{example}

\begin{example}
    \label{ex:lie-groupoids}
    For a vector bundle \(E\), the \emph{general linear groupoid} \(\GL(E) \rightrightarrows M\) has as morphisms \(x \to y\) the linear isomorphisms \(E_x \to E_y\), composed as maps.
    Local frames of \(E\) around \(x\) and \(y\) identify the morphisms near \(x\) and \(y\) with invertible matrices, and this gives \(\GL(E)\) its smooth structure.
    Over a point, it is the general linear group \(\GL(V)\).
\end{example}

\begin{example}
    \label{ex:gauge-groupoid}
    Let \(B \to M\) be a principal \(H\)-bundle \cite{bleecker1981}, with fibres \(B_x\).
    Its \emph{gauge groupoid} \((B \times B)/H \rightrightarrows M\) is the quotient of \(B \times B\) by the diagonal action of \(H\), and its morphisms \(x \to y\) are the \(H\)-equivariant maps \(B_x \to B_y\).
    If \(E\) has constant rank \(n\), then \(\GL(E)\) is the gauge groupoid of its frame bundle, the principal \(\GL(n, \R)\)-bundle whose fibre over \(x\) is the set of bases of \(E_x\).
    If the rank varies, this description applies on each union of components where the rank has a fixed value; \(\GL(E)\) has no morphisms between fibres of different ranks.
\end{example}

\begin{definition}
    \label{def:dagger-lie-groupoid}
    A \emph{dagger Lie groupoid} is a Lie groupoid with a smooth dagger \(\dagger \colon G^\op \to G\) over the identity of \(M\).
\end{definition}

Here \(G^\op\) is the opposite groupoid, with the same morphisms and the source and target exchanged.

Explicitly, a dagger Lie groupoid has a smooth map \(g \mapsto g^\dagger\) on \(G\), sending \(g \colon x \to y\) to \(g^\dagger \colon y \to x\), with \((h \circ g)^\dagger = g^\dagger \circ h^\dagger\), \(\id_x^\dagger = \id_x\) and \(g^{\dagger\dagger} = g\).
Over a point, this is a dagger Lie group (\cref{def:dagger-lie-group}).
As for groups, the \emph{unitary} morphisms are those with \(g^\dagger = g^{-1}\), and they form a subgroupoid.

\begin{example}
    \label{ex:gl-groupoid-dagger}
    For a metric bundle \(E\), the groupoid \(\GL(E)\) is a dagger Lie groupoid with the fibrewise adjoint: for \(g \colon E_x \to E_y\), the adjoint \(g^\dagger \colon E_y \to E_x\) is defined by \(\langle g(v), w \rangle_y = \langle v, g^\dagger(w) \rangle_x\).
    Its unitary morphisms are the linear isometries between fibres.
    Every Lie groupoid also has the unitary dagger \(g^\dagger = g^{-1}\).
\end{example}

In local frames \(e_i\) around \(x\) and \(y\), the matrix of \(g^\dagger\) is obtained from the transpose of the matrix of \(g\) by multiplying with the matrices \(\langle e_i, e_j \rangle\) at \(y\) and their inverses at \(x\), so the adjoint is smooth.
Over a point, this is the dagger on \(\GL(V)\) from \cref{ex:dagger-lie-groups}.
The unitary morphisms of \(\GL(E)\) form the gauge groupoid of the orthonormal frame bundle of \(E\) (an \(\O(p, q)\)-bundle on each component where the metric has signature \((p, q)\)), just as the unitary subgroup of \(\GL(V)\) is the isometry group of the form of \(V\).

\begin{definition}
    \label{def:groupoid-representation}
    A \emph{representation} of a groupoid \(G \rightrightarrows M\) on a bundle \(E\) is a functor \(\pi \colon G \to \GL(E)\) over the identity of \(M\).
    It is \emph{unitary} if it is a dagger functor for the unitary dagger on \(G\) and the fibrewise adjoint on \(\GL(E)\), that is, \(\pi(g^{-1}) = \pi(g)^\dagger\).
\end{definition}

For Lie groupoids we ask the functor to be smooth.
For a Lie group over a point this recovers \cref{def:unitary-representation}.
Since \(\pi(g^{-1}) = \pi(g)^{-1}\), a representation is unitary if and only if every \(\pi(g) \colon E_x \to E_y\) is an isometry.
Every bundle is a representation of the trivial groupoid, with \(\pi(\id_x) = \id_{E_x}\).

\begin{example}
    \label{ex:pair-groupoid-representation}
    A representation of the pair groupoid gives linear isomorphisms \(\pi(x, y) \colon E_x \to E_y\) with \(\pi(y, z) \circ \pi(x, y) = \pi(x, z)\).
    For a fixed point \(x\), the map \(M \times E_x \to E\), \((y, v) \mapsto \pi(x, y)(v)\), is a trivialisation of \(E\), and the representation is unitary if and only if this trivialisation is an isometry for the constant metric on \(M \times E_x\).
\end{example}

\begin{example}
    \label{ex:action-groupoid-representation}
    A representation of the action groupoid \(H \ltimes M\) of \cref{ex:action-groupoid} is an \(H\)-equivariant bundle, with \(g\) acting by \(\pi(g, x) \colon E_x \to E_{g \cdot x}\), and it is unitary if and only if \(H\) acts by isometries between fibres.
\end{example}

Representations of the fundamental groupoid come from flat connections, as recalled below.

A \emph{connection} on \(E\) is an \(\R\)-bilinear map \((X, u) \mapsto \nabla_X u\) from vector fields and sections to sections, \(C^\infty(M)\)-linear in \(X\), with
\[
    \nabla_X(f u) = f\, \nabla_X u + X(f)\, u .
\]
It is \emph{metric} if \(X(\langle u, v \rangle) = \langle \nabla_X u, v \rangle + \langle u, \nabla_X v \rangle\) for all vector fields \(X\) and sections \(u, v\).
A section of \(E\) along a path \(\gamma \colon [0, 1] \to M\) is a map \(u\) with \(u(t) \in E_{\gamma(t)}\), and the connection gives its covariant derivative \(\nabla_{\gamma'} u\) along \(\gamma\), where \(\gamma'\) is the velocity of \(\gamma\).
This is the unique \(\R\)-linear operator with \(\nabla_{\gamma'}(h u) = h' u + h \nabla_{\gamma'} u\) for functions \(h\) on \([0, 1]\) and \(\nabla_{\gamma'}(v \circ \gamma)(t) = \nabla_{\gamma'(t)} v\) for sections \(v\) of \(E\), where \(\nabla_{X_y} v\), for a tangent vector \(X_y \in T_y M\), is \((\nabla_X v)(y)\) for any vector field \(X\) with value \(X_y\) at \(y\).
It is \emph{parallel} if \(\nabla_{\gamma'} u = 0\).
Along a piecewise smooth path \(\gamma\) from \(x\) to \(y\), every vector \(v \in E_x\) extends uniquely to a parallel section \(u\) along \(\gamma\) with \(u(0) = v\), and the \emph{parallel transport} \(P_\gamma \colon E_x \to E_y\) sends \(v\) to \(u(1)\).
It is a linear isomorphism, with \(P_{\gamma^{-1}} = P_\gamma^{-1}\) for the reversed path.
For background on connections and parallel transport, see \cite[Chapter~4]{lee2018}.
Two paths from \(x\) to \(y\) are \emph{thin homotopic} if there is a homotopy between them, fixing the endpoints, whose differential has rank at most one everywhere, so that it sweeps out no area.
Parallel transport depends only on the thin homotopy class of a path \cite[Definition~2.2 and Propositions~4.3 and~5.2]{schreiber-waldorf2009}.
Schreiber and Waldorf work with smooth paths with sitting instants, to which every piecewise smooth path can be reparametrised by a thin homotopy.

\begin{definition}
    \label{def:path-groupoid}
    The \emph{path groupoid} \(P(M)\) has the points of \(M\) as objects and thin homotopy classes of piecewise smooth paths as morphisms, with the unitary dagger.
    A connection \(\nabla\) on \(E\) gives the \emph{parallel transport} functor \(P(M) \to \GL(E)\), \(\gamma \mapsto P_\gamma\).
\end{definition}

Composition in \(P(M)\) is concatenation of paths, and the inverse of \(\gamma\) is the reversed path \(\gamma^{-1}\), which is also its dagger.
That parallel transport is a functor says that for paths \(\gamma \colon x \to y\) and \(\delta \colon y \to z\) the triangle
\[
    \begin{tikzcd}
        E_x \rar["P_\gamma"] \drar["P_{\delta \circ \gamma}"'] & E_y \dar["P_\delta"] \\
        & E_z
    \end{tikzcd}
\]
commutes, where \(\delta \circ \gamma\) is \(\gamma\) followed by \(\delta\).
Representations of the path groupoid are functors as in \cref{def:groupoid-representation}, with no smoothness condition.
A connection is determined by its parallel transport, and Schreiber and Waldorf characterise the functors that arise in this way from connections on principal bundles as the transport functors with \(BG\)-structure \cite[Theorem~5.4]{schreiber-waldorf2009}.
The next theorem characterises the metric ones among all connections, including those with curvature.

\begin{theorem}
    \label{thm:metric-iff-dagger-transport}
    A connection on a metric bundle is metric if and only if its parallel transport \(P(M) \to \GL(E)\) is a dagger functor.
\end{theorem}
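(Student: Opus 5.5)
First I reduce the dagger-functor condition to a fibrewise statement. The path groupoid \(P(M)\) carries the unitary dagger \(\gamma^\dagger = \gamma^{-1}\), and parallel transport satisfies \(P_{\gamma^{-1}} = P_\gamma^{-1}\). So the functor is a dagger functor exactly when \(P_\gamma^{-1} = P_\gamma^\dagger\) for every path \(\gamma\), that is, when every \(P_\gamma \colon E_x \to E_y\) is unitary for the fibrewise adjoint of \cref{ex:gl-groupoid-dagger}. As observed after \cref{def:groupoid-representation}, this means every \(P_\gamma\) is a linear isometry. The theorem therefore becomes the classical statement: \(\nabla\) is metric if and only if parallel transport along every piecewise smooth path preserves the metric.

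For the forward direction, let \(\nabla\) be metric and \(\gamma\) a smooth path. I would first establish the product rule along curves,
\[
    \tfrac{d}{dt} \langle u(t), v(t) \rangle_{\gamma(t)} = \langle \nabla_{\gamma'} u, v \rangle + \langle u, \nabla_{\gamma'} v \rangle ,
\]
for sections \(u, v\) along \(\gamma\). To prove it, work in a local frame \(e_i\) and write \(u = \sum_i u^i e_i \circ \gamma\) and \(v = \sum_j v^j e_j \circ \gamma\). Using the Leibniz rule for \(\nabla_{\gamma'}\) and the metric condition applied to the frame sections \(e_i, e_j\) with \(X_{\gamma(t)} = \gamma'(t)\), both sides agree; the metric condition at a point needs only the value of \(X\) there, by \(C^\infty(M)\)-linearity. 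For parallel \(u, v\) the derivative vanishes, so \(\langle P_\gamma v_0, P_\gamma w_0 \rangle_y = \langle v_0, w_0 \rangle_x\). Piecewise smooth paths are handled by composing over the smooth pieces, using functoriality.

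For the converse, suppose every \(P_\gamma\) is an isometry. Fix a point \(x\), a tangent vector \(X_x\), and sections \(u, v\). Choose a smooth curve \(\gamma\) with \(\gamma(0) = x\) and \(\gamma'(0) = X_x\), and a parallel frame \(f_i(t) = P_{\gamma|[0,t]}(f_i)\) along \(\gamma\). Expand \(u\circ\gamma = \sum_i a^i f_i\) and \(v\circ\gamma = \sum_j b^j f_j\). Since the \(f_i\) are parallel, \(\nabla_{\gamma'} u = \sum_i (a^i)' f_i\). Since the transports are isometries, \(\langle f_i, f_j \rangle\) is constant in \(t\). Differentiating \(\langle u, v\rangle\circ\gamma = \sum_{i,j} a^i b^j \langle f_i, f_j \rangle\) at \(t = 0\) then gives \(X_x\langle u, v\rangle = \langle \nabla_{X_x} u, v\rangle + \langle u, \nabla_{X_x} v\rangle\). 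This holds at every point and for every tangent vector, so \(\nabla\) is metric.

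The main obstacle I expect is bookkeeping rather than depth. One must be careful that the covariant derivative along \(\gamma\) at \(t\) is determined by \(\gamma'(t)\), so that the pointwise form of the metric condition is legitimate. One must also handle paths with sitting instants or corners, where \(\gamma'\) may vanish; this is harmless, because on each smooth piece the derivative identity still holds. Thin homotopy invariance is not needed beyond knowing that \(P_\gamma\) is well defined on classes.
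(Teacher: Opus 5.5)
Your proposal is correct and follows the paper's proof essentially step by step. It uses the same reduction to every \(P_\gamma\) being an isometry, the same local-frame product rule on each smooth piece for the forward direction, and the same parallel frame \(P_t(e_i)\) along a curve with \(\gamma'(0) = X_x\) for the converse. The only cosmetic difference is that the paper packages the converse as \(\nabla_{X_x} u = \frac{d}{dt}\big|_{t=0} P_t^{-1}(u(\gamma(t)))\) and differentiates a form on the fixed fibre \(E_x\), whereas you differentiate the expansion against the constant Gram matrix \(\langle f_i, f_j \rangle\), which is the same computation.
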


Since the dagger of \(P(M)\) is \(\gamma \mapsto \gamma^{-1}\), the theorem says that the square
\[
    \begin{tikzcd}
        P(M)^\op \rar["P^\op"] \dar["\gamma \mapsto \gamma^{-1}"'] & \GL(E)^\op \dar["\dagger"] \\
        P(M) \rar["P"] & \GL(E)
    \end{tikzcd}
\]
commutes, where \(P\) denotes parallel transport, if and only if the connection is metric.

\begin{proof}
    Since \(P_{\gamma^{-1}} = P_\gamma^{-1}\), parallel transport is a dagger functor if and only if every \(P_\gamma\) is unitary, that is, an isometry \(E_x \to E_y\).

    Suppose first that the connection is metric, and let \(u\) and \(w\) be parallel sections along a path \(\gamma\).
    On each smooth piece of \(\gamma\), write \(u(t) = \sum_i u^i(t)\, e_i(\gamma(t))\) and \(w(t) = \sum_j w^j(t)\, e_j(\gamma(t))\) in a local frame \(e_i\) of \(E\).
    The two properties of \(\nabla_{\gamma'}\), together with the metric condition for the sections \(e_i\) and \(e_j\) at the point \(\gamma(t)\), give
    \[
        \frac{d}{dt} \langle u(t), w(t) \rangle = \langle \nabla_{\gamma'} u, w \rangle + \langle u, \nabla_{\gamma'} w \rangle = 0,
    \]
    so \(\langle u(t), w(t) \rangle\) is constant, and \(\langle P_\gamma(u(0)), P_\gamma(w(0)) \rangle = \langle u(0), w(0) \rangle\).
    Hence \(P_\gamma\) is an isometry.

    Conversely, suppose that every \(P_\gamma\) is unitary.
    Let \(X_x \in T_x M\), let \(\gamma\) be a smooth path with \(\gamma(0) = x\) and \(\gamma'(0) = X_x\), and write \(P_t \colon E_x \to E_{\gamma(t)}\) for the parallel transport along \(\gamma\) restricted to \([0, t]\).
    We first recover \(\nabla_{X_x}\) from parallel transport.
    For a basis \(e_1, \dots, e_n\) of \(E_x\), the sections \(P_t(e_i)\) along \(\gamma\) are parallel and form a basis of every fibre \(E_{\gamma(t)}\).
    Writing a section \(u\) of \(E\) as \(u(\gamma(t)) = \sum_i u^i(t)\, P_t(e_i)\), the second property of \(\nabla_{\gamma'}\) gives \(\nabla_{X_x} u = \sum_i (u^i)'(0)\, e_i\) and \(P_t^{-1}(u(\gamma(t))) = \sum_i u^i(t)\, e_i\), hence
    \[
        \nabla_{X_x} u = \frac{d}{dt}\Big|_{t = 0} P_t^{-1}\bigl(u(\gamma(t))\bigr).
    \]
    Since \(P_t\) is unitary, \(\langle u, w \rangle(\gamma(t)) = \langle P_t^{-1}(u(\gamma(t))), P_t^{-1}(w(\gamma(t))) \rangle_x\) for sections \(u\) and \(w\), where the right-hand side is a form on the fixed space \(E_x\).
    Differentiating at \(t = 0\) gives
    \[
        X_x(\langle u, w \rangle) = \langle \nabla_{X_x} u, w \rangle(x) + \langle u, \nabla_{X_x} w \rangle(x),
    \]
    and since \(x\) and \(X_x\) are arbitrary, the connection is metric.
\end{proof}

For Hermitian bundles, Schreiber and Waldorf show more: bundles with unitary connection are equivalent to transport functors into Hermitian spaces and isometries \cite[Theorem~5.8 and Corollary~5.9]{schreiber-waldorf2009}.
\Cref{thm:metric-iff-dagger-transport} is the groupoid version of \cref{prop:unitary-rep-differentiates}, with the path groupoid in place of the Lie group: preserving the metric along every path is the integrated form of the metric condition.

\begin{corollary}
    \label{cor:flat-metric-unitary}
    A flat connection on a metric bundle is metric if and only if its holonomy representation of the fundamental groupoid \(\Pi_1(M)\) is unitary.
\end{corollary}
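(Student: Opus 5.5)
The plan is to deduce the corollary from \cref{thm:metric-iff-dagger-transport} by passing from the path groupoid \(P(M)\) to the fundamental groupoid \(\Pi_1(M)\). For a flat connection, parallel transport depends only on the homotopy class of a path relative to its endpoints, not just on its thin homotopy class. This is the standard fact that flatness makes transport homotopy invariant, and I would cite it from \cite[Chapter~4]{lee2018} or \cite{schreiber-waldorf2009}. Every continuous path is homotopic to a piecewise smooth one, and two piecewise smooth paths that are homotopic are homotopic through a piecewise smooth homotopy. Hence the transport functor \(P(M) \to \GL(E)\) factors as the quotient functor \(q \colon P(M) \to \Pi_1(M)\), which sends a thin homotopy class to its homotopy class, followed by the holonomy representation \(\mathrm{Hol} \colon \Pi_1(M) \to \GL(E)\).

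Next I would compare the two dagger conditions. Both \(P(M)\) and \(\Pi_1(M)\) carry the unitary dagger, and \(q\) preserves it, because the reversed path represents the inverse class in both groupoids. The functor \(q\) is surjective on morphisms, since every homotopy class contains a piecewise smooth path. Therefore \(\mathrm{Hol} \circ q\) is a dagger functor if and only if \(\mathrm{Hol}\) is one. For the nontrivial direction, take any \([\gamma]\) in \(\Pi_1(M)\), lift it to \(\gamma\) in \(P(M)\), and compute
\[
\mathrm{Hol}([\gamma]^{-1}) = \mathrm{Hol}(q(\gamma^\dagger)) = \mathrm{Hol}(q(\gamma))^\dagger .
\]
By \cref{def:groupoid-representation}, a dagger functor from \(\Pi_1(M)\) with the unitary dagger to \(\GL(E)\) with the fibrewise adjoint is exactly a unitary representation. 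Combining this with \cref{thm:metric-iff-dagger-transport} gives the corollary.

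The main obstacle is the homotopy invariance of flat parallel transport in the piecewise smooth setting. This is background rather than a result proved earlier in the paper, so I would cite it. If a self-contained argument were wanted, I would smooth the homotopy and differentiate the transport along the family of paths. The derivative is controlled by the curvature integrated over the swept area, and it vanishes when the connection is flat.
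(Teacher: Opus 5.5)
Your proposal is correct and follows the paper's own proof. You factor flat parallel transport through the quotient \(P(M) \to \Pi_1(M)\), use that this quotient is surjective on morphisms and preserves inverses, and conclude with \cref{thm:metric-iff-dagger-transport}; like you, the paper takes the homotopy invariance of flat transport as background.
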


\begin{proof}
    For a flat connection, parallel transport depends only on the homotopy class of a path, so it factors through the functor \(P(M) \to \Pi_1(M)\) that sends a thin homotopy class to its homotopy class, and the holonomy representation is the resulting functor \(\Pi_1(M) \to \GL(E)\).
    Every homotopy class contains a piecewise smooth path, so this functor is surjective on morphisms, and it preserves inverses.
    Hence the holonomy representation is unitary if and only if parallel transport is a dagger functor, which by \cref{thm:metric-iff-dagger-transport} holds if and only if the connection is metric.
\end{proof}

\begin{proposition}
    \label{prop:unitary-groupoid-reps}
    Unitary representations of a groupoid on metric bundles, with intertwiners as morphisms, form a dagger category with the fibrewise adjoint.
\end{proposition}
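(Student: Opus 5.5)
The plan is to set up the category concretely and reduce everything to the dagger of \(\Herm_A\) from \cref{prop:herm-module-dagger}, applied fibrewise. Objects are pairs \((E, \pi)\) with \(E\) a metric bundle and \(\pi \colon G \to \GL(E)\) a unitary representation (\cref{def:groupoid-representation}). A morphism \((E, \pi) \to (F, \rho)\) is an intertwiner: a bundle map \(\varphi \colon E \to F\) with \(\rho(g) \circ \varphi_x = \varphi_y \circ \pi(g)\) for every \(g \colon x \to y\). Composition and identities are those of bundle maps, and composites and identities of intertwiners are again intertwiners, so this is a category. For the dagger I take the adjoint \(\varphi^\dagger \colon F \to E\) in \(\Herm_A\). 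As noted before \cref{prop:orthogonal-subbundles}, this adjoint is computed fibrewise: \((\varphi^\dagger)_x = (\varphi_x)^\dagger\) for the forms on \(E_x\) and \(F_x\).

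Granting that \(\varphi^\dagger\) is again an intertwiner, the dagger axioms are inherited verbatim from \(\Herm_A\), or fibrewise from \(\Herm_K\) via \cref{prop:herm-dagger}:
\[
(\psi \circ \varphi)^\dagger = \varphi^\dagger \circ \psi^\dagger, \qquad \id^\dagger = \id, \qquad \varphi^{\dagger\dagger} = \varphi.
\]
So the only real point is closure of intertwiners under the adjoint, and that is where unitarity enters. Take \(g \colon x \to y\). Then \(g^{-1} \colon y \to x\), and the intertwining relation for \(g^{-1}\) reads
\[
\rho(g^{-1}) \circ \varphi_y = \varphi_x \circ \pi(g^{-1}).
\]
By unitarity, \(\rho(g^{-1}) = \rho(g)^\dagger\) and \(\pi(g^{-1}) = \pi(g)^\dagger\), so this becomes \(\rho(g)^\dagger \circ \varphi_y = \varphi_x \circ \pi(g)^\dagger\). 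Taking fibrewise adjoints, using the dagger of \(\GL(E)\) and \(\GL(F)\) from \cref{ex:gl-groupoid-dagger} together with the adjoint of linear maps between fibres, gives
\[
\varphi_y^\dagger \circ \rho(g) = \pi(g) \circ \varphi_x^\dagger.
\]
This is exactly the statement that \(\varphi^\dagger\) intertwines \(\rho\) with \(\pi\).

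The main obstacle, though mild, is to justify that the adjoint of a morphism between different fibres behaves functorially, that is, \((a \circ b)^\dagger = b^\dagger \circ a^\dagger\) for linear maps between Hermitian spaces \(E_x, E_y, F_x, F_y\). This holds because all these fibres are objects of \(\Herm_K\). The second small point is that \(\varphi^\dagger\) is a genuine smooth bundle map, which is supplied by \cref{prop:herm-module-dagger} through the Serre--Swan identification (\cref{thm:serre-swan}). Smoothness of \(\pi\) plays no role beyond being part of the data, so the argument applies equally to the path groupoid with no smoothness condition.
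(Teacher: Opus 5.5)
Your proof is correct and is essentially the paper's argument: both take fibrewise adjoints of an intertwining square, use unitarity \(\pi(g)^\dagger = \pi(g^{-1})\) to show \(\varphi^\dagger\) is an intertwiner, and inherit the dagger axioms from bundle maps. The only cosmetic difference is direction. You start from the square at \(g^{-1}\) and obtain the condition for \(\varphi^\dagger\) at \(g\). The paper starts from the square at \(g\), obtains the condition at \(g^{-1}\), and then notes that every morphism is of that form.
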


\begin{proof}
    An intertwiner from \(\pi\) on \(E\) to \(\pi'\) on \(F\) is a bundle map \(\varphi \colon E \to F\), with components \(\varphi_x \colon E_x \to F_x\), such that the square on the left below commutes for all \(g \colon x \to y\).
    \[
        \begin{tikzcd}
            E_x \rar["\pi(g)"] \dar["\varphi_x"'] & E_y \dar["\varphi_y"] \\
            F_x \rar["\pi'(g)"] & F_y
        \end{tikzcd}
        \qquad
        \begin{tikzcd}
            E_y \rar["\pi(g^{-1})"] & E_x \\
            F_y \rar["\pi'(g^{-1})"] \uar["\varphi_y^\dagger"] & F_x \uar["\varphi_x^\dagger"']
        \end{tikzcd}
    \]
    Identities and composites of intertwiners are intertwiners, so representations form a category.
    Taking adjoints of the left square and using \(\pi(g)^\dagger = \pi(g^{-1})\) and \(\pi'(g)^\dagger = \pi'(g^{-1})\) shows that the right square commutes, which is the intertwining condition for \(\varphi^\dagger\) and the morphism \(g^{-1} \colon y \to x\).
    Every morphism of \(G\) is of this form, so \(\varphi^\dagger\) is an intertwiner from \(\pi'\) to \(\pi\).
    The dagger axioms hold because they hold for bundle maps (\cref{prop:herm-module-dagger}).
\end{proof}

\section{Lie algebroids and metric connections}
\label{sec:lie-algebroids}

A Lie algebroid is a Lie algebra with many objects, in the same way as a Lie groupoid is a Lie group with many objects, and differentiating a Lie groupoid gives a Lie algebroid.
As in \cref{sec:lie-groups}, we differentiate the dagger, now with the Lie algebra \(\gl(V)\) replaced by the general linear algebroid \(\gl(E)\), whose sections are the derivations of a bundle \(E\).
A metric on \(E\) gives \(\gl(E)\) a dagger, and \cref{thm:differentiate-groupoid-dagger} shows that it is the derivative of the fibrewise adjoint on \(\GL(E)\).
Its formula \eqref{eq:gl-dagger} has, besides the adjoint term, a term coming from the derivative of the metric.
A flat connection is a representation of the tangent algebroid, and it is unitary if and only if it is metric, so metric connections are the infinitesimal form of the metric parallel transport of \cref{sec:lie-groupoids}.
We state the definitions for Lie--Rinehart algebras, the algebraic form of Lie algebroids, since \cref{part:diff} works in that generality.

Throughout, bundles are real, with bundle metrics and positive rank on every component, and \(A = C^\infty(M)\).

\subsection{Lie algebroids and Lie--Rinehart algebras}
\label{sec:lie-rinehart}

\begin{definition}
    \label{def:lie-rinehart}
    Let \(K\) be a commutative ring and \(A\) a commutative \(K\)-algebra.
    A \emph{Lie--Rinehart algebra} \((A, L)\) is a \(K\)-Lie algebra \(L\) that is also an \(A\)-module, with an \(A\)-linear Lie algebra map \(\rho \colon L \to \Der_K(A)\), the \emph{anchor}, such that
    \[
        [X, f Y] = f [X, Y] + X(f)\, Y,
    \]
    where \(X(f) = \rho(X)(f)\).
    A \emph{Lie algebroid} over \(M\) is a vector bundle \(L \to M\) whose sections form a Lie--Rinehart algebra \((C^\infty(M), \Gamma(L))\) over \(\R\).
\end{definition}

Here \(\Der_K(A)\) is the Lie algebra of \(K\)-linear derivations of \(A\), with the commutator bracket.
Lie--Rinehart algebras go back to Rinehart \cite{rinehart1963}, and the name is due to Huebschmann \cite{huebschmann1990}.
Classically, the anchor of a Lie algebroid is a bundle map \(L \to TM\), and \(X(f)\) is the derivative of \(f\) along the vector field that \(X\) is sent to.
A morphism of Lie algebroids over \(M\) is a bundle map that preserves brackets and anchors.

\begin{example}
    \label{ex:lie-algebroids}
    The tangent bundle \(TM\) is a Lie algebroid, with the bracket of vector fields and the identity as anchor, so that \(X(f)\) is the derivative of \(f\) along \(X\).
    Its Lie--Rinehart algebra \((C^\infty(M), \Gamma(TM))\) is the main example of \cref{part:diff}.
\end{example}

\begin{example}
    \label{ex:lie-algebra-algebroid}
    A Lie algebra \(\mathfrak g\) is a Lie algebroid over a point, and \((K, \mathfrak g)\) with the zero anchor is a Lie--Rinehart algebra.
\end{example}

\begin{example}
    \label{ex:endomorphism-algebroid}
    For a bundle \(E\), the endomorphism bundle \(\End(E)\) is a Lie algebroid with the commutator bracket and the zero anchor; the Leibniz rule says that the commutator is \(C^\infty(M)\)-bilinear.
\end{example}

The main example of this section is the general linear algebroid.

\begin{definition}
    \label{def:gl-algebroid}
    Let \(A\) be a commutative \(K\)-algebra and \(E\) an \(A\)-module.
    A \emph{derivation} of \(E\) with \emph{symbol} \(X \in \Der_K(A)\) is an additive map \(P \colon E \to E\) with \(P(f u) = f P(u) + X(f)\, u\).
    For a bundle \(E\), with \(K = \R\) and \(A = C^\infty(M)\), the derivations of \(\Gamma(E)\) are the sections of the \emph{general linear algebroid} \(\gl(E)\), with the commutator bracket and the symbol as anchor.
\end{definition}

A derivation is \(K\)-linear, since \(X(c) = c X(1) = 0\) for \(c \in K\) gives \(P(c u) = c P(u)\).
For a bundle, the symbol is determined by \(P\), since \(X(f)\, u = P(f u) - f P(u)\) and \(E\) has positive rank.
The commutator of derivations with symbols \(X\) and \(Y\) is a derivation with symbol \([X, Y]\), and derivations with symbol zero are the \(C^\infty(M)\)-linear maps, that is, the sections of \(\End(E)\).
The derivations fit into the Atiyah sequence \eqref{eq:atiyah} below, an extension of \(\Gamma(TM)\) by \(\Gamma(\End(E))\).
It splits as a sequence of modules because \(\Gamma(TM)\) is projective.
Consequently the derivations form a finitely generated projective module and, by \cref{thm:serre-swan}, are the sections of a bundle.
For a connection \(\nabla\), every \(\nabla_X\) is a derivation with symbol \(X\).
Over a point, \(\gl(E)\) is the Lie algebra \(\gl(V)\) of a vector space.

Every Lie groupoid \(G \rightrightarrows M\) has a Lie algebroid \(\Lie(G)\), just as a Lie group has a Lie algebra \cite{crainic-fernandes2011}.
Its fibre at \(x\) is the tangent space at \(\id_x\) of the source fibre \(s^{-1}(x)\), its anchor is the differential of the target map, and its bracket comes from the right-invariant vector fields on \(G\).
A morphism \(G \to G'\) of Lie groupoids over \(M\) gives a morphism \(\Lie(G) \to \Lie(G')\) of Lie algebroids, the restriction of its differential, and this makes \(\Lie\) a functor.
Over a point, \(\Lie(G)\) is the Lie algebra of the Lie group \(G\).
The pair groupoid and the fundamental groupoid both have the tangent algebroid \(TM\) as their Lie algebroid, since the source fibre of the pair groupoid over \(x\) is \(\{x\} \times M\), and \(\Pi_1(M)\) agrees with the pair groupoid near the units.
For the general linear groupoid, \(\Lie(\GL(E)) = \gl(E)\) \cite{mackenzie2005}, \cite[Lemma~2.33]{crainic-fernandes2011}.
The flow description will let us compare the dagger on \(\GL(E)\) with the one on \(\gl(E)\).
A derivation \(P\) with symbol \(X\) generates a local flow of linear isomorphisms \(\Phi_t \colon E_x \to E_{x_t}\), where \(t \mapsto x_t\) is the integral curve of \(X\) through \(x\) \cite[Theorem~3.1 and Corollary~3.3]{jotz2024}.
The derivation is recovered from this flow by
\begin{equation}
    \label{eq:derivation-flow}
    P(u)(x) = \frac{d}{dt}\Big|_{t = 0} \Phi_t^{-1}\bigl(u(x_t)\bigr).
\end{equation}
The element of \(\Lie(\GL(E))\) at \(x\) that corresponds to \(P\) is the velocity at \(t = 0\) of the curve \(t \mapsto \Phi_t\) in the source fibre of \(\GL(E)\) over \(x\).
For a connection, the flow of \(\nabla_X\) is parallel transport along the integral curves of \(X\), and \eqref{eq:derivation-flow} is the formula for \(\nabla\) from the proof of \cref{thm:metric-iff-dagger-transport}.

\subsection{Dagger Lie algebroids}
\label{sec:dagger-lie-algebroids}

\begin{definition}
    \label{def:dagger-lie-algebroid}
    Let \((A, L)\) be a Lie--Rinehart algebra and \(f \mapsto \conj f\) an involutive automorphism of \(A\).
    A \emph{dagger} on \((A, L)\) is an additive map \(X \mapsto X^\dagger\) on \(L\) such that
    \begin{gather*}
        (f X)^\dagger = \conj f\, X^\dagger, \qquad X^{\dagger\dagger} = X, \\
        [X, Y]^\dagger = [Y^\dagger, X^\dagger], \qquad X^\dagger(f) = -\conj{X(\conj f)}.
    \end{gather*}
    A dagger on a Lie algebroid \(L\) is a dagger on \((C^\infty(M), \Gamma(L))\) with the trivial involution, that is, a \(C^\infty(M)\)-linear involution of \(\Gamma(L)\) that reverses brackets and satisfies \(X^\dagger(f) = -X(f)\).
    For the trivial involution, the \emph{unitary dagger} is \(X^\dagger = -X\).
\end{definition}

Over a point, with \(A = \R\) and the zero anchor, this is a dagger Lie algebra (\cref{def:dagger-lie-algebra}).
As in \cref{rem:cartan-involution}, we can trade the dagger for \(\theta(X) = -X^\dagger\).
The conditions on the dagger say that \(\theta\) is additive, involutive and bracket-preserving, with \(\theta(f X) = \conj f\, \theta(X)\) and \((\theta(X))(f) = \conj{X(\conj f)}\), so a dagger on \((A, L)\) is the same thing as a real structure in the sense of \cref{def:real-structure}.
For a Lie algebroid, \(\theta\) is an involutive automorphism of \(L\) that preserves the anchor, and the unitary dagger corresponds to \(\theta = \id\).

\begin{proposition}
    \label{prop:gl-dagger}
    Let \(E\) be a finitely generated projective module over a commutative \(K\)-algebra \(A\), with a unimodular symmetric form.
    Every derivation \(P\) of \(E\) with symbol \(X\) has a unique derivation \(P^\dagger\), with symbol \(-X\), such that
    \begin{equation}
        \label{eq:gl-dagger}
        \langle P(u), v \rangle = \langle u, P^\dagger(v) \rangle + X(\langle u, v \rangle)
    \end{equation}
    for all \(u, v \in E\).
    The map \(P \mapsto P^\dagger\) is additive and involutive, satisfies \((f P)^\dagger = f P^\dagger\) and \([P, Q]^\dagger = [Q^\dagger, P^\dagger]\), and for a metric bundle it is a dagger on \(\gl(E)\).
\end{proposition}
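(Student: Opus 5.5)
The plan is to construct \(P^\dagger\) pointwise from unimodularity, check that it is a derivation with symbol \(-X\), and then derive every algebraic property from the uniqueness built into \eqref{eq:gl-dagger}, exactly as the adjoint of \cref{prop:herm-dagger} was handled. Since the form is symmetric, the involution of \(A\) is trivial and \(\flat\) is \(A\)-linear.

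\textbf{Existence and uniqueness.} Fix \(v \in E\) and consider
\[
    \alpha_v(u) = \langle P(u), v \rangle - X(\langle u, v \rangle).
\]
It is additive in \(u\), and the Leibniz rules for \(P\) and \(X\) give
\[
    \alpha_v(f u) = f\langle P u, v\rangle + X(f)\langle u,v\rangle - f X(\langle u,v\rangle) - X(f)\langle u,v\rangle = f\,\alpha_v(u),
\]
so \(\alpha_v \in E^\vee\). By unimodularity we may put \(P^\dagger(v) = \alpha_v^\sharp\); this is the unique map satisfying \eqref{eq:gl-dagger}.

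\textbf{\(P^\dagger\) is a derivation with symbol \(-X\).} Additivity of \(P^\dagger\) is clear. Next compute
\[
    \alpha_{fv}(u) = f\langle Pu, v\rangle - X(f)\langle u,v\rangle - f X(\langle u,v\rangle) = \langle u, f P^\dagger v - X(f) v\rangle .
\]
Hence \(P^\dagger(fv) = f P^\dagger(v) + (-X)(f)\,v\).

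\textbf{Algebraic properties.} Each one follows by checking that a candidate satisfies the defining identity and then invoking uniqueness.
\begin{itemize}
    \item \emph{Involutivity.} Swap the roles in \eqref{eq:gl-dagger} using symmetry of the form. With \(P^\dagger\) of symbol \(-X\), it reads \(\langle P^\dagger v, u\rangle = \langle v, P u\rangle - (-X)(\langle v,u\rangle)\), so \(P\) satisfies the defining identity for \((P^\dagger)^\dagger\).
    \item \emph{\(A\)-linearity.} The map \(fP\) has symbol \(fX\), and \(fP^\dagger\) satisfies its defining identity, so \((fP)^\dagger = fP^\dagger\).
    \item \emph{Brackets.} This is the main bookkeeping step. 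Apply \eqref{eq:gl-dagger} twice to \(\langle PQu, v\rangle\) and to \(\langle QPu,v\rangle\). With \(Y\) the symbol of \(Q\), one gets
    \[
        \langle PQu, v\rangle = \langle u, Q^\dagger P^\dagger v\rangle + Y(\langle u,P^\dagger v\rangle) + X(\langle Qu,v\rangle).
    \]
    Expanding \(X(\langle Qu, v\rangle) = X(\langle u, Q^\dagger v\rangle) + XY(\langle u,v\rangle)\) and symmetrising, the cross terms must cancel against each other to leave \([X,Y](\langle u,v\rangle)\) plus \(\langle u, [Q^\dagger,P^\dagger]v\rangle\). To make the cancellation visible, I would expand \(Y(\langle u, P^\dagger v\rangle)\) via \eqref{eq:gl-dagger} applied to \(P^\dagger\), whose symbol is \(-X\). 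If the direct expansion gets messy, I would instead check the identity on \(u\) and \(v\) after reducing to the case where one of \(P, Q\) is \(A\)-linear, but I expect the direct computation to close.
\end{itemize}

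\textbf{Dagger on \(\gl(E)\).} For a metric bundle, the map is \(C^\infty(M)\)-linear, involutive and bracket-reversing, and the anchor condition \(P^\dagger(f) = -X(f)\) holds because the symbol of \(P^\dagger\) is \(-X\). These are exactly the conditions of \cref{def:dagger-lie-algebroid} with the trivial involution.
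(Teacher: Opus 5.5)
Your proposal follows the paper's proof step for step. You define \(P^\dagger(v)\) by unimodularity from the \(A\)-linear form \(u \mapsto \langle P(u), v\rangle - X(\langle u, v\rangle)\), check the Leibniz rule with symbol \(-X\), and obtain every algebraic property from the uniqueness of the map satisfying the defining identity. Three small points need fixing.

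\emph{Sign in the involutivity step.} Symmetry of the form turns the defining identity into
\(\langle P^\dagger(v), u\rangle = \langle v, P(u)\rangle + (-X)(\langle v, u\rangle)\).
This is exactly the defining identity for \((P^\dagger)^\dagger\), with \(P\) in its place. As you wrote it, with \(-(-X)\), it would not match.

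\emph{The bracket step closes directly.} Expanding \(\langle Q(P(u)), v\rangle\) the same way gives
\(\langle u, P^\dagger Q^\dagger v\rangle + X(\langle u, Q^\dagger v\rangle) + Y(\langle u, P^\dagger v\rangle) + YX(\langle u, v\rangle)\).
The two cross terms therefore occur identically in both expansions and cancel on subtraction. What remains is \(\langle u, [Q^\dagger, P^\dagger] v\rangle + [X, Y](\langle u, v\rangle)\), which is what you need. The further expansion of \(Y(\langle u, P^\dagger v\rangle)\) is unnecessary; it only reintroduces \(P\). The fallback reduction to an \(A\)-linear factor is also not needed.

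\emph{Additivity.} Additivity of \(P \mapsto P^\dagger\) is asserted in the statement and used in your \(C^\infty(M)\)-linearity claim, but you do not check it. It follows from uniqueness exactly as your check of \((fP)^\dagger = fP^\dagger\) does.
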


\begin{proof}
    Fix \(v \in E\).
    The map \(u \mapsto \langle P(u), v \rangle - X(\langle u, v \rangle)\) is \(A\)-linear, because the two terms \(X(f) \langle u, v \rangle\) coming from the Leibniz rules cancel:
    \[
        \langle P(f u), v \rangle - X(\langle f u, v \rangle)
        = f \langle P(u), v \rangle + X(f) \langle u, v \rangle - X(f) \langle u, v \rangle - f X(\langle u, v \rangle).
    \]
    Since the form is unimodular, this linear form is \(\langle -, w \rangle\) for a unique \(w \in E\), and we put \(P^\dagger(v) = w\).
    This gives the unique map satisfying \eqref{eq:gl-dagger}, and it is additive.
    It is a derivation with symbol \(-X\), since
    \begin{align*}
        \langle P(u), f v \rangle - X(\langle u, f v \rangle)
        &= f \bigl(\langle P(u), v \rangle - X(\langle u, v \rangle)\bigr) - X(f) \langle u, v \rangle \\
        &= \langle u, f P^\dagger(v) - X(f)\, v \rangle .
    \end{align*}

    Next we check the remaining properties, each time using the uniqueness in \eqref{eq:gl-dagger}; for a metric bundle they are the conditions of \cref{def:dagger-lie-algebroid}.
    The map \(P \mapsto P^\dagger\) is additive, and \((f P)^\dagger = f P^\dagger\) because \(f P\) has symbol \(f X\).
    For \(P^{\dagger\dagger} = P\), note that \(P^\dagger\) has symbol \(-X\), and by \eqref{eq:gl-dagger} for \(P\) with \(u\) and \(v\) exchanged,
    \[
        \langle P^\dagger(u), v \rangle - (-X)(\langle u, v \rangle)
        = \langle P(v), u \rangle - X(\langle v, u \rangle) + X(\langle u, v \rangle)
        = \langle u, P(v) \rangle,
    \]
    where we used that the form is symmetric.
    For brackets, let \(Q\) be a second derivation, with symbol \(Y\).
    Applying \eqref{eq:gl-dagger} twice,
    \[
        \langle P(Q(u)), v \rangle
        = \langle u, Q^\dagger(P^\dagger(v)) \rangle + Y(\langle u, P^\dagger(v) \rangle) + X(\langle u, Q^\dagger(v) \rangle) + X(Y(\langle u, v \rangle)),
    \]
    and subtracting the same expression with \(P\) and \(Q\) exchanged gives
    \[
        \langle [P, Q](u), v \rangle = \langle u, [Q^\dagger, P^\dagger](v) \rangle + [X, Y](\langle u, v \rangle).
    \]
    Since \([P, Q]\) has symbol \([X, Y]\), this says \([P, Q]^\dagger = [Q^\dagger, P^\dagger]\).
    Finally, the symbol of \(P^\dagger\) is \(-X\), which is the condition on the anchor.
\end{proof}

On endomorphisms, which have symbol zero, the dagger of \(\gl(E)\) is the fibrewise adjoint of \cref{sec:vector-bundles}.
For a derivation with symbol \(X\), the term \(X(\langle u, v \rangle)\) is the correction to the adjoint equation of \cref{prop:herm-module-dagger}.

For a Lie group, the dagger fixes the identity, and \cref{thm:differentiate-dagger} differentiates it there.
For a Lie groupoid, the dagger sends the source fibre \(s^{-1}(x)\) to the target fibre \(t^{-1}(x)\), so we compose it with the inversion first.
If \(G\) is a dagger Lie groupoid, then \(g \mapsto (g^\dagger)^{-1}\) sends a morphism back to one with the same source and target,
\[
    \begin{tikzcd} x \rar["g"] & y \end{tikzcd}
    \quad \longmapsto \quad
    \begin{tikzcd} y \rar["g^\dagger"] & x \end{tikzcd}
    \quad \longmapsto \quad
    \begin{tikzcd} x \rar["(g^\dagger)^{-1}"] & y , \end{tikzcd}
\]
and it is a smooth functor over the identity of \(M\), because both the dagger and the inversion reverse the order of composition, and it is involutive.
Its derivative is therefore an involutive automorphism of \(\Lie(G)\) that preserves the anchor.

\begin{definition}
    \label{def:groupoid-dagger-derivative}
    The \emph{derivative} of the dagger of a dagger Lie groupoid \(G\) is the negative of the derivative of \(g \mapsto (g^\dagger)^{-1}\): if \(X \in \Lie(G)\) at \(x\) is the velocity of a curve \(g_t\) in \(s^{-1}(x)\) with \(g_0 = \id_x\), then
    \[
        X^\dagger = -\frac{d}{dt}\Big|_{t = 0} \bigl(g_t^\dagger\bigr)^{-1} .
    \]
\end{definition}

By the discussion after \cref{def:dagger-lie-algebroid}, this is a dagger on \(\Lie(G)\), and the automorphism is its \(\theta\).
For a Lie group, the inversion has derivative \(-\id\) at the identity, so this is the derivative of the dagger itself, as in \cref{thm:differentiate-dagger}.

\begin{theorem}
    \label{thm:differentiate-groupoid-dagger}
    The Lie functor sends the fibrewise adjoint on \(\GL(E)\) to the dagger of \cref{prop:gl-dagger} on \(\gl(E)\).
\end{theorem}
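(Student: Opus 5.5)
We use the flow description of \(\Lie(\GL(E)) = \gl(E)\). Let \(P\) be a derivation with symbol \(X\), and let \(\Phi_t \colon E_x \to E_{x_t}\) be its local flow of linear isomorphisms along the integral curve \(x_t\) of \(X\). Then \(P\) at \(x\) is the velocity of \(g_t = \Phi_t\) in \(s^{-1}(x)\), and \eqref{eq:derivation-flow} recovers it. By \cref{def:groupoid-dagger-derivative}, the derivative of the fibrewise adjoint is minus the velocity of the curve \(h_t = (\Phi_t^\dagger)^{-1} \colon E_x \to E_{x_t}\), where \(\dagger\) is the fibrewise adjoint. This curve also lies in \(s^{-1}(x)\) over the same integral curve. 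Running over all points, the curves \(h_t\) form the flow of a derivation \(Q\) with symbol \(X\), because \(\Theta(g) = (g^\dagger)^{-1}\) is a smooth functor and so carries flows to flows. It then suffices to show that \(-Q = P^\dagger\) in the sense of \eqref{eq:gl-dagger}; equivalently, \(Q\) is the derivation with symbol \(X\) satisfying \(\langle P(u), v \rangle + \langle u, Q(v) \rangle = X(\langle u, v \rangle)\).

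The key computation applies \eqref{eq:derivation-flow} to both flows. For sections \(u, v\), the defining property of the adjoint \(\Phi_t^\dagger \colon E_{x_t} \to E_x\) gives
\[
    \langle u, v \rangle(x_t) = \bigl\langle \Phi_t^{-1}(u(x_t)), \Phi_t^\dagger(v(x_t)) \bigr\rangle_x = \bigl\langle \Phi_t^{-1}(u(x_t)), h_t^{-1}(v(x_t)) \bigr\rangle_x ,
\]
since \(\Phi_t \Phi_t^{-1} = \id\) and \(h_t^{-1} = \Phi_t^\dagger\). The right-hand side is a bilinear form on the fixed space \(E_x\) evaluated on two curves. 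Differentiating at \(t = 0\), with \(\Phi_0 = h_0 = \id\), gives
\[
    X(\langle u, v \rangle)(x) = \langle P(u), v \rangle(x) + \langle u, Q(v) \rangle(x).
\]
This holds at every point, so \(-Q\) satisfies \eqref{eq:gl-dagger}. By the uniqueness in \cref{prop:gl-dagger}, we get \(-Q = P^\dagger\). This is the same computation as the converse half of \cref{thm:metric-iff-dagger-transport}, now without assuming unitarity.

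The main obstacle is justifying that \(Q\) is the derivation whose flow is \(h_t\). This amounts to identifying the velocity of \(h_t\) in \(\Lie(\GL(E))\) with the derivation given by \eqref{eq:derivation-flow} applied to \(h_t\). I would handle it by noting that the identification \(\Lie(\GL(E)) \cong \gl(E)\) of \cite{crainic-fernandes2011,jotz2024} sends the velocity at \(t = 0\) of any smooth curve \(k_t\) in \(s^{-1}(x)\) with \(k_0 = \id_x\) to the value at \(x\) of the operator \(u \mapsto \frac{d}{dt}|_{t=0} k_t^{-1}(u(t(k_t)))\). This value depends only on the first-order jet of the curve, so the pointwise formula suffices and no global flow of \(Q\) is needed. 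Smoothness of \(h_t\) follows from smoothness of the fibrewise adjoint, established after \cref{ex:gl-groupoid-dagger}.
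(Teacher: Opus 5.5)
Your proposal is correct and is essentially the paper's proof. You apply \(g \mapsto (g^\dagger)^{-1}\) to the flow \(\Phi_t\), use that the right-hand side of \eqref{eq:derivation-flow} depends only on the velocity of the curve to identify the derivation of \(h_t\) (your \(Q\) is the paper's \(-P^\ddagger\)), differentiate \(\langle u, v \rangle(x_t) = \langle \Phi_t^{-1}(u(x_t)), \Phi_t^\dagger(v(x_t)) \rangle_x\) at \(t = 0\), and conclude by the uniqueness in \cref{prop:gl-dagger}; as you note yourself, the global flow of \(Q\) is not needed.
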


\begin{proof}
    Let \(P\) be a derivation with symbol \(X\), and let \(\Phi_t \colon E_x \to E_{x_t}\) be its flow, as in \eqref{eq:derivation-flow}.
    Applying the automorphism \(g \mapsto (g^\dagger)^{-1}\) to the flow gives the linear isomorphisms \((\Phi_t^\dagger)^{-1} \colon E_x \to E_{x_t}\).
    This is a curve in the source fibre over \(x\) through \(\id_x\), and its velocity at \(t = 0\) is the image of \(P\) under the derivative of the automorphism, that is, \(-P^\ddagger\), where \(P^\ddagger\) denotes the derivative of the fibrewise adjoint (\cref{def:groupoid-dagger-derivative}).
    Since the right-hand side of \eqref{eq:derivation-flow} at \(x\) depends only on the velocity of the curve \(t \mapsto \Phi_t\) at \(t = 0\), we get
    \[
        -P^\ddagger(v)(x) = \frac{d}{dt}\Big|_{t = 0} \Phi_t^\dagger\bigl(v(x_t)\bigr).
    \]
    For sections \(u\) and \(v\), the definition of the fibrewise adjoint gives
    \[
        \bigl\langle \Phi_t^{-1}\bigl(u(x_t)\bigr), \Phi_t^\dagger\bigl(v(x_t)\bigr) \bigr\rangle_x
        = \langle u, v \rangle(x_t).
    \]
    Both arguments on the left are curves in the fixed space \(E_x\), through \(u(x)\) and \(v(x)\) at \(t = 0\), and differentiating at \(t = 0\) gives
    \[
        \langle P(u), v \rangle(x) - \langle u, P^\ddagger(v) \rangle(x) = X(\langle u, v \rangle)(x).
    \]
    This is \eqref{eq:gl-dagger} with \(P^\ddagger\) in place of \(P^\dagger\), so \(P^\ddagger = P^\dagger\) by the uniqueness in \cref{prop:gl-dagger}.
\end{proof}

\subsection{Representations and metric connections}
\label{sec:algebroid-representations}

A bundle map \(\pi\) between Lie algebroids with daggers \emph{commutes with the daggers} if \(\pi(X^\dagger) = \pi(X)^\dagger\), and a \emph{dagger morphism} is a Lie algebroid morphism that commutes with the daggers.
For a Lie--Rinehart algebra \((A, L)\) and an \(A\)-module \(E\), an \emph{\(L\)-connection} on \(E\) is an \(A\)-linear map \(X \mapsto \nabla_X\) from \(L\) to the derivations of \(E\), such that the symbol of \(\nabla_X\) is the anchor of \(X\); it is \emph{flat} if \(\nabla_{[X, Y]} = [\nabla_X, \nabla_Y]\).
For a form on \(E\) and the trivial involution, it is \emph{metric} if \(X(\langle u, v \rangle) = \langle \nabla_X u, v \rangle + \langle u, \nabla_X v \rangle\).
For a Lie algebroid \(L\) and a bundle \(E\) we take \(A = C^\infty(M)\) and the sections of \(L\) and \(E\), and for \(L = TM\) these are the connections of \cref{sec:lie-groupoids}.

\begin{definition}
    \label{def:algebroid-representation}
    A \emph{representation} of a Lie algebroid \(L\) on \(E\) is a Lie algebroid morphism \(\pi \colon L \to \gl(E)\), written \(\pi(X) = \nabla_X\); it is a flat \(L\)-connection.
    It is \emph{unitary} if it is a dagger morphism for the unitary dagger on \(L\), that is, \(\nabla_X^\dagger = -\nabla_X\).
\end{definition}

Over a point, it is a representation of a Lie algebra, and unitarity is that of \cref{def:unitary-representation}.
A representation of the tangent algebroid is a flat connection.
For every connection, \(X \mapsto \nabla_X\) is a bundle map \(TM \to \gl(E)\) that preserves anchors, and it preserves brackets if and only if the connection is flat.

\begin{lemma}
    \label{lem:metric-connection-skew}
    A connection \(\nabla\) on a metric bundle is metric if and only if \(\nabla_X^\dagger = -\nabla_X\) in \(\gl(E)\) for every vector field \(X\), if and only if its connection forms in local orthonormal frames take values in skew-adjoint matrices.
\end{lemma}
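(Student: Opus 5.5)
The plan is to compare the metric condition directly with the defining equation \eqref{eq:gl-dagger} of the dagger on \(\gl(E)\) from \cref{prop:gl-dagger}, and then to read off the local statement in frames.

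For the first equivalence, fix a vector field \(X\). The derivation \(\nabla_X\) has symbol \(X\), so \cref{prop:gl-dagger} characterises \(\nabla_X^\dagger\) as the unique derivation with symbol \(-X\) such that
\[
    \langle \nabla_X u, v \rangle = \langle u, \nabla_X^\dagger v \rangle + X(\langle u, v \rangle)
\]
for all sections \(u, v\). The map \(-\nabla_X\) is also a derivation with symbol \(-X\). By uniqueness, \(\nabla_X^\dagger = -\nabla_X\) holds if and only if \(\langle \nabla_X u, v \rangle = -\langle u, \nabla_X v \rangle + X(\langle u, v \rangle)\) for all \(u, v\), which is exactly the metric condition for \(X\). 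Quantifying over all \(X\) gives the first equivalence. Strictly, uniqueness is needed only among maps satisfying \eqref{eq:gl-dagger}, and this follows from unimodularity of the form, as in the proof of \cref{prop:gl-dagger}.

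For the second equivalence, work on an open set \(U\) with a local orthonormal frame \(e_1, \dots, e_n\), so that \(\langle e_i, e_j \rangle = \epsilon_i \delta_{ij}\) with \(\epsilon_i = \pm 1\) constant. Write \(\nabla_X e_j = \sum_i \omega^i_j(X)\, e_i\). The metric condition is \(C^\infty\)-linear in \(X\) and, by the Leibniz rule, reduces on \(U\) to its values on the frame sections: the terms involving derivatives of coefficient functions cancel, exactly as in the linearity computation of \cref{prop:gl-dagger}. Since \(\langle e_i, e_j \rangle\) is constant, the condition on the frame reads \(0 = \langle \nabla_X e_i, e_j \rangle + \langle e_i, \nabla_X e_j \rangle = \epsilon_j \omega^j_i(X) + \epsilon_i \omega^i_j(X)\). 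This says precisely that the matrix \(\omega(X)\) is skew-adjoint for the form \(\mathrm{diag}(\epsilon_i)\), that is, it lies in \(\o(p,q)\) as in \cref{ex:gl-adjoint}.

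The main point needing care is the passage between local and global. The metric condition is local because the connection is a local operator, so the global condition can be checked on each frame domain. Conversely, local orthonormal frames exist near every point, by applying Gram--Schmidt adapted to signature to a local frame. One also has to check that restricting \(\nabla\), the sections and the form to \(U\) is legitimate. This is standard via bump functions, since \(\nabla_X u\) at a point depends only on \(u\) near that point.
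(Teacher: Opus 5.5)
Your proposal is correct and follows essentially the same route as the paper. The first equivalence comes from the uniqueness in \cref{prop:gl-dagger} applied to the derivation \(-\nabla_X\) of symbol \(-X\). The second comes from the \(C^\infty(M)\)-linearity of the metric defect, evaluated on a local orthonormal frame where \(\langle e_i, e_j \rangle\) is constant; your added remarks on locality and restriction to frame domains only make explicit what the paper leaves implicit.
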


\begin{proof}
    Since \(\nabla_X\) has symbol \(X\) and \(-\nabla_X\) has symbol \(-X\), the uniqueness in \cref{prop:gl-dagger} says that \(\nabla_X^\dagger = -\nabla_X\) if and only if
    \[
        \langle \nabla_X u, v \rangle = -\langle u, \nabla_X v \rangle + X(\langle u, v \rangle)
    \]
    for all sections \(u\) and \(v\), which is the metric condition.

    For the connection forms, let \(e_1, \dots, e_n\) be a local orthonormal frame on an open set \(U\), with \(\langle e_i, e_j \rangle = \pm \delta_{ij}\); such frames exist by the Gram--Schmidt process, choosing at each step a local section \(e\) with \(\langle e, e \rangle\) nowhere zero, which is possible because the restricted form is unimodular.
    The connection forms \(\omega^i_j\) are the \(1\)-forms on \(U\) with \(\nabla_X e_j = \sum_i \omega^i_j(X)\, e_i\).
    The expression \(X(\langle u, v \rangle) - \langle \nabla_X u, v \rangle - \langle u, \nabla_X v \rangle\) is \(C^\infty(M)\)-linear in \(u\) and in \(v\), by the Leibniz rules, so the connection is metric if and only if it vanishes on the frames, that is, if and only if
    \begin{align*}
        0 &= X(\langle e_j, e_k \rangle) - \langle \nabla_X e_j, e_k \rangle - \langle e_j, \nabla_X e_k \rangle \\
        &= -\sum_i \omega^i_j(X) \langle e_i, e_k \rangle - \sum_i \omega^i_k(X) \langle e_j, e_i \rangle
    \end{align*}
    for all \(j\) and \(k\), where \(X(\langle e_j, e_k \rangle) = 0\) because the frame is orthonormal.
    This says that the matrix \(\omega(X)\) with entries \(\omega^i_j(X)\) is skew-adjoint for the form on \(\R^n\) whose matrix is \(\langle e_i, e_j \rangle\), that is, it lies in the unitary subalgebra of \(\gl(n, \R)\) for this form (\cref{ex:gl-adjoint}).
\end{proof}

The lemma applies to all connections, including those with curvature.
For a Riemannian metric, the connection forms of a metric connection in orthonormal frames are antisymmetric matrices, with values in \(\o(n)\); in signature \((p, q)\) they take values in the Lie algebra \(\o(p, q)\) of the group \(\O(p, q)\) of isometries of \(\R^n\) with the form of signature \((p, q)\).

\begin{theorem}
    \label{thm:unitary-algebroid-rep}
    A representation of the tangent algebroid \(TM\) on a metric bundle, that is, a flat connection, is unitary if and only if it is metric.
\end{theorem}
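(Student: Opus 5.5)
This is a direct unwinding of the definitions, with \cref{lem:metric-connection-skew} doing the real work. A representation of \(TM\) on \(E\) is a Lie algebroid morphism \(\pi \colon TM \to \gl(E)\), \(X \mapsto \nabla_X\), that is, a flat connection (\cref{def:algebroid-representation}). By that definition, it is unitary if and only if \(\pi(X^\dagger) = \pi(X)^\dagger\) for the unitary dagger \(X^\dagger = -X\) on \(TM\). Since \(\pi\) is \(C^\infty(M)\)-linear, the left side is \(\nabla_{-X} = -\nabla_X\), so unitarity says exactly that \(\nabla_X^\dagger = -\nabla_X\) in \(\gl(E)\) for every vector field \(X\), with the dagger of \cref{prop:gl-dagger}.

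Next I invoke \cref{lem:metric-connection-skew}. It states that a connection on a metric bundle satisfies \(\nabla_X^\dagger = -\nabla_X\) for all \(X\) if and only if it is metric. This lemma holds for arbitrary connections, so in particular for flat ones, and combining the two equivalences gives the theorem. Flatness is only needed so that \(\pi\) is a Lie algebroid morphism, that is, a representation in the first place; it plays no role in the equivalence itself.

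The only point needing care is that \cref{def:algebroid-representation} quantifies over sections of \(TM\) (vector fields), while a dagger morphism is defined via the bundle map. Both the dagger on \(TM\) and the dagger on \(\gl(E)\) are \(C^\infty(M)\)-linear, and \(\pi\) is a bundle map, so the condition on all sections is the same as the condition as a bundle map. The proof is therefore a chain of two equivalences, and I expect no real obstacle.

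As a consistency check, I would remark that this is the infinitesimal counterpart of \cref{cor:flat-metric-unitary}. By \cref{thm:differentiate-groupoid-dagger}, differentiating the fibrewise adjoint on \(\GL(E)\) gives the dagger on \(\gl(E)\), so unitarity of the holonomy representation and unitarity of its derivative agree, matching \cref{prop:unitary-rep-differentiates}.
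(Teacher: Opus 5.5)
Your proposal is correct and follows the paper's proof exactly: unwind \cref{def:algebroid-representation} to get \(\nabla_X^\dagger = -\nabla_X\) for all vector fields \(X\), then apply \cref{lem:metric-connection-skew}. The extra remarks, on sections versus bundle maps and on the link with \cref{cor:flat-metric-unitary}, are harmless but not needed for the argument.
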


\begin{proof}
    By \cref{def:algebroid-representation}, the representation is unitary if and only if \(\nabla_X^\dagger = -\nabla_X\) for every vector field \(X\), and by \cref{lem:metric-connection-skew} this holds if and only if the connection is metric.
\end{proof}

Together with \cref{cor:flat-metric-unitary}, this is the analogue for groupoids of \cref{prop:unitary-rep-differentiates}: the holonomy representation of \(\Pi_1(M)\) is unitary if and only if its derivative, the flat connection, is unitary.
In \cref{prop:unitary-rep-differentiates} the group had to be connected for the converse; here \(M\) may have several components, since every morphism of \(\Pi_1(M)\) is represented by a path, and the metric condition integrates along paths.
For connections with curvature, \cref{thm:metric-iff-dagger-transport} and \cref{lem:metric-connection-skew} give the same correspondence, with the path groupoid in place of \(\Pi_1(M)\).

The tangent algebroid and \(\gl(E)\) are related by the \emph{Atiyah sequence}
\begin{equation}
    \label{eq:atiyah}
    \begin{tikzcd}
        0 \rar & \End(E) \rar & \gl(E) \rar & TM \rar & 0 ,
    \end{tikzcd}
\end{equation}
where the second map is the inclusion and the third is the symbol.
It is exact because the derivations with symbol zero are the endomorphisms, and every vector field \(X\) is the symbol of \(\nabla_X\) for any connection \(\nabla\).
With the fibrewise adjoint on \(\End(E)\), the dagger of \cref{prop:gl-dagger} on \(\gl(E)\) and the unitary dagger on \(TM\), both maps are dagger morphisms, since the dagger of \(\gl(E)\) restricts to the fibrewise adjoint on endomorphisms and \(P^\dagger\) has symbol \(-X\).

\begin{corollary}
    \label{cor:atiyah}
    Splittings of the Atiyah sequence \eqref{eq:atiyah} by vector bundle maps are connections, and by Lie algebroid maps are flat connections.
    A splitting commutes with the daggers, for the unitary dagger on \(TM\), if and only if the connection is metric; it is a dagger morphism if and only if the connection is flat and metric.
\end{corollary}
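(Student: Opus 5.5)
The plan is to unpack what a splitting is and then reduce each claim to a result already proved. A splitting of \eqref{eq:atiyah} by a vector bundle map is a bundle map \(s \colon TM \to \gl(E)\) with \(\sigma \circ s = \id_{TM}\), where \(\sigma\) is the symbol. By \cref{thm:serre-swan} such an \(s\) is the same as a \(C^\infty(M)\)-linear map \(X \mapsto s(X)\) from vector fields to derivations of \(\Gamma(E)\). The condition \(\sigma(s(X)) = X\) says that \(s(X)\) is a derivation with symbol \(X\). Writing \(\nabla_X = s(X)\), the map is \(C^\infty(M)\)-linear in \(X\), and \(\nabla_X(f u) = f \nabla_X u + X(f) u\) is the derivation property. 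This is exactly a connection, and conversely every connection gives such a splitting; the \(\R\)-bilinearity is automatic, as noted after \cref{def:gl-algebroid}.

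A splitting by a Lie algebroid map must in addition preserve brackets and anchors. The anchor condition is already the splitting condition, since the anchor of \(\gl(E)\) is the symbol and the anchor of \(TM\) is the identity. Preservation of brackets, \(s([X,Y]) = [s(X), s(Y)]\), is \(\nabla_{[X,Y]} = [\nabla_X, \nabla_Y]\), which is flatness. This is also the remark after \cref{def:algebroid-representation}.

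For the dagger statements, give \(TM\) the unitary dagger \(X^\dagger = -X\) and \(\gl(E)\) the dagger of \cref{prop:gl-dagger}. The splitting commutes with the daggers when \(s(X^\dagger) = s(X)^\dagger\). By \(C^\infty(M)\)-linearity of \(s\), this reads \(-\nabla_X = \nabla_X^\dagger\) for every vector field \(X\). By \cref{lem:metric-connection-skew}, this holds if and only if \(\nabla\) is metric. A dagger morphism is by definition a Lie algebroid morphism that commutes with the daggers. Combining the two previous paragraphs, the splitting is a dagger morphism if and only if the connection is flat and metric; in that case it is a unitary representation, as in \cref{thm:unitary-algebroid-rep}.

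The only step needing care is the first identification. One must check that a bundle map \(TM \to \gl(E)\) corresponds to a \(C^\infty(M)\)-linear map on sections. This uses that \(\gl(E)\) is a genuine bundle whose sections are the derivations, which was established via the module splitting of \eqref{eq:atiyah} and \cref{thm:serre-swan}. One must also use that the symbol of a derivation is unique, which follows from the positive rank assumption. Everything else is bookkeeping with earlier results.
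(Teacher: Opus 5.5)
Your proposal is correct and follows the paper's proof step for step. You identify a splitting with a \(C^\infty(M)\)-linear choice of derivations \(\nabla_X\) with symbol \(X\), read bracket preservation as flatness, and reduce commuting with the daggers to \(\nabla_X^\dagger = -\nabla_X\), which is metricity by \cref{lem:metric-connection-skew}; your extra care about identifying bundle maps \(TM \to \gl(E)\) with maps on sections via \cref{thm:serre-swan} makes explicit a point the paper leaves implicit.
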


\begin{proof}
    A splitting by a vector bundle map is a \(C^\infty(M)\)-linear map \(X \mapsto \nabla_X\) from vector fields to derivations such that \(\nabla_X\) has symbol \(X\), as in the diagram
    \[
        \begin{tikzcd}
            0 \rar & \End(E) \rar & \gl(E) \rar["\text{symbol}"] & TM \rar \lar[bend left, "\nabla"] & 0 ,
        \end{tikzcd}
    \]
    and this is a connection.
    It preserves the anchor, so it is a Lie algebroid map if and only if it preserves brackets, that is, if and only if the connection is flat.
    It commutes with the daggers if and only if \(\nabla_{-X} = \nabla_X^\dagger\) for all \(X\), and by \cref{lem:metric-connection-skew} this holds if and only if the connection is metric.
    Combining the two statements, it is a dagger morphism if and only if the connection is flat and metric.
\end{proof}

\begin{proposition}
    \label{prop:unitary-algebroid-reps}
    Unitary representations of a Lie algebroid on metric bundles, with intertwiners, form a dagger category with the fibrewise adjoint.
\end{proposition}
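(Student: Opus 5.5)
The plan follows the proof of \cref{prop:unitary-groupoid-reps}, with the algebroid dagger of \cref{prop:gl-dagger} in place of the fibrewise adjoint on \(\GL(E)\).
An intertwiner from a representation \(\nabla\) on \(E\) to a representation \(\nabla'\) on \(F\) is a bundle map \(\varphi \colon E \to F\), that is, a \(C^\infty(M)\)-linear map \(\Gamma(E) \to \Gamma(F)\), with
\[
    \varphi \circ \nabla_X = \nabla'_X \circ \varphi
\]
for every section \(X\) of \(L\).
Identities and composites of intertwiners are again intertwiners, so representations and intertwiners form a category.
Its candidate dagger is the fibrewise adjoint \(\varphi^\dagger \colon F \to E\) of \cref{prop:herm-module-dagger}.
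Once we know that \(\varphi^\dagger\) is an intertwiner, the dagger axioms are inherited from \(\Herm_A\).

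The main step is to show that \(\varphi^\dagger \circ \nabla'_X = \nabla_X \circ \varphi^\dagger\).
I will pair both sides with an arbitrary section \(u\) of \(E\) and use unimodularity of the form on \(E\).
Unitarity of \(\nabla\) and \(\nabla'\) means \(\nabla_X^\dagger = -\nabla_X\), and unwinding \eqref{eq:gl-dagger}, this is the metric condition
\[
    \langle \nabla_X u, v \rangle = -\langle u, \nabla_X v \rangle + \rho(X)(\langle u, v \rangle),
\]
and likewise for \(\nabla'\).
For \(u \in \Gamma(E)\) and \(w \in \Gamma(F)\), I will compute \(\langle u, \varphi^\dagger(\nabla'_X w) \rangle = \langle \varphi(u), \nabla'_X w \rangle\).
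By the metric condition for \(\nabla'\), this equals \(\rho(X)(\langle \varphi u, w \rangle) - \langle \nabla'_X \varphi(u), w \rangle\).
The intertwining property turns \(\nabla'_X \varphi(u)\) into \(\varphi(\nabla_X u)\), and moving \(\varphi\) across gives \(\rho(X)(\langle u, \varphi^\dagger w \rangle) - \langle \nabla_X u, \varphi^\dagger w \rangle\).
By the metric condition for \(\nabla\), this is \(\langle u, \nabla_X \varphi^\dagger(w) \rangle\).
Since the form on \(E\) is unimodular, the two sections agree.

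Then I will check the dagger axioms.
Contravariance, \(\id^\dagger = \id\) and involutivity hold because they hold in \(\Herm_A\), which is a dagger category by \cref{prop:herm-module-dagger}.
So the intertwiners form a dagger subcategory, with the fibrewise adjoint as dagger.

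The only potential obstacle is bookkeeping of the anchor terms \(\rho(X)(\langle -, - \rangle)\).
In the computation above they appear once with each sign and cancel, exactly as in the proof of \cref{prop:gl-dagger}.
Flatness of the representations is never used; only the metric, or unitary, condition enters the argument.
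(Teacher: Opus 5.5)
Your proposal is correct and is essentially the paper's proof. It uses the same chain of equalities: the metric condition for \(\nabla'\), then the intertwining property, then the metric condition for \(\nabla\), then unimodularity of the form on \(E\), with the dagger axioms inherited from \(\Herm_A\).
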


\begin{proof}
    An intertwiner from \(\nabla\) on \(E\) to \(\nabla'\) on \(F\) is a bundle map \(\varphi \colon E \to F\) such that the square
    \[
        \begin{tikzcd}
            \Gamma(E) \rar["\nabla_X"] \dar["\varphi"'] & \Gamma(E) \dar["\varphi"] \\
            \Gamma(F) \rar["\nabla'_X"] & \Gamma(F)
        \end{tikzcd}
    \]
    commutes for all sections \(X\) of \(L\).
    Identities and composites of intertwiners are intertwiners.
    For unitary representations, \eqref{eq:gl-dagger} with \(\nabla_X^\dagger = -\nabla_X\) reads \(X(\langle u, v \rangle) = \langle \nabla_X u, v \rangle + \langle u, \nabla_X v \rangle\).
    Using this for \(\nabla'\), then the intertwining property, then this for \(\nabla\), we get for sections \(u\) of \(E\) and \(w\) of \(F\)
    \begin{align*}
        \langle u, \varphi^\dagger(\nabla'_X w) \rangle
        &= X(\langle \varphi(u), w \rangle) - \langle \nabla'_X(\varphi(u)), w \rangle \\
        &= X(\langle u, \varphi^\dagger(w) \rangle) - \langle \nabla_X u, \varphi^\dagger(w) \rangle
        = \langle u, \nabla_X(\varphi^\dagger(w)) \rangle .
    \end{align*}
    Since the metric of \(E\) is unimodular, \(\varphi^\dagger \circ \nabla'_X = \nabla_X \circ \varphi^\dagger\), so \(\varphi^\dagger\) is an intertwiner.
    The dagger axioms hold because they hold for bundle maps (\cref{prop:herm-module-dagger}).
\end{proof}

\begin{remark}
    \label{rem:towards-divergence}
    The term \(X(\langle u, v \rangle)\) in \eqref{eq:gl-dagger} is not of the form \(\langle u, Q(v) \rangle\) for an operator \(Q\), since it is not \(C^\infty(M)\)-linear in \(u\).
    Classically, it is absorbed by integrating against a density \(\mu\), a smooth positive measure on \(M\), given in local coordinates by a positive smooth function times Lebesgue measure.
    The divergence \(\dvg_\mu(X)\) of a vector field is defined by
    \[
        \int_M X(f)\, \mu = -\int_M f \dvg_\mu(X)\, \mu
    \]
    for compactly supported \(f\), and then \eqref{eq:gl-dagger} gives
    \[
        \int_M \langle P(u), v \rangle\, \mu = \int_M \bigl\langle u, P^\dagger(v) - \dvg_\mu(X)\, v \bigr\rangle\, \mu
    \]
    for compactly supported sections, so \(P^\dagger - \dvg_\mu(X)\) is the formal adjoint of \(P\).
    Doing this for all differential operators, of all orders, is the subject of \cref{part:diff}.
\end{remark}

\part{Differential operators}
\label{part:diff}

\section{Enveloping algebras and divergences}
\label{sec:enveloping-algebra}

The daggers of \cref{part:bundles} act on bundle maps, which have order zero, and on derivations, which have order one.
We now consider every order, beginning with operators on functions.
For a positive density \(\mu\) on a manifold, integration by parts gives the familiar first-order formula
\[
    f^\dagger = f, \qquad X^\dagger = -X - \dvg_\mu(X)
\]
for a real function \(f\) and vector field \(X\).
The minus sign reverses differentiation, and the divergence term accounts for the density.
Since functions and vector fields generate the algebra of differential operators, these formulas determine its formal adjoint.

The algebraic version starts from a Lie--Rinehart algebra \((A, L)\).
Its enveloping algebra \(D\) is filtered by order, and we classify the anti-involutions that preserve this filtration (\cref{thm:anti-involutions-D}).
The classification replaces the identity on real functions by a real structure on \(A\) and \(L\), while retaining a divergence as separate data.

Throughout \cref{part:diff}, \(K\) is a commutative ring, \((A, L)\) is a Lie--Rinehart algebra over \(K\) (\cref{def:lie-rinehart}), and \(L\) is finitely generated projective as an \(A\)-module.
For a Lie algebroid this holds by \cref{thm:serre-swan}.
We write \(X(f)\) for the anchor of \(X \in L\) applied to \(f \in A\).

\subsection{The enveloping algebra}
\label{sec:enveloping-algebra-def}

\begin{definition}
    \label{def:enveloping-algebra}
    The \emph{enveloping algebra} \(D = D(A, L)\) is the \(K\)-algebra generated by \(A\) and \(L\) subject to
    \begin{gather*}
        f \cdot g = f g, \qquad 1_A = 1, \qquad f \cdot X = f X, \\
        X \cdot f - f \cdot X = X(f), \qquad X \cdot Y - Y \cdot X = [X, Y].
    \end{gather*}
    Its \emph{order filtration} is \(D^0 = A\) and \(D^{k+1} = D^k + D^k \cdot L\), where \(D^k \cdot L\) is the additive span of the products.
\end{definition}

Classically, for the tangent algebroid of a manifold \(M\), \(D\) is the algebra of differential operators on \(M\) generated by functions and vector fields, and \(D^k\) consists of the operators of order at most \(k\).
The product \(X \cdot f\) is the operator \(g \mapsto X(f g)\), and it differs from the function \(X(f)\), which acts by multiplication, by the term \(f \cdot X\).

The submodule \(D^k\) is the left \(A\)-submodule of \(D\) spanned by the products of at most \(k\) elements of \(L\).
The relation \(X \cdot f = f \cdot X + X(f)\) shows by induction that \(D^k\) is closed under multiplication by \(A\) on the right as well, and then that \(D^k D^l \subseteq D^{k + l}\).
The filtration is exhaustive, because the union of the \(D^k\) contains \(A\) and \(L\) and is closed under sums and products.

\begin{example}
    \label{ex:enveloping-examples}
    Let \(A = K[x]\) and let \(L = A \partial\) be free on one generator \(\partial\), with \((f \partial)(g) = f g'\) and \([f \partial, g \partial] = (f g' - g f') \partial\).
    This is the polynomial version of the tangent algebroid of the line, and we call it the \emph{polynomial line}.
    Its enveloping algebra is the \emph{Weyl algebra}, generated by \(x\) and \(\partial\) with the single relation \(\partial x - x \partial = 1\), and \(D^k\) consists of the operators \(\sum_{m \le k} p_m \partial^m\) with \(p_m \in K[x]\).
\end{example}

\begin{example}
    \label{ex:enveloping-lie-algebra}
    A Lie algebra \(\mathfrak g\) over \(K\) is a Lie--Rinehart algebra with \(A = K\) and zero anchor.
    Its enveloping algebra is the universal enveloping algebra \(U(\mathfrak g)\), and \(D^k\) is spanned by the products of at most \(k\) elements of \(\mathfrak g\).
\end{example}

A dagger on \(D\) will act on the scalars by the involution of \(A\), which may move the elements of \(K\), so we use the universal property of \(D\) in the following form.

\begin{lemma}
    \label{lem:enveloping-universal}
    Let \(B\) be a ring, \(\varphi \colon A \to B\) a ring homomorphism and \(\psi \colon L \to B\) an additive map with
    \begin{gather*}
        \varphi(f) \psi(X) = \psi(f X), \qquad
        \psi(X) \varphi(f) - \varphi(f) \psi(X) = \varphi(X(f)), \\
        \psi(X) \psi(Y) - \psi(Y) \psi(X) = \psi([X, Y]),
    \end{gather*}
    and with \(\varphi(K)\) central in \(B\).
    Then there is a unique ring homomorphism \(D \to B\) extending \(\varphi\) and \(\psi\).
\end{lemma}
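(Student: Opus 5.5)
Since \(D\) is defined by generators and relations, the lemma should follow from the universal property of a presented \(K\)-algebra, with the centrality of \(\varphi(K)\) used to make \(B\) into a \(K\)-algebra. First, \(\varphi\) restricted to \(K\) (through the structure map \(K \to A\)) is a ring homomorphism \(K \to B\) with central image. This makes \(B\) a \(K\)-algebra, with \(c \cdot b = \varphi(c) b\), and \(\varphi\) is then a \(K\)-algebra homomorphism.

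Next we check that \(\psi\) is \(K\)-linear. Since \(L\) is an \(A\)-module whose \(K\)-structure comes from \(A\), the first displayed condition gives \(\psi(c X) = \varphi(c) \psi(X) = c \cdot \psi(X)\).

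Now present \(D\) as the quotient of the free \(K\)-algebra, that is, the tensor algebra over \(K\), on the \(K\)-module \(A \oplus L\). Equivalently, take the free algebra on the underlying set of \(A \sqcup L\) and impose additivity and \(K\)-linearity relations. Either way we divide by the two-sided ideal generated by the relations of \cref{def:enveloping-algebra}, together with the relations making the inclusions of \(A\) and \(L\) additive and \(K\)-linear, which are implicit in ``generated by \(A\) and \(L\)''. The \(K\)-linear map \(A \oplus L \to B\), \((f, X) \mapsto \varphi(f) + \psi(X)\), extends uniquely to a \(K\)-algebra map from the tensor algebra.

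It remains to see that each defining relation maps to zero:
\begin{itemize}
    \item \(f \cdot g = f g\) and \(1_A = 1\) hold because \(\varphi\) is a ring homomorphism;
    \item \(f \cdot X = f X\) is the first hypothesis;
    \item the two commutator relations are the remaining hypotheses.
\end{itemize}
Hence the map factors through \(D\), giving a ring homomorphism extending \(\varphi\) and \(\psi\).

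For uniqueness, a ring homomorphism \(D \to B\) is determined by its values on a generating set of \(D\) as a ring. Here \(D\) is generated as a ring, not merely as a \(K\)-algebra, by the images of \(A\) and \(L\), since \(K\) acts through \(A\). Two extensions agree on these images, so they agree everywhere.

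The main subtlety is this last point, together with the meaning of the presentation. The lemma asks for a ring homomorphism, not a \(K\)-algebra map, because later \(B\) will be \(D^\op\) with scalars twisted by the involution. One must therefore be careful that the \(K\)-algebra structure used on \(B\) is the one induced by \(\varphi\), not some given one. The centrality hypothesis is exactly what makes this legitimate, and the rest is routine.
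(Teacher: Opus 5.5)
Your proposal is correct and follows the paper's proof essentially step for step: you make \(B\) a \(K\)-algebra via the central image \(\varphi(K)\), deduce that \(\psi\) is \(K\)-linear, factor the tensor-algebra map on \(A \oplus L\) through the defining relations, and get uniqueness from generation. Your remark that \(D\) is generated by \(A\) and \(L\) as a ring, and not merely as a \(K\)-algebra, because \(K\) acts through \(A\), is a useful clarification of the uniqueness step that the paper leaves implicit.
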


\begin{proof}
    Since \(\varphi(K)\) is central, \(B\) is a \(K\)-algebra through \(c \mapsto \varphi(c)\), and \(\varphi\) is a homomorphism of \(K\)-algebras.
    The map \(\psi\) is \(K\)-linear, because \(\psi(c X) = \varphi(c) \psi(X)\) for \(c \in K\).
    The algebra \(D\) is the quotient of the tensor algebra of the \(K\)-module \(A \oplus L\) by the relations of \cref{def:enveloping-algebra}, and the hypotheses say that the \(K\)-algebra map from the tensor algebra determined by \(\varphi\) and \(\psi\) sends these relations to zero.
    Uniqueness holds because \(A\) and \(L\) generate \(D\).
\end{proof}

For \(B = \End_K(A)\), with \(f\) acting by multiplication and \(X\) by \(g \mapsto X(g)\), the hypotheses of the lemma are the axioms of a Lie--Rinehart algebra.
This gives the action of \(D\) on \(A\), and we write \(P(f)\) for the action of \(P \in D\) on \(f \in A\).
Classically this is the action of differential operators on functions.
For a Lie algebra with zero anchor, an element of \(U(\mathfrak g)\) acts on \(A = K\) by multiplication with its constant term.
Thus the action need not be faithful: a non-zero \(X \in \mathfrak g\) acts as zero on \(K\).
The algebra \(D\) records such elements even when their actions on functions agree.

The order filtration has an \emph{associated graded} algebra \(\gr D = \bigoplus_k D^k / D^{k-1}\), with \(D^{-1} = 0\), and \(\Sym_A L\) denotes the symmetric algebra of the \(A\)-module \(L\).
Classically, \(D^k / D^{k-1}\) consists of the principal symbols of operators of order \(k\), which are symmetric \(k\)-vector fields.

\begin{theorem}[Poincaré--Birkhoff--Witt, {\cite[Theorem~3.1]{rinehart1963}}]
    \label{thm:pbw}
    The canonical map \(\Sym_A L \to \gr D\) is an isomorphism of graded \(A\)-algebras.
\end{theorem}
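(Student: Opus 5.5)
The statement is the Poincaré--Birkhoff--Witt theorem for Lie--Rinehart algebras: the canonical map \(\Sym_A L \to \gr D\) is an isomorphism of graded \(A\)-algebras. I would prove it in three steps: construct the map, show it is surjective, and then show it is injective, which is where the work lies.

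\textbf{The map and surjectivity.} Send \(X \in L\) to its class in \(D^1/D^0\). The relations \(X \cdot f - f \cdot X = X(f) \in D^0\) and \(X \cdot Y - Y \cdot X = [X, Y] \in D^1\) show that in \(\gr D\) the left and right \(A\)-actions agree and the classes of elements of \(L\) commute. Hence \(\gr D\) is a commutative \(A\)-algebra, and the universal property of \(\Sym_A L\) gives a graded \(A\)-algebra map \(\sigma \colon \Sym_A L \to \gr D\). Since \(D^k\) is spanned as a left \(A\)-module by products of at most \(k\) elements of \(L\), the component \(\sigma\) in degree \(k\) is surjective onto \(D^k/D^{k-1}\).

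\textbf{Injectivity via a representation.} Here I would follow Rinehart's approach. The goal is an action of \(D\) on \(S = \Sym_A L\) under which \(X \in L\) acts, on the top degree, by multiplication by \(X\), modulo lower-order terms. Given such an action, an element of \(D^k\) whose symbol is a nonzero \(s \in \Sym^k\) sends \(1\) to \(s\) plus terms of degree less than \(k\). That element therefore cannot lie in \(D^{k-1}\), whose elements send \(1\) into \(\Sym^{\le k-1}\), and \(\sigma\) is injective.

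To build the action, reduce to the free case. Since \(L\) is finitely generated projective, and since the statement is compatible with localisation, it suffices to treat \(L\) free with basis \(X_1,\dots,X_n\). Even simpler, one can keep \(L\) projective but choose an \(A\)-linear connection-like splitting. Concretely, pick a \(K\)-linear map \(\nabla \colon L \otimes \Sym^k L \to \Sym^{k+1}L\) defined inductively. Set \(\psi(X)\) to be multiplication by \(X\) plus a correction \(\nabla_X\), where \(\nabla\) is an \(L\)-connection on \(L\) extended to \(\Sym L\) as a derivation. Let \(\varphi(f)\) act by multiplication. Then check the hypotheses of \cref{lem:enveloping-universal}. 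The \(A\)-linearity and the anchor relation \(\psi(X)\varphi(f) - \varphi(f)\psi(X) = \varphi(X(f))\) follow from the Leibniz rule of \(\nabla\).

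\textbf{Main obstacle.} The bracket relation \([\psi(X), \psi(Y)] = \psi([X,Y])\) is the hard step, because a connection on \(L\) need not be flat and has torsion. I would try first to follow Rinehart's inductive construction rather than a closed formula. Define the action of \(X\) on \(\Sym^{\le k}\) degree by degree, in a basis, by the symmetrisation formula
\[X \cdot (Y_1 \cdots Y_k) = X Y_1 \cdots Y_k + (\text{lower terms forced by commutators}).\]
At each stage, verify the bracket relation using the Jacobi identity and the Leibniz rule, exactly as in the classical Lie algebra PBW proof. The anchor terms are absorbed because \(A\) acts on the left. Projectivity is needed in two places: to pass from free modules to \(L\) by a retract argument, and to ensure that \(\Sym_A L\) behaves well, being locally free.
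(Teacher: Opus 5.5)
Your definition of the map and your surjectivity argument match what the paper says. For injectivity the paper gives no proof of its own and cites Rinehart. Your strategy for injectivity is the standard one, and its last step is sound. Suppose \(\rho \colon D \to \End_K(\Sym_A L)\) is a ring map in which functions act by multiplication and each \(X \in L\) acts as multiplication by \(X\) modulo terms of lower degree. Then a lift \(P \in D^k\) of \(s \in \Sym^k_A L\) has \(\rho(P)(1) \equiv s\) modulo \(\Sym^{\le k-1}_A L\), while \(D^{k-1}\) sends \(1\) into \(\Sym^{\le k-1}_A L\). The gaps are in how you propose to construct \(\rho\).

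First, the connection shortcut cannot be repaired. Take \(\psi(X) = X + \nabla_X\), with \(\nabla_X\) extended to \(\Sym_A L\) as a derivation. Then \([\psi(X), \psi(Y)]\) is multiplication by \(\nabla_X Y - \nabla_Y X\) plus \([\nabla_X, \nabla_Y]\). Comparing with \(\psi([X,Y])\) degree by degree, the bracket relation holds exactly when \(\nabla\) is torsion-free and flat. Such a connection need not exist: the tangent algebroid of \(S^2\) has none, since \(S^2\) has no affine structure.

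The lower-order corrections must therefore come from the degree-by-degree induction in a basis \(X_i\). There the anchor is not handled by the fact that \(A\) acts on the left. Instead:
\begin{itemize}
\item define \(\rho(X_i)\) on monomials;
\item extend by \(\rho(X_i)(f p) = f \rho(X_i)(p) + X_i(f)\, p\) and \(\rho(\sum_i f^i X_i) = \sum_i f^i \rho(X_i)\);
\item note that the commutation relation for basis elements then propagates to all of \(L\) by the Leibniz rule;
\item close the key inductive case with the Jacobi identity of \(L\). Written in terms of the structure functions \([X_j, X_l] = \sum_k c^k_{jl} X_k\), this identity contains the derivatives \(X_i(c^k_{jl})\).
\end{itemize}

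Second, the passage from free to projective cannot be a retract argument. The Lie algebra trick of embedding \(L\) in \(L \times M\) with \(M\) abelian fails, because the Leibniz rule forces \([X, f m] = X(f)\, m\). An embedding of \(L\) as a Lie--Rinehart subalgebra of a free one need not exist at all. For the tangent algebroid of \(\C P^2\), the kernel of the anchor would be a complement with a flat connection, hence trivial because \(\C P^2\) is simply connected. That would make \(T\C P^2\) stably trivial, which it is not.

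Localisation does work, but you need to spell it out:
\begin{itemize}
\item \((A_{\mathfrak m}, L_{\mathfrak m})\) is again a Lie--Rinehart algebra, since derivations and the bracket extend by the Leibniz rule, and \(L_{\mathfrak m}\) is free.
\item \cref{lem:enveloping-universal} gives a filtered map \(D(A, L) \to D(A_{\mathfrak m}, L_{\mathfrak m})\) compatible with the symbol maps.
\item \((\Sym_A L)_{\mathfrak m} = \Sym_{A_{\mathfrak m}} L_{\mathfrak m}\).
\end{itemize}
So an element of the kernel vanishes at every maximal ideal and is therefore zero. Since surjectivity is already known, nothing more is needed. In particular, you do not need to identify \(D(A_{\mathfrak m}, L_{\mathfrak m})\) with a localisation of \(D\).
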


The canonical map sends \(X_1 \cdots X_k\) to the class of the product \(X_1 \cdot \ldots \cdot X_k\) in \(D^k / D^{k-1}\).
It is well defined and surjective by the relations of \(D\), which say that the products of elements of \(L\) commute up to terms of lower order, and Rinehart proves its injectivity using that \(L\) is projective.

\begin{corollary}
    \label{cor:pbw}
    \
    \begin{enumerate}
        \item The map \(A \oplus L \to D\), \((f, X) \mapsto f + X\), is injective with image \(D^1\).
        \item Each \(D^k\) is finitely generated projective as a left \(A\)-module.
        \item For \(k \le l\), \(D^k\) is a direct summand of \(D^l\) as a left \(A\)-module.
    \end{enumerate}
\end{corollary}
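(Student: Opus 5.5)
The plan is to deduce all three statements from the Poincaré--Birkhoff--Witt isomorphism of \cref{thm:pbw}. We treat the graded pieces \(D^k / D^{k-1} \cong \Sym^k_A L\) as left \(A\)-modules and climb the filtration by induction on \(k\). One preliminary point needs checking: the isomorphism \(\Sym_A L \to \gr D\) must be \(A\)-linear for the left \(A\)-module structure on each \(D^k\). The statement says it is an isomorphism of graded \(A\)-algebras, and \(D^k\) is by definition the left \(A\)-submodule spanned by the products of at most \(k\) elements of \(L\), so this is fine.

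For the first statement, we compare the short exact sequence \(0 \to D^0 \to D^1 \to D^1/D^0 \to 0\) with the map \((f, X) \mapsto f + X\). The quotient \(D^1 / D^0\) is identified with \(\Sym^1_A L = L\) by sending \(X\) to its class, and \(D^0 = A\). It remains to check that \(A \to D\) is injective; this is the degree-zero part of \cref{thm:pbw}, since \(\Sym^0_A L = A \cong D^0/D^{-1} = D^0\). Now suppose \(f + X = 0\). Then the class of \(X\) in \(D^1/D^0\) vanishes, so \(X = 0\) by injectivity in degree one, and hence \(f = 0\). Surjectivity onto \(D^1 = A + A \cdot L = A + L\) holds because \(f \cdot X = fX \in L\).

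For the second and third statements, we use the sequences \(0 \to D^{k-1} \to D^k \to \Sym^k_A L \to 0\) of left \(A\)-modules. The module \(L\) is finitely generated projective, so \(L^{\otimes k}\) is finitely generated projective. The symmetric power \(\Sym^k_A L\) is a quotient of it, so it is finitely generated. To see that it is projective, we note that \(L\) is a direct summand of a free module \(A^n\), so \(\Sym^k_A L\) is a direct summand of \(\Sym^k_A(A^n)\), which is free of finite rank; this is the functoriality of \(\Sym\) applied to the inclusion \(L \to A^n\) and the retraction \(A^n \to L\). Because \(\Sym^k_A L\) is projective, each sequence splits. By induction starting from \(D^0 = A\), each \(D^k \cong D^{k-1} \oplus \Sym^k_A L\) is finitely generated projective, and \(D^{k-1}\) is a direct summand of \(D^k\). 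Composing these splittings for \(k < m \le l\) shows that \(D^k\) is a direct summand of \(D^l\).

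There is no real obstacle here. The only step needing care is the projectivity of \(\Sym^k_A L\), which we handle by the retract argument above.
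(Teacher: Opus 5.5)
Your proposal is correct and follows the paper's proof essentially step for step. The degree-zero and degree-one parts of \cref{thm:pbw} give (1), and splitting the sequences \(0 \to D^{k-1} \to D^k \to \Sym^k_A L \to 0\) by induction gives (2) and (3); your retract argument for the projectivity of \(\Sym^k_A L\) supplies a step the paper asserts without proof.
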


\begin{proof}
    Since \(L\) is finitely generated projective, so is each \(\Sym^k_A L \cong D^k / D^{k-1}\), and therefore each short exact sequence of left \(A\)-modules
    \[
        \begin{tikzcd}
            0 \rar & D^{k-1} \rar & D^k \rar & \Sym^k_A L \rar & 0
        \end{tikzcd}
    \]
    splits.
    This gives \(D^k \cong D^{k-1} \oplus \Sym^k_A L\), and (2) and (3) follow by induction on \(k\).
    For (1), the image of the map is \(A + A \cdot L = D^1\).
    If \(f + X = 0\), then \(X\) lies in \(D^0\), so its class in \(D^1 / D^0 \cong L\) vanishes and \(X = 0\); then \(f = 0\), since \(A \to D^0\) is the isomorphism of \cref{thm:pbw} in degree zero.
\end{proof}

\subsection{Daggers on the enveloping algebra}
\label{sec:anti-involutions-D}

A \emph{dagger} on \(D\) is an anti-involution of the ring \(D\), which makes it a dagger ring (\cref{def:dagger-ring}).
It is \emph{filtration-preserving} if \((D^k)^\dagger \subseteq D^k\) for all \(k\).
A dagger that preserves \(D^0\) and \(D^1\) preserves the whole filtration, since \(D^{k+1} = D^k + D^k \cdot L\) and, by induction on \(k\), \((D^k \cdot L)^\dagger \subseteq L^\dagger (D^k)^\dagger \subseteq D^1 D^k \subseteq D^{k+1}\).

The motivating example is the formal adjoint on \(\R^n\).
Let \(\dvg(X) = \sum_i \partial_i X^i\) be the usual divergence of a vector field \(X = \sum_i X^i \partial_i\).
For compactly supported functions, integration by parts gives
\[
    \int_{\R^n} X(f)\, g \, dx = -\int_{\R^n} f \bigl(X(g) + \dvg(X)\, g\bigr)\, dx .
\]
The formal adjoint therefore sends a function \(f\) to itself and a vector field \(X\) to \(-X - \dvg(X)\), and it reverses products.
We show that every filtration-preserving dagger on \(D\) has this shape, up to involutions of \(A\) and \(L\), and that the data which appear are the following.

\begin{definition}
    \label{def:real-structure}
    A \emph{real structure} on \((A, L)\) consists of an involutive ring automorphism \(f \mapsto \conj f\) of \(A\) and an additive, involutive, bracket-preserving map \(\theta \colon L \to L\) with
    \[
        \theta(f X) = \conj f\, \theta(X), \qquad (\theta(X))(f) = \conj{X(\conj f)}.
    \]
    A \emph{real divergence} for it is an additive map \(\dvg \colon L \to A\) with
    \begin{gather*}
        \dvg(f X) = f \dvg(X) + X(f), \qquad
        \dvg([X, Y]) = X(\dvg(Y)) - Y(\dvg(X)), \\
        \dvg(\theta(X)) = \conj{\dvg(X)}.
    \end{gather*}
\end{definition}

We call the first two identities for \(\dvg\) the \emph{Leibniz rule} and the \emph{bracket identity}, and the third the \emph{reality condition}.
The conditions on \(\theta\) say that \(X \mapsto -\theta(X)\) is a dagger on \((A, L)\) in the sense of \cref{def:dagger-lie-algebroid}, so a real structure is an involution of \(A\) together with such a dagger, as in \cref{rem:cartan-involution} for Lie algebras.
We write it as \(-\theta\) because \(X^\dagger\) will denote the dagger on \(D\).
The name comes from the main example, \(A = C^\infty(M, \C)\) with complex vector fields, where the bar and \(\theta\) are complex conjugation.
The identities of \(A\) and \(L\) always form a real structure, the \emph{trivial} one.
A real divergence is additional data, and \cref{sec:divergences} explains how it arises from a positive density.

\begin{theorem}
    \label{thm:anti-involutions-D}
    Filtration-preserving daggers on \(D\) correspond bijectively to real structures on \((A, L)\) with a real divergence, via
    \[
        f^\dagger = \conj f, \qquad X^\dagger = -\theta(X) - \conj{\dvg(X)}
        \qquad \text{for } f \in A \text{ and } X \in L.
    \]
\end{theorem}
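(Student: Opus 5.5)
The proof has two directions: extract the data from a given dagger, then build a dagger from given data. Both rest on \cref{cor:pbw} and \cref{lem:enveloping-universal}.

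\emph{From a dagger to the data.} Let \(\dagger\) be a filtration-preserving dagger on \(D\). Since \((D^0)^\dagger \subseteq D^0 = A\) and \(A\) is commutative, the restriction \(f \mapsto f^\dagger\) is an involutive ring automorphism of \(A\), which we write \(\conj f\) (as in \cref{ex:commutative-dagger-rings}). For \(X \in L\), the element \(X^\dagger\) lies in \(D^1\), so by \cref{cor:pbw}(1) it decomposes uniquely as \(X^\dagger = -\theta(X) - \delta(X)\) with \(\theta(X) \in L\) and \(\delta(X) \in A\). We then set \(\dvg(X) = \conj{\delta(X)}\).

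Each axiom of \cref{def:real-structure} comes from applying \(\dagger\) to a relation of \cref{def:enveloping-algebra} and splitting into the \(L\)- and \(A\)-components.
\begin{itemize}
\item The relation \(f \cdot X = fX\) gives \((fX)^\dagger = X^\dagger \conj f = \conj f X^\dagger - \theta(X)(\conj f)\). Comparing components gives \(\theta(fX) = \conj f\,\theta(X)\) and \(\delta(fX) = \conj f\,\delta(X) + \theta(X)(\conj f)\).
\item The relation \(X\cdot f - f\cdot X = X(f)\) gives \([\conj f, X^\dagger] = \conj{X(f)}\). Expanding the left side yields \((\theta X)(\conj f) = \conj{X(f)}\), which is the anchor condition. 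Substituting this into the previous formula and conjugating gives the Leibniz rule for \(\dvg\).
\item The relation \(XY - YX = [X,Y]\) gives \([Y^\dagger, X^\dagger] = [X,Y]^\dagger\). Expanding both sides, the \(L\)-component gives \(\theta[X,Y] = [\theta X, \theta Y]\). The \(A\)-component gives \(\delta[X,Y] = (\theta X)(\delta Y) - (\theta Y)(\delta X)\), which conjugates to the bracket identity.
\item Involutivity \(X^{\dagger\dagger} = X\) gives \(X = \theta\theta X + \delta(\theta X) - \conj{\delta(X)}\). The \(L\)-component gives \(\theta^2 = \id\). The \(A\)-component gives \(\delta(\theta X) = \conj{\delta(X)}\), which is the reality condition for \(\dvg\).
\end{itemize}

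\emph{From the data to a dagger.} Conversely, given a real structure with a real divergence, define \(\varphi(f) = \conj f\) and \(\psi(X) = -\theta(X) - \conj{\dvg(X)}\), both with values in the opposite ring \(D^\op\). We verify the three hypotheses of \cref{lem:enveloping-universal} for \(B = D^\op\); these are the computations above run backwards. The lemma also requires \(\varphi(K)\) to be central in \(D^\op\). This is the step I expect to be the main obstacle, because the involution of \(A\) may move \(K\), so \(\conj c\) need not lie in the image of \(K\).

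To handle it, note that \(K \to A\) lands in the constants. For \(c \in K\) we have \(X(c) = 0\) for all \(X\), so \(X(\conj c) = \conj{(\theta X)(c)} = 0\). Hence \(\conj c\) commutes with every \(X \in L\) and with \(A\), and since \(A\) and \(L\) generate \(D\), it is central in \(D\), hence in \(D^\op\).

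The lemma then gives a ring homomorphism \(D \to D^\op\), that is, an additive anti-homomorphism \(\dagger\) of \(D\) with \(1^\dagger = 1\). It is involutive because \(\dagger\dagger\) is a ring endomorphism of \(D\) that fixes \(A\) (as the bar is involutive) and fixes \(L\) (by the \(\theta^2\) and reality computation above). Uniqueness in \cref{lem:enveloping-universal} then forces \(\dagger\dagger = \id\).

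Finally, \(\dagger\) preserves \(D^0\) and \(D^1\), so it preserves the whole filtration by the remark preceding \cref{def:real-structure}. The two constructions are mutually inverse: one direction holds by the uniqueness of the decomposition in \cref{cor:pbw}(1), the other because a dagger is determined by its values on the generators \(A\) and \(L\).
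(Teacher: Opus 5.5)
Your proposal is correct and takes essentially the same route as the paper. You decompose \(X^\dagger\) in \(D^1 = A \oplus L\) using \cref{cor:pbw}, read each axiom off as the \(L\)- and \(A\)-components of the daggered relations, and for the converse apply \cref{lem:enveloping-universal} with \(B = D^\op\), handling centrality of \(\conj c\) exactly as the paper does through \(X(\conj c) = \conj{(\theta X)(c)} = 0\). Two points are left implicit or differ only cosmetically: you do not state that \(\theta\) and \(\dvg\) are additive, which is immediate from additivity of \(\dagger\) and uniqueness of the decomposition, and your auxiliary \(\delta = \conj{\dvg}\) changes nothing of substance.
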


\begin{proof}
    We first show that a filtration-preserving dagger determines a real structure and a real divergence satisfying the displayed formulas.
    Since \(\dagger\) preserves \(D^0 = A\), we may put \(\conj f = f^\dagger\).
    This map is additive and involutive and fixes \(1\), and since \(A\) is commutative, \(\conj{f g} = g^\dagger f^\dagger = \conj f\, \conj g\).
    Next, \(X^\dagger\) lies in \(D^1\), which is \(A \oplus L\) by \cref{cor:pbw}, so \(X^\dagger = g + Y\) for unique \(g \in A\) and \(Y \in L\).
    Setting \(\theta(X) = -Y\) and \(\dvg(X) = -\conj g\) gives the displayed formula, and these are the only maps that do, because the bar is bijective.
    Both maps are additive, because \(\dagger\) is additive and the decomposition of \(D^1\) is unique.

    The remaining conditions come from applying \(\dagger\) to the relations of \(D\).
    Each relation becomes an identity in \(D^1 = A \oplus L\), whose components in \(L\) and in \(A\) give one condition each.
    Throughout we use that functions commute with each other in \(D\), and that \(Z \cdot f - f \cdot Z = Z(f)\) for \(Z \in L\) and \(f \in A\).

    Applying \(\dagger\) to \(X \cdot f - f \cdot X = X(f)\) gives
    \[
        \conj{X(f)} = \conj f\, X^\dagger - X^\dagger\, \conj f = \theta(X) \cdot \conj f - \conj f \cdot \theta(X) = (\theta(X))(\conj f).
    \]
    Replacing \(f\) by \(\conj f\) gives \((\theta(X))(f) = \conj{X(\conj f)}\).

    Applying \(\dagger\) to \(f \cdot X = f X\), and using the anchor identity, gives
    \[
        -\theta(f X) - \conj{\dvg(f X)}
        = X^\dagger\, \conj f
        = -\conj f\, \theta(X) - \conj{X(f)} - \conj f\, \conj{\dvg(X)}.
    \]
    The component in \(L\) is \(\theta(f X) = \conj f\, \theta(X)\), and the component in \(A\) is the Leibniz rule.

    Applying \(\dagger\) to \(X \cdot Y - Y \cdot X = [X, Y]\) gives
    \begin{align}
        -\theta([X, Y]) - \conj{\dvg([X, Y])}
        &= Y^\dagger X^\dagger - X^\dagger Y^\dagger \notag \\
        &= -[\theta(X), \theta(Y)] \notag \\
        \label{eq:enveloping-bracket-check}
        &\qquad + (\theta(Y))\bigl(\conj{\dvg(X)}\bigr) - (\theta(X))\bigl(\conj{\dvg(Y)}\bigr).
    \end{align}
    The component in \(L\) says that \(\theta\) preserves brackets.
    By the anchor identity, the component in \(A\) is the conjugate of \(\dvg([X, Y]) = X(\dvg(Y)) - Y(\dvg(X))\), the bracket identity.

    Applying \(\dagger\) to \(X^\dagger = -\theta(X) - \conj{\dvg(X)}\) gives
    \begin{equation}
        \label{eq:enveloping-square-check}
        X = \theta(\theta(X)) + \conj{\dvg(\theta(X))} - \dvg(X).
    \end{equation}
    The component in \(L\) says that \(\theta\) is involutive, and the component in \(A\) is the reality condition.

    Conversely, let a real structure with a real divergence be given, and put \(\psi(X) = -\theta(X) - \conj{\dvg(X)}\).
    We apply \cref{lem:enveloping-universal} to the bar and \(\psi\), as maps into the opposite ring \(D^\op\), which is \(D\) with the order of multiplication reversed.
    Written in \(D\), its hypotheses read
    \begin{gather*}
        \conj f\, \psi(X) - \psi(X)\, \conj f = \conj{X(f)}, \qquad
        \psi(X)\, \conj f = \psi(f X), \\
        \psi(Y) \psi(X) - \psi(X) \psi(Y) = \psi([X, Y]).
    \end{gather*}
    These identities follow from the anchor, scalar-multiplication and bracket computations, read in reverse.
    The first is \(\theta(X) \cdot \conj f - \conj f \cdot \theta(X) = (\theta(X))(\conj f) = \conj{X(f)}\).
    For the second, the first gives \(\psi(X)\, \conj f = \conj f\, \psi(X) - \conj{X(f)} = -\conj f\, \theta(X) - \conj{f \dvg(X) + X(f)}\), which is \(\psi(f X)\) by the conditions on \(\theta(f X)\) and the Leibniz rule.
    For the third, the computation in \cref{eq:enveloping-bracket-check} gives \(-[\theta(X), \theta(Y)] + \conj{Y(\dvg(X)) - X(\dvg(Y))}\), which is \(\psi([X, Y])\) because \(\theta\) preserves brackets and by the bracket identity.
    The bar sends \(K\) into the centre of \(D\).
    For \(c \in K\), the element \(\conj c\) commutes with \(A\).
    It also commutes with every \(X \in L\), because \(X(\conj c) = \conj{(\theta(X))(c)} = 0\), as derivations over \(K\) vanish on \(K\).
    The lemma gives a ring homomorphism \(D \to D^\op\), that is, an additive map \(P \mapsto P^\dagger\) with \(1^\dagger = 1\) and \((P Q)^\dagger = Q^\dagger P^\dagger\), which extends the bar and \(\psi\).
    Its square is a ring homomorphism \(D \to D\) that fixes every \(f\).
    It also fixes every \(X\) by \cref{eq:enveloping-square-check}, since \(\theta\) is involutive and \(\dvg\) satisfies the reality condition.
    As \(A\) and \(L\) generate \(D\), the square is the identity, and \(\dagger\) is a dagger on \(D\).
    It preserves \(D^0\) and \(D^1\) by construction, and hence the whole filtration.

    The two constructions are inverse to each other.
    The dagger built from given data has \(f^\dagger = \conj f\) and \(X^\dagger = \psi(X)\), and the first part returns the same data.
    A filtration-preserving dagger agrees on the generators \(A\) and \(L\) with the one built from its own data, and the two are therefore equal.
\end{proof}

\subsection{Divergences and densities}
\label{sec:divergences}

The classical source of real divergences is a positive density.
A \emph{positive density} on a manifold \(M\) is a measure that is given in every chart by \(\rho\, \lvert dx^1 \cdots dx^n \rvert\) with \(\rho\) smooth and positive.
Left Haar measures on a Lie group are positive densities.

\begin{example}
    \label{ex:classical-divergence}
    Let \(A = C^\infty(M, \C)\) and \(L = \Gamma(TM \otimes \C)\), with complex conjugation as real structure.
    A positive density \(\mu\) gives a real divergence \(\dvg_\mu\), defined by
    \[
        \int_M X(f)\, \mu = -\int_M f \dvg_\mu(X)\, \mu \qquad \text{for compactly supported } f,
    \]
    and the dagger \(X^\dagger = -\conj X - \conj{\dvg_\mu(X)}\) is the formal adjoint for the pairing
    \[
        \int_M f \conj g\, \mu .
    \]
\end{example}

In a chart where \(\mu = \rho\, \lvert dx^1 \cdots dx^n \rvert\), the divergence is given by \(\dvg_\mu(X) = \rho^{-1} \sum_i \partial_i (\rho X^i)\).
The conditions of \cref{def:real-structure} follow by integrating by parts.
The Leibniz rule \(\dvg_\mu(f X) = f \dvg_\mu(X) + X(f)\) comes from
\[
    \int_M f X(g)\, \mu = \int_M X(f g)\, \mu - \int_M g X(f)\, \mu,
\]
the bracket identity from expanding the integral of \([X, Y](g) = X(Y(g)) - Y(X(g))\) in the same way, and \(\dvg_\mu(\conj X) = \conj{\dvg_\mu(X)}\) holds because \(\rho\) is real.
For the formal adjoint, integration by parts gives
\[
    \int_M X(f)\, \conj g\, \mu = \int_M f\, \conj{\bigl(-\conj X(g) - \conj{\dvg_\mu(X)}\, g\bigr)}\, \mu .
\]

For a unit \(w\) of \(A\) with \(\conj w = w\), we write \(d \log w\) for the map \(L \to A\), \(X \mapsto w^{-1} X(w)\).
On a manifold, for a positive function \(w\), this is the differential of the function \(\log w\), an exact \(1\)-form.

\begin{proposition}
    \label{prop:divergences-differ}
    Two real divergences for the same real structure differ by an \(A\)-linear map \(\alpha \colon L \to A\) with \(\alpha([X, Y]) = X(\alpha(Y)) - Y(\alpha(X))\) and \(\alpha(\theta(X)) = \conj{\alpha(X)}\), a real closed \(1\)-form, and adding such a form to a real divergence gives a real divergence.
    For a manifold with a positive density \(\mu\) and a positive function \(w\), \(\dvg_{w \mu} = \dvg_\mu + d \log w\).
\end{proposition}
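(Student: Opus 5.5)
Let \(\dvg\) and \(\dvg'\) be two real divergences for the same real structure, and put \(\alpha = \dvg' - \dvg\). I would verify the three properties of \(\alpha\) one at a time, each by subtracting the corresponding identities of \cref{def:real-structure}. Subtracting the two Leibniz rules cancels the anchor terms \(X(f)\), leaving \(\alpha(f X) = f \alpha(X)\). Additivity is inherited, so \(\alpha\) is \(A\)-linear. Subtracting the bracket identities gives \(\alpha([X, Y]) = X(\alpha(Y)) - Y(\alpha(X))\), since the right-hand sides are additive in the divergence. Subtracting the reality conditions gives \(\alpha(\theta(X)) = \conj{\alpha(X)}\), because the bar is additive.

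For the converse, I would start from a real divergence \(\dvg\) and a map \(\alpha\) with these properties, and check that \(\dvg + \alpha\) satisfies each identity. The Leibniz rule holds because \(\alpha(f X) = f \alpha(X)\) contributes no anchor term. The bracket identity and the reality condition hold because both sides are additive in the divergence. These computations are routine and need no further input. The terminology \emph{real closed \(1\)-form} is then justified as follows. For the tangent algebroid, an \(A\)-linear \(\alpha \colon L \to A\) is a \(1\)-form. The bracket condition reads \(d\alpha(X, Y) = X(\alpha(Y)) - Y(\alpha(X)) - \alpha([X, Y]) = 0\), and the \(\theta\)-condition says that \(\alpha\) is real.

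For the density statement, I would compare the defining integrals in \cref{ex:classical-divergence}. For compactly supported \(f\), apply the definition of \(\dvg_\mu\) to the compactly supported function \(w f\):
\[
    \int_M X(f)\, w \mu = \int_M \bigl(X(w f) - f X(w)\bigr)\, \mu = -\int_M f \bigl(\dvg_\mu(X) + w^{-1} X(w)\bigr)\, w \mu .
\]
Since \(w \mu\) is again a positive density, the defining property of \(\dvg_{w\mu}\), together with uniqueness, gives \(\dvg_{w\mu}(X) = \dvg_\mu(X) + d\log w(X)\). Uniqueness holds because a smooth function \(h\) with \(\int f h\, w\mu = 0\) for all compactly supported \(f\) vanishes. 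One could instead check this in a chart from \(\dvg_\mu(X) = \rho^{-1} \sum_i \partial_i(\rho X^i)\), replacing \(\rho\) by \(w\rho\).

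The only mildly delicate point is to keep track of which identities are affine rather than linear in the divergence. The Leibniz rule is the only affine one, and its inhomogeneous term \(X(f)\) is exactly what cancels in the difference and what forces \(A\)-linearity. Everything else is direct subtraction.
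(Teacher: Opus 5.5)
Your proposal is correct and follows the paper's proof. You obtain \(\alpha\)'s properties by subtracting the three identities, with the anchor term cancelling in the Leibniz rule to give \(A\)-linearity, prove the converse by adding, and get \(\dvg_{w\mu} = \dvg_\mu + d\log w\) from the same integration-by-parts computation applied to \(w f\); your remarks on the uniqueness of \(\dvg_{w\mu}\) and on \(d\alpha(X,Y) = X(\alpha(Y)) - Y(\alpha(X)) - \alpha([X,Y])\) just make explicit what the paper leaves implicit.
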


\begin{proof}
    Let \(\alpha\) be the difference of two real divergences.
    It is additive, and the term \(X(f)\) cancels in \(\alpha(f X) = f \alpha(X)\), so \(\alpha\) is \(A\)-linear.
    The other two identities are the differences of the bracket identities and of the reality conditions.
    Conversely, if \(\alpha\) is such a form and \(\dvg\) a real divergence, then \(\dvg + \alpha\) is a real divergence, because \(\alpha\) is \(A\)-linear and satisfies the bracket and reality identities.
    For the second statement,
    \[
        \int_M X(f)\, w \mu = \int_M X(f w)\, \mu - \int_M f X(w)\, \mu = -\int_M f \bigl(\dvg_\mu(X) + w^{-1} X(w)\bigr)\, w \mu,
    \]
    and hence \(\dvg_{w \mu}(X) = \dvg_\mu(X) + (d \log w)(X)\).
\end{proof}

\begin{example}
    \label{ex:weighted-line-divergence}
    On the polynomial line with \(K = \Q\) and the trivial real structure, fix \(c \in \Q\) and put \(\dvg(f \partial) = f' + c f\).
    This is a real divergence: \(\dvg(g f \partial) = g \dvg(f \partial) + f g'\), and both \(\dvg([f \partial, g \partial])\) and \((f \partial)(\dvg(g \partial)) - (g \partial)(\dvg(f \partial))\) equal \(f g'' - g f'' + c (f g' - g f')\).
    The corresponding dagger on the Weyl algebra has \(x^\dagger = x\) and \(\partial^\dagger = -\partial - c\).
    Letting the operators act on smooth functions through the inclusion \(\Q[x] \subseteq C^\infty(\R)\), it is the formal adjoint for the pairing
    \[
        \int_\R f g\, e^{c x}\, dx
    \]
    of compactly supported smooth functions, since
    \[
        \int_\R f' g\, e^{c x}\, dx = -\int_\R f (g' + c g)\, e^{c x}\, dx .
    \]
    It differs from the divergence \(f \partial \mapsto f'\) of the Lebesgue measure by the closed form \(c\, dx\), where \(dx(f \partial) = f\).
    For \(c \neq 0\) this form is not \(d \log w\) for a unit \(w\) of \(\Q[x]\), since the units are the non-zero constants, while over \(C^\infty(\R)\) it is \(d \log e^{c x}\).
    This is the \emph{weighted line}, which returns in \cref{sec:formal-adjoints}.
\end{example}

\begin{example}
    \label{ex:circle-divergence}
    Let \(M\) be the circle \(\R / 2\pi\Z\), with \(A = C^\infty(M)\), the trivial real structure, and \(L\) the vector fields, which are the multiples \(f \partial\) of the unit vector field \(\partial\).
    The map \(\alpha(f \partial) = f\) is \(A\)-linear, and it is closed: for \(X = f \partial\) and \(Y = g \partial\) we have \([X, Y] = (f g' - g f') \partial\), so both \(\alpha([X, Y])\) and \(X(\alpha(Y)) - Y(\alpha(X))\) equal \(f g' - g f'\).
    Classically \(\alpha\) is the \(1\)-form \(dt\), where \(t\) is the angle coordinate; the angle itself is defined only locally, but \(dt\) is defined on the whole circle.
    It is not exact: for a function \(h\), the function \((d h)(\partial) = h'\) has integral zero over the circle, while \(\alpha(\partial) = 1\) has integral \(2\pi\).
    In particular \(\alpha \neq d \log w\) for every positive function \(w\).
    Let \(\mu\) be the standard positive density \(\lvert dt \rvert\) on the circle.
    By \cref{prop:divergences-differ}, \(\dvg_\mu + \alpha\) is a real divergence, but it is not the divergence of any positive density, because every positive density is \(w \mu\) for a positive function \(w\), with divergence \(\dvg_\mu + d \log w\).
\end{example}

For the tangent algebroid with its standard real structure, every real divergence is locally the divergence of a positive density, by \cref{prop:divergences-differ} and the Poincaré lemma.
Here the standard real structure is the identity on real functions and vector fields, or complex conjugation on their complexifications.
Globally, such a divergence differs from the divergence of a positive density by a real closed \(1\)-form.

\begin{example}
    \label{ex:lie-algebra-divergence}
    Let \(\mathfrak g\) be a real Lie algebra with a dagger (\cref{def:dagger-lie-algebra}).
    Regard it as a Lie--Rinehart algebra with \(A = \R\) and zero anchor.
    Take the trivial involution on \(\R\) and put \(\theta(X) = -X^\dagger\).
    A \emph{character} of \(\mathfrak g\) is a linear map \(\mathfrak g \to \R\) that vanishes on brackets.
    With zero anchor, the conditions of \cref{def:real-structure} say that a real divergence is a character with \(\dvg(\theta(X)) = \dvg(X)\), that is, \(\dvg(X^\dagger) = -\dvg(X)\).
    The dagger on \(U(\mathfrak g)\) then extends \(X \mapsto X^\dagger - \dvg(X)\).
    If \(\mathfrak g\) is semisimple, then \([\mathfrak g, \mathfrak g] = \mathfrak g\), every character vanishes, and a dagger on \(\mathfrak g\) extends to a unique filtration-preserving dagger on \(U(\mathfrak g)\).
\end{example}

\begin{example}
    \label{ex:haar-divergence}
    Let \(G\) be a Lie group with Lie algebra \(\mathfrak g\).
    We identify \(U(\mathfrak g)\) with the algebra of left-invariant differential operators on \(G\), where \(X \in \mathfrak g\) acts as the left-invariant vector field
    \[
        \tilde X(f)(y) = \frac{d}{dt} f(y \exp t X) \Big|_{t = 0}.
    \]
    Let \(\mu_L\) be a left Haar measure.
    The \emph{modular function} \(\Delta_G \colon G \to \R_{> 0}\) is defined, as in \cite[(8.26) and the proof of Proposition~8.27]{knapp2002}, by
    \[
        \int_G f(y z)\, d\mu_L(y) = \Delta_G(z) \int_G f(y)\, d\mu_L(y),
    \]
    and \(\Delta_G(z) = \lvert \det \Ad(z) \rvert\) by \cite[Proposition~8.27]{knapp2002}.
    Hence \(\Delta_G(\exp t X) = e^{t \tr(\ad X)}\), and differentiating the definition at \(z = \exp t X\) gives
    \[
        \int_G \tilde X(h)\, \mu_L = \tr(\ad X) \int_G h\, \mu_L
    \]
    for compactly supported \(h\).
    Applied to \(h = f g\), this shows that the formal adjoint of \(\tilde X\) for the pairing
    \[
        \int_G f g\, \mu_L
    \]
    is \(-\tilde X + \tr(\ad X)\).
    The formal adjoint therefore restricts to a dagger on \(U(\mathfrak g)\), with \(\theta = \id\) and
    \[
        \dvg(X) = -\tr(\ad X) = -\frac{d}{dt} \Delta_G(\exp t X) \Big|_{t = 0}.
    \]
    This divergence vanishes if and only if \(\tr(\ad X) = 0\) for all \(X\), that is, if \(\mathfrak g\) is unimodular.
    For connected \(G\), this is equivalent to \(G\) being unimodular, which means \(\Delta_G = 1\).

    The divergence depends on the choice of Haar measure.
    The measure \(\Delta_G \mu_L\) is a right Haar measure \cite[Corollary~8.30(c)]{knapp2002}, and for it the left-invariant vector fields are skew-adjoint.
    Since \(\tilde X(\Delta_G) = \tr(\ad X)\, \Delta_G\), the formula above gives
    \[
        \int_G \tilde X(h)\, \Delta_G \mu_L = \int_G \tilde X(h \Delta_G)\, \mu_L - \int_G h\, \tilde X(\Delta_G)\, \mu_L = 0 .
    \]
    This is an instance of \cref{prop:divergences-differ} on \(G\), for the density \(\mu_L\) and \(w = \Delta_G\), with \((d \log \Delta_G)(\tilde X) = \tr(\ad X)\).
    The character \(\tr \ad\) is the modular class of \(\mathfrak g\) in the sense of Huebschmann \cite[Example~6.11]{huebschmann1999}.
\end{example}

Evens, Lu and Weinstein introduced the modular class of a Lie algebroid \cite{evens-lu-weinstein1999}, and Huebschmann extended it to Lie--Rinehart algebras, together with the study of right-module structures \cite{huebschmann1999}.
These results concern the cohomological role of divergence data, while the classification in \cref{thm:anti-involutions-D} asks when that data defines an involution on the filtered enveloping algebra.

\begin{remark}
    \label{rem:huebschmann}
    Additive maps \(\dvg \colon L \to A\) satisfying the Leibniz rule and the bracket identity, but not necessarily the reality condition, are the same thing as right \(D\)-module structures on \(A\) that extend the multiplication of \(A\), via \(f \cdot X = -X(f) - f \dvg(X)\); cf.\ \cite[(2.0.3)--(2.0.4), Proposition~2.8 and \S6]{huebschmann1999}.
    Checking the right-module relations for this formula gives exactly these two identities.
    Classically, the right module is the module of densities under minus the Lie derivative, with \(A\) identified with the densities through \(f \mapsto f \mu\).
\end{remark}

\section{Jets and differential operators}
\label{sec:jets}

We now pass from operators on functions to operators between bundles.
Classically, an operator of order at most \(k\) from \(E\) to \(F\) is a bundle map from the \(k\)-jet bundle of \(E\) to \(F\): its value at a point depends on derivatives of the input section up to order \(k\).
Composition first prolongs the inner operator, computing derivatives of its output from higher derivatives of its input.
Algebraically, jets come from the filtration of \(D\), and prolongation comes from its multiplication.
These form a comonad graded by order, whose co-Kleisli category is \(\DiffOp\).
We identify the operators from \(A\) to itself with \(D\) (\cref{thm:diff-A-A}) and describe general operators through their tests (\cref{prop:tests}).
For a general Lie--Rinehart algebra, an operator is a map out of the algebraic jet module; its action on sections need not determine that map.

Classically, the \(1\)-jet of a function \(f\) on the line at a point records \(f\) and \(f'\) at that point.
To record these jets across the line, we use the map \(P \mapsto P(f)\), whose values are functions.

\begin{example}
    \label{ex:jets-line}
    Let \(A = K[x]\) and \(L = A \partial\) be the polynomial line.
    By \cref{cor:pbw}, \(D^1 = A \oplus A \partial\) is free as a left \(A\)-module on \(1\) and \(\partial\), so a map of left \(A\)-modules \(\omega \colon D^1 \to A\) is determined by the pair \((g_0, g_1) = (\omega(1), \omega(\partial))\).
    The function \(f\) gives such a map, \(P \mapsto P(f)\), with \((g_0, g_1) = (f, f')\), its value and its derivative.
    An arbitrary pair \((g_0, g_1)\) is a section of the jet bundle.
    It is the jet of a function precisely when \(g_1 = g_0'\).
    These maps carry two module structures.
    Multiplying the values of \(\omega\) by \(f\) gives \(f (g_0, g_1) = (f g_0, f g_1)\).
    Precomposing \(\omega\) with right multiplication by \(f\) in \(D^1\) gives
    \[
        (g_0, g_1) f = (f g_0, f g_1 + f' g_0),
    \]
    since \(\omega(\partial \cdot f) = \omega(f \partial + f') = f g_1 + f' g_0\).
    The second structure is the Leibniz rule: the jet \((h, h')\) of \(h\), multiplied by \(f\) in this way, is the jet \((f h, (f h)')\) of \(f h\).
    The maps from these \(\omega\) to \(A\) that are linear for the first structure are \((g_0, g_1) \mapsto p_0 g_0 + p_1 g_1\) with \(p_0, p_1 \in K[x]\), and applied to the jet of \(f\) they give \(p_0 f + p_1 f'\).
    They are the first-order operators \(p_0 + p_1 \partial\), and this is the description of operators as maps out of jets.
    Among them, those that are also linear for the second structure are the ones with \(p_1 = 0\), the operators of order zero.
\end{example}

\subsection{The jet comonad}
\label{sec:jets-general}

For the tangent algebroid, the \(k\)-jet of a function \(f\) is determined by the functions \(P(f)\) for operators \(P\) of order at most \(k\).
The map \(P \mapsto P(f)\) is linear over \(A\) acting on the left, because \((g P)(f) = g\, P(f)\), and we take such maps as the definition of jets.

\begin{definition}
    \label{def:jets}
    The module of \emph{\(k\)-jets of functions} is \(J^k A = \Hom_A(D^k, A)\), the maps of left \(A\)-modules.
    It has the \emph{pointwise} structure \((f \omega)(P) = f \omega(P)\) and the \emph{right} structure \((\omega f)(P) = \omega(P \cdot f)\).
    For a finitely generated projective module \(E\),
    \[
        J^k E = J^k A \otimes_A E,
    \]
    the tensor product taken over the right structure, with the pointwise structure as module structure.
\end{definition}

For \(E = A\), we identify \(J^k A \otimes_A A\) with \(J^k A\) by \(\omega \otimes f \mapsto \omega f\).
This identification preserves the pointwise module structure.
These actions differ in when the function is multiplied.
The pointwise action multiplies the value of a jet after evaluation; the right action multiplies its input before evaluation.
The tensor product uses the right action so that the jet of \(f u\) obeys the Leibniz rule.
Classically, \(J^k E\) is the module of sections of the \(k\)-jet bundle of \(E\) \cite{saunders1989, grothendieck1967}.
General elements of this module need not be jets of sections.
For \(A = C^\infty(\R^n)\) and the vector fields, \(D^k\) is free on the operators \(\partial^I\), where \(I\) is a multi-index with \(\lvert I \rvert \le k\).
The dual functionals form a basis of \(J^k A\) and give the usual jet coordinates.
The right structure is defined because \(D^k\) is closed under right multiplication by \(A\), and the two structures commute.

Let \(j^k(1) \in J^k A\) be the map \(P \mapsto P(1)\), the jet of the constant function \(1\).
Then \(j^k(1) f\) is the map \(P \mapsto (P \cdot f)(1) = P(f)\), the \emph{\(k\)-jet} \(j^k(f)\) of \(f\).
The \(k\)-jet of a section \(u \in E\) is \(j^k(u) = j^k(1) \otimes u\).
The tensor product over the right structure is what makes this consistent: \(j^k(f u) = j^k(1) \otimes f u = j^k(f) \otimes u\), so the jet of a section \(\sum_i f_i u_i\) is \(\sum_i j^k(f_i) \otimes u_i\), however the section is written.

In \(J^k E\) we have \(\omega f \otimes u = \omega \otimes f u\) and \(f (\omega \otimes u) = f \omega \otimes u\).
Maps out of \(J^k E\) are given below on elements \(\omega \otimes u\), by formulas that are biadditive, take the same value on \((\omega f, u)\) and \((\omega, f u)\), and are linear for the pointwise structure.
In particular, a linear map \(\varphi \colon E \to F\) gives \(J^k(\varphi) \colon J^k E \to J^k F\), \(\omega \otimes u \mapsto \omega \otimes \varphi(u)\), and \(J^k\) is an additive functor.

A jet of a section is read off through the components of the section: for a linear form \(\alpha \in E^\vee\), the function \(\alpha(u)\) has the derivatives \(P(\alpha(u))\), and the next definition extends this to all jets.

\begin{definition}
    \label{def:jet-pairing}
    A jet \(j \in J^k E\) is evaluated on \(P \in D^k\) and \(\alpha \in E^\vee\) by
    \[
        (\omega \otimes u)(P \otimes \alpha) = \omega(P \cdot \alpha(u)).
    \]
\end{definition}

Here \(P \cdot \alpha(u)\) is the product in \(D\), an element of \(D^k\); applied to \(1\) it gives the function \(P(\alpha(u))\).
The formula is well defined on \(J^k E\), since \((\omega f)(P \cdot \alpha(u)) = \omega(P \cdot \alpha(f u))\).
The value depends on \(P \otimes \alpha\) as an element of \(D^k \otimes_A E^\vee\), the tensor product over the right \(A\)-structure of \(D^k\), because \(j(P \cdot f \otimes \alpha) = j(P \otimes f \alpha)\).
It is \(A\)-linear in \(j\), and \(A\)-linear for the left structure of \(D^k\): \(j(g P \otimes \alpha) = g\, j(P \otimes \alpha)\).
On jets of sections it is \(j^k(u)(P \otimes \alpha) = P(\alpha(u))\).
Classically, evaluating the jet of a section \(u\) on \(P \otimes \alpha\) applies the operator \(P\) to the component \(\alpha(u)\) of the section.

For a linear map \(\varphi \colon E \to F\), a jet \(j \in J^k E\) and a linear form \(\lambda\) on \(F\), we have
\begin{equation}
    \label{eq:jet-functor-values}
    J^k(\varphi)(j)(P \otimes \lambda) = j(P \otimes \lambda \circ \varphi),
\end{equation}
as one sees on the elements \(\omega \otimes u\); we apply this also when \(E\) and \(F\) are themselves jet modules.

\begin{example}
    \label{ex:pairing-line}
    On the polynomial line, take \(E = A\) and \(\alpha = \id_A\).
    The element \(\omega \otimes f\) of \(J^1 A \otimes_A A\) has the values \(\omega(f)\) on \(1 \otimes \id_A\) and \(\omega(\partial \cdot f) = f g_1 + f' g_0\) on \(\partial \otimes \id_A\), where \((g_0, g_1)\) are the values of \(\omega\).
    These are the values of the jet \(\omega f\) of \cref{ex:jets-line}, in accordance with \(\omega \otimes f = \omega f \otimes 1\), and \(J^1 A \otimes_A A\) is \(J^1 A\) with its pointwise structure.
\end{example}

Jets are determined by their values, and the next proposition gives the formula.
It uses dual bases (\cref{def:dual-basis}); by \cref{cor:pbw}, \(D^k\) has a dual basis \((b_a, \beta^a)\) as a left \(A\)-module, with \(\beta^a \in J^k A\).

\begin{proposition}
    \label{prop:jet-reconstruction}
    Let \((e_i, \xi^i)\) be a dual basis of \(E\) and \((b_a, \beta^a)\) one of \(D^k\).
    Every \(j \in J^k E\) satisfies \(j = \sum_{a, i} j(b_a \otimes \xi^i)\, \beta^a \otimes e_i\).
    In particular jets are determined by their values, \(J^k E \cong (D^k \otimes_A E^\vee)^\vee\), and \(J^k E\) is finitely generated projective.
\end{proposition}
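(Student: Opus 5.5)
It suffices to prove the reconstruction formula on generators \(\omega \otimes u\) of \(J^k E\), because both sides are additive in \(j\). The right-hand side is additive in \(j\) since the evaluation \(j \mapsto j(b_a \otimes \xi^i)\) is \(A\)-linear.

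First I expand \(u\) using the dual basis of \(E\). This gives \(\omega \otimes u = \sum_i \omega \otimes \xi^i(u) e_i = \sum_i \omega\, \xi^i(u) \otimes e_i\), with the function moved across the tensor product through the right structure. It then remains to expand each element \(\omega' = \omega\, \xi^i(u) \in J^k A = \Hom_A(D^k, A)\) using the dual basis \((b_a, \beta^a)\) of \(D^k\). For \(P \in D^k\) we have \(P = \sum_a \beta^a(P) b_a\), and \(\omega'\) is left \(A\)-linear, so \(\omega'(P) = \sum_a \beta^a(P)\, \omega'(b_a)\). Hence \(\omega' = \sum_a \omega'(b_a)\, \beta^a\) in the pointwise structure. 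Substituting, and using \(f(\beta \otimes e) = f\beta \otimes e\), I get
\[
    \omega \otimes u = \sum_{a,i} \omega\bigl(b_a \cdot \xi^i(u)\bigr)\, \beta^a \otimes e_i .
\]
By \cref{def:jet-pairing}, the coefficient \(\omega(b_a \cdot \xi^i(u))\) is exactly \((\omega \otimes u)(b_a \otimes \xi^i)\), which is the claimed formula.

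For the consequences, the formula shows at once that a jet with all values zero is zero, so jets are determined by their values. The evaluation therefore gives an injective \(A\)-linear map \(J^k E \to (D^k \otimes_A E^\vee)^\vee\); here \(D^k \otimes_A E^\vee\) is taken over the right structure of \(D^k\) and carries the left module structure, for which the pairing is linear, as noted after \cref{def:jet-pairing}.

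For surjectivity, given a functional \(\Lambda\) I take \(j = \sum_{a,i} \Lambda(b_a \otimes \xi^i)\, \beta^a \otimes e_i\) and check that \(j(P \otimes \alpha) = \Lambda(P \otimes \alpha)\). Writing \(j(P \otimes \alpha)\) out gives \(\sum \Lambda(b_a \otimes \xi^i)\, \beta^a(P \cdot \alpha(e_i))\), so I need to recombine this into \(\Lambda(P \otimes \alpha)\). This recombination is the main obstacle, because the function \(\alpha(e_i)\) sits on the right of \(P\).

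To handle it, I first use left linearity of \(\Lambda\) to rewrite the sum as \(\Lambda\bigl(\sum_{a,i} \beta^a(P \cdot \alpha(e_i)) b_a \otimes \xi^i\bigr)\). The dual basis of \(D^k\) turns the inner sum over \(a\) into \(\sum_i P \cdot \alpha(e_i) \otimes \xi^i\). Moving \(\alpha(e_i)\) across the tensor product gives \(P \otimes \sum_i \alpha(e_i) \xi^i\). Finally \(\sum_i \alpha(e_i)\xi^i = \alpha\), because \(\alpha(u) = \sum_i \xi^i(u)\alpha(e_i)\). So \(j(P \otimes \alpha) = \Lambda(P \otimes \alpha)\) and the map is an isomorphism.

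Lastly, \(J^k E\) is finitely generated projective. The formula exhibits \((\beta^a \otimes e_i,\ j \mapsto j(b_a \otimes \xi^i))\) as a finite dual basis of \(J^k E\), and a module with a dual basis is finitely generated projective.
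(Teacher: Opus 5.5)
Your proof is correct and follows the paper's argument essentially step for step. You reduce to \(\omega \otimes u\), expand \(u\) in the dual basis of \(E\) and \(\omega\,\xi^i(u)\) in the dual basis of \(D^k\), and read off the dual basis of \(J^k E\); for surjectivity onto \((D^k \otimes_A E^\vee)^\vee\) you use the same recombination via \(\sum_a \beta^a(Q)\, b_a = Q\) and \(\sum_i \alpha(e_i)\, \xi^i = \alpha\) that the paper gives in its appendix.
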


\begin{proof}
    Both sides are additive in \(j\), and we may take \(j = \omega \otimes u\).
    Then \(\omega \otimes u = \sum_i \omega\, \xi^i(u) \otimes e_i\), and for \(f \in A\) the dual basis of \(D^k\) gives \(\omega f = \sum_a (\omega f)(b_a)\, \beta^a = \sum_a \omega(b_a \cdot f)\, \beta^a\).
    Taking \(f = \xi^i(u)\) gives the formula, since \(\omega(b_a \cdot \xi^i(u)) = (\omega \otimes u)(b_a \otimes \xi^i)\).
    The formula shows that the elements \(\beta^a \otimes e_i\), together with the linear forms \(j \mapsto j(b_a \otimes \xi^i)\), form a dual basis of \(J^k E\), which is therefore finitely generated projective.
    The isomorphism \(J^k E \cong (D^k \otimes_A E^\vee)^\vee\) is proved in \cref{sec:appendix-jets}.
\end{proof}

To compose two operators, we need the jets of the output of the first one, and these are computed from jets of higher order of its input.
Classically, the \(l\)-jet of the \(k\)-jet of a section depends on its derivatives up to order \(k + l\).
In terms of values, applying \(P \in D^k\) to the function \(Q(\alpha(u))\), for \(Q \in D^l\), is applying \(P \cdot Q \in D^{k+l}\) to \(\alpha(u)\).
The comultiplication below is therefore dual to the multiplication of \(D\), the restriction is dual to the inclusion \(D^k \subseteq D^l\), and the counit is evaluation at \(1 \in D^0\).\footnote{%
    The letters \(\varepsilon\) and \(\delta\) for the counit and the comultiplication are the standard notation for comonads; they are unrelated to the cap \(\varepsilon\) of the dagger compact structure of \cref{def:dagger-compact} and to the Kronecker \(\delta^i_j\).%
}

\begin{definition}
    \label{def:jet-maps}
    The \emph{restrictions} \(\pi_{l,k} \colon J^l E \to J^k E\) for \(k \le l\), the \emph{counit} \(\varepsilon \colon J^0 E \to E\) and the \emph{comultiplication} \(\delta_{k,l} \colon J^{k+l} E \to J^k J^l E\) are the natural maps determined by
    \begin{gather*}
        \pi_{l,k}(j)(P \otimes \alpha) = j(P \otimes \alpha), \qquad
        \alpha(\varepsilon(j)) = j(1 \otimes \alpha), \\
        \delta_{k,l}(j)\bigl(P \otimes (Q \otimes \alpha)\bigr) = j(P Q \otimes \alpha),
    \end{gather*}
    for \(P \in D^k\), \(Q \in D^l\) and \(\alpha \in E^\vee\), where \(Q \otimes \alpha\) is regarded as the linear form \(j' \mapsto j'(Q \otimes \alpha)\) on \(J^l E\).
\end{definition}

By \cref{prop:jet-reconstruction}, applied to \(E\) and to \(J^l E\), there is at most one map with each of these properties, and \cref{sec:appendix-jets} gives explicit formulas that show that the maps exist and are natural.
For instance, \(\pi_{l,k}(\omega \otimes u)\) is the restriction of \(\omega\) to \(D^k\), tensored with \(u\), and \(\varepsilon(\omega \otimes u) = \omega(1)\, u\).
On jets of sections, \(\pi_{l,k}(j^l(u)) = j^k(u)\), \(\varepsilon(j^0(u)) = u\) and \(\delta_{k,l}(j^{k+l}(u)) = j^k(j^l(u))\), as one sees by comparing values.
Classically, \(\pi_{l,k}\) forgets the derivatives of order above \(k\), \(\varepsilon\) takes the value of a \(0\)-jet, and \(\delta_{k,l}\) regards a jet of order \(k + l\) as a \(k\)-jet of \(l\)-jets.

\begin{example}
    \label{ex:comultiplication-line}
    On the polynomial line, \(D^2\) is free on \(1\), \(\partial\) and \(\partial^2\), so a \(2\)-jet \(\omega\) is given by the triple \((\omega(1), \omega(\partial), \omega(\partial^2))\), which is \((f, f', f'')\) for the jet of \(f\).
    By \cref{prop:jet-reconstruction}, an element of \(J^1 J^1 A\) is determined by its four values on \(P \otimes (Q \otimes \id_A)\) with \(P, Q \in \{1, \partial\}\), and \(\delta_{1,1}(\omega)\) takes the value \(\omega(P Q)\) there.
    For the jet of \(f\), these values are
    \[
        \begin{array}{c|cc}
            & Q = 1 & Q = \partial \\
            \hline
            P = 1 & f & f' \\
            P = \partial & f' & f''
        \end{array}
    \]
    The rows are the value \((f, f')\) and the derivative \((f', f'')\) of the \(1\)-jet of \(f\), so \(\delta_{1,1}\) sends the \(2\)-jet of \(f\) to the \(1\)-jet of its \(1\)-jet.
\end{example}

The ungraded jet comonad and its PDE coalgebras go back to Marvan \cite{marvan1987}.
Khavkine and Schreiber give a synthetic treatment \cite{khavkine-schreiber2017}, and Wallbridge describes its co-Kleisli morphisms as linear differential operators \cite{wallbridge2020}.
Flood, Mantegazza and Winther construct jet functors for differential calculi, with composition maps giving the order bound for differential operators \cite[Theorems~5.44 and~8.38; Proposition~10.3]{flood-mantegazza-winther2025}.
Here the jet functors come from a filtered Lie--Rinehart enveloping algebra, and comultiplication is dual to its multiplication.
The maps \(\pi\), \(\varepsilon\) and \(\delta\) satisfy the axioms of a comonad, graded by the order.
Recall that a comonad on a category is an endofunctor \(J\) with natural transformations \(\varepsilon \colon J \Rightarrow \Id\) and \(\delta \colon J \Rightarrow J J\) satisfying the counit and coassociativity laws; in the graded version the functor carries an order.

\begin{definition}
    \label{def:graded-comonad}
    A \emph{comonad graded by \((\N, \le, +)\)} on a category \(\cat C\) consists of endofunctors \(J^k\), natural transformations \(\pi_{l,k} \colon J^l \Rightarrow J^k\) for \(k \le l\), \(\varepsilon \colon J^0 \Rightarrow \Id\) and \(\delta_{k,l} \colon J^{k+l} \Rightarrow J^k J^l\), satisfying
    \begin{gather*}
        \pi_{k,k} = \id, \qquad \pi_{l,k} \circ \pi_{m,l} = \pi_{m,k}, \qquad
        J^k(\varepsilon) \circ \delta_{k,0} = \id, \qquad \varepsilon \circ \delta_{0,k} = \id, \\
        J^k(\delta_{l,m}) \circ \delta_{k,l+m} = \delta_{k,l} \circ \delta_{k+l,m},
    \end{gather*}
    and, for \(k \le m\) and \(l \le n\),
    \[
        \delta_{k,l} \circ \pi_{m+n, k+l} = J^k(\pi_{n,l}) \circ \pi_{m,k} \circ \delta_{m,n}.
    \]
    This is the dual of the graded monads of \cite{katsumata2014, fujii-katsumata-mellies2016}.
\end{definition}

For jets, the counit laws say that the \(k\)-jet of the \(0\)-jet, and the \(0\)-jet of the \(k\)-jet, are the original \(k\)-jet.
Coassociativity says that iterated jets can be formed in either order.
Compatibility with restrictions says that forgetting derivatives commutes with forming jets of jets.
The diagrams expressing these laws appear with their verification in \cref{sec:appendix-jets}.

\begin{proposition}
    \label{prop:jet-comonad}
    The jet functors on \(\fgpMod_A\), with \(\pi\), \(\varepsilon\) and \(\delta\), form a comonad graded by \((\N, \le, +)\).
\end{proposition}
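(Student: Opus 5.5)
The plan is to verify every identity of \cref{def:graded-comonad} by comparing values, using \cref{prop:jet-reconstruction}. That proposition says a jet in \(J^k E\) is determined by its values \(j(P \otimes \alpha)\) for \(P \in D^k\) and \(\alpha \in E^\vee\). Applied to iterated jet modules \(J^k J^l E\), it says such an element is determined by its values on \(P \otimes \lambda\) with \(\lambda \in (J^l E)^\vee\). Since the forms \(j' \mapsto j'(Q \otimes \alpha)\), for \(Q\) running over a dual basis of \(D^l\) and \(\alpha\) over one of \(E^\vee\), make up the coordinate forms of the dual basis of \(J^l E\) constructed in that proposition, it suffices to test on \(P \otimes (Q \otimes \alpha)\). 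Every \(\lambda\) is an \(A\)-combination \(\sum g\,(Q \otimes \alpha)\) of these, and \(P \otimes g\lambda = P g \otimes \lambda\), with \(P g \in D^k\). I will take the existence and naturality of \(\pi\), \(\varepsilon\), \(\delta\) from \cref{sec:appendix-jets}, so only the equations remain. Functoriality of each \(J^k\) is clear from its definition on elements \(\omega \otimes u\).

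The key tool is \eqref{eq:jet-functor-values}, which computes values of \(J^k(\varphi)(j)\) by precomposing the form with \(\varphi\). With it, each law reduces to an identity in the filtered ring \(D\).
\begin{itemize}
\item The restriction laws \(\pi_{k,k} = \id\) and \(\pi_{l,k} \circ \pi_{m,l} = \pi_{m,k}\) are immediate: both sides have value \(j(P \otimes \alpha)\).
\item For the counit law \(J^k(\varepsilon) \circ \delta_{k,0} = \id\), evaluate on \(P \otimes \alpha\). By \eqref{eq:jet-functor-values} the left side gives \(\delta_{k,0}(j)(P \otimes \alpha \circ \varepsilon)\). The form \(\alpha \circ \varepsilon\) on \(J^0 E\) is \(j' \mapsto j'(1 \otimes \alpha)\), so this is \(j(P \cdot 1 \otimes \alpha) = j(P \otimes \alpha)\).
\item For the other counit law, \(\alpha(\varepsilon(\delta_{0,k}(j))) = \delta_{0,k}(j)(1 \otimes \alpha)\). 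Here \(\alpha\) is viewed as a form on \(J^k E\) only through \(\varepsilon\)-type evaluations, so I must compare \(\varepsilon\) on \(J^0 J^k E\) with a form \(\lambda\) on \(J^k E\). Testing \(\lambda = Q \otimes \alpha\) gives \(\delta_{0,k}(j)(1 \otimes (Q \otimes \alpha)) = j(Q \otimes \alpha)\), so \(\varepsilon(\delta_{0,k}(j))\) and \(j\) have the same values.
\item For coassociativity, both sides are tested on \(P \otimes (Q \otimes (R \otimes \alpha))\). The right side, \(\delta_{k,l} \circ \delta_{k+l,m}\), unfolds to \(\delta_{k+l,m}(j)(PQ \otimes (R \otimes \alpha))\) and then to \(j((PQ)R \otimes \alpha)\). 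The left side uses \eqref{eq:jet-functor-values} with the form \((Q \otimes (R \otimes \alpha)) \circ \delta_{l,m} = QR \otimes \alpha\), and gives \(j(P(QR) \otimes \alpha)\). The two agree by associativity in \(D\).
\item The restriction compatibility gives \(j(PQ \otimes \alpha)\) on both sides, for \(P \in D^k \subseteq D^m\) and \(Q \in D^l \subseteq D^n\). This uses \(D^k D^l \subseteq D^{k+l}\).
\end{itemize}

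The main obstacle is the bookkeeping at the second level: justifying that elements of \(J^k J^l E\) are determined by values on \(P \otimes (Q \otimes \alpha)\), and that forms like \(\lambda \circ \delta_{l,m}\) and \(\lambda \circ J^l(\pi)\) are again of the form \(Q' \otimes \alpha\). I will handle this once, as a lemma, using the explicit dual basis \((\beta^a \otimes e_i,\ j \mapsto j(b_a \otimes \xi^i))\) of \(J^l E\) from \cref{prop:jet-reconstruction}. I will also check that the value pairing is \(A\)-linear in the form variable in the sense \(P \otimes g\lambda = P g \otimes \lambda\), which follows from its well-definedness over \(D^k \otimes_A (J^l E)^\vee\). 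After that, each law is a one-line identity in \(D\): \(P \cdot 1 = P\), associativity, and compatibility of the filtration with products.
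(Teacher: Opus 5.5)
Your proposal is correct and follows the paper's proof in the appendix essentially step for step. Like the paper, you use \cref{prop:jet-reconstruction} and \eqref{eq:jet-functor-values} to reduce every law to values on \(P \otimes (Q \otimes \alpha)\), via the identities \(\alpha \circ \varepsilon = 1 \otimes \alpha\), \((Q \otimes \alpha) \circ \pi_{n,l} = Q \otimes \alpha\) and \((Q \otimes (R \otimes \alpha)) \circ \delta_{l,m} = QR \otimes \alpha\), so that each law becomes \(P \cdot 1 = P\), associativity of \(D\), or the inclusions of the filtration. One slip in your description of the bookkeeping: the restriction-compatibility step needs the form \((Q \otimes \alpha) \circ \pi_{n,l}\), not \(\lambda \circ J^l(\pi)\).
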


The proof is in \cref{sec:appendix-jets}: comparing values, the axioms reduce to the unit and associativity laws of \(D\) and to the inclusions \(D^k \subseteq D^l\).

\subsection{The category of differential operators}
\label{sec:diffop}

The \emph{co-Kleisli category} of an ordinary comonad \(J\) has the same objects, the maps \(J X \to Y\) as morphisms from \(X\) to \(Y\), and the composite of \(P \colon J X \to Y\) and \(Q \colon J Y \to Z\) is \(Q \circ J(P) \circ \delta\).
For the jet comonad, a morphism of degree \(k\) is an operator of order at most \(k\), and the degrees add under composition.
The morphisms therefore come in one set for each degree, with maps that raise the degree, and they form a locally graded category.

\begin{definition}
    \label{def:locally-graded-category}
    A \emph{locally graded category} \(\cat C\) over \((\N, \le, +)\) has objects and sets \(\Hom_k(X, Y)\) for \(k \in \N\).
    It has change-of-degree maps \(\iota_{k,l} \colon \Hom_k(X, Y) \to \Hom_l(X, Y)\) for \(k \le l\), identities \(\id_X \in \Hom_0(X, X)\), and compositions
    \[
        \Hom_l(Y, Z) \times \Hom_k(X, Y) \to \Hom_{l+k}(X, Z),
    \]
    such that composition is associative and unital, the \(\iota\) are functorial, and \(\iota(g) \circ \iota(f) = \iota(g \circ f)\) \cite{mcdermott-uustalu2022}.
    It is \emph{preadditive} if the hom-sets are abelian groups and composition and \(\iota\) are additive.
\end{definition}

Here functoriality means \(\iota_{k,k} = \id\) and \(\iota_{l,m} \circ \iota_{k,l} = \iota_{k,m}\), and the last condition holds for all change-of-degree maps for which both sides are defined.
This is the notion of \cite[Definition~9]{mcdermott-uustalu2022} for the grading \((\N, \le, +)\), where the change-of-degree maps are called coercions.

\begin{definition}
    \label{def:dagger-locally-graded}
    A \emph{dagger} on a locally graded category consists of maps \(\Hom_k(X, Y) \to \Hom_k(Y, X)\), \(P \mapsto P^\dagger\), that satisfy the conditions of \cref{def:dagger-category} and \(\iota(P)^\dagger = \iota(P^\dagger)\).
\end{definition}

The condition \((Q \circ P)^\dagger = P^\dagger \circ Q^\dagger\) makes sense because both sides have degree \(l + k = k + l\).
A dagger preserves degrees by definition, and \cref{ex:colimit-daggers} shows that this is a restriction.

\begin{definition}
    \label{def:diffop}
    The locally graded category \(\DiffOp\) of \emph{differential operators} has the finitely generated projective \(A\)-modules as objects and
    \[
        \Diff^k(E, F) = \Hom_A(J^k E, F),
    \]
    the \emph{operators of order at most \(k\)} relative to \((A,L)\).
    The composite of \(P \in \Diff^k(E, F)\) and \(Q \in \Diff^l(F, H)\) is \(Q \circ J^l(P) \circ \delta_{l,k}\), the identity is \(\varepsilon\), and \(\iota_{k,l}(P) = P \circ \pi_{l,k}\).
\end{definition}

The composite \(Q \circ P\) in \(\DiffOp\) is the composite of linear maps
\[
    \begin{tikzcd}
        J^{l+k} E \rar["\delta_{l,k}"] & J^l J^k E \rar["J^l(P)"] & J^l F \rar["Q"] & H .
    \end{tikzcd}
\]
This is the co-Kleisli category of the graded comonad of \cref{prop:jet-comonad}, and it is a preadditive locally graded category; the verification is in \cref{sec:appendix-jets}.
An operator \(P \in \Diff^k(E, F)\) acts on sections by \(P(u) = P(j^k(u))\), and since \(\delta_{l,k}\) sends \(j^{l+k}(u)\) to \(j^l(j^k(u))\), the composite acts by \((Q \circ P)(u) = Q(P(u))\).
This action can lose information.
For \(A = K\) and a non-zero Lie algebra \(L\) with zero anchor, \(X \in L\) gives a non-zero element of \(\Diff^1(A, A) \cong D^1\) but acts as zero on every element of \(A\).
Classically, operators are bundle maps out of jet bundles, and the composite of \(P\) and \(Q\) is the prolongation \(J^l(P) \circ \delta_{l,k}\) of \(P\) to \(l\)-jets followed by \(Q\).

\begin{example}
    \label{ex:composition-line}
    On the polynomial line, \cref{ex:jets-line} identifies the first-order operator \(p_0 + p_1 \partial\) with the map \(P \colon J^1 A \to A\), \((g_0, g_1) \mapsto p_0 g_0 + p_1 g_1\).
    Let \(Q = q_0 + q_1 \partial\) be a second one.
    On a \(2\)-jet \(\omega\) with values \((g_0, g_1, g_2)\), the comultiplication gives the element of \(J^1 J^1 A\) with the values of \cref{ex:comultiplication-line}.
    By \eqref{eq:jet-functor-values}, the prolongation satisfies \(J^1(P)(j')(R \otimes \id_A) = j'(R \otimes P)\) for \(R \in D^1\).
    Applied to the comultiplication of \(\omega\), it gives the \(1\)-jet whose value on \(R\) is \(\omega(R \cdot (p_0 + p_1 \partial))\).
    Its values on \(1\) and \(\partial\) are
    \[
        p_0 g_0 + p_1 g_1
        \qquad \text{and} \qquad
        \omega\bigl(\partial \cdot (p_0 + p_1 \partial)\bigr) = p_0' g_0 + (p_0 + p_1') g_1 + p_1 g_2,
    \]
    which for the jet of \(f\) are \(P(f)\) and its derivative \(P(f)'\).
    Applying \(Q\) gives
    \[
        (Q \circ P)(g_0, g_1, g_2) = q_0 (p_0 g_0 + p_1 g_1) + q_1 \bigl(p_0' g_0 + (p_0 + p_1') g_1 + p_1 g_2\bigr),
    \]
    the operator of the product \((q_0 + q_1 \partial)(p_0 + p_1 \partial)\) in the Weyl algebra.
\end{example}

\begin{proposition}
    \label{prop:order-zero}
    Precomposition with \(\varepsilon\) is an isomorphism \(\Hom_A(E, F) \cong \Diff^0(E, F)\), compatible with identities, composition and addition.
\end{proposition}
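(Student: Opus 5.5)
The plan is to show that \(\varepsilon \colon J^0 E \to E\) is an isomorphism of \(A\)-modules, natural in \(E\). The statement then follows formally from the comonad laws of \cref{prop:jet-comonad}.

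For bijectivity, \(D^0 = A\), so \(J^0 A = \Hom_A(A, A) \cong A\) via \(\omega \mapsto \omega(1)\). Under this identification the right structure agrees with the pointwise one, since \((\omega f)(1) = \omega(f) = f\,\omega(1)\). Hence \(J^0 E = J^0 A \otimes_A E \cong A \otimes_A E \cong E\), and the composite is \(\omega \otimes u \mapsto \omega(1)\, u\), which is exactly \(\varepsilon\) by the explicit formula after \cref{def:jet-maps}. This composite is pointwise \(A\)-linear, so \(\varepsilon\) is an isomorphism of modules. Its inverse sends \(u\) to \(j^0(u) = j^0(1) \otimes u\). Naturality of \(\varepsilon\) is already part of \cref{prop:jet-comonad}. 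Precomposition with \(\varepsilon\), that is \(\varphi \mapsto \varphi \circ \varepsilon\), is therefore a bijection \(\Hom_A(E, F) \to \Hom_A(J^0 E, F) = \Diff^0(E, F)\). It is additive because precomposition is additive.

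Identities are immediate: \(\id_E \mapsto \varepsilon\), which is the identity of \(\DiffOp\) by \cref{def:diffop}. For composition, take \(\varphi \colon E \to F\) and \(\psi \colon F \to H\). The \(\DiffOp\) composite of \(\varphi \circ \varepsilon\) and \(\psi \circ \varepsilon\) is
\[
    \psi \circ \varepsilon \circ J^0(\varphi \circ \varepsilon) \circ \delta_{0,0}.
\]
Naturality of \(\varepsilon\) gives \(\varepsilon \circ J^0(\varphi) = \varphi \circ \varepsilon\), so this becomes \(\psi \circ \varphi \circ \varepsilon \circ J^0(\varepsilon) \circ \delta_{0,0}\). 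The counit law \(J^0(\varepsilon) \circ \delta_{0,0} = \id\) (the case \(k = 0\) of \(J^k(\varepsilon) \circ \delta_{k,0} = \id\)) reduces it to \((\psi \circ \varphi) \circ \varepsilon\), as required.

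No step is a serious obstacle. The only point needing care is the identification of the two module structures on \(J^0 A\), which makes the tensor product over the right structure collapse to \(E\) with its pointwise structure. If one prefers values, one can check bijectivity via \cref{prop:jet-reconstruction} with the dual basis \((1, \id_A)\) of \(D^0\): a \(0\)-jet is determined by its values \(j(1 \otimes \alpha) = \alpha(\varepsilon(j))\), and linear forms separate points of \(E\) by a dual basis argument.
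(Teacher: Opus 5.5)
Your proof is correct and follows the paper's argument. You identify \(J^0 A\) with \(A\), note that its pointwise and right structures coincide, and conclude that \(\varepsilon\) is an isomorphism \(J^0 E \cong E\) with inverse \(u \mapsto \id_A \otimes u\); compatibility with composition then follows from naturality of \(\varepsilon\) and a counit law. The only difference is that the paper proves the composition identities for an arbitrary order-\(k\) operator composed with linear maps on either side. There it applies naturality to the operator itself and uses the counit law \(\varepsilon \circ \delta_{0,k} = \id\), then specialises to \(k = 0\); it reuses that more general form afterwards.
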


\begin{proof}
    Since \(D^0 = A\), the jets \(J^0 A = \Hom_A(A, A)\) are the multiplications by elements of \(A\), and on \(D^0\) the pointwise and right structures agree.
    Hence \(\varepsilon(\omega \otimes u) = \omega(1)\, u\) is an isomorphism \(J^0 E \cong E\), with inverse \(u \mapsto \id_A \otimes u\), and precomposition with it is a bijection \(\Hom_A(E, F) \cong \Diff^0(E, F)\).
    It sends \(\id_E\) to \(\varepsilon\), the identity of \(\DiffOp\), and it is additive.
    Let \(P \in \Diff^k(E, F)\), and let \(\varphi \colon F \to H\) and \(\psi \colon H \to E\) be linear.
    The naturality of \(\varepsilon\) and the counit laws give
    \begin{gather*}
        (\varphi \circ \varepsilon) \circ P = \varphi \circ \varepsilon \circ J^0(P) \circ \delta_{0,k} = \varphi \circ P \circ \varepsilon \circ \delta_{0,k} = \varphi \circ P, \\
        P \circ (\psi \circ \varepsilon) = P \circ J^k(\psi) \circ J^k(\varepsilon) \circ \delta_{k,0} = P \circ J^k(\psi),
    \end{gather*}
    where the left-hand sides are composites in \(\DiffOp\).
    For \(k = 0\) and \(P = \chi \circ \varepsilon\), the first line shows that composition of order-zero operators is composition of linear maps.
\end{proof}

From now on, order-zero operators are identified with linear maps; in particular \(\hat u \colon A \to E\), \(f \mapsto f u\), and \(\alpha \colon E \to A\) are operators of order zero.
The proof shows that composing an operator with linear maps on either side keeps its order, and that \(P \circ \hat u\) acts on functions by \((P \circ \hat u)(f) = P(f u)\).

\begin{theorem}
    \label{thm:diff-A-A}
    For \(P \in D^k\), let \(\Phi(P) \colon J^k A \to A\) be the map \(\omega \otimes f \mapsto \omega(P \cdot f)\).
    \begin{enumerate}
        \item \(\Phi\) is an isomorphism \(D^k \cong \Diff^k(A, A)\).
        \item It is compatible with the inclusions \(D^k \subseteq D^l\) and the maps \(\iota_{k,l}\).
        \item It sends products in \(D\) to composites of operators: \(\Phi(Q \cdot P) = \Phi(Q) \circ \Phi(P)\).
    \end{enumerate}
\end{theorem}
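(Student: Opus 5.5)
First I would check that \(\Phi(P)\) is well defined and lands in \(\Diff^k(A, A)\). The formula \(\omega \otimes f \mapsto \omega(P \cdot f)\) is biadditive. It takes the same value on \((\omega g, f)\) and \((\omega, g f)\), since \((\omega g)(P \cdot f) = \omega(P \cdot f \cdot g)\) and functions commute in \(D\). It is linear for the pointwise structure, because \(\omega \mapsto \omega(P f)\) is. Under the identification \(J^k A \otimes_A A \cong J^k A\), \(\omega \otimes f \mapsto \omega f\), the map \(\Phi(P)\) is simply evaluation at \(P\): \(\omega \mapsto \omega(P)\). This reduces part (1) to the statement that the double-dual map \(D^k \to \Hom_A(\Hom_A(D^k, A), A)\) is an isomorphism. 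That holds because \(D^k\) is finitely generated projective as a left \(A\)-module (\cref{cor:pbw}). Concretely, with a dual basis \((b_a, \beta^a)\) of \(D^k\), the inverse sends \(\Psi\) to \(\sum_a \Psi(\beta^a)\, b_a\).

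Part (2) follows from the same description. \(\iota_{k,l}(\Phi(P)) = \Phi(P) \circ \pi_{l,k}\), and \(\pi_{l,k}\) restricts \(\omega\) from \(D^l\) to \(D^k\). Evaluating the restriction at \(P \in D^k\) gives \(\omega(P)\), which is \(\Phi(P)\) for \(P\) regarded in \(D^l\).

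For part (3), take \(P \in D^k\) and \(Q \in D^l\). The composite is \(\Phi(Q) \circ J^l(\Phi(P)) \circ \delta_{l,k}\), and I would compute it on values using \cref{prop:jet-reconstruction}. Let \(\omega \in J^{l+k} A\), and write \(j' = J^l(\Phi(P))(\delta_{l,k}(\omega)) \in J^l A\). Under \(J^l A \otimes_A A \cong J^l A\), applying \(\Phi(Q)\) to \(j'\) gives its value \(j'(Q \otimes \id_A)\).

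By \eqref{eq:jet-functor-values}, this value is \(\delta_{l,k}(\omega)(Q \otimes \Phi(P))\). Here \(\Phi(P)\) is a linear form on \(J^k A\), so I have to identify it with a form of the type \(R \otimes \alpha\) in \cref{def:jet-maps}. Under the identification above, \(\Phi(P)\) is evaluation at \(P \otimes \id_A\), namely \(j \mapsto j(P \otimes \id_A)\), because \((\omega \otimes f)(P \otimes \id_A) = \omega(P \cdot f)\). The defining property of \(\delta\) then gives \(\omega(Q P \otimes \id_A) = \omega(Q P)\), which is \(\Phi(Q P)(\omega)\).

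The main obstacle is bookkeeping in this step. One has to keep the identifications \(J^k A \otimes_A A \cong J^k A\) consistent. One also has to justify that \eqref{eq:jet-functor-values} applies with \(\lambda = \Phi(P)\), a form on the jet module \(J^k A\). That requires writing \(\Phi(P)\) as \(P \otimes \id_A\) in the sense of \cref{def:jet-maps}, so that the value of \(\delta\) on \(Q \otimes (P \otimes \id_A)\) is exactly \(\omega(Q P \otimes \id_A)\).
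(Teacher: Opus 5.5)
Your proposal is correct and follows essentially the same route as the paper: \(\Phi(P)\) is identified with evaluation at \(P \otimes \id_A\), and compatibility with \(\iota_{k,l}\) is read off from restriction. Multiplicativity comes from \eqref{eq:jet-functor-values} together with the defining property of \(\delta_{l,k}\), and bijectivity uses the dual basis \((b_a, \beta^a)\) of \(D^k\) with the same inverse \(R \mapsto \sum_a R(\beta^a \otimes 1)\, b_a\). The only difference is one of packaging: you present bijectivity as the canonical double-dual isomorphism for the finitely generated projective module \(D^k\), whereas the paper checks injectivity and surjectivity directly.
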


\begin{proof}
    The map \(\Phi(P)\) is the evaluation \(j \mapsto j(P \otimes \id_A)\), with \(\id_A \in A^\vee\), and it is therefore \(A\)-linear, additive in \(P\), and compatible with the inclusions and restrictions by \cref{def:jet-maps}.
    For \(P \in D^k\) and \(Q \in D^l\), the composite \(\Phi(Q) \circ \Phi(P) = \Phi(Q) \circ J^l(\Phi(P)) \circ \delta_{l,k}\) sends \(j\) to
    \[
        J^l(\Phi(P))(\delta_{l,k}(j))(Q \otimes \id_A)
        = \delta_{l,k}(j)\bigl(Q \otimes (P \otimes \id_A)\bigr)
        = j(Q \cdot P \otimes \id_A),
    \]
    where we used that \(J^l(\varphi)(j')(Q \otimes \alpha) = j'(Q \otimes \alpha \circ \varphi)\) for jets \(j'\) of order \(l\) and that \(\id_A \circ \Phi(P)\) is the linear form \(P \otimes \id_A\) on \(J^k A\).
    Hence \(\Phi(Q) \circ \Phi(P) = \Phi(Q \cdot P)\).
    For \(f \in A\), both \(\Phi(f)\) and \(f \circ \varepsilon\) send \(\omega \otimes g\) to \(\omega(1) f g\), and \(\Phi\) is the identification of \cref{prop:order-zero} on \(D^0\).

    It remains to show that \(\Phi\) is bijective, and we use the dual basis \((b_a, \beta^a)\) of \(D^k\).
    If \(\Phi(P) = 0\), then \(\omega(P) = 0\) for every \(\omega \in J^k A\), and \(P = \sum_a \beta^a(P)\, b_a = 0\).
    Given an operator \(R \colon J^k A \to A\), put \(P = \sum_a R(\beta^a \otimes 1)\, b_a\).
    Every \(\omega \in J^k A\) satisfies \(\omega = \sum_a \omega(b_a)\, \beta^a\) for the pointwise structure, so \(R(\omega \otimes 1) = \sum_a \omega(b_a)\, R(\beta^a \otimes 1) = \omega(P)\), and since \(\omega \otimes f = \omega f \otimes 1\), we get \(R = \Phi(P)\).
\end{proof}

From now on we identify \(\Diff^k(A, A)\) with \(D^k\).
The operator of \(P \in D^k\) acts on functions as \(P\) does, since \(\Phi(P)(j^k(f)) = j^k(1)(P \cdot f) = P(f)\).
For the polynomial line this is the identification of \cref{ex:jets-line}, and \cref{ex:composition-line} is an instance of the multiplicativity.

Here is the matrix picture behind the next proposition.
For \(E = F = A^2\) with its standard basis, choose four elements \(Q^i{}_j \in D^k\).
Using the coordinate injections \(\hat e_i\) and projections \(\xi^j\), form \(P = \sum_{i,j} \hat e_i \circ Q^i{}_j \circ \xi^j\).
Its test from the \(j\)-th input component to the \(i\)-th output component is \(Q^i{}_j\).
Its action on a pair of sections is
\[
    P(f_1, f_2) = \bigl(Q^1_1(f_1) + Q^1_2(f_2),\; Q^2_1(f_1) + Q^2_2(f_2)\bigr).
\]
The proposition says that this matrix description works with dual bases even when the modules are not free.

\begin{proposition}
    \label{prop:tests}
    An operator \(P \in \Diff^k(E, F)\) is determined by its \emph{tests} \(\alpha \circ P \circ \hat u \in D^k\), for \(u \in E\) and \(\alpha \in F^\vee\).
    Explicitly, for dual bases \((e_i, \xi^i)\) of \(E\) and \((e'_j, \xi'^j)\) of \(F\),
    \[
        P = \sum_{i, j} \hat e'_j \circ (\xi'^j \circ P \circ \hat e_i) \circ \xi^i .
    \]
\end{proposition}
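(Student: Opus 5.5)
The displayed formula comes from decomposing the identities of \(E\) and \(F\) through their dual bases and composing with \(P\) in \(\DiffOp\). First, each test \(\alpha \circ P \circ \hat u\) really lies in \(D^k\). Here \(\hat u\) and \(\alpha\) are order-zero operators, and by the remark after \cref{prop:order-zero} composing with linear maps on either side keeps the order. So the test lies in \(\Diff^k(A, A)\), which \cref{thm:diff-A-A} identifies with \(D^k\). Once the formula is established, its right-hand side involves only the tests \(\xi'^j \circ P \circ \hat e_i\) and fixed linear maps, which gives the first claim.

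For the formula itself, I would work with order-zero maps identified with linear maps. The dual basis property of \(E\) says \(\id_E = \sum_i \hat e_i \circ \xi^i\) as linear maps, since \(u = \sum_i \xi^i(u)\, e_i\). Likewise \(\id_F = \sum_j \hat e'_j \circ \xi'^j\). By \cref{prop:order-zero}, these identities hold in \(\Diff^0\), where \(\varepsilon\) is the identity of \(\DiffOp\). So
\[
    P = \id_F \circ P \circ \id_E = \Bigl(\sum_j \hat e'_j \circ \xi'^j\Bigr) \circ P \circ \Bigl(\sum_i \hat e_i \circ \xi^i\Bigr).
\]
Because \(\DiffOp\) is preadditive, composition distributes over these sums. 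Associativity of composition in the locally graded category then regroups each term as \(\hat e'_j \circ (\xi'^j \circ P \circ \hat e_i) \circ \xi^i\). The degrees match: \(0 + k + 0 = k\).

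The only point needing care is justifying that composition in \(\DiffOp\) is additive, associative and unital with \(\varepsilon\) as identity. This is the co-Kleisli structure of \cref{prop:jet-comonad}, stated before the proposition to be verified in \cref{sec:appendix-jets}. With that granted, the order-zero pieces can also be handled concretely by the two formulas in the proof of \cref{prop:order-zero}, namely \((\varphi \circ \varepsilon) \circ P = \varphi \circ P\) and \(P \circ (\psi \circ \varepsilon) = P \circ J^k(\psi)\). These reduce everything to additivity of \(J^k\) and of composition of linear maps.
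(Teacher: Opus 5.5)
Your proposal is correct and follows essentially the same argument as the paper. Both write \(\id_E = \sum_i \hat e_i \circ \xi^i\) and \(\id_F = \sum_j \hat e'_j \circ \xi'^j\) via the dual bases, compose with \(P\) on both sides, and use that composition in \(\DiffOp\) is associative and additive.
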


\begin{proof}
    The linear map \(\sum_i \hat e_i \circ \xi^i\) sends \(u\) to \(\sum_i \xi^i(u)\, e_i = u\), and it is therefore \(\id_E\); likewise for \(F\).
    Composing \(P\) with these identities on both sides, and using that composition in \(\DiffOp\) is associative and additive, gives the formula.
\end{proof}

The reconstruction formula of \cref{prop:tests} expresses \(P\) as the sum over \(i\) and \(j\) of the composites
\[
    \begin{tikzcd}[column sep = huge]
        E \rar["\xi^i"] & A \rar["\xi'^j \circ P \circ \hat e_i"] & A \rar["\hat e'_j"] & F .
    \end{tikzcd}
\]
After choosing dual bases, operators have finite matrix presentations over \(D\), with entries the tests \(\xi'^j \circ P \circ \hat e_i\).
For free modules these are ordinary matrix entries.
For projective modules a dual basis may be redundant, and the entries satisfy the relations of the chosen dual bases.
Applied to a function \(f\), the test \(\alpha \circ P \circ \hat u\) gives \(\alpha(P(f u))\).

\begin{lemma}
    \label{lem:change-of-degree-injective}
    All change-of-degree maps \(\iota_{k,l} \colon \Diff^k(E, F) \to \Diff^l(E, F)\) are injective.
\end{lemma}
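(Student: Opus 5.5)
The plan is to show that the restriction \(\pi_{l,k} \colon J^l E \to J^k E\) is surjective. Injectivity of \(\iota_{k,l}(P) = P \circ \pi_{l,k}\) then follows at once: if \(P \circ \pi_{l,k} = 0\) with \(\pi_{l,k}\) surjective, then \(P = 0\).

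For surjectivity, I would first treat \(E = A\). Here \(J^k A = \Hom_A(D^k, A)\), and \(\pi_{l,k}\) is restriction of maps of left \(A\)-modules along the inclusion \(D^k \subseteq D^l\). This is visible from the explicit description \(\pi_{l,k}(\omega \otimes u) = \omega|_{D^k} \otimes u\) stated after \cref{def:jet-maps}. By \cref{cor:pbw}(3), \(D^k\) is a direct summand of \(D^l\) as a left \(A\)-module, say \(D^l = D^k \oplus C\). Every \(\omega \in J^k A\) therefore extends to \(D^l\), for instance by zero on \(C\), and the restriction map \(J^l A \to J^k A\) is surjective.

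For general \(E\), the point to check is that restriction is compatible with the right \(A\)-structure, so that \(\pi_{l,k}\) on \(J^l E = J^l A \otimes_A E\) is the tensor product of the restriction map with \(\id_E\). Right multiplication by \(f\) sends \(D^k\) into itself, so \((\omega f)|_{D^k} = (\omega|_{D^k}) f\), and the restriction \(J^l A \to J^k A\) is indeed a bimodule map for the pointwise and right structures. The map \(\pi_{l,k}\) is then \(\mathrm{res} \otimes_A \id_E\), and the tensor product of a surjection with \(E\) is surjective by right exactness. Alternatively, surjectivity is clear on generators: each \(\omega \otimes u\) lifts to \(\tilde\omega \otimes u\) for any extension \(\tilde\omega\) of \(\omega\).

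The only step needing care is the identification of \(\pi_{l,k}\) with restriction tensored with \(\id_E\). Since the paper defines \(\pi_{l,k}\) through values, I would confirm it by comparing values. For \(P \in D^k\) and \(\alpha \in E^\vee\), both \(\pi_{l,k}(\tilde\omega \otimes u)\) and \(\omega \otimes u\) evaluate on \(P \otimes \alpha\) to \(\omega(P \cdot \alpha(u))\), because \(P \cdot \alpha(u) \in D^k\) and \(\tilde\omega\) agrees with \(\omega\) there. Jets are determined by their values by \cref{prop:jet-reconstruction}, so the two jets are equal.
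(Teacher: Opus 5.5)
Your proposal is correct and is essentially the paper's proof: both reduce injectivity to surjectivity of \(\pi_{l,k}\), use \cref{cor:pbw} to extend each \(\omega \in J^k A\) to \(D^l\) (your extension by zero on a complement is the paper's \(\omega \circ r\)), and lift \(\omega \otimes u\) to the extension tensored with \(u\). Your extra check by comparing values on \(P \otimes \alpha\) is a valid confirmation that the paper leaves implicit in the explicit formula for \(\pi_{l,k}\).
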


\begin{proof}
    The map \(\iota_{k,l}\) is precomposition with \(\pi_{l,k}\), and it is injective as soon as \(\pi_{l,k}\) is surjective.
    By \cref{cor:pbw}, there is a map of left \(A\)-modules \(r \colon D^l \to D^k\) that restricts to the identity on \(D^k\).
    For \(\omega \in J^k A\), the jet \(\omega \circ r \in J^l A\) restricts to \(\omega\), so \(\pi_{l,k}(\omega \circ r \otimes u) = \omega \otimes u\), and \(\pi_{l,k}\) is surjective.
\end{proof}

By \cref{lem:change-of-degree-injective}, we regard \(\Diff^k(E, F)\) as a subgroup of \(\Diff^l(E, F)\) for \(k \le l\): an operator of order at most \(k\) is also one of order at most \(l\).

For each pair of objects \(E,F\), take the filtered colimit of the groups \(\Diff^k(E,F)\) and their change-of-degree maps in the category of abelian groups.
Since these maps are injective, the colimit is their union:
\[
    \colim_{k\in\N} \Diff^k(E, F) = \bigcup_{k\in\N} \Diff^k(E,F).
\]
These groups are the hom-groups of an ordinary preadditive category with the same objects, which we call the \emph{colimit category}.
Composition is induced by the graded composition \(\Diff^l(F,H)\times\Diff^k(E,F)\to\Diff^{k+l}(E,H)\).
Compatibility with the change-of-degree maps makes it independent of the degrees chosen to represent the operators.
The next two examples are daggers on this category that change the order of operators, and they are the reason for working with the locally graded category.

\begin{example}
    \label{ex:colimit-daggers}
    Let \(K = \Q\), \(A = \Q[x]\) and \(L = A \partial\), so that \(D\) is the Weyl algebra.
    Every finitely generated projective module over \(\Q[x]\) is free, and after a basis of every module is fixed, the morphisms \(A^n \to A^m\) in the colimit category are the \(m \times n\) matrices over \(D\), by \cref{prop:tests}.
    The swap \(x \leftrightarrow \partial\) extends to an anti-involution \(P \mapsto P^\ddagger\) of the ring \(D\), since it respects \(\partial x - x \partial = 1\).
    Applying it to every entry and transposing, \((P^i{}_j) \mapsto ((P^j{}_i)^\ddagger)\), is a dagger on the colimit category, and it sends the operator \(x\) of order zero to the operator \(\partial\) of order one.
\end{example}

\begin{example}
    \label{ex:colimit-daggers-abelian}
    Let \(A = K = \Q\) and \(L = \Q^2\), abelian with zero anchor, so that \(D = \Q[X, Y]\) and \(D^k\) consists of the polynomials of degree at most \(k\).
    The automorphism \(X \mapsto X\), \(Y \mapsto -Y + X^2\) is involutive, and applying it to every entry and transposing gives, as in \cref{ex:colimit-daggers}, a dagger on the colimit category.
    It fixes the operators of order zero and sends \(Y\), of order one, to an operator of order two.
\end{example}

\section{Differential operators and formal adjoints}
\label{sec:formal-adjoints}

We now classify daggers on the whole category \(\DiffOp\).
On the object \(A\), such a dagger restricts to one on \(D\), described in \cref{thm:anti-involutions-D} by a real structure and a divergence.
On every other module, it also requires a unimodular Hermitian form.
Together these data determine the formal adjoint of each operator.
The proof uses the \emph{tests} of \cref{prop:tests}: an operator is determined by composing it with a section at its input and a linear form at its output, leaving an operator from \(A\) to \(A\).

\subsection{Classification of daggers}
\label{sec:classification}

\begin{proposition}
    \label{prop:dagger-additive-diffop}
    Every dagger on \(\DiffOp\) is additive in each degree.
\end{proposition}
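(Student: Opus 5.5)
The plan is to run the proof of \cref{prop:dagger-additive} inside \(\DiffOp\), keeping track of degrees. That argument used only three facts: composition is additive in each variable, there is a zero object, and there are biproducts. So the work is to supply graded versions of these three facts in \(\DiffOp\) and to check that every morphism the argument needs lives in degree \(0\) or in the degree \(k\) under consideration.

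First, composition is additive. By \cref{def:diffop} and the remark following it, \(\DiffOp\) is a preadditive locally graded category, so composition \(\Diff^l \times \Diff^k \to \Diff^{k+l}\) is biadditive.

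Second, the zero module \(0\) is a zero object in every degree. We have \(J^k 0 = 0\), so \(\Diff^k(E, 0) = 0\), and \(\Diff^k(0, F) = \Hom_A(0, F) = 0\). Now take the zero operator \(0 \in \Diff^k(E, F)\). It factors as the unique degree-\(k\) operator \(E \to 0\) followed by the identity of \(0\) in degree \(0\), viewed in \(\Diff^0(0, F)\). Its dagger therefore factors through \(\Diff^0(F, 0)\) and \(\Diff^k(0, E)\). Both of these groups are zero, so the dagger of a zero operator is zero.

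Third, we need biproducts. Use the order-zero injections \(\inn_j\) and projections \(\pr_j\) of \(F \oplus F\), identified with operators via \cref{prop:order-zero}. These satisfy the biproduct equations in degree \(0\), including \(\inn_1 \circ \pr_1 + \inn_2 \circ \pr_2 = \id\). The dagger preserves degree, so the daggers \(\pr_j^\dagger\) and \(\inn_j^\dagger\) are again of degree \(0\). Applying the dagger gives \(\inn_j^\dagger \circ \pr_i^\dagger = \delta_{ij}\), using the zero step above.

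Next comes the claim \eqref{eq:dagger-biproduct}. Its proof tests against maps \(\chi, \chi'\) into \(F \oplus F\) of an arbitrary degree \(m\). From \(\inn_j^\dagger \circ \chi = \inn_j^\dagger \circ \chi'\) we get \(\chi^\dagger \circ \inn_j = \chi'^\dagger \circ \inn_j\) in degree \(m\). Composing with \(\pr_j\) and adding, using additivity of composition, gives \(\chi^\dagger = \chi'^\dagger\), hence \(\chi = \chi'\). We apply this with \(m = 0\) to the two sides of \eqref{eq:dagger-biproduct}. This step is the main point to watch: the comparison must be done inside a single hom-group \(\Diff^0\), and it is, because every morphism involved has degree \(0\).

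Finally, let \(P, Q \in \Diff^k(E, F)\). Factor \(P + Q\) as \((\pr_1 + \pr_2) \circ (\inn_1 \circ P + \inn_2 \circ Q)\). The first factor has degree \(0\) and the second has degree \(k\), so the total degree is \(0 + k = k\), and the identity holds in \(\Diff^k\) by biadditivity. Take daggers and insert \eqref{eq:dagger-biproduct} in the middle. Expanding by additivity of composition gives \(P^\dagger + Q^\dagger\), exactly as in \cref{prop:dagger-additive}.
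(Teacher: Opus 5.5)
Your proposal is correct and follows the paper's proof. Both rerun the argument of \cref{prop:dagger-additive} in \(\DiffOp\), using biadditivity of composition, the zero module as a zero object, and the degree-zero injections and projections of \(F \oplus F\) from \cref{prop:order-zero}, and both observe that every composite stays in degree \(k\). Your version is more explicit about the degree bookkeeping, notably that \eqref{eq:dagger-biproduct} is an identity in \(\Diff^0\) with test maps \(\chi, \chi'\) of degree \(0\).
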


\begin{proof}
    The proof of \cref{prop:dagger-additive} uses only that composition is additive, that there is a zero object, and that the objects \(F \oplus F\) are biproducts.
    In \(\DiffOp\) composition is additive, and the zero module is a zero object, since \(J^k 0 = 0\) and so \(\Diff^k(0, F) = 0 = \Diff^k(E, 0)\).
    The direct sum \(F \oplus F\), with the linear injections and projections, is a biproduct whose structure maps have degree zero, by \cref{prop:order-zero}.
    In that proof every composite consists of one morphism of degree \(k\) and otherwise of morphisms of degree zero, so it stays in degree \(k\), and the argument applies verbatim in each degree.
\end{proof}

\begin{definition}
    \label{def:dagger-data}
    A \emph{metric-divergence structure} on \((A, L)\) consists of
    \begin{itemize}
        \item a real structure \((f \mapsto \conj f, \theta)\) (\cref{def:real-structure});
        \item a real divergence \(\dvg\);
        \item a unimodular Hermitian form on every finitely generated projective \(A\)-module, with the standard form \(\langle f, g \rangle = f \conj g\) on \(A\).
    \end{itemize}
\end{definition}

For free modules with their standard forms, the classification has a familiar shape.
If \(P \colon A^n \to A^m\) is represented by a matrix \((P^i{}_j)\) over \(D\), its adjoint is the transposed matrix \(((P^j{}_i)^\dagger)\).
For the trivial real structure, a vector-field entry satisfies \(X^\dagger = -X - \dvg(X)\); the divergence is the correction to ordinary matrix transposition.

\begin{theorem}
    \label{thm:formal-adjoints}
    Daggers on \(\DiffOp\) correspond bijectively to metric-divergence structures on \((A, L)\).
    Under this correspondence:
    \begin{itemize}
        \item on \(\Diff^k(A, A) = D^k\) it restricts to the dagger on \(D\) of \cref{thm:anti-involutions-D};
        \item the forms are \(\langle u, v \rangle = \hat v^\dagger \circ \hat u\), so that \(v^\flat = \hat v^\dagger\);
        \item the dagger of \(P \in \Diff^k(E, F)\) is the unique operator \(P^\dagger \in \Diff^k(F, E)\) satisfying the \emph{adjunction formula}
        \begin{equation}
            \label{eq:adjunction}
            u^\flat \circ P^\dagger \circ \hat v = (v^\flat \circ P \circ \hat u)^\dagger \qquad \text{in } D^k
        \end{equation}
        for all \(u \in E\) and \(v \in F\).
    \end{itemize}
\end{theorem}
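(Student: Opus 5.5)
The proof follows the pattern of \cref{thm:daggers-are-forms,thm:daggers-are-metrics}, with the tests of \cref{prop:tests} in place of vectors. Given a dagger on \(\DiffOp\), it is additive in each degree by \cref{prop:dagger-additive-diffop}. On the object \(A\), it gives for each \(k\) an involution of \(\Diff^k(A, A) = D^k\). These involutions are compatible with the inclusions, since \(\iota(P)^\dagger = \iota(P^\dagger)\) and the \(\iota\) are injective (\cref{lem:change-of-degree-injective}). By \cref{thm:diff-A-A} they therefore assemble into an anti-involution of the ring \(D\) preserving the filtration, and \cref{thm:anti-involutions-D} yields a real structure with a real divergence.

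Next comes the degree-zero part. By \cref{prop:order-zero}, \(\Diff^0\) is \(\fgpMod_A\), and the dagger restricts to a dagger on it. \Cref{thm:daggers-are-metrics} then gives an involution of \(A\), which agrees with the bar because both are the dagger on \(D^0 = A\). It also gives unimodular Hermitian forms \(\langle u, v \rangle = \hat v^\dagger \circ \hat u\), standard on \(A\), with \(v^\flat = \hat v^\dagger\). The adjunction formula is then immediate: applying the dagger to \(v^\flat \circ P \circ \hat u = \hat v^\dagger \circ P \circ \hat u\) gives \(\hat u^\dagger \circ P^\dagger \circ \hat v = u^\flat \circ P^\dagger \circ \hat v\). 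By \cref{prop:tests}, an operator is determined by its tests, and every linear form is some \(u^\flat\). Hence \(P^\dagger\) is the unique operator satisfying \eqref{eq:adjunction}, and the dagger is determined by the metric-divergence structure. This proves injectivity of the correspondence.

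For the converse, start from a metric-divergence structure and \emph{define} \(P^\dagger\) by the reconstruction formula, using dual bases \((e_i, \xi^i)\) of \(E\) and \((e'_j, \xi'^j)\) of \(F\):
\[
    P^\dagger = \sum_{i, j} \hat e_i \circ (\xi'^j \circ P \circ \hat e_i)^{\dagger}_{\mathrm{tw}} \circ \xi'^j ,
\]
where the twisted entries are obtained by passing through \(\sharp\) and \(\flat\). The cleanest way is to use the reciprocal bases of \cref{def:reciprocal-basis}:
\[
    P^\dagger = \sum_{i, j} \hat e^i \circ \bigl(e'^{j\flat} \circ P \circ \hat e_i\bigr)^\dagger \circ e'^{\flat}_j ,
\]
with the dagger of \(D^k\) applied to each entry. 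This formula is exactly what \eqref{eq:adjunction} forces after expanding \(\hat u\) and \(\hat v\) via \cref{prop:reciprocal-expansion}, so the candidate exists and satisfies \eqref{eq:adjunction}. By uniqueness of tests, it is independent of the chosen dual bases. The dagger axioms follow by checking them on tests, since both sides of each axiom are determined by their tests.

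Two axioms are straightforward on tests. For involutivity, take \eqref{eq:adjunction} twice, use that \(\dagger\) on \(D\) is involutive, and use that the forms are Hermitian, so that \(\hat v^\dagger \circ \hat u\) is conjugate-symmetric. Compatibility with \(\iota\) is clear because the dagger on \(D\) preserves the filtration and commutes with the inclusions.

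The main obstacle is contravariance, \((Q \circ P)^\dagger = P^\dagger \circ Q^\dagger\). The plan is to insert \(\id_F = \sum_j \hat e'_j \circ e'^{j\flat}\) between \(Q\) and \(P\) (again \cref{prop:reciprocal-expansion}). This writes the test of \(Q \circ P\) as a finite sum of products of tests in \(D\). The anti-multiplicativity of the dagger on \(D\) then reverses each product, and the resulting expression is recognised as the corresponding test of \(P^\dagger \circ Q^\dagger\) via \eqref{eq:adjunction}. One also needs that composing with order-zero maps \(\hat u\) and \(\alpha\) is compatible with composites in \(\DiffOp\), which is \cref{prop:order-zero}. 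The care required is in tracking the conjugations: \((f P)^\dagger = P^\dagger \conj f\) inside \(D\), and the semilinearity of \(\flat\), must match. Finally, the dagger built from the data restricts to the given dagger on \(D\) (the tests of \(A\) are the elements themselves) and to the given forms, so the two constructions are mutually inverse.
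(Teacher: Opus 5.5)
Your proposal is correct and follows the paper's proof. You restrict to \(D\) and apply \cref{thm:anti-involutions-D}, obtain the forms from \(\hat v^\dagger \circ \hat u\), and characterise \(P^\dagger\) by its tests. Conversely, you build \(P^\dagger\) from dual and reciprocal bases and get contravariance by inserting a reciprocal expansion of \(\id_F\). Two of your variations are legitimate: transporting the degree-zero dagger to \(\fgpMod_A\) via \cref{prop:order-zero} and citing \cref{thm:daggers-are-metrics}, and using the paper's formula with each dual basis \((e_i, \xi^i)\) replaced by \((e^i, e_i^\flat)\).

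The only slip is cosmetic. Involutivity needs no Hermitian symmetry. Hermitian symmetry is instead what gives the axiom \(\id^\dagger = \id\), which you leave unmentioned, via \(u^\flat \circ \hat v = \langle v, u \rangle = \conj{\langle u, v \rangle} = (v^\flat \circ \hat u)^\dagger\).
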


The adjunction formula compares the two rows of the diagram
\[
    \begin{tikzcd}
        A \rar["\hat u"] & E \rar["P"] & F \rar["v^\flat"] & A \\
        A & E \lar["u^\flat"] & F \lar["P^\dagger"] & A \lar["\hat v"]
    \end{tikzcd}
\]
whose composites are elements of \(D^k\).
The lower row is the upper row with every arrow replaced by its dagger, since \(\hat u^\dagger = u^\flat\) and \((v^\flat)^\dagger = \hat v\), and the formula says that its composite is the dagger of the composite of the upper row.
For operators of order zero the adjunction formula reads \(\langle \varphi^\dagger(v), u \rangle = \conj{\langle \varphi(u), v \rangle}\), which is the definition of the adjoint of a linear map.
For operators of higher order, the tests are elements of \(D^k\) rather than functions, and the dagger on the right-hand side is the dagger on \(D\), which contains the divergence.

\begin{proof}
    We begin with a dagger on \(\DiffOp\), and as in the proof of \cref{thm:daggers-are-forms} we first look at the object \(A\).
    By \cref{prop:dagger-additive-diffop,thm:diff-A-A}, the dagger restricts to additive maps \(D^k \to D^k\) that are compatible with the inclusions \(D^k \subseteq D^l\), reverse products and square to the identity, and \(1^\dagger = 1\) because \(1\) is the identity of \(A\).
    Since the filtration is exhaustive, these maps form a filtration-preserving dagger on \(D\), and \cref{thm:anti-involutions-D} gives the real structure and the real divergence.
    In particular \(f^\dagger = \conj f\) for \(f \in A = D^0\).

    Next we construct the forms.
    For \(u, v \in E\), the composite \(\hat v^\dagger \circ \hat u\) is an operator of order zero from \(A\) to \(A\), that is, a function, and we put \(\langle u, v \rangle = \hat v^\dagger \circ \hat u\).
    As in the proof of \cref{thm:daggers-are-forms}, this form is biadditive, linear in \(u\), semilinear in \(v\) and Hermitian, because the dagger is additive by \cref{prop:dagger-additive-diffop}, reverses composites, squares to the identity and is the bar on \(D^0 = A\).
    Since \(\langle u, v \rangle = \hat v^\dagger(u)\), the map \(\flat\) is the composite of the bijections \(v \mapsto \hat v\) and \(\hat v \mapsto \hat v^\dagger\) between operators of order zero, so the form is unimodular.
    On \(A\) itself \(\hat g = g\), so \(\langle f, g \rangle = \conj g\, f\) is the standard form.
    The adjunction formula holds because \(u^\flat = \hat u^\dagger\) and \(\hat v = (v^\flat)^\dagger\):
    \[
        u^\flat \circ P^\dagger \circ \hat v = \hat u^\dagger \circ P^\dagger \circ (v^\flat)^\dagger = (v^\flat \circ P \circ \hat u)^\dagger .
    \]
    The adjunction formula determines \(P^\dagger\), because every linear form on \(E\) is \(u^\flat\) for some \(u\), and an operator is determined by its tests (\cref{prop:tests}).

    Conversely, let a metric-divergence structure be given, and write \(P \mapsto P^\dagger\) for the dagger on \(D\) given by \cref{thm:anti-involutions-D}.
    For \(f, g \in A\) and \(Q \in D\) we have \(f\, Q^\dagger g = (\conj g\, Q \conj f)^\dagger\).
    Choose dual bases \((e_i, \xi^i)\) of \(E\) and \((e'_j, \xi'^j)\) of \(F\).
    Let \(e^i\) and \(e'^j\) be their reciprocal bases (\cref{def:reciprocal-basis}), so that \((e^i)^\flat = \xi^i\) and \((e'^j)^\flat = \xi'^j\).
    We use the dual-basis expansion to form a candidate adjoint.
    The adjunction formula prescribes its entries when tested against the output vectors \(e'_j\) and the reciprocal input vectors \(e^i\), so we put
    \[
        P^\dagger = \sum_{i, j} \hat e_i \circ \bigl((e'_j)^\flat \circ P \circ \widehat{e^i}\bigr)^\dagger \circ \xi'^j
        \qquad \text{in } \Diff^k(F, E).
    \]
    To check that this operator satisfies the adjunction formula, note that \(u^\flat \circ \hat e_i = \langle e_i, u \rangle\) and \(\xi'^j \circ \hat v = \xi'^j(v)\) are functions, so
    \begin{align*}
        u^\flat \circ P^\dagger \circ \hat v
        &= \sum_{i, j} \langle e_i, u \rangle \bigl((e'_j)^\flat \circ P \circ \widehat{e^i}\bigr)^\dagger\, \xi'^j(v) \\
        &= \Bigl(\sum_{i, j} \conj{\xi'^j(v)}\, (e'_j)^\flat \circ P \circ \widehat{e^i} \circ \conj{\langle e_i, u \rangle}\Bigr)^\dagger
        = (v^\flat \circ P \circ \hat u)^\dagger,
    \end{align*}
    since \(\sum_j \conj{\xi'^j(v)}\, (e'_j)^\flat = \bigl(\sum_j \xi'^j(v)\, e'_j\bigr)^\flat = v^\flat\) and \(\sum_i \conj{\langle e_i, u \rangle}\, e^i = \sum_i \langle u, e_i \rangle\, e^i = u\) by \cref{prop:reciprocal-expansion}.
    As before, the adjunction formula determines \(P^\dagger\), which is therefore independent of the dual bases.
    We now check the dagger axioms using the same formula.
    The change-of-degree maps commute with composition by operators of order zero and correspond to the inclusions \(D^k \subseteq D^l\).
    These inclusions commute with the dagger on \(D\), so \(\iota(P)^\dagger = \iota(P^\dagger)\).
    The identity satisfies \(u^\flat \circ \hat v = \langle v, u \rangle = \conj{\langle u, v \rangle} = (v^\flat \circ \hat u)^\dagger\), so \(\id^\dagger = \id\).
    Applying the dagger on \(D\) to \eqref{eq:adjunction} gives \(v^\flat \circ P \circ \hat u = (u^\flat \circ P^\dagger \circ \hat v)^\dagger\), which is the adjunction formula for \(P^\dagger\), so \(P^{\dagger\dagger} = P\).
    For contravariance, let \(Q \in \Diff^l(F, H)\) and \(w \in H\).
    The two reciprocal-basis expansions of \cref{prop:reciprocal-expansion} give
    \[
        \id_F = \sum_j \hat e'_j \circ (e'^j)^\flat
        = \sum_j \widehat{e'^j} \circ (e'_j)^\flat .
    \]
    Inserting the first expansion and applying \eqref{eq:adjunction} twice gives
    \begin{align*}
        u^\flat \circ P^\dagger \circ Q^\dagger \circ \hat w
        &= \sum_j \bigl((e'_j)^\flat \circ P \circ \hat u\bigr)^\dagger \bigl(w^\flat \circ Q \circ \widehat{e'^j}\bigr)^\dagger \\
        &= \Bigl(w^\flat \circ Q \circ \Bigl(\sum_j \widehat{e'^j} \circ (e'_j)^\flat\Bigr) \circ P \circ \hat u\Bigr)^\dagger
        = (w^\flat \circ Q \circ P \circ \hat u)^\dagger,
    \end{align*}
    so \((Q \circ P)^\dagger = P^\dagger \circ Q^\dagger\).

    It remains to show that the two constructions are inverse to each other.
    Starting from a metric-divergence structure, the dagger on \(D\) satisfies the adjunction formula on \(D^k = \Diff^k(A, A)\), since \(f^\flat = \conj f\) and \(\conj f\, Q^\dagger g = (\conj g\, Q f)^\dagger\).
    Hence the constructed dagger restricts to the given one on \(D\), and \cref{thm:anti-involutions-D} returns the real structure and the divergence.
    The linear form \(u^\flat\) satisfies the adjunction formula for \(\hat u\), because for \(f \in A\) and \(v \in E\) both \(f^\flat \circ u^\flat \circ \hat v\) and \((v^\flat \circ \hat u \circ f)^\dagger\) equal \(\conj f\, \langle v, u \rangle = \conj{f \langle u, v \rangle}\).
    Hence \(\hat u^\dagger = u^\flat\), and the dagger returns the given forms.
    Starting from a dagger, we showed that it satisfies the adjunction formula for its own metric-divergence structure.
    By uniqueness it is the dagger constructed from that structure.
\end{proof}

\begin{example}
    \label{ex:classical-formal-adjoint}
    Let \(A = C^\infty(M, \C)\) and \(L = \Gamma(TM \otimes \C)\) with complex conjugation, and take Hermitian bundle metrics on all bundles and the divergence \(\dvg_\mu\) of a positive density \(\mu\), as in \cref{ex:classical-divergence}.
    Then \eqref{eq:adjunction} is the defining property of the formal adjoint:
    \[
        \int_M \langle P(u), v \rangle\, \mu = \int_M \langle u, P^\dagger(v) \rangle\, \mu
    \]
    for compactly supported sections \(u\) and \(v\).
    To see this, let \(Q = v^\flat \circ P \circ \hat u\), which sends a function \(f\) to \(\langle P(f u), v \rangle\).
    Applied to a function \(g\), the two sides of \eqref{eq:adjunction} give \(\langle P^\dagger(g v), u \rangle = Q^\dagger(g)\), and \(Q^\dagger\) is the formal adjoint of \(Q\) for the pairing
    \[
        \int_M f \conj g\, \mu ,
    \]
    since the formal adjoint for this pairing reverses products and agrees with \(\dagger\) on \(A\) and \(L\), which generate \(D\) (\cref{ex:classical-divergence}).
    Multiplying by \(\conj f\) and integrating gives
    \[
        \int_M \langle P^\dagger(g v), f u \rangle\, \mu
        = \int_M Q^\dagger(g)\, \conj f\, \mu
        = \conj{\int_M Q(f)\, \conj g\, \mu}
        = \int_M \langle g v, P(f u) \rangle\, \mu,
    \]
    which is the conjugate of the formula above for the sections \(f u\) and \(g v\), and every section is a sum of sections of this form.
    With this standard real structure, any other real divergence differs from \(\dvg_\mu\) by a real closed \(1\)-form (\cref{prop:divergences-differ}).
    A general dagger on \(\DiffOp\) may also have indefinite forms and any real structure satisfying \cref{def:real-structure}.
\end{example}

\begin{example}
    \label{ex:weighted-line-dagger}
    On the weighted line of \cref{ex:weighted-line-divergence}, over \(A = \Q[x]\) with \(\dvg(f \partial) = f' + c f\), every finitely generated projective module is free.
    Declaring a basis of each module orthonormal, with the basis \(1\) of \(A\), gives a metric-divergence structure, and by \cref{thm:formal-adjoints} a dagger on \(\DiffOp\).
    In orthonormal bases the reciprocal basis is the basis itself.
    The formula in the proof therefore gives \((P^i{}_j) \mapsto ((P^j{}_i)^\dagger)\) on matrices over \(D\): the transpose, with the dagger \(x^\dagger = x\), \(\partial^\dagger = -\partial - c\) applied to the entries.
    Over \(C^\infty(\R)\), this is the formal adjoint for the density \(e^{c x}\, dx\) and the constant metrics.
\end{example}

\begin{corollary}
    \label{cor:order-zero-recovers}
    For \(L = 0\), \cref{thm:formal-adjoints} is \cref{thm:daggers-are-metrics}.
\end{corollary}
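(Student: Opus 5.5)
Setting \(L = 0\) collapses each side of \cref{thm:formal-adjoints} onto the corresponding side of \cref{thm:daggers-are-metrics}, and the plan is to check this side by side. We first compute the objects involved. With \(L = 0\), \cref{def:enveloping-algebra} gives \(D = A\) with \(D^k = A\) for every \(k\). Then \(J^k A = \Hom_A(A, A) \cong A\), on which the pointwise and right structures agree, so \(J^k E \cong E\). The restrictions \(\pi_{l,k}\), the counit \(\varepsilon\) and the comultiplication \(\delta_{k,l}\) all become identities under these isomorphisms; this can be checked on values, using \cref{prop:jet-reconstruction}. Hence every \(\Diff^k(E, F)\) equals \(\Diff^0(E, F)\), which is \(\Hom_A(E, F)\) by \cref{prop:order-zero}. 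The change-of-degree maps are identities, and composition is composition of linear maps. So \(\DiffOp\) is \(\fgpMod_A\) repeated in every degree.

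It follows that a dagger on \(\DiffOp\) is the same as a dagger on \(\fgpMod_A\). Given one on \(\fgpMod_A\), we apply it in every degree, and the condition \(\iota(P)^\dagger = \iota(P^\dagger)\) holds trivially. Conversely, a dagger on \(\DiffOp\) is determined by degree \(0\), because each \(\iota\) is the identity.

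On the other side, a metric-divergence structure for \(L = 0\) reduces to the data of \cref{thm:daggers-are-metrics}. A real structure is just an involution of \(A\), since \(\theta\) on \(L = 0\) is forced. A real divergence is a map \(0 \to A\), hence unique. What remains is the involution together with unimodular Hermitian forms, standard on \(A\), which is exactly the data of \cref{thm:daggers-are-metrics}.

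Finally we check that the two bijections agree. The adjunction formula \eqref{eq:adjunction} for operators of order zero reduces to the defining equation of the adjoint, as noted after the statement of \cref{thm:formal-adjoints}. Both theorems also recover the forms by the same recipe \(\langle u, v \rangle = \hat v^\dagger \circ \hat u\).

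The only step needing any care is identifying the jet comonad with the identity comonad. The trivialization \(J^k E \cong E\), \(\omega \otimes u \mapsto \omega(1) u\), must be shown compatible with \(\delta_{k,l}\) and \(\pi_{l,k}\). Since jets are determined by their values on \(1 \otimes \alpha\), this reduces to the identity \(1 \cdot 1 = 1\) in \(D = A\).
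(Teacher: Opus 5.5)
Your proposal is correct and follows the paper's proof. Both arguments collapse \(D\) to \(A\) and identify the jets with the modules themselves, so that \(\DiffOp\) is \(\fgpMod_A\) in every degree with identity change-of-degree maps. Both then reduce a metric-divergence structure to an involution with Hermitian forms, observing that the adjunction formula in order zero is the defining equation of the adjoint.
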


\begin{proof}
    For \(L = 0\) the enveloping algebra is \(D = A\), with \(D^k = A\) for every \(k\).
    Then \(J^k E = E\), all restrictions are the identity, and \(\Diff^k(E, F) = \Hom_A(E, F)\) in every degree, with bijective change-of-degree maps.
    A dagger on \(\DiffOp\) is therefore the same thing as a dagger on \(\fgpMod_A\).
    A real structure on \((A, 0)\) is an involutive ring automorphism of \(A\), with \(\theta = 0\), and the only real divergence is zero.
    A metric-divergence structure is therefore an involution of \(A\) together with a unimodular Hermitian form on every module, standard on \(A\), and the adjunction formula in order zero says that the dagger is the adjoint for these forms.
\end{proof}

\begin{proposition}
    \label{prop:reweighting}
    Let \(\dagger\) be a dagger on \(\DiffOp\) and \(w\) a unit of \(A\) with \(\conj w = w\).
    Then \(P^\ddagger = w^{-1} \circ P^\dagger \circ w\) is a dagger on \(\DiffOp\) with the same real structure and forms.
    Its divergence is \(\dvg + d \log w\), with \(d \log w\) as in \cref{sec:divergences}.
\end{proposition}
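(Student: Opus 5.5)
The plan is to check the dagger axioms directly, using that \(w\) is an operator of order zero and \(w^\dagger = \conj w = w\). In each degree, \(P \mapsto w^{-1} \circ P^\dagger \circ w\) lands in the correct hom-set: here \(w\) stands for multiplication by \(w\) on the relevant module, that is, \(w \id_E\) regarded in \(\Diff^0(E,E)\) by \cref{prop:order-zero}. Composing with order-zero operators keeps the degree and commutes with the change-of-degree maps, so \(\iota(P)^\ddagger = \iota(P^\ddagger)\) follows from the corresponding property of \(\dagger\).

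Throughout I use that scalar multiples of identities are daggered by conjugation. For \(f \in A\) and any module \(E\), the argument of \cref{thm:daggers-are-metrics} gives \((f \id_E)^\dagger = \conj f \id_E\): it uses only the order-zero part, where the dagger is the adjoint for the forms. So \(w^\dagger = w\) and \((w^{-1})^\dagger = w^{-1}\) on every \(E\), and multiplication by functions is central among order-zero maps. With this, the axioms are short computations. The identity gives \(w^{-1} \circ \id \circ w = \id\). Contravariance follows from
\[
    (Q \circ P)^\ddagger = w^{-1} P^\dagger Q^\dagger w = (w^{-1} P^\dagger w)(w^{-1} Q^\dagger w).
\]
For involutivity,
\[
    P^{\ddagger\ddagger} = w^{-1}(w^{-1} P^\dagger w)^\dagger w = w^{-1} w^\dagger P (w^{-1})^\dagger w = P.
\]
On order-zero operators \(\varphi\), the element \(w\) commutes with \(\varphi^\dagger\) because \(\varphi^\dagger\) is \(A\)-linear. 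Hence \(\varphi^\ddagger = \varphi^\dagger\), and by \cref{thm:formal-adjoints} the involution of \(A\) and the forms \(\langle u,v\rangle = \hat v^\ddagger \circ \hat u = \hat v^\dagger \circ \hat u\) are unchanged.

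The step needing actual computation is identifying \(\theta\) and the divergence. On \(X \in L \subseteq D^1\) we have
\[
    X^\ddagger = w^{-1}\bigl(-\theta(X) - \conj{\dvg(X)}\bigr) w.
\]
The relation \(\theta(X) \cdot w = w\, \theta(X) + (\theta(X))(w)\) in \(D\) gives
\[
    X^\ddagger = -\theta(X) - \conj{\dvg(X)} - w^{-1} (\theta(X))(w).
\]
Comparing with the form \(X^\ddagger = -\theta'(X) - \conj{\dvg'(X)}\) from \cref{thm:anti-involutions-D}, and using uniqueness of the decomposition \(D^1 = A \oplus L\) of \cref{cor:pbw}, gives \(\theta' = \theta\). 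It also gives \(\conj{\dvg'(X)} = \conj{\dvg(X)} + w^{-1}(\theta(X))(w)\).

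It remains to rewrite the correction term using the real-structure axiom \((\theta(X))(f) = \conj{X(\conj f)}\) with \(f = w\). Since \(\conj w = w\), this gives \((\theta(X))(w) = \conj{X(w)}\). Then
\[
    w^{-1}(\theta(X))(w) = \conj{w^{-1} X(w)},
\]
so \(\dvg' = \dvg + d\log w\). The only delicate points are the sign conventions and remembering that the conjugate of \(w^{-1}\) is \(w^{-1}\); everything else is formal. Incidentally, this recovers from the dagger side that \(d\log w\) is a real closed \(1\)-form, consistent with \cref{prop:divergences-differ}.
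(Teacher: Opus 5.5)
Your proposal is correct and follows the paper's proof essentially step for step. The dagger axioms come from cancelling the order-zero factors \(w, w^{-1}\) using \(w^\dagger = \conj w = w\), and the forms are unchanged because \(\ddagger\) agrees with \(\dagger\) in order zero. The divergence is read off from \(w^{-1} X^\dagger w\) via \(\theta(X) \cdot w = w \cdot \theta(X) + (\theta(X))(w)\) and \((\theta(X))(w) = \conj{X(w)}\). The only cosmetic difference is that you justify \(w^\dagger = w\) through the order-zero restriction and \cref{thm:daggers-are-metrics}, where the paper cites the adjunction formula in order zero.
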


\begin{proof}
    Here \(w\) and \(w^{-1}\) stand for multiplication on the source and target of \(P^\dagger\).
    By the adjunction formula in order zero, the dagger of multiplication by \(w\) is multiplication by \(\conj w = w\).
    The factors \(w\) and \(w^{-1}\) cancel in composites.
    This gives \((Q \circ P)^\ddagger = P^\ddagger \circ Q^\ddagger\) and \(\id^\ddagger = \id\).
    Involutivity follows from \(P^{\ddagger\ddagger} = w^{-1} \circ w \circ P \circ w^{-1} \circ w = P\).
    Since \(w\) has order zero, \(\ddagger\) preserves the order and commutes with the change-of-degree maps.
    The forms are unchanged, because \(\hat v^\ddagger \circ \hat u = w^{-1} \langle w u, v \rangle = \langle u, v \rangle\).
    On \(D\) the new dagger is \(Q \mapsto w^{-1} Q^\dagger w\).
    It fixes the functions.
    For \(X \in L\), using \(\theta(X) \cdot w = w \cdot \theta(X) + (\theta(X))(w)\),
    \[
        w^{-1} X^\dagger w = -\theta(X) - w^{-1} (\theta(X))(w) - \conj{\dvg(X)} = -\theta(X) - \conj{\dvg(X) + w^{-1} X(w)},
    \]
    since \((\theta(X))(w) = \conj{X(\conj w)} = \conj{X(w)}\) and \(\conj{w^{-1}} = w^{-1}\).
    By \cref{thm:anti-involutions-D}, the real structure is unchanged and the divergence is \(\dvg + d \log w\).
\end{proof}

Classically, for a positive function \(w\), this is the change of density \(\mu \mapsto w \mu\) of \cref{prop:divergences-differ}.
Consequently, as soon as \(A\) has a real unit \(w\) with \(d \log w \neq 0\), for example a positive non-constant function on a manifold, the real structure and the forms do not determine the divergence.

\begin{remark}
    \label{rem:existence-daggers-diffop}
    By \cref{thm:formal-adjoints}, a dagger on \(\DiffOp\) with a given real structure exists if and only if every finitely generated projective module admits a unimodular Hermitian form and a real divergence exists.
    The first condition concerns only \(A\), and it can fail, as \cref{rem:existence-of-metrics} shows.
    Over a manifold, with complex conjugation, both always hold: every bundle has a Hermitian metric, and every manifold has a positive density, whose divergence is real by \cref{ex:classical-divergence}.
\end{remark}

\subsection{First-order operators and connections}
\label{sec:first-order}

Let the real structure be trivial, so that the forms are symmetric.
Let \(X \in L\), and let \(P\) be a derivation of \(E\) whose symbol is the anchor of \(X\) (\cref{def:gl-algebroid}), so that \(P(f u) = f P(u) + X(f)\, u\).
It defines the operator \(\tilde P \in \Diff^1(E, E)\), \(\omega \otimes u \mapsto \omega(1)\, P(u) + \omega(X)\, u\).

The operator \(\tilde P\) is well defined because \(\omega(X \cdot f) = f \omega(X) + X(f)\, \omega(1)\).
On sections, \(\tilde P(u) = P(u)\), since the jet \(j^1(1)\) takes the value \(1\) on \(1\) and the value \(X(1) = 0\) on \(X\).
Its tests are
\[
    \alpha \circ \tilde P \circ \hat u = \alpha(P(u)) + \alpha(u)\, X \qquad \text{in } D^1,
\]
as one checks by applying both sides to \(\omega \otimes f\) and using \(X \cdot f = f \cdot X + X(f)\).
By \cref{prop:gl-dagger}, \(P\) has an adjoint derivation \(P^\dagger\), whose symbol is the anchor of \(-X\), and we form \(\widetilde{P^\dagger}\) with \(-X\).

\begin{proposition}
    \label{prop:derivation-dagger}
    With the trivial real structure, the dagger of \(\tilde P\) in \(\DiffOp\) is
    \[
        \tilde P^\dagger = \widetilde{P^\dagger} - \dvg(X)\, \id_E,
    \]
    where \(P^\dagger\) is the dagger of \cref{prop:gl-dagger}.
\end{proposition}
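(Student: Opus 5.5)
The plan is to use the uniqueness half of \cref{thm:formal-adjoints}. The dagger of \(\tilde P\) is the unique operator in \(\Diff^1(E, E)\) satisfying the adjunction formula \eqref{eq:adjunction}. So it suffices to check that the right-hand side \(R = \widetilde{P^\dagger} - \dvg(X)\, \id_E\) satisfies
\[
    u^\flat \circ R \circ \hat v = (v^\flat \circ \tilde P \circ \hat u)^\dagger \qquad \text{in } D^1
\]
for all \(u, v \in E\). Both sides are elements of \(D^1 = A \oplus L\) (\cref{cor:pbw}), so we only need to compare components. No dual bases are needed.

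First I compute the right-hand side from the test formula stated just before the proposition, with \(\alpha = v^\flat\):
\[
    v^\flat \circ \tilde P \circ \hat u = \langle P(u), v \rangle + \langle u, v \rangle\, X .
\]
With the trivial real structure, \cref{thm:anti-involutions-D} gives \(f^\dagger = f\) and \(X^\dagger = -X - \dvg(X)\). The dagger reverses products, so
\[
    (\langle u, v \rangle\, X)^\dagger = X^\dagger \cdot \langle u, v \rangle = -X \cdot \langle u, v \rangle - \dvg(X) \langle u, v \rangle .
\]
Rewriting \(X \cdot g = g X + X(g)\) then gives
\[
    (v^\flat \circ \tilde P \circ \hat u)^\dagger = \langle P(u), v \rangle - X(\langle u, v \rangle) - \dvg(X) \langle u, v \rangle - \langle u, v \rangle X .
\]

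Next I compute the left-hand side. The same test formula, applied to \(\widetilde{P^\dagger}\) with symbol \(-X\) and \(\alpha = u^\flat\), gives
\[
    u^\flat \circ \widetilde{P^\dagger} \circ \hat v = \langle P^\dagger(v), u \rangle - \langle v, u \rangle X .
\]
For the order-zero term, \cref{prop:order-zero} shows that \(u^\flat \circ (\dvg(X)\, \id_E) \circ \hat v = \dvg(X) \langle v, u \rangle\). Because the forms are symmetric, the \(L\)-components of the two sides agree.

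The \(A\)-components agree exactly when
\[
    \langle u, P^\dagger(v) \rangle = \langle P(u), v \rangle - X(\langle u, v \rangle),
\]
which is the defining identity \eqref{eq:gl-dagger}. The only real obstacle is bookkeeping. One must keep the factor \(\langle u, v \rangle\) on the correct side of \(X\) when applying the dagger, because that commutation is what produces the term \(X(\langle u, v \rangle)\). One must also confirm that the test formula applies to \(\widetilde{P^\dagger}\) with symbol \(-X\), which holds since the formula was derived for an arbitrary derivation and symbol. Uniqueness then gives \(\tilde P^\dagger = R\).
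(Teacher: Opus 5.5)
Your proposal is correct and follows the paper's proof. You use the uniqueness in \cref{thm:formal-adjoints}, compute both tests with \(X^\dagger = -X - \dvg(X)\) and \(X \cdot g = g X + X(g)\), and match them using the symmetry of the forms and the defining identity \eqref{eq:gl-dagger}. Splitting the comparison into components of \(D^1 = A \oplus L\) is only a more explicit bookkeeping of the same computation.
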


\begin{proof}
    By \cref{thm:formal-adjoints} it suffices to compare tests.
    With the trivial real structure, \(f^\dagger = f\) and \(X^\dagger = -X - \dvg(X)\), so \((f X)^\dagger = X^\dagger f = -f X - X(f) - \dvg(X) f\) in \(D^1\).
    The adjunction formula gives
    \begin{align*}
        u^\flat \circ \tilde P^\dagger \circ \hat v
        &= \bigl(\langle P(u), v \rangle + \langle u, v \rangle\, X\bigr)^\dagger \\
        &= \langle P(u), v \rangle - \langle u, v \rangle\, X - X(\langle u, v \rangle) - \dvg(X)\, \langle u, v \rangle.
    \end{align*}
    The tests of \(\widetilde{P^\dagger} - \dvg(X) \id_E\) are \(\langle P^\dagger(v), u \rangle - \langle v, u \rangle\, X - \dvg(X)\, \langle v, u \rangle\), and since the forms are symmetric, the two agree by the defining property of \(P^\dagger\).
\end{proof}

\begin{example}
    \label{ex:weighted-line-derivation}
    On the weighted line, give \(E=A\) its standard symmetric form and let \(P=\partial+h\) for \(h\in A\).
    The adjoint derivation is \(P^\dagger=-\partial+h\), whereas the operator \(\tilde P\) has formal adjoint \(-\partial+h-c\).
    For \(h=0\), the connection \(\nabla_\partial=\partial\) is metric, and its operator is skew-adjoint precisely when \(c=0\).
\end{example}

For an \(L\)-connection \(\nabla\) on \(E\) (\cref{sec:algebroid-representations}), the uniqueness in \cref{prop:gl-dagger} shows, as in \cref{lem:metric-connection-skew}, that \(\nabla\) is metric if and only if \(\nabla_X^\dagger = -\nabla_X\) for every \(X \in L\).

\begin{corollary}
    \label{cor:skew-adjoint-connection}
    With the trivial real structure, for a metric \(L\)-connection \(\nabla\) on \(E\), the operator \(\tilde\nabla_X\) is skew-adjoint in \(\DiffOp\) if and only if \(\dvg(X)\) acts as zero on \(E\), that is, \(\dvg(X)\, u = 0\) for all \(u \in E\).
\end{corollary}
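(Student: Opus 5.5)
Apply \cref{prop:derivation-dagger} with \(P = \nabla_X\), so that \(\tilde P = \tilde\nabla_X\). It gives
\[
    \tilde\nabla_X^\dagger = \widetilde{\nabla_X^\dagger} - \dvg(X)\, \id_E ,
\]
where \(\nabla_X^\dagger\) is the adjoint derivation of \cref{prop:gl-dagger}, formed with the symbol \(-X\). Since \(\nabla\) is metric, the remark before the corollary gives \(\nabla_X^\dagger = -\nabla_X\). So \(\widetilde{\nabla_X^\dagger}\) is the operator of the derivation \(-\nabla_X\) with \(-X\), namely \(\omega \otimes u \mapsto -\omega(1)\,\nabla_X u - \omega(X)\, u\). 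This is exactly \(-\tilde\nabla_X\), and the construction \(P \mapsto \tilde P\) is visibly additive in the pair \((P, X)\).

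Hence \(\tilde\nabla_X^\dagger = -\tilde\nabla_X - \dvg(X)\,\id_E\). It follows that \(\tilde\nabla_X\) is skew-adjoint in \(\DiffOp\) if and only if \(\dvg(X)\,\id_E = 0\) as an operator in \(\Diff^1(E,E)\).

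It remains to interpret this vanishing. The operator \(\dvg(X)\,\id_E\) is the order-zero operator of multiplication by \(\dvg(X)\), regarded in degree one through \(\iota_{0,1}\). By \cref{lem:change-of-degree-injective}, \(\iota_{0,1}\) is injective. By \cref{prop:order-zero}, order-zero operators are the same as linear maps. So the operator vanishes if and only if the linear map \(u \mapsto \dvg(X)\, u\) on \(E\) is zero, that is, \(\dvg(X)\,u = 0\) for all \(u \in E\).

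The only point needing care is identifying \(\widetilde{-\nabla_X}\), built with symbol \(-X\), with \(-\tilde\nabla_X\). I would check this directly on elements \(\omega \otimes u\) from the defining formula. It is immediate because \(\omega(-X) = -\omega(X)\) by additivity of \(\omega\).
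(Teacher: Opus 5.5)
Your proof is correct and follows the paper's argument: apply \cref{prop:derivation-dagger} with \(\nabla_X^\dagger = -\nabla_X\) to get \(\tilde\nabla_X^\dagger = -\tilde\nabla_X - \dvg(X)\,\id_E\), then read off the condition. You also check two steps the paper leaves implicit: that \(\widetilde{-\nabla_X}\) formed with \(-X\) equals \(-\tilde\nabla_X\), and, via \cref{lem:change-of-degree-injective,prop:order-zero}, that \(\dvg(X)\,\id_E = 0\) in degree one means \(\dvg(X)\,u = 0\) for all \(u\).
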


\begin{proof}
    By \cref{prop:derivation-dagger} and \(\nabla_X^\dagger = -\nabla_X\), we have \(\tilde\nabla_X^\dagger = -\tilde\nabla_X - \dvg(X)\, \id_E\).
    Hence \(\tilde\nabla_X\) is skew-adjoint if and only if \(\dvg(X)\, \id_E = 0\).
\end{proof}

If \(E\) is a bundle of positive rank on every component of \(M\), a function that acts as zero on \(E\) vanishes, so the condition is \(\dvg(X) = 0\).
Classically, with the Riemannian divergence, the corollary applies to divergence-free fields, such as Killing fields, while for a general vector field the formal adjoint of \(\nabla_X\) differs from \(-\nabla_X\) by multiplication with \(\dvg(X)\).

\subsection{Green's formula and invariant integrals}
\label{sec:green-formula}

For a vector field on a manifold, integration by parts writes the difference between an operator and its formal adjoint as a divergence.
The same statement holds for every dagger on \(\DiffOp\), with the divergence of its metric-divergence structure.
The set \(\dvg(L) = \{\dvg(X) \mid X \in L\}\) is an additive subgroup of \(A\).

\begin{proposition}
    \label{prop:green-formula}
    For \(P \in \Diff^k(E, F)\), \(u \in E\) and \(v \in F\), there is an \(X \in L\) such that
    \[
        \langle P(u), v \rangle - \langle u, P^\dagger(v) \rangle = \dvg(X).
    \]
\end{proposition}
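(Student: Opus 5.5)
The plan is to reduce the statement to the object \(A\), where it becomes an identity in \(D\), and then to prove that identity by induction on products of generators, using the Leibniz rule of \(\dvg\).

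\emph{Reduction to a test.} Put \(Q = v^\flat \circ P \circ \hat u \in D^k\). Acting on \(1\), and using that composites act by composition on sections, gives \(Q(1) = v^\flat(P(u)) = \langle P(u), v \rangle\). By the adjunction formula \eqref{eq:adjunction}, \(u^\flat \circ P^\dagger \circ \hat v = Q^\dagger\), so \(Q^\dagger(1) = \langle P^\dagger(v), u \rangle\). Hermitian symmetry then gives \(\langle u, P^\dagger(v) \rangle = \conj{Q^\dagger(1)}\). The statement therefore reduces to showing that, for every \(Q \in D\),
\[
    Q(1) - \conj{Q^\dagger(1)} \in \dvg(L).
\]

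\emph{A right module structure.} Define \(g \cdot Q = \conj{Q^\dagger(\conj g)}\) for \(g \in A\) and \(Q \in D\). Since \(\dagger\) reverses products and the action of \(D\) on \(A\) is a left action, this is a right action of \(D\) on \(A\): \(g \cdot (RS) = (g \cdot R) \cdot S\). It extends multiplication by functions, because \(g \cdot f = \conj{\conj f\, \conj g} = g f\). For \(X \in L\), the formula \(X^\dagger = -\theta(X) - \conj{\dvg(X)}\) of \cref{thm:anti-involutions-D}, together with \((\theta(X))(\conj g) = \conj{X(g)}\), gives
\[
    g \cdot X = -X(g) - g \dvg(X),
\]
which is the right module of \cref{rem:huebschmann}. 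The quantity to control is \(Q(1) - 1 \cdot Q\).

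\emph{Induction.} I will prove the stronger claim that \(g\, Q(f) - (g \cdot Q)\, f \in \dvg(L)\) for all \(f, g \in A\) and all \(Q \in D\). It suffices to check this for \(Q\) a product of a function and elements of \(L\), since the claim is additive in \(Q\) and \(\dvg(L)\) is a subgroup. For \(Q = h \in A\), both terms equal \(g h f\), so the difference is \(0 = \dvg(0)\). For the inductive step take \(Q = X \cdot R\) with \(X \in L\), and put \(h = R(f)\). The Leibniz rule applied to \(\dvg(g h X)\) gives
\[
    g X(h) = \dvg(g h X) - h\bigl(X(g) + g \dvg(X)\bigr) = \dvg(g h X) + (g \cdot X)\, h .
\]
Applying the induction hypothesis to \(R\), with \(g' = g \cdot X\) in place of \(g\), shows that \((g \cdot X)\, R(f) - ((g \cdot X) \cdot R)\, f \in \dvg(L)\). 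Adding the two lines gives the claim for \(Q = X \cdot R\). Taking \(f = g = 1\) then yields the statement. The resulting \(X \in L\) is an explicit sum of terms \(g h X_i\), built along a product decomposition of the test \(Q\).

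I expect the main care to be needed in the second step. One must check that \(g \cdot Q = \conj{Q^\dagger(\conj g)}\) is a right action, so that the placement of the conjugations is consistent with a non-trivial real structure, and that its value on \(L\) is exactly the Huebschmann formula. Once that is in place, the induction is a single application of the Leibniz rule.
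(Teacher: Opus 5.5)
Your proposal is correct and is essentially the paper's proof. It uses the same reduction to the test \(Q = v^\flat \circ P \circ \hat u\) at \(f = g = 1\) with Hermitian symmetry, and the same induction over the generators of \(D\). Your right action \(g \cdot Q = \conj{Q^\dagger(\conj g)}\) just repackages the paper's quantity \(Q(f)\,\conj g - f\,\conj{Q^\dagger(g)}\) with \(g\) replaced by \(\conj g\), and your Leibniz computation is the paper's identity \(X(f)\,\conj g - f\,\conj{X^\dagger(g)} = \dvg(f\,\conj g\,X)\).

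The one point you leave implicit is that your induction only peels off factors from \(L\) on the left. You therefore need either that every element of \(D\) is a sum of products \(X_1 \cdots X_n\, h\), or the trivial extra case \(Q = h \cdot R\), which reduces to \(R\) with \(g h\) in place of \(g\). The paper avoids this by showing directly that the defect is additive over arbitrary products \(Q R\).
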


\begin{proof}
    We first take \(Q \in D\) and \(f, g \in A\), and show that
    \[
        Q(f)\, \conj g - f\, \conj{Q^\dagger(g)} \in \dvg(L).
    \]
    Both sides are additive in \(Q\).
    For \(Q = h \in A\), the difference is zero.
    For \(Q = X \in L\), the formulas for \(X^\dagger\) and the real structure give
    \[
        X(f)\, \conj g - f\, \conj{X^\dagger(g)}
        = X(f)\, \conj g + f X(\conj g) + f\, \conj g\, \dvg(X)
        = \dvg(f\, \conj g\, X).
    \]
    For a product \(Q R\), the difference is the sum of the difference for \(Q\) at \((R(f), g)\) and that for \(R\) at \((f, Q^\dagger(g))\).
    Since \(A\) and \(L\) generate \(D\), the claim follows.

    Now take \(Q = v^\flat \circ P \circ \hat u\).
    Its value at \(1\) is \(\langle P(u), v \rangle\), and by \eqref{eq:adjunction} its dagger is \(u^\flat \circ P^\dagger \circ \hat v\).
    The claim for \(Q\) with \(f = g = 1\), together with Hermitian symmetry, gives the result.
\end{proof}

\begin{corollary}
    \label{cor:green-current}
    For each \(P \in \Diff^k(E, F)\), there is a biadditive map \(B_P \colon E \times F \to L\) such that
    \[
        \langle P(u), v \rangle - \langle u, P^\dagger(v) \rangle
        = \dvg(B_P(u,v)).
    \]
    If the involution preserves \(K\), this map can be chosen \(K\)-sesquilinear, and for the trivial real structure it can be chosen \(K\)-bilinear.
\end{corollary}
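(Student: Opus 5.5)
The plan is to make the proof of \cref{prop:green-formula} explicit and uniform in \((u, v)\), rather than just existential. First, on \(D\), I would define for each \(Q \in D\) a map \(C_Q \colon A \times A \to L\) with
\[
    Q(f)\, \conj g - f\, \conj{Q^\dagger(g)} = \dvg(C_Q(f, g)).
\]
Defining \(C_Q\) on elements of \(D\) directly is not well posed, since \(D\) is a quotient. So I would work with an expression of \(Q\). Concretely, for a fixed \(P\) and fixed dual bases \((e_i, \xi^i)\) of \(E\) and \((e'_j, \xi'^j)\) of \(F\), \cref{prop:tests} writes \(P\) through finitely many tests \(Q_{ij} = \xi'^j \circ P \circ \hat e_i \in D^k\). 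I would choose once and for all, for each \(Q_{ij}\), an expression as a finite sum of products of generators \(h \in A\) and \(X \in L\). Along that expression, the recursion in the proof of \cref{prop:green-formula} gives an explicit \(C\): \(C_h = 0\), \(C_X(f,g) = f\, \conj g\, X\), and \(C_{QR}(f,g) = C_Q(R(f), g) + C_R(f, Q^\dagger(g))\). By induction on the length of the expression, each \(C\) is biadditive, \(K\)-linear in \(f\), and semilinear in \(g\) for the involution restricted to \(K\) (when that restriction preserves \(K\)). This uses that \(R\) and \(Q^\dagger\) are additive, that \(R\) is \(K\)-linear, and that \(Q^\dagger(c g) = \conj c\, Q^\dagger(g)\)-type identities hold because \(\conj c\) is central.

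Next, I would reduce \(\langle P(u), v\rangle - \langle u, P^\dagger(v)\rangle\) to these tests. Expand \(u = \sum_i \xi^i(u) e_i\) and \(v = \sum_j \xi'^j(v)\, e'^j\), using the reciprocal basis and \cref{prop:reciprocal-expansion}. Then \(\langle P(u), v\rangle\) becomes \(\sum_{i,j}\) of terms built from \(Q_{ij}(\xi^i(u))\) paired with \(\conj{\xi'^j(v)}\)-type coefficients, up to the fixed reciprocal-basis bookkeeping. The adjunction formula \eqref{eq:adjunction} gives the matching expression for \(\langle u, P^\dagger(v)\rangle\) through \(Q_{ij}^\dagger\). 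This leads me to set
\[
    B_P(u, v) = \sum_{i, j} C_{Q_{ij}}\bigl(\xi^i(u), \eta_j(v)\bigr),
\]
where \(\eta_j(v) \in A\) is the appropriate coefficient of \(v\), linear in \(v\). In the case of the trivial real structure, \(\eta_j(v)\) is linear and all bars disappear, which would give the \(K\)-bilinear version. Biadditivity and sesquilinearity of \(B_P\) follow from those of \(C\) and the linearity of \(\xi^i, \eta_j\).

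The main obstacle is matching the identity exactly, with the right choice of \(\eta_j\) and the placement of conjugations, because the tests are taken against \(\hat e_i\) and \(\xi'^j\) rather than against \(\hat u\) and \(v^\flat\). I would first try testing with \(Q = v^\flat \circ P \circ \hat u\) and rewriting it, using \(\hat u = \sum_i \hat e_i \circ \xi^i(u)\) and \(v^\flat = \sum_j \conj{\xi'^j(v)}\,\ldots\). This is the computation already carried out in the proof of \cref{thm:formal-adjoints}, which shows that \(Q = \sum_{i,j} \conj{a_j(v)}\, Q'_{ij}\, b_i(u)\) with fixed \(Q'_{ij}\) and linear \(a_j, b_i\). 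Then the recursion applied to the products \(\conj{a_j}\cdot Q'_{ij}\cdot b_i\) (with function factors contributing \(C_h = 0\) but shifting arguments) gives \(B_P\) explicitly. Finally, Hermitian symmetry turns the \(f = g = 1\) identity into the stated one, as at the end of \cref{prop:green-formula}.
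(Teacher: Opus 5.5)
Your final route is correct and is essentially the paper's proof. It uses the same recursion \(B_{QR}(f,g)=B_Q(R(f),g)+B_R(f,Q^\dagger(g))\) along fixed expressions of finitely many dual-basis tests, and \(B_P\) is again a sum of these scalar currents evaluated at coefficient functions of \(u\) and \(v\). The only difference is in the verification. The paper expands \(\langle P(u),v\rangle\) and \(\langle u,P^\dagger(v)\rangle\) separately, through \cref{prop:tests} and the dagger of that expansion. You instead apply the scalar identity once, at \(f=g=1\), to \(v^\flat\circ P\circ\hat u=\sum_{i,j}\conj{\xi'^j(v)}\,\bigl((e'_j)^\flat\circ P\circ\hat e_i\bigr)\,\xi^i(u)\), and then use \eqref{eq:adjunction} with Hermitian symmetry. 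This yields \(B_P(u,v)=\sum_{i,j}B_{(e'_j)^\flat\circ P\circ\hat e_i}\bigl(\xi^i(u),\xi'^j(v)\bigr)\), which works equally well.

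Two slips should be fixed. First, the expansion \(v=\sum_j\xi'^j(v)\,e'^j\) in your first plan is false in general. The correct expansions are \(v=\sum_j\xi'^j(v)\,e'_j\) and \(v=\sum_j\langle v,e'_j\rangle\,e'^j\), and the latter gives the paper's coefficient \(\eta_j(v)=\langle v,e'_j\rangle\). Second, semilinearity of the scalar currents in \(g\) needs \(Q^\dagger(cg)=c\,Q^\dagger(g)\) for \(c\in K\). This holds because every element of \(D\) acts \(K\)-linearly on \(A\), since the anchor takes values in \(\Der_K(A)\). The identity \(Q^\dagger(cg)=\conj c\,Q^\dagger(g)\) that you cite is false (take \(Q=1\)). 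It would also put \(c\) rather than \(\conj c\) in front of the term \(B_R(f,Q^\dagger(cg))\), breaking your induction.
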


\begin{proof}
    Fix an expression for a scalar operator \(Q \in D\) as a sum of products of functions and elements of \(L\).
    For a multiplication operator \(h\), set \(B_h(f,g)=0\), and for \(X \in L\), set \(B_X(f,g)=f\conj g\,X\).
    Extend over sums and products by
    \begin{align*}
        B_{Q+R}(f,g)&=B_Q(f,g)+B_R(f,g),\\
        B_{Q R}(f,g)&=B_Q(R(f),g)+B_R(f,Q^\dagger(g)).
    \end{align*}
    The generator and product calculations in the proof of \cref{prop:green-formula} show that
    \[
        \dvg(B_Q(f,g))=Q(f)\conj g-f\conj{Q^\dagger(g)}.
    \]
    These maps are biadditive.
    If the involution preserves \(K\), the action of every element of \(D\) is \(K\)-linear, so the recursion also makes \(B_Q\) linear in its first input and semilinear in its second.

    Choose dual bases \((e_i,\xi^i)\) of \(E\) and \((e'_j,\xi'^j)\) of \(F\).
    For each entry \(Q^j{}_i=\xi'^j\circ P\circ\hat e_i\), fix such an expression and its map \(B_{Q^j{}_i}\).
    Define
    \[
        B_P(u,v)=\sum_{i,j} B_{Q^j{}_i}\bigl(\xi^i(u),\langle v,e'_j\rangle\bigr).
    \]
    By \cref{prop:tests}, \(P=\sum_{i,j}\hat e'_j\circ Q^j{}_i\circ\xi^i\), and taking daggers gives the corresponding expansion of \(P^\dagger\).
    The two pairings therefore satisfy
    \begin{align*}
        \langle P(u),v\rangle
        &=\sum_{i,j} Q^j{}_i(\xi^i(u))\,\langle e'_j,v\rangle,\\
        \langle u,P^\dagger(v)\rangle
        &=\sum_{i,j}\xi^i(u)\,\conj{(Q^j{}_i)^\dagger(\langle v,e'_j\rangle)}.
    \end{align*}
    Their difference is \(\dvg(B_P(u,v))\) by the scalar identity.
    The asserted additivity and sesquilinearity follow from those of the scalar currents and the forms.
\end{proof}

\begin{remark}
    \label{rem:green-non-canonical}
    The current \(B_P\) constructed in \cref{cor:green-current} depends on the chosen dual bases and expressions for the scalar entries.
    Different currents can have the same divergence, since \(\dvg\) need not be injective.
    In classical examples one can often give a current directly in geometric terms.
    For the Riemannian density and the Laplacian \(\Delta=\dvg\circ\grad\), for instance,
    \[
        g\,\Delta(f)-f\,\Delta(g)
        =\dvg\bigl(g\grad(f)-f\grad(g)\bigr).
    \]
\end{remark}

\begin{example}
    \label{ex:weighted-line-green}
    On the weighted line of \cref{ex:weighted-line-divergence}, the operator \(\partial^2\) has adjoint \((\partial+c)^2\).
    Its Green identity is
    \[
        f''g-f(g''+2cg'+c^2g)
        =\dvg\bigl((f'g-fg'-cfg)\partial\bigr).
    \]
    This follows by differentiating the coefficient of \(\partial\) and adding \(c\) times that coefficient.
\end{example}

Suppose the involution of \(A\) preserves \(K\).
An \emph{invariant integral} is a \(K\)-linear map \(\lambda \colon A \to K\) satisfying \(\lambda(\conj f) = \conj{\lambda(f)}\) and \(\lambda(\dvg(X)) = 0\) for all \(f \in A\) and \(X \in L\).
On a compact manifold without boundary with a positive density \(\mu\), integration against \(\mu\) is an invariant integral for \(\dvg_\mu\).

\begin{corollary}
    \label{cor:invariant-integral-adjoint}
    Every invariant integral satisfies
    \[
        \lambda(\langle P(u), v \rangle)
        = \lambda(\langle u, P^\dagger(v) \rangle)
    \]
    for \(P \in \Diff^k(E, F)\), \(u \in E\) and \(v \in F\).
    Thus the dagger is an adjoint for the Hermitian pairing \((u, v) \mapsto \lambda(\langle u, v \rangle)\) on sections.
\end{corollary}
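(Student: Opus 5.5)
The plan is to apply \(\lambda\) to Green's formula (\cref{prop:green-formula}). For \(P \in \Diff^k(E, F)\), \(u \in E\) and \(v \in F\), that proposition gives some \(X \in L\) with
\[
    \langle P(u), v \rangle - \langle u, P^\dagger(v) \rangle = \dvg(X).
\]
Since \(\lambda\) is \(K\)-linear, in particular additive, it sends this difference to \(\lambda(\dvg(X))\). This vanishes by the definition of an invariant integral, which gives the displayed identity. No choice of current is needed here: the existence statement of \cref{prop:green-formula} suffices, so \cref{cor:green-current} is not required.

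For the second sentence, I will check that \((u, v) \mapsto \lambda(\langle u, v \rangle)\) is a Hermitian pairing on sections with values in \(K\). Biadditivity follows from that of the form and the additivity of \(\lambda\). The symmetry follows from \(\lambda(\conj f) = \conj{\lambda(f)}\) applied to \(\langle v, u \rangle = \conj{\langle u, v \rangle}\).

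For \(K\)-sesquilinearity, I use that the involution of \(A\) preserves \(K\): for \(c \in K\) we have \(\langle u, c v \rangle = \conj c \langle u, v \rangle\) with \(\conj c \in K\), and \(K\)-linearity of \(\lambda\) gives \(\lambda(\langle u, cv \rangle) = \conj c\, \lambda(\langle u, v \rangle)\). Linearity in the first variable is similar.

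With these properties in place, the displayed identity says exactly that \(P^\dagger\) is an adjoint of \(P\) for these pairings. The argument has no real obstacle. The only point needing care is that the Hermitian property of the \(K\)-valued pairing requires both the compatibility \(\lambda(\conj f) = \conj{\lambda(f)}\) and the standing assumption that the involution restricts to \(K\).
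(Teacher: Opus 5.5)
Your proposal is correct and is essentially the paper's proof: apply \(\lambda\) to the identity of \cref{prop:green-formula}, use \(\lambda(\dvg(X)) = 0\), and get Hermitian symmetry of the pairing from \(\lambda(\conj f) = \conj{\lambda(f)}\). Your extra check of \(K\)-sesquilinearity, using that the involution preserves \(K\), spells out a point the paper leaves implicit.
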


\begin{proof}
    Apply \(\lambda\) to \cref{prop:green-formula}.
    The pairing is Hermitian because \(\lambda\) commutes with the involution of scalars.
\end{proof}

\begin{example}
    \label{ex:circle-no-invariant-integral}
    On the circle of \cref{ex:circle-divergence}, let \(\dvg(f\partial) = f' + f\).
    The map \(f \mapsto f' + f\) is surjective on smooth periodic functions: for every periodic \(g\), the return map of the equation \(f' + f = g\) has linear part \(e^{-2\pi}\), hence a unique fixed point.
    Hence \(\dvg(L) = A\), and every invariant integral is zero.
    In contrast, for the divergence of the standard density \(\lvert dt \rvert\), integration over the circle is a non-zero invariant integral.
\end{example}

\section{Conclusion and outlook}
\label{sec:outlook}

The three parts of this paper describe the geometric data carried by daggers on a sequence of algebraic structures.
On a single vector space an anti-involution of the matrix ring is a form up to scale, and on all vector spaces, or all vector bundles, a dagger is a Hermitian metric on every object.
Differentiating daggers on Lie groups and groupoids gives daggers on Lie algebras and algebroids, and compatibility with them is unitarity of representations and metricity of connections.
On differential operators, a dagger is a unimodular Hermitian form on every finitely generated projective module together with a divergence and a real structure, and it acts as the formal adjoint.
Green's formula expresses the difference between the pairings of an operator and its adjoint against sections as a divergence.

\paragraph{Changing divergences.}
By \cref{prop:divergences-differ}, real divergences for a fixed real structure differ by real closed \(1\)-forms.
Reweighting by a unit accounts for the differences of the form \(d\log w\) (\cref{prop:reweighting}), while \cref{ex:circle-divergence} gives a closed form that does not arise this way.
For a general real closed \(1\)-form, the corresponding change of formal adjoint need not be conjugation by a unit.
A next question is how to compare these daggers and identify which closed forms yield equivalent adjoint structures.

\paragraph{Information geometry.}
A statistical manifold carries a metric on \(E = TM\) and a pair of connections \(\nabla\) and \(\nabla^*\) that are dual in the sense that
\[
    X(\langle u, v \rangle) = \langle \nabla_X u, v \rangle + \langle u, \nabla^*_X v \rangle .
\]
By \eqref{eq:gl-dagger}, this says \(\nabla^*_X = -\nabla_X^\dagger\) in \(\gl(E)\), so duality of connections is the involution \(P \mapsto -P^\dagger\) on derivations, and metric connections are its fixed points.
A next question is how this involution relates the curvatures of dual connections, both on manifolds and on Lie algebroids.

\paragraph{Differential equations.}
An operator \(P \in \Diff^k(E, F)\) defines the equation \(P(u) = 0\), and a dagger on \(\DiffOp\) assigns to it the adjoint equation \(P^\dagger(v) = 0\).
\Cref{prop:green-formula} relates them by a divergence, and \cref{cor:invariant-integral-adjoint} turns this into an adjoint identity when an invariant integral exists.
The jet comonad of \cref{sec:jets} gives two further settings for the relation between an equation and its adjoint.

Differential equations have been described as coalgebras of the ungraded jet comonad \cite{marvan1987, khavkine-schreiber2017}, and recent synthetic work concerns the infinite jet bundles involved \cite{giotopoulos-et-al2026}.
The graded jet comonad of \cref{prop:jet-comonad} has a co-Eilenberg--Moore category, which is locally graded, dually to the Eilenberg--Moore category of a graded monad \cite{fujii-katsumata-mellies2016, mcdermott-uustalu2022}.
A graded coalgebra can encode a family of modules \(E_l\) with compatible maps
\[
    E_{k+l} \longrightarrow J^k E_l,
\]
satisfying counit and coassociativity identities.
The questions are how such families describe prolongation and integrability \cite{goldschmidt1967}, and whether a dagger on \(\DiffOp\) induces a construction on these coalgebras.

Tensoring with the filtered enveloping algebra gives the functors \(T^k E = D^k \otimes_A E\), which form a monad graded by \((\N, \le, +)\), with the multiplication of \(D\).
By \cref{prop:jet-reconstruction}, \(J^k E \cong (T^k E^\vee)^\vee\), so this monad is dual to the jet comonad.
A related question is whether the dagger relates graded coalgebras of \(J\) to graded algebras of \(T\), and how such a relation interacts with their co-Kleisli and Kleisli categories.

\paragraph{Hodge theory.}
On a compact Riemannian manifold without boundary, the Hodge decomposition splits differential forms into exact, coexact and harmonic parts, orthogonal to each other for the \(L^2\) product.
The exterior differential on forms gives a complex of operators in \(\DiffOp\) with \(d^2 = 0\).
Writing \(d_p\) for the differential from \(p\)-forms to \((p+1)\)-forms, the operators \(d_{p-1}\) and \(d_p^\dagger\) are orthogonal because
\[
    d_{p-1}^\dagger \circ d_p^\dagger = (d_p \circ d_{p-1})^\dagger = 0.
\]
The exact and coexact spaces are not \(A\)-submodules in general, so this decomposition is not a splitting in \(\DiffOp\).
For the usual \(L^2\) orthogonality it uses an invariant integral, positivity and a Green operator, whose existence is the analytic part of Hodge theory.

\paragraph{String diagrams.}
The dagger compact structure of Hermitian modules (\cref{thm:herm-dagger-compact}) makes the string diagrams of categorical quantum mechanics \cite{abramsky-coecke2004, abramsky-coecke2008, heunen-vicary2019} available for bundles with metrics.
Raising and lowering indices with the reciprocal basis are the cup and the cap of this structure.
A next question is how string diagrams express covariant differentiation and curvature contractions in Riemannian geometry and general relativity.

\printbibliography

\appendix

\section{Additional proofs}
\label{sec:appendix}

\subsection{Dagger compact structure}
\label{sec:appendix-compact}

We prove \cref{thm:herm-dagger-compact}.
Throughout, \(A\) is a commutative ring with an involution \(f \mapsto \conj f\), and all modules are Hermitian.
Sums of pure tensors \(u \otimes u'\) exhaust \(E \otimes E'\), so a sesquilinear form on a tensor product is determined by its values on pairs of pure tensors, and two linear maps out of a tensor product agree as soon as they agree on pure tensors.

We begin with the form on \(E \otimes E'\).
A sesquilinear form on a module \(E\) is the same thing as an \(A\)-linear map \(\conj E \to E^\vee\), \(\conj v \mapsto \langle -, v \rangle\), and it is unimodular if and only if this map is an isomorphism.
The maps \(\conj u \otimes \conj{u'} \mapsto \conj{u \otimes u'}\) and \(\alpha \otimes \beta \mapsto (u \otimes u' \mapsto \alpha(u) \beta(u'))\) are isomorphisms \(\conj E \otimes \conj{E'} \cong \conj{E \otimes E'}\) and \(E^\vee \otimes E'^\vee \cong (E \otimes E')^\vee\), the second because the modules are finitely generated projective.
Composing the inverse of the first, the tensor product of the isomorphisms \(\conj E \cong E^\vee\) and \(\conj{E'} \cong E'^\vee\), and the second, we obtain an isomorphism \(\conj{E \otimes E'} \cong (E \otimes E')^\vee\).
It is the \(\flat\) of a unimodular sesquilinear form on \(E \otimes E'\), and this form satisfies \(\langle u \otimes u', v \otimes v' \rangle = \langle u, v \rangle \langle u', v' \rangle\).
The form is Hermitian, since \(\langle v, u \rangle \langle v', u' \rangle = \conj{\langle u, v \rangle \langle u', v' \rangle}\).
The module \(E \otimes E'\) is finitely generated projective, so it is a Hermitian module, and the unit \(A\) carries the standard form.

Next, for linear maps \(\varphi \colon E \to F\) and \(\psi \colon E' \to F'\), the adjoint of \(\varphi \otimes \psi\) is \(\varphi^\dagger \otimes \psi^\dagger\), because on pure tensors
\begin{align*}
    \langle \varphi(u) \otimes \psi(u'), w \otimes w' \rangle
    &= \langle \varphi(u), w \rangle \langle \psi(u'), w' \rangle \\
    &= \langle u, \varphi^\dagger(w) \rangle \langle u', \psi^\dagger(w') \rangle
    = \langle u \otimes u', \varphi^\dagger(w) \otimes \psi^\dagger(w') \rangle .
\end{align*}
A linear isomorphism that preserves the forms is unitary, and it suffices to check this on pure tensors.
The associator \((u \otimes u') \otimes u'' \mapsto u \otimes (u' \otimes u'')\) and the symmetry \(\sigma(u \otimes u') = u' \otimes u\) preserve the forms, since both sides are products of the same three, respectively two, factors.
The left unitor \(A \otimes E \to E\), \(f \otimes u \mapsto f u\), preserves the forms, since
\[
    \langle f u, g v \rangle = f \conj g \langle u, v \rangle = \langle f, g \rangle \langle u, v \rangle = \langle f \otimes u, g \otimes v \rangle,
\]
and the right unitor is treated in the same way.
Hence \(\Herm_A\) is dagger symmetric monoidal.

For the duals, recall from \cref{sec:dagger-compact} that \(\conj E\) is a Hermitian module.
The map \(\varepsilon\) is well defined and \(A\)-linear on \(E \otimes \conj E\), because \(\langle f u, v \rangle = f \langle u, v \rangle\) and \(\varepsilon(u \otimes f \cdot \conj v) = \langle u, \conj f v \rangle = f \langle u, v \rangle\).
Let \((e_i, \xi^i)\) be a dual basis of \(E\), with reciprocal basis \((e^i)\).
Suppressing associators and unitors, the first triangle identity is the composite
\[
    E \xrightarrow{\id \otimes \eta} E \otimes \conj E \otimes E \xrightarrow{\varepsilon \otimes \id} E,
    \qquad
    v \mapsto \sum_i v \otimes \conj{e^i} \otimes e_i \mapsto \sum_i \langle v, e^i \rangle\, e_i ,
\]
which is the identity by the first expansion of \cref{prop:reciprocal-expansion}.
The second triangle identity is the composite
\[
    \conj E \xrightarrow{\eta \otimes \id} \conj E \otimes E \otimes \conj E \xrightarrow{\id \otimes \varepsilon} \conj E,
    \qquad
    \conj v \mapsto \sum_i \conj{e^i} \otimes e_i \otimes \conj v \mapsto \sum_i \langle e_i, v \rangle \cdot \conj{e^i}
    = \conj{\sum_i \langle v, e_i \rangle\, e^i} ,
\]
using \(f \cdot \conj{e^i} = \conj{\conj f e^i}\) and \(\conj{\langle e_i, v \rangle} = \langle v, e_i \rangle\).
It is the identity by the second expansion of \cref{prop:reciprocal-expansion}.

It remains to show that \(\varepsilon = \eta^\dagger \circ \sigma\), where \(\sigma \colon E \otimes \conj E \to \conj E \otimes E\) is the symmetry.
By the computation after \cref{prop:herm-module-dagger}, the adjoint of \(\eta = \widehat{\eta(1)}\) is \(\eta(1)^\flat\).
Hence, for \(u \in E\) and \(\conj w \in \conj E\),
\begin{align*}
    \eta^\dagger(\sigma(u \otimes \conj w))
    &= \Bigl\langle \conj w \otimes u, \sum_i \conj{e^i} \otimes e_i \Bigr\rangle
    = \sum_i \langle e^i, w \rangle \langle u, e_i \rangle \\
    &= \Bigl\langle \sum_i \langle u, e_i \rangle\, e^i, w \Bigr\rangle
    = \langle u, w \rangle ,
\end{align*}
by the definition of the form on \(\conj E\) and the second expansion of \cref{prop:reciprocal-expansion}.
This is \(\varepsilon(u \otimes \conj w)\).
Since \(\varepsilon\) is defined by the form alone and \(\eta = (\varepsilon \circ \sigma^{-1})^\dagger\), the cup \(\eta\) is independent of the choice of dual basis, and \(\Herm_A\) is dagger compact.

\subsection{The jet comonad}
\label{sec:appendix-jets}

We complete the proof of \cref{prop:jet-reconstruction}, construct the maps of \cref{def:jet-maps}, prove \cref{prop:jet-comonad}, and show that \(\DiffOp\) is a preadditive locally graded category.
As in \cref{sec:jets}, a dot denotes the product in \(D\).
By \cref{prop:jet-reconstruction}, all jet modules, iterated or not, are finitely generated projective, and an element of \(J^k E\) is determined by its values \(j(P \otimes \alpha)\).
The dual basis of \(J^l E\) given there consists of elements \(\beta^a \otimes e_i\) and linear forms \(Q \otimes \alpha\) with \(Q \in D^l\), so an element of \(J^k J^l E\) is determined by its values on the elements \(P \otimes (Q \otimes \alpha)\), and similarly for jets of higher iterates.

We first complete the proof of \cref{prop:jet-reconstruction}.
The values of a jet define a map from \(J^k E\) to the dual of \(D^k \otimes_A E^\vee\), where the tensor product is taken over the right structure of \(D^k\), and the left structure of \(D^k\) makes it a left \(A\)-module.
The reconstruction formula shows that this map is injective.
It is surjective: for a linear form \(\lambda\) on \(D^k \otimes_A E^\vee\), the jet \(\sum_{a, i} \lambda(b_a \otimes \xi^i)\, \beta^a \otimes e_i\) takes the value
\[
    \sum_{a, i} \lambda(b_a \otimes \xi^i)\, \beta^a(P \cdot \alpha(e_i))
    = \sum_i \lambda(P \cdot \alpha(e_i) \otimes \xi^i)
    = \lambda(P \otimes \alpha)
\]
on \(P \otimes \alpha\), using \(\sum_a \beta^a(Q)\, b_a = Q\) for \(Q \in D^k\) and \(\sum_i \alpha(e_i)\, \xi^i = \alpha\).

Next we construct the maps of \cref{def:jet-maps}.
Let \((b_a, \beta^a)\) be a dual basis of \(D^l\), and define
\begin{gather*}
    \pi_{l,k}(\omega \otimes u) = \omega|_{D^k} \otimes u, \qquad
    \varepsilon(\omega \otimes u) = \omega(1)\, u, \\
    \delta_{k,l}(\omega \otimes u) = \sum_a \omega(- \cdot b_a) \otimes (\beta^a \otimes u),
\end{gather*}
where \(\omega(- \cdot b_a) \colon D^k \to A\) is the map \(P \mapsto \omega(P \cdot b_a)\), which is \(A\)-linear and defined because \(D^k D^l \subseteq D^{k+l}\).
The three formulas are biadditive and linear for the pointwise structure.
The first two are balanced because \((\omega f)|_{D^k} = \omega|_{D^k} f\) and \(\omega(f) = f \omega(1)\).
On \(P \otimes \alpha\), the first two formulas take the values \(\omega(P \cdot \alpha(u))\) and \(\omega(1)\, \alpha(u) = \omega(\alpha(u))\), as required.
On \(P \otimes (Q \otimes \alpha)\), the third takes the value
\[
    \sum_a \omega\bigl(P \cdot \beta^a(Q \cdot \alpha(u)) \cdot b_a\bigr) = \omega(P \cdot Q \cdot \alpha(u)),
\]
because \(\sum_a \beta^a(R)\, b_a = R\) for \(R \in D^l\).
The value \(\omega(P \cdot Q \cdot \alpha(u) f)\) is the same for \(\omega f \otimes u\) and for \(\omega \otimes f u\), so the third formula is balanced as well, and \(\delta_{k,l}\) is a well-defined map with the property required in \cref{def:jet-maps}.
Naturality in \(E\) follows by comparing values, using \eqref{eq:jet-functor-values}, and by uniqueness none of the maps depends on the dual basis.

We now verify the comonad laws.
The counit laws, coassociativity and the compatibility of \(\delta\) with the restrictions say that the following diagrams commute:
\[
    \begin{tikzcd}
        J^k \rar["\delta_{k,0}"] \drar["\id"'] & J^k J^0 \dar["J^k(\varepsilon)"] \\
        & J^k
    \end{tikzcd}
    \qquad
    \begin{tikzcd}
        J^k \rar["\delta_{0,k}"] \drar["\id"'] & J^0 J^k \dar["\varepsilon"] \\
        & J^k
    \end{tikzcd}
\]
\[
    \begin{tikzcd}[column sep = large]
        J^{k+l+m} \rar["\delta_{k,l+m}"] \dar["\delta_{k+l,m}"'] & J^k J^{l+m} \dar["J^k(\delta_{l,m})"] \\
        J^{k+l} J^m \rar["\delta_{k,l}"] & J^k J^l J^m
    \end{tikzcd}
    \qquad
    \begin{tikzcd}[column sep = large]
        J^{m+n} \rar["\delta_{m,n}"] \dar["\pi_{m+n,k+l}"'] & J^m J^n \dar["J^k(\pi_{n,l}) \circ \pi_{m,k}"] \\
        J^{k+l} \rar["\delta_{k,l}"] & J^k J^l
    \end{tikzcd}
\]

\begin{proof}[Proof of \cref{prop:jet-comonad}]
    The jet functors preserve finitely generated projective modules by \cref{prop:jet-reconstruction}, and we have seen that \(\pi\), \(\varepsilon\) and \(\delta\) are natural.
    The identities \(\pi_{k,k} = \id\) and \(\pi_{l,k} \circ \pi_{m,l} = \pi_{m,k}\) hold because the restriction of a map along \(D^k \subseteq D^k\) is the map itself, and restrictions compose.
    For the remaining identities we compare values, and the defining properties of \cref{def:jet-maps} say that, as linear forms,
    \[
        \alpha \circ \varepsilon = 1 \otimes \alpha, \qquad
        (Q \otimes \alpha) \circ \pi_{n,l} = Q \otimes \alpha, \qquad
        \bigl(Q \otimes (R \otimes \alpha)\bigr) \circ \delta_{l,m} = Q R \otimes \alpha,
    \]
    for \(Q \in D^l\), \(R \in D^m\) and \(l \le n\).
    The first counit law holds because
    \[
        J^k(\varepsilon)(\delta_{k,0}(j))(P \otimes \alpha) = \delta_{k,0}(j)\bigl(P \otimes (1 \otimes \alpha)\bigr) = j(P \otimes \alpha),
    \]
    and the second because \(\lambda(\varepsilon(j')) = j'(1 \otimes \lambda)\) for every linear form \(\lambda\), which for \(\lambda = Q \otimes \alpha\) gives
    \[
        (Q \otimes \alpha)\bigl(\varepsilon(\delta_{0,k}(j))\bigr) = \delta_{0,k}(j)\bigl(1 \otimes (Q \otimes \alpha)\bigr) = j(Q \otimes \alpha).
    \]
    Both sides of the coassociativity law take the value \(j(P Q R \otimes \alpha)\) on \(P \otimes (Q \otimes (R \otimes \alpha))\):
    \begin{align*}
        J^k(\delta_{l,m})(\delta_{k,l+m}(j))\bigl(P \otimes (Q \otimes (R \otimes \alpha))\bigr)
        &= \delta_{k,l+m}(j)\bigl(P \otimes (Q R \otimes \alpha)\bigr) \\
        &= j\bigl(P (Q R) \otimes \alpha\bigr), \\
        \delta_{k,l}(\delta_{k+l,m}(j))\bigl(P \otimes (Q \otimes (R \otimes \alpha))\bigr)
        &= \delta_{k+l,m}(j)\bigl(P Q \otimes (R \otimes \alpha)\bigr) \\
        &= j\bigl((P Q) R \otimes \alpha\bigr),
    \end{align*}
    and these agree because \(D\) is associative.
    Finally, for \(k \le m\), \(l \le n\), \(P \in D^k\) and \(Q \in D^l\), both sides of the last identity take the value \(j(P Q \otimes \alpha)\) on \(P \otimes (Q \otimes \alpha)\):
    \[
        J^k(\pi_{n,l})\bigl(\pi_{m,k}(\delta_{m,n}(j))\bigr)\bigl(P \otimes (Q \otimes \alpha)\bigr)
        = \delta_{m,n}(j)\bigl(P \otimes (Q \otimes \alpha)\bigr)
        = j(P Q \otimes \alpha),
    \]
    and \(\delta_{k,l}(\pi_{m+n,k+l}(j))(P \otimes (Q \otimes \alpha)) = \pi_{m+n,k+l}(j)(P Q \otimes \alpha) = j(P Q \otimes \alpha)\).
\end{proof}

Next we verify that \(\DiffOp\), as defined in \cref{def:diffop}, is a preadditive locally graded category.
Let \(P \in \Diff^k(E, F)\), \(Q \in \Diff^l(F, H)\) and \(R \in \Diff^m(H, E)\).
The change-of-degree maps are functorial because the restrictions are: \(\iota_{k,k}(P) = P \circ \pi_{k,k} = P\) and \(\iota_{l,m}(\iota_{k,l}(P)) = P \circ \pi_{l,k} \circ \pi_{m,l} = P \circ \pi_{m,k}\).
The unit laws follow from the counit laws and the naturality of \(\varepsilon\):
\[
    P \circ \id_E = P \circ J^k(\varepsilon) \circ \delta_{k,0} = P, \qquad
    \id_F \circ P = \varepsilon \circ J^0(P) \circ \delta_{0,k} = P \circ \varepsilon \circ \delta_{0,k} = P.
\]
Associativity follows from coassociativity and the naturality of \(\delta_{m,l}\):
\begin{align*}
    R \circ (Q \circ P)
    &= R \circ J^m(Q) \circ J^m J^l(P) \circ J^m(\delta_{l,k}) \circ \delta_{m,l+k} \\
    &= R \circ J^m(Q) \circ J^m J^l(P) \circ \delta_{m,l} \circ \delta_{m+l,k} \\
    &= R \circ J^m(Q) \circ \delta_{m,l} \circ J^{m+l}(P) \circ \delta_{m+l,k}
    = (R \circ Q) \circ P.
\end{align*}
Composition is compatible with the change of degree of each factor separately, and hence with \(\iota(Q) \circ \iota(P) = \iota(Q \circ P)\).
For \(k \le n\), the last identity of \cref{def:graded-comonad}, with \(\pi_{l,l} = \id\), gives
\begin{align*}
    Q \circ \iota_{k,n}(P)
    &= Q \circ J^l(P) \circ J^l(\pi_{n,k}) \circ \delta_{l,n} \\
    &= Q \circ J^l(P) \circ \delta_{l,k} \circ \pi_{l+n,l+k}
    = \iota_{l+k,l+n}(Q \circ P),
\end{align*}
and for \(l \le n\), the naturality of \(\pi_{n,l}\) and the same identity, with \(\pi_{k,k} = \id\), give
\begin{align*}
    \iota_{l,n}(Q) \circ P
    &= Q \circ \pi_{n,l} \circ J^n(P) \circ \delta_{n,k}
    = Q \circ J^l(P) \circ \pi_{n,l} \circ \delta_{n,k} \\
    &= Q \circ J^l(P) \circ \delta_{l,k} \circ \pi_{n+k,l+k}
    = \iota_{l+k,n+k}(Q \circ P).
\end{align*}
The hom-sets \(\Hom_A(J^k E, F)\) are abelian groups, the change-of-degree maps are additive, and composition is additive in each variable because the functors \(J^l\) are additive, so \(\DiffOp\) is preadditive.

\end{document}